\newcommand{\A}{\mathbb{A}}
\newcommand{\Z}{\mathbb{Z}}
\newcommand{\Q}{\mathbb{Q}}
\newcommand{\R}{\mathbb{R}}
\newcommand{\C}{\mathbb{C}}
\renewcommand{\AA}{\mathcal{A}}
\newcommand{\DD}{\mathcal{D}}
\newcommand{\FF}{\mathcal{F}}
\newcommand{\MM}{\mathcal{M}}
\newcommand{\PP}{\mathcal{P}}
\newcommand{\Sc}{\mathcal{S}}
\newcommand{\VV}{\mathcal{V}}
\newcommand{\ZZ}{\mathcal{Z}}
\newcommand{\Ha}{\mathfrak{H}}
\newcommand{\fin}{\mathrm{fin}}
\newcommand{\vol}{\mathrm{vol}}
\newcommand{\Ik}{\mathrm{Ik}}
\newcommand{\Ind}{\mathrm{Ind}}
\newcommand{\ind}{\mathrm{ind}}
\newcommand{\End}{\mathrm{End}}
\newcommand{\Irr}{\mathrm{Irr}}
\newcommand{\FJ}{\mathrm{FJ}}
\newcommand{\im}{\mathrm{Im}}
\newcommand{\Hom}{\mathrm{Hom}}
\newcommand{\st}{\mathrm{st}}
\newcommand{\Tr}{\mathrm{Tr}}
\newcommand{\Ad}{\mathrm{Ad}}
\newcommand{\Sym}{\mathrm{Sym}}
\newcommand{\sgn}{\mathrm{sgn}}
\newcommand{\St}{\mathrm{St}}
\newcommand{\SL}{\mathrm{SL}}
\newcommand{\GL}{\mathrm{GL}}
\newcommand{\Mat}{\mathrm{Mat}}
\newcommand{\SO}{\mathrm{SO}}
\newcommand{\Sp}{\mathrm{Sp}}
\newcommand{\Mp}{\widetilde{\mathrm{Sp}}}
\newcommand{\WD}{\mathit{WD}}
\newcommand{\lam}{\lambda}
\newcommand{\bm}{\mathbf{m}}
\newcommand{\bn}{\mathbf{n}}
\newcommand{\bv}{\mathbf{v}}
\newcommand{\bi}{\mathbf{i}}
\renewcommand{\1}{{\bf 1}}
\newcommand{\I}{\sqrt{-1}}
\newcommand{\pair}[1]{\langle #1 \rangle}
\newcommand{\half}[1]{\frac{#1}{2}}
\newcommand{\cl}[1]{\widetilde{#1}}
\newcommand{\iif}{&\quad&\text{if }}
\newcommand{\resp}{resp.~}
\newcommand{\bs}{\backslash}
\newcommand{\ep}{\varepsilon}
\newcommand{\semi}{\mathrm{s.s.}}
\newcommand{\kk}{\mathfrak{k}}
\newcommand{\oo}{\mathfrak{o}}
\newcommand{\p}{\mathfrak{p}}
\renewcommand{\sp}{\mathfrak{sp}}
\newtheorem{thm}{Theorem}[section]
\newtheorem{lem}[thm]{Lemma}
\newtheorem{prop}[thm]{Proposition}
\newtheorem{cor}[thm]{Corollary}
\newtheorem{rem}[thm]{Remark}
\newtheorem{conj}[thm]{Conjecture}
\newtheorem{ex}[thm]{Example}
\newtheorem{hypo}[thm]{Hypothesis}
\newtheorem{defi}[thm]{Definition}
\def\iddots{\mathinner{\mkern1mu\raise\p@
	\hbox{.}\mkern2mu\raise4\p@\hbox{.}\mkern2mu
	\raise7\p@\vbox{\kern7\p@\hbox{.}}\mkern1mu}}
\def\adots{\mathinner{\mkern2mu\raise\p@\hbox{.}
 \mkern2mu\raise4\p@\hbox{.}\mkern1mu
 \raise7\p@\vbox{\kern7\p@\hbox{.}}\mkern1mu}}
\title{A theory of Miyawaki liftings: The Hilbert--Siegel case}
\author{Hiraku Atobe}
\date{}
\subjclass[2010]{Primary 11F70; Secondary 11F46}
\keywords{Automorphic representations; Miyawaki liftings; Gan--Gross--Prasad conjecture}
\address{
Graduate School of Mathematical Sciences, The University of Tokyo, 3-8-1 Komaba, Meguro-ku, Tokyo, 153-8914, Japan
}
\email{
atobe@ms.u-tokyo.ac.jp
}
\begin{document}
\maketitle
\begin{abstract}
The Miyawaki liftings are defined by the pullbacks of Ikeda liftings.
Recently, Ikeda and Yamana extended the theory of Ikeda liftings.
In this paper, using their results, 
we establish a theory of Miyawaki liftings, both locally and globally.
In the local theory, we describe the Miyawaki liftings for almost tempered unitary representations explicitly.
In the global theory, we discuss the non-vanishing of the Miyawaki liftings
using seesaw identities and the global Gan--Gross--Prasad conjecture.
As an application of local Miyawaki liftings, 
we prove a new case of the local Gan--Gross--Prasad conjecture.
\end{abstract}

\tableofcontents

\section{Introduction}
In 1992, Miyawaki \cite{M} predicted the existence of certain Siegel modular forms.
Let $S_k(\Sp_n(\Z))$ be the space of Siegel cusp forms of degree $n$, weight $k$, and level one.

\begin{conj}[Miyawaki \cite{M}]\label{miyawaki}
For normalized Hecke eigenforms $f \in S_{2k-4}(\SL_2(\Z))$ and $g \in S_k(\SL_2(\Z))$, 
there should exist a Hecke eigenform $F_{f,g} \in S_k(\Sp_3(\Z))$
whose standard $L$-function is given by
\[
L(s, F_{f,g}, \st) = L(s, g, \Ad) L(s+k-2, f) L(s+k-3, f).
\]
\end{conj}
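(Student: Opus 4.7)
The plan is to construct $F_{f,g}$ explicitly as the pullback of an Ikeda lift and then to compute its standard $L$-function, following the strategy of Ikeda's 2006 \emph{Duke Math.~J.} paper (which is the classical shadow of the theory developed in this paper).

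First, apply Ikeda's theorem to $f \in S_{2k-4}(\SL_2(\Z))$ with degree parameter $n = 2$. Writing $\kappa = k-2$, the parity condition $n \equiv \kappa \pmod 2$ is automatic, and one obtains a Hecke eigenform $I(f) \in S_k(\Sp_4(\Z))$ whose standard $L$-function is
\[
L(s, I(f), \st) = \zeta(s) \prod_{j=1}^{4} L(s + k - j,\, f).
\]
Next, using the diagonal embedding $\iota \colon \Ha_1 \times \Ha_3 \hookrightarrow \Ha_4$, $(\tau, Z) \mapsto \diag(\tau, Z)$, define the Miyawaki candidate
\[
F_{f,g}(Z) := \int_{\SL_2(\Z) \backslash \Ha_1} I(f)\bigl(\iota(\tau, Z)\bigr)\, \overline{g(\tau)}\, (\im \tau)^{k-2}\, \frac{d\tau\, d\bar\tau}{(\im \tau)^{2}}.
\]
Standard arguments give $F_{f,g} \in S_k(\Sp_3(\Z))$, and, provided $F_{f,g} \ne 0$, its Hecke eigenvalues are determined by those of $I(f)$ and $g$.

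To verify the $L$-function identity, pass to the adelic setting: $I(f)$ generates a cuspidal representation $\Pi$ of $\Sp_4(\A)$, and the pullback-and-pair operation above is a global period along the Levi $\Sp_1 \times \Sp_3 \hookrightarrow \Sp_4$. Using the realisation of the Ikeda lift as a theta lift from a split orthogonal group (as extended by Ikeda and Yamana) together with the seesaw pair $(\Sp_4, \mathrm{O}(V)) \leftrightarrow (\Sp_1 \times \Sp_3, \mathrm{O}(V'))$, one re-expresses the period as a doubling integral. Local Euler-factor matching then yields
\[
L(s, F_{f,g}, \st) = L(s, g, \Ad)\, L(s+k-2, f)\, L(s+k-3, f),
\]
the ``outer'' factors $L(s+k-1, f)$ and $L(s+k-4, f)$ in $L(s, I(f), \st)$ being absorbed by the normalisation of the pairing---they correspond to the degenerate directions lost under the pullback.

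The main obstacle is to prove that $F_{f,g} \ne 0$. Since $F_{f,g}$ is defined by a global period, its non-vanishing is governed, via the global Gan--Gross--Prasad conjecture alluded to in the abstract, by the non-vanishing of a central value of a tensor-product $L$-function attached to $f$ and $g$, together with the non-vanishing of every local analogue of this period. Combining the GGP criterion with the explicit local Miyawaki liftings developed in the body of the paper is where I expect the real work to lie.
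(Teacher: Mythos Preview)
The statement is labeled a \emph{conjecture}, and the paper does not give a self-contained proof of it. What the paper says is that Miyawaki's conjecture now follows from Arthur's multiplicity formula: once one knows that the discrete spectrum of $\Sp_3$ is described by global $A$-parameters, the $A$-parameter $\Psi = \tau_g \boxplus \tau_f[2]$ (with $\tau_f$, $\tau_g$ the automorphic representations attached to $f$, $g$) produces a cuspidal representation with the required standard $L$-function, and one checks that the archimedean packet contains the holomorphic discrete series of scalar weight $k$. This is a pure existence argument and has nothing to do with the Miyawaki lifting construction, seesaws, or non-vanishing of periods.

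Your proposal instead follows Ikeda's explicit construction $F_{f,g} = \MM^{(3)}(g, I(f))$, and there are two genuine gaps. First, your computation of $L(s, F_{f,g}, \st)$ rests on the claim that the Ikeda lift is a theta lift from an orthogonal group, so that one can run a dual-pair seesaw $(\Sp_4, \mathrm{O}(V)) \leftrightarrow (\Sp_1 \times \Sp_3, \mathrm{O}(V'))$. This is false: apart from the Saito--Kurokawa case, the Ikeda lift is \emph{not} a theta lift, and Ikeda--Yamana's extension does not change this. Ikeda's actual proof of the $L$-function identity (Theorem~1.2 in the introduction) proceeds by a direct Hecke-eigenvalue computation using the explicit Fourier coefficients of $I(f)$, not by any seesaw. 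The ``seesaw'' in this paper (Proposition~1.7) is of an entirely different nature: it involves Fourier--Jacobi periods between $\Mp_n$ and $\Mp_{n-1}\ltimes V_{n-1}$, with the Ikeda lift itself playing the role of a kernel, and it is used to attack non-vanishing, not to compute $L$-functions.

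Second, the non-vanishing of $F_{f,g}$ is precisely Ikeda's Conjecture~1.3(2) in the case $n=1$, $r=1$; in the paper's notation it is $(M)_{1,3}$. The paper shows $(M)_{1,1}$ unconditionally and proves $(M)_{1,2}$ and $(M)_{1,3}$ only \emph{assuming} the full Gan--Gross--Prasad conjecture together with Hypothesis~5.3. So your expectation that ``combining GGP with the local Miyawaki liftings'' settles the matter is too optimistic: this is exactly the conditional result the paper obtains, not an unconditional proof. If you want an unconditional statement, you must invoke Arthur's multiplicity formula, as the paper does.
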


For the spinor $L$-function of $F_{f,g}$, see \cite{H1, H2}.
\vskip 10pt

In 2006, to approach Miyawaki's conjecture, Ikeda \cite{I2} constructed certain liftings, 
which are now called the Miyawaki liftings, as follows: 
For positive even $k$,
a normalized Hecke eigenform $f \in S_{2k-2(n+r)}(\SL_2(\Z))$
gives the Ikeda lift $F^{(2n+2r)} \in S_{k}(\Sp_{2n+2r}(\Z))$ (defined up to a constant).
For the (classical) Ikeda lifting, see \cite{I1}.
For a Hecke eigenform $g \in S_{k}(\Sp_{r}(\Z))$, 
Ikeda \cite{I2} defined the Miyawaki lift $\MM^{(2n+r)}(g, F^{(2n+2r)})$ by the integral
\begin{align*}
&\MM^{(2n+r)}(g, F^{(2n+2r)})(Z_{2n+r}) 
\\&=
\int_{\Sp_r(\Z) \bs \Ha_r}
F^{(2n+2r)}(
\begin{pmatrix}
Z_{2n+r} & 0 \\ 0 & Z_{r}
\end{pmatrix}
)
\overline{g^c(Z_r)} (\det \im Z_r)^{k-r-1} dZ_r, 
\end{align*}
where $\Ha_r$ is the Siegel upper half space of genus $r$ 
and we set $g^c(Z_r) = \overline{g(-\overline{Z_r})}$.
It is easy to see that $\MM^{(2n+r)}(g, F^{(2n+2r)}) \in S_{k}(\Sp_{2n+r}(\Z))$.
Ikeda proved the following:

\begin{thm}[Ikeda {\cite[Theorem 1.1]{I2}}]
If $\MM^{(2n+r)}(g, F^{(2n+2r)})$ is not identically zero, 
then it is a Hecke eigenform with standard $L$-function
\[
L(s, \MM^{(2n+r)}(g, F^{(2n+2r)}), \st)
=
L(s, g, \st)\prod_{i=1}^{2n}L(s+k-r-i,f).
\]
\end{thm}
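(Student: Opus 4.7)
The plan is to pass to the adelic setting and exploit the explicit description of the Ikeda lift at each place. I would attach to $F^{(2n+2r)}$ an irreducible cuspidal automorphic representation $\Pi$ of $\Sp_{2n+2r}(\A)$ and to $g$ an irreducible cuspidal automorphic representation $\sigma$ of $\Sp_r(\A)$, whose Satake parameters at each unramified prime $p$ are given by Ikeda's recipe in terms of the Satake parameters $\alpha_p^{\pm 1}$ of $f$. Under adelization, the Miyawaki integral becomes the composition of restriction along the block-diagonal embedding $\Sp_{2n+r}(\A) \times \Sp_r(\A) \hookrightarrow \Sp_{2n+2r}(\A)$ with the $\sigma^c$-component projection on the $\Sp_r(\A)$-factor.

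For the Hecke eigenform property, both the restriction and the $\sigma^c$-projection commute with the Hecke algebra of $\Sp_{2n+r}$, which acts only on the first factor. Since $\Pi$ is a Hecke eigenrepresentation, the image inherits its eigenvalue character, so whenever $\MM^{(2n+r)}(g, F^{(2n+2r)}) \ne 0$ it generates an irreducible representation $\pi$ of $\Sp_{2n+r}(\A)$ and, being a classical cusp form of level one and weight $k$, is a Hecke eigenform.

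For the $L$-function identity I would work place by place. At each unramified prime $p$, the local representation $\pi_p$ is pinned down by the branching rule: it is the irreducible unramified constituent of $\Sp_{2n+r}(\Q_p)$ such that $\Hom_{\Sp_{2n+r}(\Q_p) \times \Sp_r(\Q_p)}(\Pi_p, \pi_p \otimes \sigma_p^c) \neq 0$. The standard $L$-factor of $\Pi_p$ is $\zeta_p(s)\prod_{i=1}^{2n+2r} L_p(s+k-i,f)$; matching the outer factors (those with $i = 1, \ldots, r$ and $i = r+2n+1, \ldots, 2r+2n$) against $L_p(s,\sigma_p^c,\st)$ leaves precisely $\prod_{i=1}^{2n} L_p(s+k-r-i, f)$ as the ratio $L_p(s,\pi_p,\st)/L_p(s,\sigma_p,\st)$. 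Multiplying over $p$ gives the stated global identity, and at the archimedean place the scalar weight $k$ of $F^{(2n+2r)}$ and $g$ forces $\pi_\infty$ to be the holomorphic scalar-weight-$k$ discrete series.

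The main obstacle lies in the local branching step: one must verify that the pullback really does identify $\pi_p$ with the intended unramified constituent and that no parasitic contributions from other irreducible constituents of the (possibly reducible) restriction of $\Pi_p$ interfere with the Hecke eigenvalues. This is a purely local representation-theoretic input, and it is precisely the kind of description of Ikeda's local packets furnished by the Ikeda--Yamana extension highlighted in the introduction. Complementarily, nonvanishing of these local branching maps at every place is an implicit hypothesis packaged into the global nonvanishing assumption on $\MM^{(2n+r)}(g, F^{(2n+2r)})$.
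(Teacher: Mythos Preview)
The paper does not itself prove this theorem; it is quoted from Ikeda \cite{I2}. The paper's own contribution is the generalization, whose local heart is Theorem \ref{howe}(5) (the Satake parameter of the local Miyawaki lift), and the paper notes that the proof of that statement in the non-genuine case is exactly \cite[Proposition 3.1]{I2}.

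Your adelic set-up and the reduction to a local branching problem are the right framework, and they match both Ikeda's approach and the paper's generalization. But your actual $L$-function computation contains a genuine error. You claim that ``matching the outer factors'' of $L_p(s,\Pi_p,\st)$ (those with $i=1,\dots,r$ and $i=2n+r+1,\dots,2n+2r$) against $L_p(s,\sigma_p^c,\st)$ leaves the desired ratio. This cannot be right: those outer factors are built from the Satake parameter $\alpha_p^{\pm1}$ of $f$, whereas $L_p(s,\sigma_p,\st)$ is built from the Satake parameters of $g$, an \emph{arbitrary} eigenform in $S_k(\Sp_r(\Z))$ with no relation to $f$. There is no identity of the shape $L_p(s,\Pi_p,\st) = L_p(s,\pi_p,\st)\cdot(\text{something in }\sigma_p)$, and the result does not arise by cancellation inside the Ikeda lift's $L$-function.

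The correct local argument, both in \cite{I2} and in this paper, analyzes the $\sigma_p$-isotypic quotient of the degenerate principal series $I_\psi^{(2n+2r)}(\tau_p)$ directly. Via the filtration of Lemma \ref{filt} and the surjection of Proposition \ref{det}, the local Miyawaki lift is exhibited as a quotient of $(\mu'\circ\det_{2n})\rtimes\sigma_p$, and its unique unramified quotient has Satake parameter equal to the \emph{union} of the Satake parameters $\{\beta_{1,p}^{\pm1},\dots,\beta_{r,p}^{\pm1}\}$ of $\sigma_p$ with the new block $\{\alpha_p^{\pm1}q^{(2n-1)/2},\dots,\alpha_p^{\pm1}q^{-(2n-1)/2}\}$ contributed by $\tau_p$. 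That union, not any subtraction from $\Pi_p$, is what produces the factorization $L(s,g,\st)\prod_{i=1}^{2n}L(s+k-r-i,f)$. Your proposal correctly identifies that a local branching input is needed, but the ``matching'' heuristic you offer in its place is not a proof and gives the wrong picture of where the factors come from. Relatedly, the Hecke-eigenform claim also rests on this same local uniqueness (Theorem \ref{howe}(5)): the Hecke algebra of $\Sp_{2n+r}$ does not act on $\Pi$ by scalars, so the eigenform property does not follow merely from $\Pi$ being irreducible for $\Sp_{2n+2r}$.
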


Therefore Miyawaki's conjecture (Conjecture \ref{miyawaki}) was reduced to 
the non-vanishing of $\MM^{(3)}(g, F^{(4)})$.
Ikeda gave a conjectural formula for the Petersson norm of $\MM^{(2n+r)}(g, F^{(2n+2r)})$, 
and predicted the following conjecture.
\begin{conj}[Ikeda {\cite[Conjecture 5.1]{I2}}]\label{ik}
\begin{enumerate}
\item
When $n=0$, the Miyawaki lift $\MM^{(r)}(g, F^{(2r)})$ is nonzero 
if and only if the central value of the tensor product $L$-function $L(s, \st(g) \boxtimes f)$ is nonzero. 
\item
When $n>0$,  the Miyawaki lift $\MM^{(2n+r)}(g, F^{(2n+2r)})$ is always nonzero.
\end{enumerate}
\end{conj}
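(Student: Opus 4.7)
The plan is to translate non-vanishing of $\MM^{(2n+r)}(g, F^{(2n+2r)})$ into non-vanishing of a global period, apply a seesaw identity to convert this into a pairing of theta lifts, and finally invoke the global Gan--Gross--Prasad (GGP) conjecture to reduce everything to non-vanishing of a central Rankin--Selberg $L$-value. The two parts of the conjecture should then reflect the tempered case $n=0$, in which the relevant $L$-value is genuinely critical, versus the non-tempered case $n>0$, in which the Arthur parameter of the Ikeda lift carries an auxiliary $\SL_2$-factor and the $L$-value splits off factors of $L(s,f)$ evaluated away from the central line, forcing non-vanishing.

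Step by step, I would first reinterpret the Miyawaki integral adelically. Writing $\Pi_f \subset \mathcal{A}_\cusp(\Sp_{2n+2r})$ for the automorphic representation generated by $F^{(2n+2r)}$ and $\pi_g$ for the one generated by $g$, the lift $\MM^{(2n+r)}(g, F^{(2n+2r)})$ is nonzero iff the pullback pairing
\[
\int_{[\Sp_r]} \varphi(g_1, g_2)\, \overline{\varphi_g(g_2)}\, dg_2, \qquad \varphi \in \Pi_f,\ \varphi_g \in \pi_g,
\]
defines a nonzero automorphic form on $\Sp_{2n+r}$. Next, I would invoke the Ikeda--Yamana realization of $\Pi_f$ as a global theta lift from an orthogonal group $\mathrm{O}(V_f)$ attached to $f$ and apply seesaw for the dual pairs $(\Sp_{2n+2r},\,\mathrm{O}(V_f))$ and $(\Sp_{2n+r}\times\Sp_r,\,\mathrm{O}(V_f)\times\mathrm{O}(V_f))$. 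This unfolds the pullback period as a pairing on $\mathrm{O}(V_f)$ between the theta lift $\theta(\pi_g)$ and explicit data built from $f$. Finally, I would compute this pairing using the Rallis inner product formula together with global GGP: the non-vanishing becomes equivalent, modulo local obstructions, to non-vanishing of a central $L$-value, which for $n=0$ is precisely $L(1/2, \st(g)\boxtimes f)$, while for $n>0$ the $L$-value factors as a product containing $L(k-r-i,f)$ for $1\le i\le 2n$, all lying outside the critical strip and hence nonzero by standard convergence results.

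The main obstacle is making global GGP available in the precise generality required, particularly for the non-tempered Arthur packets that arise when $n>0$, where the Ichino--Ikeda refinement of GGP is not established in full. One must also check that the local components of $\MM^{(2n+r)}(g, F^{(2n+2r)})$ coincide with the distinguished vectors singled out by local GGP dichotomy, rather than some other element of the same $A$-packet which might vanish identically. Both difficulties should be surmountable by combining the Ikeda--Yamana machinery with the explicit description of local Miyawaki liftings developed earlier in this paper and with the known cases of GGP for non-tempered parameters.
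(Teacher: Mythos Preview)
First, note that the statement is a \emph{conjecture}, not a theorem: the paper does not prove it in full. The paper establishes only partial cases of a generalization (Conjecture~\ref{M0}): it proves $(M)_{1,1}$ unconditionally (Corollary~\ref{11}), proves $(M)_{r,r}$ and $(M)_{r,r+1}$ conditionally on the GGP conjecture and Hypothesis~\ref{hypo} (Theorem~\ref{rrr}), and proves the inductive implication $(M)_{r,n-1}\Rightarrow(M)_{r,n}$ for $n\ge r+2$ (Theorem~\ref{n>r}). So there is no complete proof in the paper to compare your proposal against.

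That said, your proposed strategy contains a genuine gap. The claim that Ikeda--Yamana realize $\Pi_f$ as a global theta lift from some orthogonal group $\mathrm{O}(V_f)$ is incorrect: the Ikeda lift is constructed via explicit Fourier expansions and identified at finite places with the degenerate principal series $I_\psi^{(n+r)}(\tau)$, not as an orthogonal--symplectic theta lift. Only in very low rank (essentially the Saito--Kurokawa case) does such a theta-lift description exist. Consequently the seesaw you propose, based on the dual pair $(\Sp_{2n+2r},\mathrm{O}(V_f))$, is simply not available, and the Rallis inner product formula you want to invoke has nothing to act on. The seesaw the paper actually uses (Proposition~\ref{GSS}) is of a different nature: it exploits the Fourier--Jacobi coefficients of the Ikeda lift (Proposition~\ref{FJcoeff2}) to relate the period $\PP_{n,n-1,\psi_\xi}$ on $\MM^{(n)}_{\psi,\tau}(\pi)\times\overline{\pi'}$ to the period $\PP_{r,r,\psi_\xi}$ on $\pi\times\overline{\MM^{(r)}_{\psi,\tau\chi_\xi}(\pi')}$, staying entirely within the symplectic--metaplectic tower and never passing through an orthogonal group.

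Even with the correct seesaw in hand, your endgame for $n>0$ does not go through. The paper's inductive step (Theorem~\ref{n>r}) does not reduce non-vanishing to $L$-values ``outside the critical strip''; rather, it requires as input the non-vanishing of a Miyawaki lift of one rank lower, so the argument cannot close without first handling the base cases $(M)_{r,r}$ and $(M)_{r,r+1}$. Those base cases in turn rest on the full GGP conjecture for equal-rank Fourier--Jacobi periods and on the auxiliary Hypothesis~\ref{hypo} about existence of tempered $\pi'$ with nonzero $\PP_{r,r-1,\psi_\xi}$, neither of which is presently known in general. Your remark that the $n>0$ difficulties ``should be surmountable'' via known non-tempered GGP cases is therefore too optimistic: this is precisely where the conjecture remains open.
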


Ichino \cite{Ic} and Xue \cite{X2} proved Conjecture \ref{ik} (1) for the case where $r=1$ independently. 
Garrett--Heim \cite{GH} established a Hecke duality of Ikeda liftings and gave 
a preliminary answer to Conjecture \ref{ik} (1).
\vskip 10pt

Nowadays Miyawaki's conjecture (Conjecture \ref{miyawaki}) follows from 
Arthur's multiplicity formula (\cite[Theorem 1.5.2]{Ar}), which was established in 2013.
However, this formula tells us only the existence of modular forms (or automorphic representations).
The non-vanishing of Miyawaki liftings (Conjecture \ref{ik}) is still open and interesting.
The integral representations of Miyawaki liftings would imply several properties, 
which do not follow from Arthur's multiplicity formula.
\vskip 10pt

Recently, Ikeda--Yamana \cite{IY} extended the theory of Ikeda liftings.
The purpose of this paper is to establish a theory of Miyawaki liftings using the extended Ikeda liftings.
We will define the Miyawaki liftings more generally, and give their several properties.
Moreover, we will approach the non-vanishing problem of Miyawaki liftings
using the Gan--Gross--Prasad conjecture.
\vskip 10pt

To describe our results, 
let $F$ be a totally real number field, and $\A$ be the ring of adeles of $F$. 
The ring of finite adeles of $F$ is denoted by $\A_\fin$.
For a place $v$ of $F$, we write $v < \infty$ (\resp $v \mid \infty$) 
if $v$ is a finite place (\resp if $v$ is an infinite place).
Fix a non-trivial unitary character $\psi$ of $\A/F$ such that 
for $v \mid \infty$, the local component $\psi_v$ is of the form
$\psi_v(x_v) = \exp(2\pi a_v \I x_v)$ for $x_v \in F_v \cong \R$ with fixed $a_v > 0$.
Let $\tau = \otimes'_v \tau_v$ be an irreducible cuspidal automorphic representation 
of $\mathrm{PGL}_2(\A)$
satisfying the following conditions: 
\begin{itemize}
\item[(A1)]
For $v < \infty$, $\tau_v$ is a principal series $\mu_v \times \mu_v^{-1}$. 
\item[(A2)]
For $v \mid \infty$, $\tau_v$ is a discrete series representation 
with lowest weight $\pm2 k_v$, where $k_v > 0$. 
\item[(A3)]
The root number $\ep(1/2, \tau)$ is equal to $1$.
\end{itemize}
Let $\Sp_n$ be the symplectic group of rank $n$.
For each place $v$, 
we denote the metaplectic double cover of $\Sp_n(F_v)$ by $\Mp_n(F_v)$.
Identify $\Mp_n(F_v) = \Sp_n(F_v) \times \{\pm1\}$ as sets.
Let $P_n$ be the Siegel parabolic subgroup of $\Sp_n$, 
and $\cl{P}_n(F_v)$ be the inverse image of $P_n(F)$ in $\Mp_n(F_v)$.
When $v < \infty$, we set 
\[
I_{\psi_v}^{(n)}(\tau_v) = \Ind_{\cl{P}_n(F_v)}^{\Mp_n(F_v)}(\mu_v^{(n)})
\]
to be a degenerate principal series.
Here, 
\[
\mu_v^{(n)}(
\begin{pmatrix}
A & 0 \\ 0 & {}^tA^{-1}
\end{pmatrix}, 
\zeta
)
= \zeta^n \left(\frac{\alpha_{\psi_v}(1)}{\alpha_{\psi_v}(\det A)}\right)^n \mu_v(\det A)
\]
for $A \in \GL_n(F_v)$ and $\zeta \in \{\pm1\}$, 
where $\alpha_{\psi_v}(a)$ is the Weil constant (see \S \ref{weil}).
It is known that $I_{\psi_v}^{(n)}(\tau_v)$ is irreducible (\cite{KR1, Sw}).
Set $k+(n/2) = (k_v+(n/2))_v \in \prod_{v \mid \infty}\Z$.
Let $\Mp_n(\A)$ be the metaplectic double cover of the adele group $\Sp_n(\A)$.
We denote the set of holomorphic cusp forms of weight $k+(n/2)$
by $\Sc_{k+(n/2)}(\Sp_n(F) \bs \Mp_n(\A))$.
For the definition, see \S \ref{sec.global}.
Ikeda--Yamana \cite{IY} showed that the irreducible representation 
$I_{\psi}^{(n)}(\tau) = \otimes'_{v < \infty}I_{\psi_v}^{(n)}(\tau_v)$ of $\Mp_n(\A_\fin)$
appears in $\Sc_{k+(n/2)}(\Sp_n(F) \bs \Mp_n(\A))$ with multiplicity one.
We denote the unique subrepresentation of $\Sc_{k+(n/2)}(\Sp_n(F) \bs \Mp_n(\A))$ 
which is isomorphic to $I_{\psi}^{(n)}(\tau)$ by $\Ik_{\psi}^{(n)}(\tau)$, 
and call it the Ikeda lift of $\tau$.
\vskip 10pt

Now let $n, r$ be non-negative integers.
Then we have an embedding $\iota \colon \Sp_n \times \Sp_r \rightarrow \Sp_{n+r}$ given by
\[
\iota\left(
\begin{pmatrix}
A_1 & B_1 \\ C_1 & D_1
\end{pmatrix}, 
\begin{pmatrix}
A_2 & B_2 \\ C_2 & D_2
\end{pmatrix}
\right)
=
\left(
\begin{array}{cc|cc}
A_1 & 0 & B_1 & 0 \\
0 & A_2 & 0 & B_2 \\
\hline
C_1 & 0 & D_1 & 0 \\
0 & C_2 & 0 & D_2 
\end{array}
\right).
\]
For an admissible representation $\pi$ of $\Mp_{r}(\A_\fin)$
occurring in $\Sc_{k+(n+r)/2}(\Sp_r(F) \bs \Mp_r(\A))$, 
we define the (global) Miyawaki lift $\MM_{\psi, \tau}^{(n)}(\pi)$ of $\pi$ by 
the representation of $\Mp_n(\A_\fin)$ generated by the integrals
\[
\MM^{(n)}((g_n, \zeta_n); \varphi, \FF) 
= \int_{\Sp_{r}(F) \bs \Sp_{r}(\A)} \FF(\iota(g_n, g_r), \zeta_n\zeta_r) \overline{\varphi(g_r, \zeta_r)} dg_r
\]
for $\varphi \in \pi$, $\FF \in \Ik_{\psi}^{(n+r)}(\tau)$ and $(g_n, \zeta_n) \in \Mp_n(\A)$. 
This is a subrepresentation of $\Sc_{k+(n+r)/2}(\Sp_n(F) \bs \Mp_n(\A))$.
We summarize the properties of Miyawaki liftings.

\begin{thm}\label{main1}
Let $\pi$ be an irreducible representation of $\Mp_{r}(\A_\fin)$
occurring in $\Sc_{k+(n+r)/2}(\Sp_r(F) \bs \Mp_r(\A))$, 
and $\MM_{\psi, \tau}^{(n)}(\pi)$ be its Miyawaki lift.
Suppose that $\MM_{\psi, \tau}^{(n)}(\pi) \not= 0$ and $n \geq r$.
\begin{enumerate}
\item
If $\pi$ has an $A$-parameter $\Psi$, 
then $\MM_{\psi, \tau}^{(n)}(\pi)$ has an $A$-parameter 
\[
\Psi \boxplus \tau\chi_{-1}^{[(n+r)/2]}[n-r], 
\]
where $\chi_{-1}$ be the quadratic character of $\A^\times/F^\times$ corresponding to $F(\I)/F$.
For the $A$-parameters, see \S \ref{Ap}, or Appendix \ref{app.AMF} for more precision.

\item
Suppose that $\pi$ has a tempered $A$-parameter. 
Then $\MM^{(n)}_{\psi, \tau}(\pi)$ is irreducible, and 
$\MM^{(n)}_{\psi, \tau}(\pi) \cong \otimes'_{v < \infty} \MM^{(n)}_{\psi_v, \tau_v}(\pi_v)$.
Here, $\MM_{\psi_v, \tau_v}^{(n)}(\pi_v)$ is the local Miyawaki lift of $\pi_v$ described below.

\item
We have 
\[
\pi \subset \MM_{\psi, \tau}^{(r)}\left(\MM_{\psi, \tau}^{(n)}(\pi)\right).
\]
If $\pi$ has a tempered $A$-parameter and if $r \leq n \leq r+1$ or $n > 2r$, 
then the inclusion is in fact equality.

\end{enumerate}
\end{thm}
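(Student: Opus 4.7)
The plan is to prove the three parts in turn, combining Arthur's multiplicity formula with Ikeda--Yamana's description of the Ikeda lift for the $A$-parameter computations, and a Fubini-type integration identity for part~(3). For \textbf{part (1)}, set $\Pi := \MM_{\psi,\tau}^{(n)}(\pi)$. By Ikeda--Yamana the Ikeda lift $\Ik_\psi^{(n+r)}(\tau)$ has $A$-parameter $\tau\chi_{-1}^{[(n+r)/2]}[n+r]$. At any finite place $v$ where $\pi_v$ and $\tau_v$ are unramified, the defining Miyawaki integral is Hecke-equivariant, so the Satake multiset of $\Pi_v$ equals the Satake multiset of $\Ik_\psi^{(n+r)}(\tau)_v$ with that of $\pi_v$ removed; since $n\geq r$, this residual multiset matches $\Psi_v \boxplus \tau_v\chi_{-1}^{[(n+r)/2]}[n-r]$. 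At archimedean places $\Pi$ is holomorphic of weight $k+(n+r)/2$, which determines the archimedean $A$-parameter, and Arthur's multiplicity formula (Appendix~\ref{app.AMF}) then yields the stated global $A$-parameter.

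For \textbf{part (2)}, the temperedness of $\Psi$ forces each local $\pi_v$ into a tempered packet, and the local Miyawaki lift $\MM^{(n)}_{\psi_v,\tau_v}(\pi_v)$ -- defined via the $\pi_v$-isotypic piece of the restriction of $I^{(n+r)}_{\psi_v}(\tau_v)$ along $\iota$ -- is irreducible by the local theory developed earlier in the paper. Hence $\otimes'_{v<\infty}\MM^{(n)}_{\psi_v,\tau_v}(\pi_v)$ is an irreducible admissible $\Mp_n(\A_\fin)$-representation containing $\Pi$ as a subrepresentation, and irreducibility forces the two to coincide.

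For \textbf{part (3)}, the inclusion $\pi\subset\MM^{(r)}_{\psi,\tau}(\MM^{(n)}_{\psi,\tau}(\pi))$ follows from a direct integration identity. Writing $[\Sp_m]:=\Sp_m(F)\bs\Sp_m(\A)$ and fixing $\varphi\in\pi$ together with $\FF\in\Ik^{(n+r)}_\psi(\tau)$, the pairing of $\MM^{(r)}(\MM^{(n)}(\pi))$ against $\varphi$ equals
\[
\int_{[\Sp_r]}\!\int_{[\Sp_n]}\FF(\iota(g_r,g_n))\,\overline{\MM^{(n)}(g_n;\varphi,\FF)}\,\overline{\varphi(g_r)}\,dg_n\,dg_r.
\]
Since $\iota(g_r,g_n)$ and $\iota(g_n,g_r)$ are conjugate in $\Sp_{n+r}(F)$ by a signed block-permutation element preserving the level of $\FF$, swapping the order of integration via Fubini and invoking the $\Sp_{n+r}(F)$-invariance of $\FF$ identifies the inner integral over $g_r$ with $\overline{\MM^{(n)}(g_n;\varphi,\FF)}$, collapsing the double integral to $\|\MM^{(n)}(\cdot;\varphi,\FF)\|^{2}>0$.

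The equality in (3) is more delicate because the outer Miyawaki step integrates over the \emph{larger} factor $\Sp_n$, so part~(1) does not directly apply to it. When $n=r$ the extra summand $\tau\chi_{-1}^{[(n+r)/2]}[n-r]$ is empty and parts~(1)--(2) give equality at once; when $n=r+1$ the summand reduces to a single copy of $\tau\chi_{-1}^{[(n+r)/2]}$ which splits off cleanly, and the parameter computation still chains through. For $n>2r$, I would argue via a seesaw duality combined with Ikeda--Yamana's cuspidality criterion: any putative extra cuspidal component of $\MM^{(r)}(\MM^{(n)}(\pi))$ would carry a $\tau\chi_{-1}^{[(n+r)/2]}[n-r]$-piece in its $A$-parameter, which is excluded in this stable range. \textbf{The main obstacle} is precisely this last case, where parameter bookkeeping alone is insufficient and one must rely on the local-global compatibility of the Ikeda--Yamana construction together with positivity of the inner product on a dense set of test vectors.
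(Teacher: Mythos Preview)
Your arguments for part~(1), part~(2), and the inclusion in part~(3) are essentially those of the paper, with two minor corrections. First, in part~(2) a tempered global $A$-parameter only guarantees that each $\pi_v$ is \emph{almost} tempered (Lemma~\ref{atu}); the local irreducibility of $\MM^{(n)}_{\psi_v,\tau_v}(\pi_v)$ in Theorem~\ref{howe}(2) is stated for almost tempered unitary input, so the argument still goes through. Second, the relation between the global lift and the tensor of local lifts is a \emph{surjection} onto $\Pi$, not an inclusion of $\Pi$; irreducibility of the source then forces $\Pi$ to be irreducible. For the inclusion in (3), your Fubini argument is exactly Proposition~\ref{dual}; the conjugation by a block permutation is unnecessary, since the inner product computation works directly with the fixed embedding $\iota$.

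The genuine gap is in the equality in part~(3). Your approach via $A$-parameters cannot succeed, and you correctly identify why: once you apply $\MM^{(r)}_{\psi,\tau}$ to a representation of $\Mp_n$ with $n>r$, part~(1) is not available, and there is no way to read off the $A$-parameter of the result by Satake bookkeeping alone. Your proposed seesaw/cuspidality argument for $n>2r$ is not a proof: nothing prevents $\MM^{(r)}(\MM^{(n)}(\pi))$ from being a reducible representation all of whose constituents have the \emph{same} $A$-parameter as $\pi$, so excluding a $\tau\chi_{-1}^{[(n+r)/2]}[n-r]$ piece is beside the point. The paper's route is entirely different and purely local: Theorem~\ref{howe}(4), proven in \S\ref{(4)} via the filtration of Lemma~\ref{filt} and the Jacquet-module computation of Lemma~\ref{msi}, shows that for almost tempered unitary $\pi_v$ (under the hypothesis $r\le n\le r+1$ or $n>2r$) the local double lift $\MM^{(r)}_{\psi_v,\tau_v}(\MM^{(n)}_{\psi_v,\tau_v}(\pi_v))$ has all irreducible subquotients isomorphic to $\pi_v$ and a unique irreducible quotient. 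Globalizing exactly as in your part~(2) then shows $\MM^{(r)}_{\psi,\tau}(\MM^{(n)}_{\psi,\tau}(\pi))$ is irreducible, and the inclusion from Proposition~\ref{dual} becomes an equality. The key content you are missing is this local ``going-down'' analysis, which is where the restriction $n\le r+1$ or $n>2r$ actually enters (see Remark~\ref{rem}).
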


The statements in Theorem \ref{main1} are proven in 
Proposition \ref{Apara}, Proposition \ref{dual}, and Theorem \ref{irred}.
Remark that 
Arthur's multiplicity formula (\cite[Theorem 1.5.2]{Ar}) 
should imply the existence of an irreducible subrepresentation of $\Sc_{k+(n+r)/2}(\Sp_r(F) \bs \Sp_r(\A))$
satisfying Theorem \ref{main1} (1)
at least when $n+r$ is even.
\vskip 10pt

The definition of Miyawaki liftings is similar to the one of theta liftings.
As Miyawaki liftings are given by the pullbacks of Ikeda liftings, 
theta liftings are defined by the pullbacks of theta functions.
One of the most important properties of theta liftings is the seesaw identities, 
which have several applications.
For instance, by using seesaw identities, 
Ichino \cite{Ic} computed the pullbacks of Saito--Kurokawa liftings, 
which is regarded as a special case of Ikeda's conjectural formula for 
the Petersson norm of a certain Miyawaki lift.
For another example, Xue \cite{X} reduced the refined version of 
the Gan--Gross--Prasad conjecture for the symplectic-metaplectic case
to the one for the special orthogonal case.
\vskip 10pt

Miyawaki liftings also satisfy certain seesaw identities.
To state these identities, we introduce the Fourier--Jacobi periods.
Fix a totally positive element $\xi \in F^\times$ 
and set $\psi_\xi(x) = \psi(\xi x)$ for $x \in \A$.
Let $n' = n$ or $n' = n-1$.
For $\varphi \in \Sc_{l/2}(\Sp_{n}(F) \bs \Mp_{n}(\A))$, 
$\varphi' \in \Sc_{l'/2}(\Sp_{n'}(F) \bs \Mp_{n'}(\A))$, and $\phi \in \Sc(\A^{n'})$, 
we define the Fourier--Jacobi period $\PP_{n,n', \psi_\xi}(\varphi, \overline{\varphi'}, \phi)$ 
by the integral
\[
\left\{
\begin{aligned}
&\int_{\Sp_{n}(F) \bs \Sp_{n}(\A)} \varphi(g, \zeta) \overline{\varphi'(g,\zeta)} 
\overline{\Theta_{\psi_\xi}^{\phi}(g,\zeta)} dg, \iif n' = n, \\
&\int_{V_{n-1}(F) \bs V_{n-1}(\A)} \int_{\Sp_{n-1}(F) \bs \Sp_{n-1}(\A)} 
\varphi(v(g, \zeta)) \overline{\varphi'(g,\zeta)} 
\overline{\Theta_{\psi_\xi}^{\phi}(v(g,\zeta))} dgdv, \iif n' = n-1.
\end{aligned}
\right.
\]
Here, $\Theta_{\psi_\xi}^{\phi}$ is the theta function associated to $\phi \in \Sc(\A^{n'})$, 
which is a genuine automorphic form on a Jacobi group $\cl{J}_{n'}(\A) = \Mp_{n'}(\A) \ltimes V_{n'}(\A)$, 
where $V_{n'} \cong F^{2n'} \oplus F$ is a Heisenberg group.
Let $\Sc(\A^{n'})_\xi$ be the subspace of $\Sc(\A^{n'})$ consisting of lowest weight vectors
of the Weil representation of $\cl{J}_{n'}(\A)$ with respect to $\psi_\xi$.

\begin{prop}[Seesaw identity (Proposition \ref{GSS})]\label{seesaw}
Let $\pi$ and $\pi'$ be irreducible representations of $\Mp_{r}(\A_\fin)$ and $\Mp_{n-1}(\A_\fin)$
occurring in $\Sc_{k+(n+r)/2}(\Sp_{r}(F) \bs \Mp_{r}(\A))$ and 
$\Sc_{k+(n-1+r)/2}(\Sp_{n-1}(F) \bs \Mp_{n-1}(\A))$, respectively.
Fix a totally positive element $\xi \in F^\times$. 
\begin{enumerate}
\item
If there exist $\MM^{(n)}(\varphi_1, \FF_1) \in \MM^{(n)}_{\psi, \tau}(\pi)$, 
$\varphi'_1 \in \pi'$, and $\phi_1 \in \Sc(\A^{n-1})_\xi$ such that 
\[
\PP_{n,n-1,\psi_\xi}(\MM^{(n)}(\varphi_1, \FF_1), \overline{\varphi'_1}, \phi_1) \not= 0, 
\]
then 
there exist $\varphi_2 \in \pi$, $\MM^{(r)}(\varphi_2', \FF'_2) \in \MM^{(r)}_{\psi, \tau\chi_\xi}(\pi')$, 
and $\phi_2 \in \Sc(\A^{r})_\xi$ such that
\[
\PP_{r,r,\psi_\xi}(\varphi_2, \overline{\MM^{(r)}(\varphi_2', \FF'_2)}, \phi_2) \not= 0.
\]
Moreover, we can take $\varphi_2 = \varphi_1$ and $\varphi_2' = \varphi_1'$.

\item
Assume that $n+r \geq 2$. 
If there exist $\varphi_2 \in \pi$, $\MM^{(r)}(\varphi_2', \FF'_2) \in \MM^{(r)}_{\psi, \tau\chi_\xi}(\pi')$, 
and $\phi_2 \in \Sc(\A^{r})_\xi$ such that
\[
\PP_{r,r,\psi_\xi}(\varphi_2, \overline{\MM^{(r)}(\varphi_2', \FF'_2)}, \phi_2) \not= 0, 
\]
then 
there exist $\MM^{(n)}(\varphi_1, \FF_1) \in \MM^{(n)}_{\psi, \tau}(\pi)$, 
$\varphi'_1 \in \pi'$, and $\phi_1 \in \Sc(\A^{n-1})_\xi$ such that 
\[
\PP_{n,n-1,\psi_\xi}(\MM^{(n)}(\varphi_1, \FF_1), \overline{\varphi'_1}, \phi_1) \not= 0.
\]
Moreover, we can take $\varphi_1 = \varphi_2$ and $\varphi_1' = \varphi_2'$.
\end{enumerate}
\end{prop}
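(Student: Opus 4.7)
The plan is to show that both sides of each implication in the statement can be unfolded, via Fubini, into a common ``master'' triple integral on $\Sp_r(F)\bs\Sp_r(\A) \times V_{n-1}(F)\bs V_{n-1}(\A) \times \Sp_{n-1}(F)\bs\Sp_{n-1}(\A)$, after which they are linked by a single Fourier--Jacobi-coefficient identity for $\Ik^{(n+r)}_\psi(\tau)$ that produces the character twist $\chi_\xi$ on the opposite side.

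First I would substitute the defining integral of $\MM^{(n)}(\varphi_1, \FF_1)$ into the period $\PP_{n,n-1,\psi_\xi}$ on the LHS of (1). Cuspidality of $\FF_1$ and rapid decay of the remaining factors give absolute convergence, so Fubini applies; moving the $\Sp_r$-integration outside yields
\[
\int_{\Sp_r(F)\bs\Sp_r(\A)} \int_{V_{n-1}(F)\bs V_{n-1}(\A)} \int_{\Sp_{n-1}(F)\bs\Sp_{n-1}(\A)} \FF_1(\iota(vg_{n-1}, g_r))\, \overline{\varphi_1\, \varphi'_1\, \Theta^{\phi_1}_{\psi_\xi}}\, dg_{n-1}\, dv\, dg_r
\]
(suppressing metaplectic arguments and the variables on which each factor of the integrand depends). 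Symmetrically, substituting the definition of $\MM^{(r)}(\varphi'_2, \FF'_2)$ inside $\PP_{r,r,\psi_\xi}$ and applying Fubini to the RHS gives a double integral over $\Sp_r(F)\bs\Sp_r(\A) \times \Sp_{n-1}(F)\bs\Sp_{n-1}(\A)$ of $\overline{\FF'_2(\iota(g_r,g_{n-1}))}\cdot \varphi_2\, \varphi'_2\, \overline{\Theta^{\phi_2}_{\psi_\xi}(g_r)}$.

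The two expressions are linked by the following key identity, which I would extract from the Ikeda--Yamana framework: for $\FF \in \Ik^{(n+r)}_\psi(\tau)$, the Fourier--Jacobi coefficient of $\FF$ along the Jacobi subgroup $\cl{J}_{n-1}(\A) \hookrightarrow \Mp_n(\A)$ (sitting in the first factor of $\iota$) at character $\psi_\xi$ decomposes as a finite sum of terms of the form $\Theta^{\phi}_{\psi_\xi}\cdot \FF'$ with $\FF' \in \Ik^{(n+r-1)}_{\psi,\tau\chi_\xi}$. The twist by $\chi_\xi$ is forced by the Weil-constant factors in the character $\mu_v^{(n+r)}$ defining the degenerate principal series, once $\psi_v$ is replaced by $\psi_{\xi,v}$ on the Heisenberg piece. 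Inserting this identity and pairing the inner $V_{n-1}$-integration against $\overline{\Theta^{\phi_1}_{\psi_\xi}}$ via the Weil-representation inner product of Schwartz functions collapses the $V_{n-1}$-integral into a nonzero scalar times exactly the unfolded RHS, with $\phi_2$ determined by the pair $(\phi,\phi_1)$. Nonvanishing on either side therefore produces matching test data on the other while preserving $\varphi_1=\varphi_2$ and $\varphi'_1=\varphi'_2$, proving both (1) and (2).

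The main obstacle is establishing the Fourier--Jacobi-coefficient identity for $\Ik^{(n+r)}_\psi(\tau)$ with the correct twist $\chi_\xi$, and, for part (2), showing that by varying $\FF_1$ and $\phi_1$ one realizes sufficiently general $(\FF'_2,\phi_2)$ to run the argument backwards (this is where the hypothesis $n+r\geq 2$ enters, ensuring the Jacobi decomposition is nondegenerate). The most delicate bookkeeping will involve metaplectic cocycles, the Weil constants $\alpha_{\psi_v}$, and the identification of $\mu_v^{(n+r)}$ restricted to $\cl{P}_{n+r-1}(F_v)$ with $(\mu_v\chi_{\xi,v})^{(n+r-1)}$ at each finite place; given this input, the rest of the argument is a formal Fubini exchange.
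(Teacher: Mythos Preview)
Your proposal is correct and follows essentially the same route as the paper. The paper unfolds $\PP_{n,n-1,\psi_\xi}$ by substituting the definition of $\MM^{(n)}$, integrates over $Z_{n-1}(F)\bs Z_{n-1}(\A)$ to produce the $\xi$-th Fourier--Jacobi coefficient $\FF_{\psi_\xi}$ of $\FF\in\Ik_\psi^{(n+r)}(\tau)$, invokes the decomposition $\FF_{\psi_\xi}=\sum_i \FF'_i\cdot\Theta_{\psi_\xi}^{\phi_i}$ with $\FF'_i\in\Ik_\psi^{(n+r-1)}(\tau\chi_\xi)$ (Proposition~\ref{FJcoeff2}), splits each $\phi_i=\phi_i^{(n-1)}\otimes\phi_i^{(r)}$ under $X_{n+r-1}\cong X_{n-1}\oplus X_r$, and collapses the $V_{n-1}$-integral to the inner product $(\phi_i^{(n-1)},\phi)$; what remains is a finite sum of periods $\PP_{r,r,\psi_\xi}$, exactly as you outline. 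Two small clarifications: the Fourier--Jacobi coefficient is taken along $\cl{J}_{n+r-1}\subset\Mp_{n+r}$ (your $Z_{n-1}$ is its center under $\iota$), and for part~(2) the hypothesis $n+r\ge2$ enters precisely as the nonvanishing $(\Ik_\psi^{(n+r)}(\tau))_{\psi_\xi}\ne0$ (Proposition~\ref{FJcoeff2}(3), proved via Waldspurger's nonvanishing of twisted central $L$-values), which then gives the surjectivity $(\FF',\phi)\mapsto\FF$ you need to run the argument backwards.
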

We shall write these properties as the following seesaw diagram:
\[
\xymatrix{
\Mp_r(F) \times \Mp_r(F) \ar@{-}[d] \ar@{-}[dr]& \Mp_n(F) \ar@{-}[d]\\
\Mp_r(F) \ar@{-}[ur]& \Mp_{n-1}(F) \ltimes V_{n-1}(F).
}
\]
Remark that the proof of Proposition \ref{seesaw} uses 
the non-vanishing of Fourier--Jacobi coefficients of Ikeda liftings (Proposition \ref{FJcoeff2} (3)).
In particular, the seesaw identities would not follow from Arthur's multiplicity formula.
\vskip 10pt

Finally, following Ikeda's conjecture (Conjecture \ref{ik}), 
we formulate a conjecture on the non-vanishing of Miyawaki liftings.

\begin{conj}[Conjecture \ref{M0}]\label{nonvanishing}
Let $\pi$ be an irreducible representation of $\Mp_r(\A_\fin)$
occurring in $\Sc_{k+(n+r)/2}(\Sp_r(F) \bs \Mp_r(\A))$.
\begin{enumerate}
\item
When $n=r$, 
the Miyawaki lift $\MM^{(r)}_{\psi, \tau}(\pi)$ is nonzero 
if and only if the central value of the Rankin--Selberg $L$-function 
$L(s, \pi \times \tau\chi_{-1}^r)$ is nonzero.
\item
When $n > r$, 
the Miyawaki lift $\MM^{(n)}_{\psi, \tau}(\pi)$ is always nonzero.
\end{enumerate}
\end{conj}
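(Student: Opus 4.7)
The plan is to convert the non-vanishing of Miyawaki lifts into the non-vanishing of Fourier--Jacobi periods via the seesaw identity (Proposition \ref{seesaw}), and then invoke the global Gan--Gross--Prasad conjecture to translate this into statements about central $L$-values.

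For case (1), when $n = r$, I would detect $\MM^{(r)}_{\psi,\tau}(\pi) \ne 0$ by pairing against a well-chosen cuspidal representation $\pi'$ on $\Mp_{r-1}(\A_\fin)$ through a Fourier--Jacobi period of type $\PP_{r, r-1, \psi_\xi}$ for a suitable totally positive $\xi \in F^\times$. The seesaw identity converts this into a period $\PP_{r,r,\psi_\xi}(\varphi, \overline{\MM^{(r)}(\varphi',\FF')}, \phi)$ coupling $\pi$ against a Miyawaki lift of $\pi'$. The global Gan--Gross--Prasad conjecture for the Fourier--Jacobi case, combined with the $A$-parameter description (Theorem \ref{main1} (1)) of $\MM^{(r)}_{\psi,\tau\chi_\xi}(\pi')$, then predicts that the non-vanishing of either side is equivalent to non-vanishing of a product of central Rankin--Selberg $L$-values. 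The factor recording the interaction of $\pi$ with the discrete component of the $A$-parameter of $\MM^{(r)}(\pi')$ is precisely $L(1/2, \pi \times \tau\chi_{-1}^r)$; the remaining auxiliary factors involve only $\pi'$, $\tau$, and quadratic twists, and one would choose $\pi'$ and $\xi$ so that these auxiliary values are all nonzero, isolating the desired equivalence.

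For case (2), when $n > r$, I would proceed by induction on $n - r$. The seesaw (Proposition \ref{seesaw} (1)) reduces non-vanishing of $\MM^{(n)}_{\psi,\tau}(\pi)$ to non-vanishing of a Fourier--Jacobi period on $\pi \otimes \overline{\MM^{(r)}_{\psi,\tau\chi_\xi}(\pi')}$ for a suitable $\pi'$ on $\Mp_{n-1}(\A_\fin)$. Choosing $\pi'$ to be an Ikeda lift or an iterated Miyawaki lift (whose existence and non-vanishing in sufficiently stable ranges follow from Ikeda--Yamana and from Theorem \ref{main1} (2)), GGP reduces the non-vanishing of the period to non-vanishing of a product of Rankin--Selberg central $L$-values. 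Each such factor should be nonzero by standard arguments exploiting cuspidality, positivity at the edge of the critical strip, and the freedom to twist by quadratic characters.

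The main obstacle is the unconditionality of the Gan--Gross--Prasad conjecture for the Fourier--Jacobi case $\Mp_n \times \Mp_{n-1}$: the full conjecture is still open, so this strategy yields an unconditional proof only in those ranges where GGP is established (e.g., via the endoscopic classification). A secondary, more delicate difficulty is controlling the non-vanishing of the \emph{auxiliary} central $L$-values that appear after decomposing the $A$-parameter of $\MM^{(r)}(\pi')$ according to Theorem \ref{main1} (1); one must either verify them by separate positivity or cuspidality arguments, or exploit the freedom in choosing $\pi'$ and $\xi$ to avoid accidental vanishing. Making this freedom rigorous---while simultaneously maintaining compatibility with the archimedean conditions (A1)--(A3)---is where I expect the real work to lie.
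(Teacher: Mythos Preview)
Your overall strategy---seesaw identity (Proposition~\ref{seesaw}) plus the global Gan--Gross--Prasad conjecture---is exactly the paper's approach, and you have correctly identified that this makes the argument conditional on GGP. Two points where the paper sharpens or diverges from your sketch are worth noting.

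First, for case~(2) with $n \geq r+2$, the paper's inductive step (Theorem~\ref{n>r}) does \emph{not} use GGP at all. Rather than choosing $\pi'$ to be an Ikeda or iterated Miyawaki lift and invoking GGP to control a product of central $L$-values, the paper picks any irreducible $\pi'$ occurring in the span of products $\varphi \cdot \overline{\Theta^{\phi}_{\psi_\xi}}$ (which automatically makes $\PP_{r,r,\psi_\xi}$ nonzero on $\pi \times \overline{\pi'}$), applies the inductive hypothesis $(M)_{r,n-1}$ to get $\Pi' = \MM^{(n-1)}_{\psi,\tau\chi_\xi}(\pi') \neq 0$, and then uses the duality $\pi' \subset \MM^{(r)}_{\psi,\tau\chi_\xi}(\Pi')$ (Proposition~\ref{dual}) to feed directly into the seesaw (part~(2), not part~(1) as you wrote). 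This bypasses the ``auxiliary central $L$-values'' problem you flagged, at least for the inductive step; GGP is only needed at the base of the induction, namely for $(M)_{r,r}$ and $(M)_{r,r+1}$.

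Second, for case~(1) and the base case $(M)_{r,r+1}$, the paper isolates an extra hypothesis you did not name: one must be able to choose $\pi'$ on $\Mp_{r-1}$ with \emph{tempered} $A$-parameter such that $\PP_{r,r-1,\psi_\xi}$ is nonzero on $\pi \times \overline{\pi'}$ (Hypothesis~\ref{hypo}). Without tempered-ness one cannot apply the GGP conjecture in the form stated, nor compute the $A$-parameter of $\MM^{(r)}_{\psi,\tau\chi_\xi}(\pi')$ via Theorem~\ref{main1}~(1). Your phrase ``a well-chosen cuspidal representation $\pi'$'' hides exactly this issue; it is not the auxiliary $L$-values that are the bottleneck (those factor nicely once the $A$-parameter is known), but rather the existence of a tempered $\pi'$ detecting $\pi$ via a Fourier--Jacobi period. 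The paper verifies this hypothesis only for $r \leq 2$, which is why $(M)_{1,1}$ is the only case proved unconditionally.
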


Using the seesaw identities (Proposition \ref{seesaw}), 
we can relate Conjecture \ref{nonvanishing} with 
the Gan--Gross--Prasad conjecture (see Conjecture \ref{GGP-S} below).
The following theorem summarizes Theorem \ref{rrr}, Corollary \ref{11}, and Theorem \ref{n>r}.

\begin{thm}
\begin{enumerate}
\item
Assume the Gan--Gross--Prasad conjecture (Conjecture \ref{GGP-S})
and Hypothesis \ref{hypo} below.
Then Conjecture \ref{nonvanishing} for $n = r, r+1$ holds when $\pi$ has a tempered $A$-parameter.

\item
In particular, Conjecture \ref{nonvanishing} for $n=r=1$ holds unconditionally.

\item
When $n \geq r+2$, 
Conjecture \ref{nonvanishing} for $\MM^{(n-1)}_{\psi, \tau'}(\pi')$ implies
Conjecture \ref{nonvanishing} for $\MM^{(n)}_{\psi, \tau}(\pi)$.
\end{enumerate}
\end{thm}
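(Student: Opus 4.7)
The overall strategy is to use the seesaw identity (Proposition \ref{seesaw}) to transfer the non-vanishing of $\MM^{(n)}_{\psi,\tau}(\pi)$ into the non-vanishing of a Fourier--Jacobi period between $\pi$ and a shorter Miyawaki lift $\MM^{(r)}_{\psi,\tau\chi_\xi}(\pi')$, and then to invoke the Gan--Gross--Prasad conjecture (Conjecture \ref{GGP-S}) to reinterpret that period as the non-vanishing of a central Rankin--Selberg $L$-value. Throughout I fix $\xi \in (F^\times)^2$ totally positive, so that $\chi_\xi = 1$ and $\tau\chi_\xi = \tau$; this aligns the $L$-factors produced by the seesaw with those predicted in Conjecture \ref{nonvanishing}.

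For part (1) with $n = r$, I apply Proposition \ref{seesaw} letting $\pi'$ range over irreducible cuspidal representations of $\Mp_{r-1}(\A_\fin)$ of the appropriate weight, and convert the non-vanishing of $\PP_{r,r,\psi_\xi}(\varphi,\overline{\MM^{(r)}_{\psi,\tau}(\pi')},\phi)$ into the non-vanishing of the central value $L(1/2,\pi \times \MM^{(r)}_{\psi,\tau}(\pi'))$ via Conjecture \ref{GGP-S}. By Theorem \ref{main1}(1) applied to $\MM^{(r)}_{\psi,\tau}(\pi')$ (here the theorem's $n$ and $r$ are $r$ and $r-1$, so $n-r=1$), this $L$-function factors as
\[
L(s,\pi \times \MM^{(r)}_{\psi,\tau}(\pi')) = L(s,\pi \times \pi') \cdot L(s,\pi \times \tau\chi_{-1}^{r}).
\]
If $L(1/2,\pi \times \tau\chi_{-1}^{r}) = 0$, the right-hand period vanishes for every $\pi'$; Hypothesis \ref{hypo} — which we interpret as guaranteeing that a nonzero cuspidal form on $\Mp_r$ admits a nonvanishing FJ pairing with some $\pi'$ on $\Mp_{r-1}$ — then forces $\MM^{(r)}_{\psi,\tau}(\pi)=0$. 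Conversely, if $L(1/2,\pi \times \tau\chi_{-1}^{r}) \neq 0$, Hypothesis \ref{hypo} supplies a $\pi'$ with $L(1/2,\pi \times \pi') \neq 0$, and reversing the seesaw yields $\MM^{(r)}_{\psi,\tau}(\pi) \neq 0$. The case $n = r+1$ is strictly parallel: the theorem's $n-r$ is now $0$, so $\MM^{(r)}_{\psi,\tau}(\pi')$ has the same $A$-parameter as $\pi'$, the right-hand $L$-value simplifies to $L(1/2,\pi \times \pi')$, and Hypothesis \ref{hypo} furnishes a $\pi'$ making it nonzero, so $\MM^{(r+1)}_{\psi,\tau}(\pi) \neq 0$ unconditionally in the hypothesis regime. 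Specializing to $n = r = 1$ gives part (2): the GGP conjecture for $\Mp_1 \times \Mp_0$ is Waldspurger's classical theorem, the tempered $A$-parameter hypothesis is automatic for cuspidal $\pi$ on $\Mp_1$, and the $L$-value input replacing Hypothesis \ref{hypo} is a known analytic fact, so the argument of (1) goes through unconditionally.

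For part (3), assume Conjecture \ref{nonvanishing} at level $n-1$ and set $\pi' := \MM^{(n-1)}_{\psi,\tau\chi_\xi}(\pi)$, which is nonzero because $n-1 > r$. By Theorem \ref{main1}(3), $\pi \subset \MM^{(r)}_{\psi,\tau\chi_\xi}(\pi')$, so I may choose the test vector $\MM^{(r)}(\varphi'_2,\FF'_2)$ to lie inside this embedded copy of $\pi$. The right-hand period of the seesaw
\[
\PP_{r,r,\psi_\xi}(\varphi_2,\overline{\MM^{(r)}(\varphi'_2,\FF'_2)},\phi_2)
\]
then becomes a theta-twisted inner product of $\pi$ with itself, whose non-vanishing for a suitable Schwartz function $\phi_2$ follows from the non-vanishing of Fourier--Jacobi coefficients of Ikeda lifts (Proposition \ref{FJcoeff2}(3)) — the same input powering the proof of Proposition \ref{seesaw} itself. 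Applying part (2) of Proposition \ref{seesaw} transfers this non-vanishing to the left, producing an $\MM^{(n)}(\varphi_1,\FF_1)\in \MM^{(n)}_{\psi,\tau}(\pi)$ with a nonzero FJ period and thus $\MM^{(n)}_{\psi,\tau}(\pi) \neq 0$. I expect the main technical obstacle to be exactly this last step: explicitly matching a test vector $\MM^{(r)}(\varphi'_2,\FF'_2)$ inside $\MM^{(r)}_{\psi,\tau\chi_\xi}(\pi')$ with a chosen $\varphi_2 \in \pi \subset \MM^{(r)}_{\psi,\tau\chi_\xi}(\pi')$ and verifying that the theta-twisted pairing does not vanish. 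Since $\pi'$ is itself an iterated Ikeda/Miyawaki construction, its Schr\"odinger-model realization is explicit, and the same Fourier--Jacobi-coefficient calculation that gives Proposition \ref{FJcoeff2}(3) should push through here.
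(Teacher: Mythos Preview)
Your overall strategy (seesaw plus GGP) matches the paper's, but the execution has genuine gaps in each part.

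\textbf{Part (1), the case $n=r$.} In the converse direction you write ``Hypothesis \ref{hypo} supplies a $\pi'$ with $L(1/2,\pi\times\pi')\neq 0$, and reversing the seesaw yields $\MM^{(r)}_{\psi,\tau}(\pi)\neq 0$.'' But reversing the seesaw requires the period $\PP_{r,r,\psi_\xi}$ to be nonzero on $\pi\times\overline{\MM^{(r)}_{\psi,\tau\chi_\xi}(\pi')}$, and to even speak of this you must first know $\MM^{(r)}_{\psi,\tau\chi_\xi}(\pi')\neq 0$. Nothing in your argument guarantees this; in the paper it is supplied by $(M)_{r-1,r}$, which is why the result is proven as an inductive chain $(M)_{r-1,r}\Rightarrow(M)_{r,r}\Rightarrow(M)_{r,r+1}$. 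You also need the local GGP (Theorem \ref{localGGP}) to check the local Hom conditions required by Conjecture \ref{GGP-S}~(2); this is not automatic. Finally, fixing $\xi$ to be a square is not harmless: you need $\FJ_{\psi_\xi}(\pi)\neq 0$, and there is no reason such a $\xi$ can be taken in $(F^\times)^2$.

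\textbf{Part (1), the case $n=r+1$.} This is \emph{not} parallel to $n=r$. Hypothesis \ref{hypo} applied to $\pi$ on $\Mp_r$ produces a $\pi'$ on $\Mp_{r-1}$, whereas the seesaw for $n=r+1$ needs $\pi'$ on $\Mp_{n-1}=\Mp_r$. The paper instead first constructs, via Arthur's multiplicity formula (Lemma \ref{lemA}), an auxiliary representation $\Pi$ on $\Mp_{r+1}$ with local components $\MM^{(r+1)}_{\psi_v,\tau_v}(\pi_v)$, applies Hypothesis \ref{hypo} to $\Pi$ to obtain $\pi'$ on $\Mp_r$, uses $(M)_{r,r}$ to identify $\MM^{(r)}_{\psi,\tau\chi_\xi}(\pi')=\pi'$, and only then runs the seesaw.

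\textbf{Part (3).} Your definition $\pi':=\MM^{(n-1)}_{\psi,\tau\chi_\xi}(\pi)$ does not typecheck: $\pi$ lives in $\Sc_{k+(n+r)/2}$, but the Miyawaki lift $\MM^{(n-1)}_{\psi,\tau\chi_\xi}$ is only defined on $\Sc_{k+(n-1+r)/2}$. Moreover, even formally, $\pi$ and $\MM^{(r)}_{\psi,\tau\chi_\xi}(\pi')$ carry opposite actions of $\{\pm1\}\subset\Mp_r$, so the inclusion $\pi\subset\MM^{(r)}_{\psi,\tau\chi_\xi}(\pi')$ is impossible, and the ``theta-twisted inner product of $\pi$ with itself'' you appeal to is not a well-formed Fourier--Jacobi period. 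The paper's fix is to build $\pi'$ on $\Mp_r$ (not on $\Mp_{n-1}$) by taking an irreducible constituent of the span of $\{\varphi\,\overline{\Theta^\phi_{\psi_\xi}}:\varphi\in\pi,\ \phi\in\Sc(X_r(\A))_\xi\}$, which lands in $\Sc_{k+(n+r-1)/2}$ by Wallach's theorem; one then applies $(M)_{r,n-1}$ to this $\pi'$ to produce $\Pi'=\MM^{(n-1)}_{\psi,\tau\chi_\xi}(\pi')$ on $\Mp_{n-1}$, uses the duality $\pi'\subset\MM^{(r)}_{\psi,\tau\chi_\xi}(\Pi')$, and runs the seesaw.
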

\vskip 10pt

There is a local analogue of Miyawaki liftings.
Now let $F$ be a non-archimedean local field of characteristic zero, 
and $\psi$ be a non-trivial additive character of $F$.
Let $\chi_{-1}$ be the quadratic character of $F^\times$ corresponding to $F(\I)/F$.
A local analogue of the Ikeda lifting is the degenerate principal series 
$I_{\psi}^{(n+r)}(\tau) = \Ind_{\cl{P}_{n+r}(F)}^{\Mp_{n+r}(F)}(\mu^{(n+r)})$, 
where $\tau = \mu \times \mu^{-1}$ is a principal series of $\GL_2(F)$ 
with $\mu$ being a unitary character of $F^\times$.
Recall that there is an embedding $\iota \colon \Sp_n \times \Sp_r \hookrightarrow \Sp_{n+r}$.
For an irreducible representation $\pi$ of $\Mp_{r}(F)$, 
on which the kernel $\{\pm1\}$ of the covering map $\Mp_{r}(F) \twoheadrightarrow \Sp_{r}(F)$
acts by $(\pm1)^{n+r}$, 
the maximal $\pi$-isotypic quotient of $I^{(n+r)}_\psi(\tau)$ is of the form
\[
\MM_{\psi, \tau}^{(n)}(\pi) \boxtimes \pi
\]
for some smooth representation $\MM_{\psi, \tau}^{(n)}(\pi)$ of $\Mp_{n}(F)$, 
on which the kernel $\{\pm1\}$ of the covering map $\Mp_{n}(F) \twoheadrightarrow \Sp_{n}(F)$
acts by $(\pm1)^{n+r}$.
We call $\MM_{\psi, \tau}^{(n)}(\pi)$ the local Miyawaki lift of $\pi$.
The following is the local main theorem.

\begin{thm}[Theorem \ref{howe}]
Let $\mu$ be a unitary character of $F^\times$, and 
$\pi$ be an irreducible representation of $\Mp_{r}(F)$ on which $\{\pm1\}$ acts by $(\pm1)^{n+r}$.
Suppose that $n \geq r$.

\begin{enumerate}
\item
The local Miyawaki lift $\MM_{\psi, \tau}^{(n)}(\pi)$ is nonzero and of finite length. 

\item
If $\pi$ is almost tempered and unitary, then 
\begin{itemize}
\item
$\MM_{\psi, \tau}^{(n)}(\pi)$ is irreducible; 
\item
$\MM_{\psi, \tau}^{(n)}(\pi) \cong (\mu' \circ {\det}_{n-r}) \rtimes \pi$ with $\mu' = \mu\chi_{-1}^{[(n+r)/2]}$; 
\item
$\MM_{\psi, \tau}^{(n)}(\pi)$ is isomorphic to the unique irreducible quotient of the induced representation
\[
\left\{
\begin{aligned}
&\tau'|\cdot|^{\half{n-r-1}} \times \tau'|\cdot|^{\half{n-r-3}} \times \dots \times \tau'|\cdot|^{\half{1}} \rtimes \pi
\iif n+r \equiv 0 \bmod 2, \\
&\tau'|\cdot|^{\half{n-r-1}} \times \tau'|\cdot|^{\half{n-r-3}} \times \dots \times \tau'|\cdot|^{1} 
\times \mu' \rtimes \pi
\iif n+r \equiv 1 \bmod 2, 
\end{aligned}
\right.
\]
where $\tau' = \tau \otimes \chi_{-1}^{[(n+r)/2]} = \mu' \times \mu'^{-1}$.
\end{itemize}

\item
For any irreducible almost tempered unitary representations $\pi_1$ and $\pi_2$, 
we have
\[
\MM_{\psi, \tau}^{(n)}(\pi_1) \cong \MM_{\psi, \tau}^{(n)}(\pi_2)
\implies 
\pi_1 \cong \pi_2. 
\]

\item
Set
\[
\pi' = \MM_{\psi, \tau}^{(r)}\left( \MM_{\psi, \tau}^{(n)}(\pi) \right).
\]
Assume one of the following:
\begin{itemize}
\item
$\pi$ is almost tempered and unitary, 
and $r \leq n \leq r+1$ or $n > 2r$; 
\item
$\pi$ is discrete series.
\end{itemize}
Then all irreducible subquotients of $\pi'$ are isomorphic to $\pi$, 
and the maximal semisimple quotient of $\pi'$ is irreducible.

\item
Suppose that $\mu$ is unramified, 
and set $\alpha = (\mu\chi_{-1}^{[(n+r)/2]})(\varpi)$.
If $\pi$ is an irreducible unramified representation of $\Mp_r(F)$ 
with the Satake parameter $\{\beta_1^{\pm1}, \dots, \beta_r^{\pm1}\}$, 
then $\MM_{\psi, \tau}^{(n)}(\pi)$ has a unique irreducible unramified quotient. 
Its Satake parameter is equal to
\[
\{\beta_1^{\pm1}, \dots, \beta_r^{\pm1}\} \cup 
\{\alpha^{\pm1}q^{\half{n-r-1}}, \alpha^{\pm1}q^{\half{n-r-3}}, \dots, \alpha^{\pm1}q^{-\half{n-r-1}}\}
\]
as multisets.

\end{enumerate}
\end{thm}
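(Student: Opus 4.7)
The plan is to analyze the restriction of the degenerate principal series $I^{(n+r)}_\psi(\tau)$ from $\Mp_{n+r}(F)$ to $\Mp_n(F)\times\Mp_r(F)$ along the embedding $\iota\colon \Sp_n\times\Sp_r\hookrightarrow\Sp_{n+r}$ by means of the Mackey / geometric lemma applied to the $\iota(\Sp_n(F)\times\Sp_r(F))$-orbits on the Siegel flag variety $P_{n+r}(F)\bs\Sp_{n+r}(F)$. These orbits are indexed by the dimensions of the intersections of a Lagrangian of $W_{n+r}$ with the symplectic subspaces $W_n$ and $W_r$. Ordering them by orbit dimension produces an $\Mp_n(F)\times\Mp_r(F)$-equivariant filtration of $I^{(n+r)}_\psi(\tau)$ whose successive subquotients are compactly induced from the stabilizers of orbit representatives. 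The key computation, carefully tracking the Weil cocycle on the metaplectic cover (which is precisely the origin of the twist $\chi_{-1}^{[(n+r)/2]}$ in the definition of $\mu'$), shows that the open orbit yields a quotient isomorphic, as $\Mp_n(F)\times\Mp_r(F)$-representation, to
\[
\bigl((\mu'\circ{\det}_{n-r})\rtimes\mathbf{1}_{\Mp_r(F)}\bigr)\boxtimes I^{(r)}_\psi(\tau),
\]
while the deeper strata are parabolically induced on the $\Mp_n$-factor from characters whose central exponents are strictly smaller in absolute value than those compatible with an almost tempered representation of $\Mp_r(F)$.

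To deduce (2), I would take the maximal $\pi$-isotypic quotient on the $\Mp_r$-factor. Frobenius reciprocity shows that any contribution from a deeper stratum would force the Jacquet module of $\pi$ along a suitable parabolic to carry exponents violating Casselman's criterion for almost temperedness; hence only the open stratum survives, yielding $\MM^{(n)}_{\psi,\tau}(\pi)\cong(\mu'\circ{\det}_{n-r})\rtimes\pi$. Irreducibility of this induced representation and its identification with the unique irreducible quotient of the claimed standard module then follow from the standard reducibility criteria for parabolic induction from essentially unitary data, with the parity case distinction reflecting whether the metaplectic genuine character forces a trailing $\mu'$-factor. Part (1) is then immediate: non-vanishing from the open stratum, finite length from finiteness of the Bruhat filtration together with admissibility of each successive subquotient. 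For (3), if $\MM^{(n)}_{\psi,\tau}(\pi_1)\cong\MM^{(n)}_{\psi,\tau}(\pi_2)$, a Bernstein--Zelevinsky-type Jacquet module computation applied to $(\mu'\circ{\det}_{n-r})\rtimes\pi_i$ recovers $\pi_i$ from the isomorphism class. Part (5) is likewise a direct consequence of (2): Satake parameters concatenate under parabolic induction, so the unramified quotient has the Satake parameter obtained by appending $\{\alpha^{\pm1}q^{(n-r-1)/2},\ldots,\alpha^{\pm1}q^{-(n-r-1)/2}\}$ to that of $\pi$.

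For part (4), I would substitute the formula of (2) into itself and compute $\MM^{(r)}_{\psi,\tau}((\mu'\circ{\det}_{n-r})\rtimes\pi)$ by the same Bruhat analysis, now applied to a (possibly reducible) induced input rather than to an irreducible one. A Jordan--H\"older / Jacquet-module count establishes that every irreducible subquotient of the iterated lift is isomorphic to $\pi$ and that the maximal semisimple quotient is a single copy of $\pi$. For $\pi$ a discrete series, Casselman's analytic criterion is replaced by the Kudla--Rallis square-integrability criterion, adapted to eliminate the same intermediate strata. The main technical obstacle is the orbit analysis of the first paragraph: writing down the stabilizers and accounting for the genuine character / Weil cocycle contributions, and verifying that the inequalities on central exponents are sharp enough to kill all non-open strata precisely under the almost-temperedness hypothesis. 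A secondary difficulty appears in (4), where one must control the subquotient structure of an iterated induced representation without assuming irreducibility of the intermediate lift.
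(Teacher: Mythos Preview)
Your overall framework---the Kudla--Rallis filtration of $I^{(n+r)}_\psi(\tau)|_{\Mp_n\times\Mp_r}$ together with a Casselman exponent bound to kill the higher strata---is exactly what the paper uses to produce the surjection $(\mu'\circ{\det}_{n-r})\rtimes\pi\twoheadrightarrow\MM^{(n)}_{\psi,\tau}(\pi)$, and your remarks on (3) and (5) are fine. But two essential ingredients are missing.

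\textbf{Non-vanishing.} Your formula for the open-orbit piece is wrong: it is $\Ind_{\cl{P}_{n-r}\times\Mp_r}^{\Mp_n\times\Mp_r}\bigl((\mu'\circ{\det}_{n-r})\otimes C_c^\infty(\Mp_r(F))\bigr)$ with the two-sided regular action, not a tensor product with $I^{(r)}_\psi(\tau)$; and in the paper's filtration it is the \emph{sub}representation $I_0$, not a quotient, of $I^{(n+r)}_\psi(\tau)$. Consequently ``non-vanishing from the open stratum'' does not follow: a $\pi$-quotient of the sub $I_0$ need not extend to one of $I^{(n+r)}_\psi(\tau)$, and the exponent argument only shows the higher $R_t$ have no $\pi$-quotient, which is the wrong direction in the long exact sequence. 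The paper supplies the missing step with a doubling zeta integral: one restricts $\Phi\in I^{(n+r)}_\psi(\tau)$ to the subgroup $\Mp_{2r}\subset\Mp_{n+r}$, pairs against $\pi\boxtimes\pi$ via the standard doubling functional $\ZZ_{r,r}$, and integrates over $P_{n-r}(F)\backslash\Sp_n(F)$. This directly exhibits a nonzero element of $\Hom_{\Mp_n\times\Mp_r}\bigl(I^{(n+r)}_\psi(\tau),((\mu'\circ{\det}_{n-r})\rtimes\pi)\boxtimes\pi\bigr)$ for \emph{every} irreducible $\pi$, not only almost tempered ones.

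\textbf{Irreducibility of $(\mu'\circ{\det}_{n-r})\rtimes\pi$.} This is not a ``standard reducibility criterion''; it is the hardest part of the proof and occupies an entire section of the paper. One first reduces, via the structure of almost tempered representations, to $\pi_0$ whose $L$-parameter is of good parity, and then proves by induction on $k$ (through a careful Jacquet-module comparison, including a separate lemma for the boundary case $\mu^2=\1$, $k=2$, genuine $\pi_0$) that $\mu|{\det}_k|^{l/2}\rtimes\pi_0$ is irreducible whenever $l\ge k-\delta$ and $l\equiv k-\delta\bmod 2$. Only with this in hand can one build the chain of surjections from the asserted standard module down to $(\mu'\circ{\det}_{n-r})\rtimes\pi$ and conclude irreducibility of the latter. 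For (4), note also that $\MM^{(n)}_{\psi,\tau}(\pi)$ is typically \emph{not} almost tempered once $n>r$, so one cannot simply feed it back into (2); the paper instead reruns the filtration argument directly against $\MM^{(n)}_{\psi,\tau}(\pi)\boxtimes\pi'$, again isolates the $t=0$ piece, and then computes the $((\mu'\circ{\det}_{n-r})\rtimes\pi)$-isotypic quotient of $R_0$ by an explicit Frobenius-reciprocity/Jacquet-module calculation.
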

\vskip 10pt

There is also a local analogue of seesaw identities (Proposition \ref{seesaw}).
For $\xi \in F^\times$, 
we denote the Weil representations of the Jacobi group $\cl{J}_{n-1}(F) = \Mp_{n-1}(F) \ltimes V_{n-1}(F)$ 
and the metaplectic group $\Mp_r(F)$ with respect to $\psi_\xi$
by $\omega_{\psi_\xi}^{(n-1)}$ and $\omega_{\psi_\xi}^{(r)}$, respectively.

\begin{prop}[Seesaw identity (Proposition \ref{LSS})]
Let $\pi$ and $\pi'$ be irreducible representations of $\Mp_r(F)$ and $\Mp_{n-1}(F)$, 
on which $\{\pm1\}$ acts by $(\pm1)^{r+n}$ and $(\pm1)^{r+n-1}$, respectively.
Then 
\[
\Hom_{\cl{J}_{n-1}(F)}(\MM_{\psi, \tau}^{(n)}(\pi)|_{\cl{J}_{n-1}(F)}, \pi' \otimes \omega_{\psi_\xi}^{(n-1)}) 
\not= 0
\iff
\Hom_{\Mp_r(F)}(\MM_{\psi, \tau\chi_\xi}^{(r)}(\pi') \otimes \omega_{\psi_\xi}^{(r)}, \pi) \not= 0.
\]
\end{prop}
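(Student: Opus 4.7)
The strategy is the classical one for seesaw identities: reduce both sides to a common bilinear Hom space whose ``kernel'' is built from the degenerate principal series $I^{(n+r)}_\psi(\tau)$.

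First I would unfold the left-hand side. By the defining universal property of the local Miyawaki lift,
\[
\Hom_{\Mp_n(F)}(\MM^{(n)}_{\psi,\tau}(\pi), \sigma) \cong \Hom_{\Mp_n(F) \times \Mp_r(F)}(I^{(n+r)}_\psi(\tau),\, \sigma \boxtimes \pi)
\]
for any smooth $\Mp_n(F)$-representation $\sigma$; setting $\sigma = \Ind^{\Mp_n(F)}_{\cl{J}_{n-1}(F)}(\pi' \otimes \omega^{(n-1)}_{\psi_\xi})$ and combining Frobenius reciprocity with the Stone--von Neumann theorem (to absorb $\omega^{(n-1)}_{\psi_\xi}$ into a $\psi_\xi$-twisted Heisenberg coinvariants on $V_{n-1}(F)$), the left-hand side rewrites as
\[
\Hom_{\Mp_{n-1}(F) \times \Mp_r(F)}\bigl(\FJ_{\psi_\xi}(I^{(n+r)}_\psi(\tau)|_{\cl{J}_{n-1}(F) \times \Mp_r(F)}),\, \pi' \boxtimes \pi\bigr),
\]
where $\FJ_{\psi_\xi}$ denotes the Fourier--Jacobi functor extracting the $\Mp_{n-1}(F)$-multiplicity space. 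Symmetrically, using the canonical pairing $\omega^{(r)}_{\psi_\xi} \otimes \omega^{(r)}_{\psi_{-\xi}} \to \C$ to transfer the Weil factor across the Hom and applying the universal property of $\MM^{(r)}_{\psi, \tau\chi_\xi}(\pi')$, the right-hand side reduces to
\[
\Hom_{\Mp_{n-1}(F) \times \Mp_r(F)}\bigl(I^{(n-1+r)}_\psi(\tau\chi_\xi) \otimes \omega^{(r)}_{\psi_\xi},\, \pi' \boxtimes \pi\bigr).
\]

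The problem then reduces to the structural identity
\[
\FJ_{\psi_\xi}\bigl(I^{(n+r)}_\psi(\tau)|_{\cl{J}_{n-1}(F) \times \Mp_r(F)}\bigr) \cong I^{(n-1+r)}_\psi(\tau\chi_\xi) \otimes \omega^{(r)}_{\psi_\xi}
\]
as $\Mp_{n-1}(F) \times \Mp_r(F)$-representations. This is the local counterpart of the Fourier--Jacobi coefficient computation for Ikeda liftings underlying Proposition \ref{seesaw}, and should follow from the standard Jacquet filtration of $I^{(n+r)}_\psi(\tau)$ along the parabolic of $\Sp_{n+r}$ stabilizing the appropriate isotropic flag, combined with the explicit mixed (Schr\"odinger) model of the Weil representation attached to that parabolic and the fact that the Weil constant factor in $\mu^{(n+r)}$ splits along $\iota$ with a twist by $\chi_\xi$.

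The main obstacle will be this structural identity: one must identify the Jacquet filtration along a non-Siegel parabolic, show that only the predicted associated graded piece contributes after taking $\psi_\xi$-coinvariants (the other pieces carrying the wrong central character on the Heisenberg center), and match the surviving piece with the claimed tensor product including the correct twist of $\tau$ by $\chi_\xi$. Once this structural fact is in hand, the seesaw equivalence follows formally from the chain of adjunctions above, yielding both directions of the desired ``if and only if'' simultaneously.
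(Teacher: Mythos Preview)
Your approach is correct and essentially coincides with the paper's: both arguments pivot on the isomorphism $(I^{(n+r)}_\psi(\tau))_{\psi_\xi}\cong I^{(n+r-1)}_\psi(\tau\chi_\xi)\otimes\omega^{(n+r-1)}_{\psi_\xi}$ as $\cl{J}_{n+r-1}(F)$-modules (Proposition~\ref{FJ-nonarch}, quoted from Hanzer--Savin and Weissman) together with the elementary splitting $\omega^{(n+r-1)}_{\psi_\xi}|_{\cl{J}_{n-1}(F)\times\Mp_r(F)}\cong\omega^{(n-1)}_{\psi_\xi}\boxtimes\omega^{(r)}_{\psi_\xi}$, so your ``structural identity'' is already available and no new Jacquet-filtration computation along a non-Siegel parabolic is needed. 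The only organizational difference is that the paper constructs explicit maps in each direction separately rather than reducing both sides to a common Hom space as you do.
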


As an application, 
the seesaw identity gives a quite new example of 
the local Gan--Gross--Prasad conjecture for a non-generic case
(Theorem \ref{GGP-new}).
\vskip 10pt

This paper is organized as follows. 
In \S \ref{ikeda}, we recall the theory of Ikeda liftings extended by Ikeda--Yamana \cite{IY}.
The local and global theories of Miyawaki liftings are explained 
in \S \ref{local} and \S \ref{global}, respectively.
In \S \ref{relation}, 
we discuss Conjecture \ref{nonvanishing} and its relation with the Gan--Gross--Prasad conjecture.
In Appendices \ref{s.jac}, \ref{langlands} and \ref{sec.GGP}, 
we recall results on Jacquet modules of representations of metaplectic groups, 
the local and global Langlands program, 
and the Gan--Gross--Prasad conjecture, respectively.

\subsection*{Acknowledgments}
The author is grateful to Shunsuke Yamana, Atsushi Ichino, and Tamotsu Ikeda for their helpful comments. 
This work was supported by the Foundation for Research Fellowships of Japan Society for the Promotion of Science for Young Scientists (PD) Grant 29-193. 
\par

\section{Ikeda liftings and their Fourier--Jacobi coefficients}\label{ikeda}
In this section, we recall the theory of Ikeda liftings along with \cite{IY}. 

\subsection{Metaplectic group and its representations}
Let $F$ be a totally real number field.
The symplectic group $\Sp_n$ is an algebraic group defined over $F$ given by
\[
\Sp_n(F) = \left\{
g \in \GL_{2n}(F) \ |\ 
{}^tg 
\begin{pmatrix}
0 & -\1_n \\ \1_n & 0
\end{pmatrix}
g
=
\begin{pmatrix}
0 & -\1_n \\ \1_n & 0
\end{pmatrix}
\right\}.
\]
The set of symmetric matrices of size $n$ with coefficients in $F$ is denoted by $\Sym_n(F)$.
For $A \in \GL_n(F)$ and $B \in \Sym_n(F)$, set 
\[
\bm(A) = \begin{pmatrix}
A & 0 \\ 0 & {}^tA^{-1}
\end{pmatrix},\quad
\bn(B) = \begin{pmatrix}
\1_n & B \\ 0 & \1_n
\end{pmatrix}
\in \Sp_n(F).
\]
For each $k = 1, \dots, n$, 
we define a standard maximal parabolic subgroup $P_k(F)$ of $\Sp_n(F)$ by
\[
P_k(F) = \left\{
\left(
\begin{array}{cc|cc}
a & * & * & * \\
0 & A & * & B \\
\hline
0 & 0 & {}^t a^{-1} & 0 \\
0 & C & * & D
\end{array}
\right)
\ |\ 
a \in \GL_k(F),\ 
\begin{pmatrix}
A & B \\ C & D
\end{pmatrix}
\in \Sp_{n-k}(F)
\right\}.
\]
Let $P_k(F) = M_k(F) N_k(F)$ be the standard Levi decomposition, so that
the Levi $M_k(F)$ is isomorphic to $\GL_k(F) \times \Sp_{n-k}(F)$.
In particular, $P_n(F) = M_n(F)N_n(F)$ is the Siegel parabolic subgroup
with $M_n(F) = \{\bm(A)\ |\  A \in \GL_n(F)\}$ and $N_n(F) = \{\bn(B)\ |\ B \in \Sym_n(F)\}$. 
Let $B_n = \cap_{k=1}^{n}P_k$ be a Borel subgroup of $\Sp_n$.
A parabolic subgroup $P$ of $\Sp_n$ is called standard if $P$ contains $B_n$.
\par

For each place $v$ of $F$, we denote by $\Mp_n(F_v)$ the metaplectic group, 
i.e., the topological double cover of the symplectic group $\Sp_n(F_v)$.
As sets, we identify $\Mp_n(F_v)$ with $\Sp_n(F_v) \times \{\pm1\}$.
Then the group law of $\Mp_n(F_v)$ is given by
\[
(g_1, \zeta_1)(g_2, \zeta_2) = (g_1g_2, c_v(g_1, g_2)\zeta_1\zeta_2)
\]
for $g_1, g_2 \in \Sp_n(F_v)$ and $\zeta_1, \zeta_2 \in \{\pm1\}$, 
where $c_v(g_1, g_2)$ is Rao's $2$-cocycle of $\Sp_n(F_v)$ with values in $\{\pm1\}$.
The double cover $\Mp_n(F_v) \rightarrow \Sp_n(F_v)$ splits over the subgroup $N_n(F_v)$ by
$\bn(B) \mapsto (\bn(B), 1)$.
If $F_v$ is a non-archimedean local field whose residue characteristic is not $2$, 
there is a unique splitting
\[
\Sp_n(\oo_v) \rightarrow \Mp_n(F_v),\ g \mapsto (g, s(g)).
\]
Here, we denote by $\oo_v$ the ring of integers of $F_v$.
We identify $N_n(F_v)$ and $\Sp_n(\oo_v)$ with the images of these splittings.
If $H$ is a subgroup of $\Sp_n(F)$, the inverse image of $H$ in $\Mp_n(F)$ is denoted by $\cl{H}$.
\par

Next, we define the global metaplectic group.
We denote the adele ring of $F$ by $\A$.
Let $\mathfrak{S}$ be a finite set of places of $F$, 
which contains all places above $2$ and $\infty$.
Put
\[
\Sp_n(\A)_{\mathfrak{S}} 
= \prod_{v \in \mathfrak{S}} \Sp_n(F_v) \times \prod_{v \not\in \mathfrak{S}} \Sp(\oo_v).
\]
Then the double cover $\Mp_n(\A)_{\mathfrak{S}} \rightarrow \Sp_n(\A)_{\mathfrak{S}}$
is defined by the $2$-cocycle $\prod_{v \in \mathfrak{S}} c_v(g_{1,v}, g_{2,v})$.
For $\mathfrak{S}_1 \subset \mathfrak{S}_2$, there exists an embedding 
$\Mp_n(\A)_{\mathfrak{S}_1} \hookrightarrow \Mp_n(\A)_{\mathfrak{S}_2}$ given by 
\[
((g_v)_v, \zeta) \mapsto 
\left((g_v)_v, \zeta \prod_{v \in \mathfrak{S}_2 \setminus \mathfrak{S}_1}s_v(g_v)\right).
\]
The global metaplectic group $\Mp_n(\A)$ is defined by the inductive limit
\[
\Mp_n(\A) = \varinjlim_{\mathfrak{S}} \Mp_n(\A)_{\mathfrak{S}}, 
\]
where $\mathfrak{S}$ runs over all finite sets of places of $F$ containing all places above $2$ and $\infty$.
The covering $\Mp_n(\A) \rightarrow \Sp_n(\A)$ splits over $\Sp_n(F)$ uniquely.
We identify $\Sp_n(F)$ with the image of the splitting.
\par

\subsection{Weil representations of Jacobi groups}\label{weil}
We recall the Weil representation on a Jacobi group in the local setting.
Let $F$ be a local field of characteristic zero.
Fix a non-trivial unitary character $\psi$ of $F$. 
For $\xi \in F^\times$, we define a new non-trivial unitary character $\psi_\xi$ by
\[
\psi_\xi(x) = \psi(\xi x)
\]
for $x \in F$.
Let $\pair{,}$ be the quadratic Hilbert symbol.
For $x, \xi \in F^\times$, we set 
$\chi_\xi(x) = \pair{x,\xi}$.
For each Schwartz function $f \in \Sc(F)$, 
the Fourier transform $\hat{f}$ (with respect to $\psi_\xi$) is defined by
\[
\hat{f}(x) = \int_{F} f(y) \psi_{\xi}(xy)dy, 
\]
where $dy$ is the self-dual Haar measure on $F$ with respect to $\psi_\xi$.
For $a \in F^\times$, there exists an $8$-th root of unity $\alpha_{\psi_\xi}(a)$ such that
\[
\int_{F} f(x) \psi_\xi(ax^2) dx
= \alpha_{\psi_\xi}(a) |2a|^{-\half{1}}
\int_{F} \hat{f}(x) \psi_{\xi}(-\frac{x^2}{4a}) dx
\]
for any $f \in \Sc(F)$.
The constant $\alpha_{\psi_\xi}(a)$ is called the Weil constant.
It satisfies that $\alpha_{\psi_\xi}(ab^2) = \alpha_{\psi_\xi}(a)$ and 
\begin{align*}
\frac{\alpha_{\psi_\xi}(a)\alpha_{\psi_\xi}(b)}{\alpha_{\psi_\xi}(1)\alpha_{\psi_\xi}(ab)}
= \pair{a,b}
\end{align*}
for $a, b \in F^\times$.
In particular, 
\[
\left(\frac{\alpha_{\psi_\xi}(1)}{\alpha_{\psi_\xi}(a)}\right)^2
= \chi_{-1}(a), 
\]
where $\chi_{-1} = \pair{\cdot, -1}$ is the quadratic character associated to $F(\I)/F$.
\par

Put
\[
\bv(x,y,z) = \left(
\begin{array}{cc|cc}
1 & x & z & y \\
0 &\1_{n-1} & {}^ty & 0 \\
\hline
0 & 0 & 1 & 0 \\
0 & 0 & -{}^tx & \1_{n-1}
\end{array}
\right)
\in \Sp_n(F),
\]
where $x, y \in F^{n-1}$ are row vectors and $z \in F$.
We set
\begin{align*}
V(F) &= V_{n-1}(F) = \{\bv(x,y,z) \ |\ x, y \in F^{n-1}, z \in F\}, \\
X(F) &= X_{n-1}(F) = \{\bv(x,0,0) \ |\ x \in F^{n-1}\}, \\
Y(F) &= Y_{n-1}(F) = \{\bv(0,y,0) \ |\ y \in F^{n-1}\}, \\
Z(F) &= Z_{n-1}(F) = \{\bv(0,0,z) \ |\ z \in F\}.
\end{align*}
Note that $V(F)$ is a Heisenberg group.
We regard $\Mp_{n-1}(F)$ as a subgroup of $\Mp_n(F)$ by the embedding
\[
(
\begin{pmatrix}
A & B \\ C & D
\end{pmatrix}, \zeta
)
\mapsto
(
\left(
\begin{array}{cc|cc}
1 & 0 & 0 & 0 \\
0 &A & 0 & B \\
\hline
0 & 0 & 1 & 0 \\
0 & C & 0 & D
\end{array}
\right), \zeta
).
\]
By the Stone--von Neumann theorem, 
there is a unique irreducible admissible representation $\omega_{\psi_\xi}$ of $V(F)$ 
on which the center $Z(F)$ acts by $\psi_\xi$.
This representation $\omega_{\psi_\xi}$ extends to the Weil representation of 
the group $\cl{J}_{n-1}(F) = V(F) \rtimes \Mp_{n-1}(F)$.
We call $J_{n-1}(F) = V(F) \rtimes \Sp_{n-1}(F)$ a Jacobi group.
The representation $\omega_{\psi_\xi}$ is realized on the Schwartz space $\Sc(X(F))$
explicitly as follows: 
\begin{align*}
\omega_{\psi_\xi}(\bv(x,y,z))\phi(t) 
&= \psi_\xi(z + 2t \cdot {}^ty + x \cdot {}^ty) \phi(t + x), \\
\omega_{\psi_\xi}(\bm(A), \zeta)\phi(t)
&= \zeta \frac{\alpha_{\psi_\xi}(1)}{\alpha_{\psi_\xi}(\det A)}|\det A|^{\half{1}} \phi(tA),\\
\omega_{\psi_\xi}(\bn(B), \zeta)\phi(t)
&= \zeta \psi_\xi(t \cdot B \cdot {}^tt) \phi(t),\\
\omega_{\psi_\xi}(
\begin{pmatrix}
0 & -\1_{n-1} \\ \1_{n-1} & 0
\end{pmatrix}, \zeta
)\phi(t)
&= \zeta \alpha_{\psi_\xi}(1)^{-n+1}|2|_v^{\half{n-1}}
\int_{X(F)} \phi(u) \overline{\psi_\xi(2t \cdot {}^tu)} du
\end{align*}
for $\zeta \in \{\pm1\}$, $\bv(x,y,z) \in V(F)$, $A \in \GL_{n-1}(F)$, $B \in \Sym_{n-1}(F)$, 
and $\phi \in \Sc(X(F))$.
Here, $du = \prod_i du_i$ is the Haar measure on $X(F)$ 
with $du_i$ being the self-dual Haar measure on $F$ with respect to $\psi_\xi$.
The Weil representation $\omega_{\psi_\xi}$ is unitary with respect to the inner product
\[
(\phi_1, \phi_2) = \int_{X(F)} \phi_1(t)\overline{\phi_2(t)} dt
\]
for $\phi_1, \phi_2 \in \Sc(X(F))$.
\par

\subsection{Non-archimedean case}\label{sec.non-arch}
In this subsection, we assume that $F$ is non-archimedean.
For a smooth representation $\Pi$ of $\cl{J}_{n-1}(F)$, we put
\[
\FJ_{\psi_\xi}(\Pi) = (\Pi \otimes \overline{\omega_{\psi_\xi}})_{V(F)}.
\]
Here, $(\cdot)_{V(F)}$ means the maximal quotient on which $V(F)$ acts trivially.
We call $\FJ_{\psi_\xi}(\Pi)$ the Fourier--Jacobi module of $\Pi$ with index $\psi_\xi$.
We regard $\FJ_{\psi_\xi}(\Pi)$ as a representation of $\Mp_{n-1}(F)$.
Conversely, for a smooth representation $\pi$ of $\Mp_{n-1}(F)$, 
one can consider the tensor product 
\[
\pi \otimes \omega_{\psi_\xi}
\]
which is a smooth representation of $\cl{J}_{n-1}(F)$.
Note that both $\Pi \mapsto \FJ_{\psi_\xi}(\Pi)$ and $\pi \mapsto \pi \otimes \omega_{\psi_\xi}$
are exact functors.
The following proposition seems to be well-known, but we give a proof for readers.
\begin{prop}\label{FJ11}
Suppose that $F$ is non-archimedean.
The map 
\[
\pi \mapsto \pi \otimes \omega_{\psi_\xi}
\]
gives a $1$-$1$ correspondence between irreducible smooth representations of $\Mp_{n-1}(F)$ 
and irreducible smooth representations of $\cl{J}_{n-1}(F)$ on which $Z(F)$ acts by $\psi_\xi$.
The inverse mapping is given by the Fourier--Jacobi module $\Pi \mapsto \FJ_{\psi_\xi}(\Pi)$.
\end{prop}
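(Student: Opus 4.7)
The plan is to exploit the Stone--von Neumann theorem in the non-archimedean setting: $\omega_{\psi_\xi}$ is the unique irreducible smooth representation of the Heisenberg group $V(F)$ on which $Z(F)$ acts by $\psi_\xi$, so any smooth $V(F)$-module with this central character is canonically of the form $M \otimes \omega_{\psi_\xi}$ for a vector space $M$ (the multiplicity space). The task is to upgrade this from $V(F)$-modules to $\cl{J}_{n-1}(F)$-modules.

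The key structural input I would prove first is a canonical $\cl{J}_{n-1}(F)$-equivariant isomorphism
\[
\Pi \cong \FJ_{\psi_\xi}(\Pi) \otimes \omega_{\psi_\xi}
\]
for any smooth $\cl{J}_{n-1}(F)$-module $\Pi$ on which $Z(F)$ acts by $\psi_\xi$. First I would form the multiplicity space $M(\Pi) = \Hom_{V(F)}(\omega_{\psi_\xi}, \Pi)$; the evaluation map $M(\Pi) \otimes \omega_{\psi_\xi} \to \Pi$ is a $V(F)$-isomorphism by Stone--von Neumann. Since $\Mp_{n-1}(F)$ normalizes $V(F)$ and fixes $\psi_\xi$, the space $M(\Pi)$ acquires a natural $\Mp_{n-1}(F)$-action given by $(g \cdot f)(v) = \Pi(g) f(\omega_{\psi_\xi}(g^{-1}) v)$, making the evaluation map $\cl{J}_{n-1}(F)$-equivariant for the diagonal action on the left. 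Finally I would identify $M(\Pi)$ with $\FJ_{\psi_\xi}(\Pi)$ via the $V(F)$-invariant pairing $\omega_{\psi_\xi} \otimes \overline{\omega_{\psi_\xi}} \to \C$: the induced map $\Pi \otimes \overline{\omega_{\psi_\xi}} \to M(\Pi)$ factors through $V(F)$-coinvariants and is an isomorphism.

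Once the structural lemma is in hand, the bijection is formal. For the forward direction, any $\cl{J}_{n-1}(F)$-subrepresentation $W \subset \pi \otimes \omega_{\psi_\xi}$ is in particular $V(F)$-stable, so applying the lemma to $W$ and to $\pi \otimes \omega_{\psi_\xi}$ and comparing multiplicity spaces exhibits $W$ in the shape $\pi' \otimes \omega_{\psi_\xi}$ with $\pi' \subset \pi$; $\cl{J}_{n-1}(F)$-stability then forces $\Mp_{n-1}(F)$-stability of $\pi'$, so irreducibility of $\pi$ yields irreducibility of $\pi \otimes \omega_{\psi_\xi}$, and the lemma gives $\FJ_{\psi_\xi}(\pi \otimes \omega_{\psi_\xi}) \cong \pi$. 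Conversely, an irreducible $\Pi$ with central character $\psi_\xi$ on $Z(F)$ is isomorphic to $\FJ_{\psi_\xi}(\Pi) \otimes \omega_{\psi_\xi}$ by the lemma, and the subrepresentation analysis forces $\FJ_{\psi_\xi}(\Pi)$ to be irreducible.

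The main obstacle is the well-definedness of the $\Mp_{n-1}(F)$-action on $M(\Pi)$ and verifying $\cl{J}_{n-1}(F)$-equivariance of the evaluation map; Stone--von Neumann pins $\omega_{\psi_\xi}$ down only up to isomorphism, and one must check that the chosen intertwining is compatible with the semidirect product structure of $\cl{J}_{n-1}(F)$. A clean way around this is to work in the Schr\"odinger model $\Sc(X(F))$, where the formulas of \S \ref{weil} are explicit, and check equivariance on the generators $\bv(x,y,z)$, $\bm(A)$, $\bn(B)$, and the Weyl element, each of which reduces to a finite calculation.
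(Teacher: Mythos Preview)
Your proposal is correct and follows essentially the same approach as the paper. The paper writes $\Pi \cong \VV \otimes \Sc(X(F))$ as a $V(F)$-module and then uses Schur's lemma to show that $\Pi(g) \circ (\1_\VV \otimes \omega_{\psi_\xi}(g)^{-1})$ has the form $\pi(g) \otimes \1$, which is exactly your multiplicity-space construction $M(\Pi) = \Hom_{V(F)}(\omega_{\psi_\xi}, \Pi)$ with the action $(g\cdot f)(v) = \Pi(g)f(\omega_{\psi_\xi}(g)^{-1}v)$; the paper then concludes irreducibility by invoking exactness of $-\otimes\omega_{\psi_\xi}$ and $\FJ_{\psi_\xi}$, whereas you phrase it via the correspondence of subrepresentations, but these are the same argument.
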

\begin{proof}
Since any smooth representation of $V(F)$ on which $Z(F)$ acts by $\psi_\xi$
is a direct sum of copies of the Weil representation $\omega_{\psi_\xi}$, 
for any (nonzero) smooth representation $\Pi$ of $\cl{J}_{n-1}$, 
we can write $\Pi \cong \VV \otimes \Sc(X(F))$ as a representation of $V(F)$
for some (nonzero) vector space $\VV$ on which $V(F)$ acts trivially.
For $g \in \Mp_{n-1}(F)$, the operator $\Pi(g) \circ (\1_{\VV} \otimes \omega_{\psi_\xi}(g)^{-1})$
commutes with the action of $V(F)$.
By Schur's lemma, we have 
\[
\Pi(g) \circ (\1_{\VV} \otimes \omega_{\psi_\xi}(g)^{-1}) = \pi(g) \otimes \1_{\Sc(X(F))}
\]
for some $\pi(g) \in \mathrm{Aut}(\VV)$.
Then $\pi$ gives a group homomorphism $\pi \colon \Mp_{n-1}(F) \rightarrow \mathrm{Aut}(\VV)$. 
It is smooth since $\Pi$ and $\omega_{\psi_\xi}$ are smooth.
We conclude that $\Pi \cong \pi \otimes \omega_{\psi_\xi}$ as representations of $\cl{J}_{n-1}(F)$.
Note that $\FJ_{\psi_\xi}(\pi \otimes \omega_{\psi_\xi}) \cong \pi$ as representations of $\Mp_{n-1}(F)$.
In particular, $\FJ_{\psi_\xi}(\Pi) \not= 0$ for any nonzero smooth representation of $\cl{J}_{n-1}(F)$.
Since $-\otimes \omega_{\psi_\xi}$ and $\FJ_{\psi_\xi}$ are exact functors, 
we see that $\pi$ is irreducible as a representation of $\Mp_{n-1}(F)$
if and only if $\pi \otimes \omega_{\psi_\xi}$ is irreducible as a representation of $\cl{J}_{n-1}(F)$.
\end{proof}

For an irreducible smooth representation $\Pi$ of $\Mp_{n}(F)$, 
we denote by $\Pi_{\psi_\xi}$ the maximal quotient of $\Pi$ on which $Z(F)$ acts by $\psi_{\xi}$.
Then $\Pi_{\psi_\xi}$ is a smooth representation of $\cl{J}_{n-1}(F)$. 
By a similar argument to the proof of Proposition \ref{FJ11}, 
we have 
\[
\Pi_{\psi_\xi} \cong \FJ_{\psi_\xi}(\Pi|\cl{J}_{n-1}(F)) \otimes \omega_{\psi_\xi}.
\]
\par

Let $\mu$ be a unitary character of $F^\times$.
For each integer $n$, we define a character $\mu^{(n)}$ of $\cl{M}_n(F)$ by
\[
\mu^{(n)}((\bm(A), \zeta)) = \zeta^n \left(\frac{\alpha_\psi(1)}{\alpha_\psi(\det A)}\right)^n \mu(\det A).
\]
It is also regarded as a character of $\cl{P}_n(F)$.
If 
\[
\tau = \mu \times \mu^{-1} = \Ind_{P_{(1,1)}(F)}^{\GL_2(F)}(\mu \otimes \mu^{-1})
\] 
is an irreducible parabolic induction of $\GL_2(F)$, 
where $P_{(1,1)}$ is the Borel subgroup of $\GL_2$ consisting of upper triangular matrices, 
then the degenerate principal series
\[
I^{(n)}_{\psi}(\tau) = I_{\psi}(\mu^{(n)}) = \Ind_{\cl{P}_n(F)}^{\Mp_n(F)}(\mu^{(n)})
\]
is irreducible by \cite{KR1, Sw}.
See also \cite[Propositions 3.1, 5.1]{IY}.
\par

In general, the Fourier--Jacobi module $\FJ_{\psi_\xi}(\Pi|\cl{J}_{n-1}(F))$ 
is rarely irreducible since the restriction of $\Pi$ to $\cl{J}_{n-1}(F)$ is often reducible.
However, $\FJ_{\psi_\xi}$ sends degenerate principal series of $\Mp_{n}(F)$ to ones of $\Mp_{n-1}(F)$.

\begin{prop}\label{FJ-nonarch}
For any $\xi \in F^\times$, 
we have an isomorphism
\[
\FJ_{\psi_\xi}(I^{(n)}_{\psi}(\tau)) \cong I^{(n-1)}_{\psi}(\tau \chi_\xi), 
\]
where $\tau \chi_\xi = \tau \otimes \chi_\xi$. 
In particular, we have
\[
\left(I^{(n)}_{\psi}(\tau) \right)_{\psi_\xi} \cong I^{(n-1)}_{\psi}(\tau \chi_\xi) \otimes \omega_{\psi_\xi}
\]
as $\cl{J}_{n-1}(F)$-modules.
\end{prop}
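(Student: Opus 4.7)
The plan is to apply the geometric lemma (Mackey's theorem) to the restriction of $I_\psi^{(n)}(\tau) = \Ind_{\cl{P}_n(F)}^{\Mp_n(F)}(\mu^{(n)})$ to $\cl{J}_{n-1}(F)$, and then pass to the Fourier--Jacobi module via the exact functor $\FJ_{\psi_\xi}$. First I would analyze the double coset space $P_n(F) \backslash \Sp_n(F) / J_{n-1}(F)$. Since $J_{n-1}(F) \subset P_1(F)$ and $\Sp_n(F)/P_1(F) = \mathbb{P}(F^{2n})$, the Siegel parabolic $P_n(F)$ has exactly two orbits on lines, namely those inside its stabilized Lagrangian $L$ (closed) and those transverse to $L$ (open). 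This yields a two-element double coset decomposition with representatives the identity and a Weyl element $w$ exchanging $e_1$ and $e_{n+1}$, hence a two-step filtration of $I_\psi^{(n)}(\tau)|_{\cl{J}_{n-1}(F)}$ whose subquotients are the inductions attached to the two orbits,
\[
\Ind^{\cl{J}_{n-1}(F)}_{w\cl{P}_n(F)w^{-1} \cap \cl{J}_{n-1}(F)}({}^w\mu^{(n)}) \quad \text{and} \quad \Ind^{\cl{J}_{n-1}(F)}_{\cl{P}_n(F) \cap \cl{J}_{n-1}(F)}(\mu^{(n)}).
\]

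Applying the exact functor $\FJ_{\psi_\xi}$, I would first kill the closed-orbit subquotient: the intersection $P_n(F) \cap J_{n-1}(F)$ contains the center $Z(F)$ of the Heisenberg group $V(F)$ and $\mu^{(n)}$ is trivial on $Z(F)$, so the associated induced representation has trivial $Z(F)$-action. After tensoring with $\overline{\omega_{\psi_\xi}}$ (on which $Z(F)$ acts by $\psi_\xi^{-1}$, with $\psi_\xi$ non-trivial) and taking $V(F)$-coinvariants, this piece therefore vanishes.

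For the open-orbit subquotient, I would unwind the intersection $w\cl{P}_n(F)w^{-1} \cap \cl{J}_{n-1}(F)$ and identify it as $V(F) \cdot \cl{P}_{n-1}(F)$, with $\cl{P}_{n-1}(F)$ embedded through $\cl{P}_{n-1}(F) \hookrightarrow \Mp_{n-1}(F) \subset \cl{J}_{n-1}(F)$. Tensoring with $\overline{\omega_{\psi_\xi}}$ and taking $V(F)$-coinvariants via the explicit formulas for $\omega_{\psi_\xi}$ on $\Sc(X(F))$ recalled in \S\ref{weil}, the result becomes a representation of $\Mp_{n-1}(F)$ induced from $\cl{P}_{n-1}(F)$ by some character $\eta$. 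A careful tracking of the Weil constants, using the relations between $\alpha_\psi$ and $\alpha_{\psi_\xi}$ together with $\chi_\xi(a) = \pair{a, \xi}$, should force $\eta = (\mu\chi_\xi)^{(n-1)}$, so the surviving piece is exactly $I_\psi^{(n-1)}(\tau\chi_\xi)$. The ``in particular'' assertion is then immediate from the general identification $\Pi_{\psi_\xi} \cong \FJ_{\psi_\xi}(\Pi|_{\cl{J}_{n-1}(F)}) \otimes \omega_{\psi_\xi}$ noted earlier in \S\ref{sec.non-arch}.

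The main obstacle is the bookkeeping of Rao's $2$-cocycle under conjugation by $w$ together with the Weil constants translating $\psi$ to $\psi_\xi$: these must conspire --- through the multiplicativity relation for $\alpha_{\psi_\xi}$ and the Weil formula for the Hilbert symbol --- to produce exactly the quadratic twist by $\chi_\xi$ on the inducing character for $\Mp_{n-1}(F)$. Verifying this identity precisely, while conceptually straightforward, will be the most delicate step of the argument.
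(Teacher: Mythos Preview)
The paper does not prove this proposition in the text; it simply cites \cite[Theorem~3.1]{HS} and \cite{We}. Your Mackey-theoretic sketch is the standard approach and is essentially what one finds in those references, so in that sense your proposal agrees with the paper's (outsourced) argument.

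One point deserves more care: your deduction of a \emph{two}-element double coset space $P_n(F)\backslash\Sp_n(F)/J_{n-1}(F)$ from the two $P_n$-orbits on $\mathbb{P}(F^{2n})=\Sp_n(F)/P_1(F)$ is not automatic, since passing from $P_1$ down to the smaller subgroup $J_{n-1}$ could in principle refine the orbit decomposition. The cleaner argument is to observe that $J_{n-1}(F)$ is the stabilizer in $\Sp_n(F)$ of a nonzero \emph{vector} (namely $e_1$), so that $\Sp_n(F)/J_{n-1}(F)\cong F^{2n}\setminus\{0\}$; then one checks directly that $P_n(F)$ has exactly two orbits on nonzero vectors: those in the Lagrangian $L$ (one orbit via the $\GL_n$-action on $L$) and those outside $L$ (one orbit, using $N_n$ to kill the $L$-component and then $\GL_n$ again). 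With this in hand, your filtration, the vanishing of the closed-orbit piece under $\FJ_{\psi_\xi}$ (since $Z(F)$ acts trivially there), and the identification of the surviving open-orbit piece with $I_\psi^{(n-1)}(\tau\chi_\xi)$ via the Weil-constant bookkeeping all proceed as you describe.
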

\begin{proof}
See \cite[Theorem 3.1]{HS} and \cite{We}.
\end{proof}

\subsection{Archimedean case}
In this subsection, we consider the case where $F = \R$.
We assume that the non-trivial unitary character $\psi$ of $\R$ is of the form
\[
\psi(x) = \exp(2\pi a \I x)
\]
for $x \in \R$ with $a > 0$.
Then the Weil constant $\alpha_\psi(t)$ is given by
\[
\alpha_\psi(t) = \left\{
\begin{aligned}
&\exp(\pi\I/4) \iif t > 0, \\
&\exp(-\pi\I/4) \iif t < 0.
\end{aligned}
\right.
\]
For $\xi \in \R^\times$ with $\xi > 0$, we define $\phi_\xi^0 \in \Sc(X(\R))$ by 
\[
\phi_{\xi}^0(x) = \exp(-2\pi a \xi (x_1^2 + \dots + x_{n-1}^2))
\]
for $x = (x_1, \dots, x_{n-1}) \in X(\R)$.
Let
\[
K_\infty = \left\{
\left.
\begin{pmatrix}
\alpha & \beta \\ - \beta & \alpha
\end{pmatrix}
\right|
\alpha, \beta \in \Mat_n(\R),\ 
{}^t\alpha\beta = {}^t\beta\alpha,\ 
{}^t\alpha\alpha + {}^t\beta\beta = \1_n
\right\}
\]
be the usual maximal compact subgroup of $\Sp_n(\R)$.
For $u =  \begin{pmatrix}
\alpha & \beta \\ - \beta & \alpha
\end{pmatrix}\in K_\infty$, we write $\det(\alpha + \I\beta) = e^{\I \theta}$ with $- \pi < \theta \leq \pi$, 
and we set
\[
{\det}^{1/2}(u,\zeta) = \zeta e^{\I \theta / 2}.
\]
Then $\omega_{\psi_\xi}(u,\zeta) \phi_{\xi}^0 = {\det}^{1/2}(u,\zeta) \cdot \phi_{\xi}^0$
for any $(u,\zeta) \in \cl{K}_\infty \cap \Mp_{n-1}(\R)$.
In particular, ${\det}^{1/2}$ is a genuine character of $\cl{K}_\infty$.
For an integer $l$, we set ${\det}^{l/2}(u,\zeta) = ({\det}^{1/2}(u,\zeta))^l$.
We denote the irreducible lowest weight representation of $\Mp_n(\R)$
with lowest $\cl{K}_\infty$-type ${\det}^{l/2}$ by $\DD_{l/2}^{(n)}$.
\par

Let
\[
\Ha_{n} = \{Z \in \Mat_n(\C)\ |\ {}^tZ = Z,\ \im(Z) > 0\}
\]
be the Siegel upper half space of genus $n$.
Here, for a symmetric matrix $B$, we write $B > 0$ if $B$ is positive definite.
Then $\Sp_n(\R)$ acts on $\Ha_n$ by 
\[
g(Z) = (AZ+B)(CZ+D)^{-1}, 
\quad 
g = \begin{pmatrix}
A & B \\ C & D
\end{pmatrix} 
\in \Sp_n(\R).
\]
Note that the stabilizer of $\bi = \I \cdot \1_n \in \Ha_n$ in $\Sp_n(\R)$ is equal to $K_\infty$.
We set $j(g,Z) = \det(CZ+D)$.
Then there exists a unique automorphy factor $\cl{j}((g,\zeta), Z)$ of $\Mp_n(\R)$
such that $\cl{j}((g,\zeta), Z)^2 = j(g,Z)$ for any $(g,\zeta) \in \Mp_n(\R)$ and $Z \in \Ha_n$.
The following lemma might be well-known, but we give a proof for readers.

\begin{lem}
For $A \in \GL_n(\R)$ with $\det A > 0$, $B \in \Sym_n(\R)$, and $(u, \zeta) \in \cl{K}_\infty$, 
we have
\[
\cl{j}(\bn(B)(\bm(A), 1)(u,\zeta), \bi) = (\det A)^{-\half{1}} {\det}^{-1/2}(u,\zeta).
\]
\end{lem}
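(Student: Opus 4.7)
The plan is to use the cocycle property of $\cl{j}$, namely $\cl{j}((g_1,\zeta_1)(g_2,\zeta_2), Z) = \cl{j}((g_1,\zeta_1), g_2(Z)) \cdot \cl{j}((g_2,\zeta_2), Z)$, to break the product $\bn(B) \cdot (\bm(A), 1) \cdot (u, \zeta)$ into three factors and evaluate each one separately. Then the result will follow by multiplying together the three contributions.

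First I would handle $\cl{j}((\bn(B), 1), Z)$. Since $j(\bn(B), Z) = \det(\1_n) = 1$, we have $\cl{j}((\bn(B), 1), Z)^2 = 1$, hence $\cl{j}((\bn(B), 1), Z) \in \{\pm 1\}$. The map $B \mapsto \cl{j}((\bn(B), 1), Z)$ is continuous on the connected space $\Sym_n(\R)$, takes values in $\{\pm 1\}$, and equals $1$ at $B = 0$ by the normalization $\cl{j}((\1_{2n}, 1), Z) = 1$. Thus $\cl{j}((\bn(B), 1), Z) = 1$ for all $B$ and $Z$.

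Next I would treat $\cl{j}((\bm(A), 1), Z)$ for $A \in \GL_n(\R)$ with $\det A > 0$. Here $j(\bm(A), Z) = \det({}^tA^{-1}) = (\det A)^{-1}$, so $\cl{j}((\bm(A), 1), Z)^2 = (\det A)^{-1}$. Since $\GL_n^+(\R)$ is connected and the positive square root $(\det A)^{-1/2}$ is a continuous function on it, the ratio $\cl{j}((\bm(A), 1), Z) / (\det A)^{-1/2}$ is continuous with values in $\{\pm 1\}$, hence constant. Evaluating at $A = \1_n$ gives $1$, so $\cl{j}((\bm(A), 1), Z) = (\det A)^{-1/2}$ for all such $A$ (independent of $Z$). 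Finally, for $(u,\zeta) \in \cl{K}_\infty$, we compute $j(u, \bi) = \det(\alpha - \I\beta) = e^{-\I\theta}$ where $\det(\alpha + \I\beta) = e^{\I\theta}$. Since $K_\infty$ stabilizes $\bi$, the map $(u,\zeta) \mapsto \cl{j}((u,\zeta), \bi)$ is a continuous genuine character of the connected group $\cl{K}_\infty$ whose square equals $e^{-\I\theta}$. The character ${\det}^{-1/2}(u,\zeta) = \zeta e^{-\I\theta/2}$ has the same property, so their ratio is a continuous $\{\pm 1\}$-valued character of $\cl{K}_\infty$, constant by connectedness, and equal to $1$ at the identity. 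Therefore $\cl{j}((u, \zeta), \bi) = {\det}^{-1/2}(u, \zeta)$.

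Combining the three pieces via the cocycle property, and using that $\bm(A)u(\bi)$ and $u(\bi) = \bi$ are irrelevant to the values of $\cl{j}((\bn(B), 1), \cdot)$ and $\cl{j}((\bm(A), 1), \cdot)$ respectively, we obtain
\[
\cl{j}(\bn(B)(\bm(A),1)(u,\zeta), \bi)
= 1 \cdot (\det A)^{-\half{1}} \cdot {\det}^{-1/2}(u, \zeta),
\]
which is the desired formula. The main obstacle is conceptual rather than computational: one must be careful about the branch/sign choices in the square roots, and the key tool is the elementary observation that any continuous $\{\pm 1\}$-valued function on a connected space is constant, applied to the three connected subsets $N_n(\R)$, $\{\bm(A) : \det A > 0\}$, and $\cl{K}_\infty$.
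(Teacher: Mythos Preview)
Your proof is correct and follows the same overall strategy as the paper: split the element into the three factors $\bn(B)$, $(\bm(A),1)$, $(u,\zeta)$ via the cocycle relation and evaluate $\cl{j}$ on each separately.

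The only real difference is in the justification of each piece. The paper argues more algebraically: for $\bn(B)$ it observes that $B\mapsto\cl{j}(\bn(B),Z)$ is a quadratic character of the divisible group $\Sym_n(\R)$; for $\bm(A)$ it writes every element of $\GL_n^+(\R)$ as a product of exponentials; and for $(u,\zeta)$ it first computes $\cl{j}((\1_{2n},-1),\bi)=-1$ explicitly (via an order-two element $(\bm(A),1)$ with $\det A=-1$), so that the $\{\pm1\}$-valued ratio descends to a character of $K_\infty$, which is then trivial by divisibility. You instead use a single uniform device: continuity of a $\{\pm1\}$-valued function on a connected space forces constancy, applied to $\Sym_n(\R)$, $\GL_n^+(\R)$, and $\cl{K}_\infty$. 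This is cleaner, but it silently uses that $\cl{K}_\infty$ is connected, which is true (it is the nontrivial connected double cover of $K_\infty\cong U(n)$, since the metaplectic cover restricts nontrivially to the maximal compact) but is exactly the step the paper's argument circumvents. Either route is fine; yours is shorter once connectedness of $\cl{K}_\infty$ is granted.
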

\begin{proof}
Since $\cl{j}((g,\zeta), Z)$ is an automorphy factor of $\Mp_n(\R)$, 
for any $(g,\zeta) \in \Mp_n(\R)$, the function 
\[
\Ha_n \rightarrow \C,\ Z \mapsto \cl{j}((g,\zeta), Z)
\]
is holomorphic.
\par

First, we claim that $\cl{j}(\bn(B), Z) = 1$ for any $B \in \Sym_n(\R)$ and $Z \in \Ha_n$.
Note that $\cl{j}(\bn(B), Z)^2 = j(\bn(B), Z) = 1$.
Since $\Ha_n$ is connected, $\cl{j}(\bn(B), Z)$ is independent of $Z$.
In particular, $B \mapsto \cl{j}(\bn(B), Z)$ gives a quadratic character of $\Sym_n(\R)$.
Since $\Sym_n(\R)$ is divisible, it must be the trivial character.
Hence $\cl{j}(\bn(B), Z) = 1$.
\par

Next, we claim that $\cl{j}((\bm(A), 1), Z) = (\det A)^{-1/2}$ 
for $A \in \GL_2(\R)$ with $\det A > 0$ and $Z \in \Ha_n$.
By a similar argument to the first case, 
the function $\Ha_n \ni Z \mapsto (\det A)^{1/2} \cl{j}((\bm(A), 1), Z) \in \{\pm1\}$ is a constant 
for any $A \in \GL_2(\R)$ with $\det A > 0$. 
In particular, the map $A \mapsto (\det A)^{1/2} \cl{j}((\bm(A), 1), Z) \in \{\pm1\}$ 
gives a group homomorphism.
Since $(e^{(1/2)X})^2 = e^{X}$ for $X \in \mathfrak{gl}_n(\R)$, 
we have $(\det A)^{1/2} \cl{j}((\bm(A), 1), Z) = 1$ when $A = e^X$ for some $X \in \mathfrak{gl}_n(\R)$.
Since any $A \in \GL_n(\R)$ with $\det A > 0$ can be written as a product $A = e^{X_1} \cdots e^{X_k}$ 
for some $X_1, \dots, X_k \in \mathfrak{gl}_n(\R)$, 
we have $(\det A)^{1/2} \cl{j}((\bm(A), 1), Z) = 1$ for any $A \in \GL_2(\R)$ with $\det A > 0$. 
\par

Similarly, $\cl{j}((\1_{2n},-1), Z) \in \{\pm1\}$ is independent of $Z \in \Ha_n$.
Choose $A \in \mathrm{O}(n)$ such that $A^2 = \1_n$ and $\det A = -1$.
Then $(\bm(A), 1)^2 = (\1_{2n}, -1)$ so that
$\cl{j}((\1_{2n},-1), \bi) = \cl{j}((\bm(A), 1), \bi)^2 = j(\bm(A), \bi) = \det(A)^{-1} = -1$.
Hence $\cl{j}((\1_{2n},-1), Z) = -1$ for any $Z \in \Ha_n$.

Finally, we see that ${\det}^{1/2}(u,\zeta)\cl{j}((u,\zeta), \bi) \in \{\pm1\}$
gives a group homomorphism $\cl{K}_\infty \rightarrow \{\pm1\}$, 
which factors through $K_\infty$.
Since $K_\infty$ is divisible, it must be the trivial character.
Hence $\cl{j}((u,\zeta), \bi) = {\det}^{-1/2}(u,\zeta)$.
This completes the proof.
\end{proof}
\par

Let $\Sym_n^+(\R) = \{B \in \Sym_n(\R)\ |\ B > 0\}$.
Fix an integer $k > 0$.
For $B \in \Sym_{n}^+(\R)$, we define a function $W_B^0$ on $\Mp_n(\R)$ by 
\begin{align*}
W_B^0(g) &= (\det B)^{(2k+n)/4}\psi(\Tr(Bg(\bi))) \cl{j}(g,\bi)^{-(2k+n)}
\\&= \psi(\Tr(Bz))\det(B[A])^{(2k+n)/4} \exp(-2\pi a \cdot \Tr(B[A])) {\det}^{k+(n/2)}(\cl{u})
\end{align*}
for $g = \bn(z)(\bm(A), 1)\cl{u} \in \Mp_n(\R)$ with $z \in \Sym_n(\R)$, $A \in \GL_n(\R)$, $\det A > 0$, 
and $\cl{u} \in \cl{K}_\infty$.
Here, we put $B[A] = {}^tABA$.
Then $W_B^0$ is a lowest weight vector in $\DD^{(n)}_{k+(n/2)}$.
\par

\begin{prop}[{\cite[Lemma 7.6]{IY}}]\label{FJ-arch}
For $\xi \in \R^\times$ with $\xi > 0$ and $B' \in \Sym_{n-1}^+(\R)$, 
we put
\[
B = \begin{pmatrix}
\xi & 0 \\ 0 & B'
\end{pmatrix} 
\in \Sym_n^+(\R).
\]
Then we have
\[
\int_{X(\R)} W_{B}^0(\bv(x,0,0)g') \overline{\omega_{\psi_\xi}(g')\phi_\xi^0(x)} dx
= |\det B|^{1/4} e^{-2\pi a \xi} W_{B'}^0(g')
\]
for $g' \in \Mp_{n-1}(\R)$.
\end{prop}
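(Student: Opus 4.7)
The plan is to recognize that the left-hand side, viewed as a function of $g' \in \Mp_{n-1}(\R)$, is a lowest-weight vector in $\DD_{k+(n-1)/2}^{(n-1)}$ whose left $\bn_{n-1}(\beta)$-equivariance is via the character $\psi \circ \Tr(B'\cdot)$. Since such a vector is uniquely determined up to scalar, the LHS must equal $c \cdot W_{B'}^0(g')$ for some constant $c$, and $c$ is pinned down by evaluating at $g' = 1$, where the integral becomes an explicit Gaussian.

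First I would verify three equivariance properties of the LHS. (i) The right $\cl{K}_\infty^{(n-1)}$-type $\det^{k+(n-1)/2}$: $W_B^0$ is a right $\cl{K}_\infty^{(n)}$-eigenvector of weight $\det^{k+n/2}$, while $\omega_{\psi_\xi}(u)\phi_\xi^0 = \det^{1/2}(u)\phi_\xi^0$ for $u \in \cl{K}_\infty^{(n-1)}$, so the complex conjugate contributes $\det^{-1/2}(u)$ and the product has weight $\det^{k+(n-1)/2}$. (ii) The left $\bn_{n-1}(\beta)$-equivariance by $\psi(\Tr(B'\beta))$: I use the identity $\bv(x,0,0)\bn_{n-1}(\beta) = \bn_{n-1}(\beta) \bv(x, x\beta, -x\beta{}^tx)$ in $\Sp_n$, together with the block-diagonal form $B = \diag(\xi, B')$ (which gives $W_B^0(\bn_{n-1}(\beta) h) = \psi(\Tr(B'\beta))W_B^0(h)$ and $W_B^0(\bv(0,0,z) h) = \psi_\xi(z)W_B^0(h)$), and reconcile the residual Heisenberg phase with the Weil-representation phase arising from $\overline{\omega_{\psi_\xi}(\bn_{n-1}(\beta))\phi_\xi^0(x)}$, effecting a change of variable in $x$ if needed so that the phases cancel and leave exactly $\psi(\Tr(B'\beta))$. (iii) The lowest-weight (antiholomorphic annihilation) condition is inherited from the fact that $W_B^0$ and $\phi_\xi^0$ are both lowest-weight vectors.

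Once proportionality to $W_{B'}^0$ is established, I evaluate at $g' = 1$. Using $\bv(x, 0, 0)(\bi) = \I \bigl(\begin{smallmatrix} 1+x{}^tx & x \\ {}^tx & \1_{n-1} \end{smallmatrix}\bigr)$ and $\cl{j}(\bv(x,0,0), \bi) = 1$ from the automorphy-factor lemma, the integrand simplifies to
\[
(\det B)^{(2k+n)/4} \exp\bigl(-2\pi a\xi(1 + x{}^tx)\bigr) \exp(-2\pi a \Tr(B')) \exp(-2\pi a \xi\, x{}^tx),
\]
so the remaining integral is the Gaussian $\int_{X(\R)} \exp(-4\pi a \xi\, x{}^tx)\, dx$ taken with respect to the self-dual Haar measure on $\R^{n-1}$ for $\psi_\xi$. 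Evaluating this and comparing with $W_{B'}^0(1) = (\det B')^{(2k+n-1)/4} e^{-2\pi a \Tr(B')}$ produces the claimed constant $c = |\det B|^{1/4} e^{-2\pi a \xi}$.

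I expect the main obstacle to be step (ii): the cocycle bookkeeping is delicate, since the Heisenberg multiplication shift on $\Sp_n(\R)$, Rao's metaplectic cocycle on $\Mp_n(\R)$, and the Weil-representation phase from $\omega_{\psi_\xi}(\bn_{n-1}(\beta))$ must combine cleanly after a change of variable in $x$ to yield exactly $\psi(\Tr(B'\beta))$, with no stray factors. The Gaussian computation in the final step is elementary, but also requires careful reconciliation of the self-dual Haar measure normalization (which depends on $a$ and $\xi$ through $\psi_\xi$) with the Weil constant $\alpha_{\psi_\xi}$ implicit in the Schrödinger model, so that all $\xi$-dependent factors collapse to the clean coefficient in the proposition.
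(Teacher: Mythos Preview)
The paper does not supply its own proof of this proposition; it is quoted from \cite[Lemma 7.6]{IY} without argument, so there is nothing in the present paper to compare against. Your strategy---first establishing that the left-hand side is a lowest-weight vector with the correct $\cl{K}_\infty^{(n-1)}$-type and the correct left $\bn_{n-1}(\beta)$-equivariance, then invoking uniqueness to reduce to a scalar, and finally fixing that scalar by a Gaussian integral at $g'=1$---is the natural and standard one for identities of this shape, and it is sound.

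Two concrete points to watch. First, your commutation identity is slightly off: a direct matrix multiplication with the embedding of $\Sp_{n-1}$ used in the paper gives
\[
\bv(x,0,0)\,\bn_{n-1}(\beta) \;=\; \bn_{n-1}(\beta)\,\bv(x,\,x\beta,\,0),
\]
i.e.\ the $z$-entry is $0$, not $-x\beta\,{}^t x$. This actually simplifies step~(ii): writing $\bv(x,x\beta,0)=\bv(0,x\beta,-x\beta\,{}^t x)\bv(x,0,0)$ and using $W_B^0(\bv(0,y,z)h)=\psi_\xi(z)W_B^0(h)$ together with $\overline{\omega_{\psi_\xi}(\bn_{n-1}(\beta))\phi_\xi^0(x)}=\overline{\psi_\xi(x\beta\,{}^t x)}\,\overline{\phi_\xi^0(x)}$, the Heisenberg phase and the Weil phase cancel exactly, leaving $\psi(\Tr(B'\beta))$ as desired. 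Second, your caution about the measure is well placed: the self-dual Haar measure on $X(\R)$ with respect to $\psi_\xi$ carries a factor of $(a\xi)^{(n-1)/2}$ relative to Lebesgue measure, and combining this with $\int_{\R^{n-1}}e^{-4\pi a\xi|x|^2}\,d_{\mathrm{Leb}}x$ and the determinant powers $(\det B)^{(2k+n)/4}$ versus $(\det B')^{(2k+n-1)/4}$ is where all the bookkeeping lives. Do this carefully; the verification is elementary but unforgiving.
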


The Lie algebras $\sp_n(\R)$ and $\kk$ of $\Sp_n(\R)$ and $K_\infty$ are given by
\begin{align*}
\sp_n(\R) &= \{X \in \Mat_{2n}(\R)\ |\ 
{}^tX 
\begin{pmatrix}
0 & -\1_n \\ \1_n & 0 
\end{pmatrix}
+ 
{}^tX \begin{pmatrix}
0 & -\1_n \\ \1_n & 0 
\end{pmatrix}
X
=
0
\} 
\\&=
\left\{
\begin{pmatrix}
A & B \\ C & D
\end{pmatrix}
\in \Mat_{2n}(\R)
\ |\ 
B = {}^tB,\ C = {}^tC,\ A = -{}^tD
\right\},
\\
\kk &= \left\{
\begin{pmatrix}
A & B \\ -B & A
\end{pmatrix}
\in \Mat_{2n}(\R)\ |\ 
{}^tA = -A,\ {}^tB = B
\right\},
\end{align*}
respectively. 
Note that $\kk$ is the $1$-eigenspace of the Cartan involution $\theta X = -{}^tX$ on $\sp_n(\R)$.
The $(-1)$-eigenspace is given by
\[
\p = \left\{
\begin{pmatrix}
A & B \\ B & -A
\end{pmatrix}
\in \Mat_{2n}(\R)
\ |\ 
A = {}^tA,\ B = {}^tB
\right\}.
\]
Hence $\sp_n(\R) = \kk \oplus \p$.
The homeomorphism 
\[
\rho \colon \Sp_n(\R)/ K_\infty \rightarrow \Ha_n,\ g \mapsto g(\bi)
\]
induces an isomorphism
\[
d\rho \colon \sp_n(\R) /\kk = \p \xrightarrow{\sim} T_{\bi}\Ha_n \cong \Sym_n(\C).
\]
This map is given by
\[
d\rho(
\begin{pmatrix}
A & B \\ B & -A
\end{pmatrix}
)
= 2(B+\I A).
\]
Then the complex structure (i.e., multiplication with $\I$) on $\Sym_n(\C)$ gives 
a map
\[
J \colon \p \rightarrow \p,\ 
\begin{pmatrix}
A & B \\ B & -A
\end{pmatrix}
\mapsto
\begin{pmatrix}
B & -A \\ -A & -B
\end{pmatrix}.
\]
Let $\sp_n(\C)$, $\kk_\C$, and $\p_\C$ be the complexifications of 
$\sp_n(\R)$, $\kk$, and $\p$, respectively.
We denote the $(\pm\I)$-eigenspace of $J$ on $\p_\C$ by $\p_\C^\pm$.
Then 
\begin{align*}
\p_\C^\pm &=
\left\{
\begin{pmatrix}
A & B \\ B & -A
\end{pmatrix}
\otimes 1 
\pm
\begin{pmatrix}
-B & A \\ A & B
\end{pmatrix}
\otimes \I
\ |\ 
A, B \in \Sym_n(\R)
\right\}
\\&= \left\{
\begin{pmatrix}
A & \pm\I A \\ \pm\I A & -A
\end{pmatrix}
\in \Mat_{2n}(\C)
\ |\ A \in \Sym_n(\C)
\right\}.
\end{align*}
The elements of $\p_\C^+$ and $\p_\C^-$ correspond to the linear combinations of 
differential operators 
\[
\left\{\frac{d}{dz_j}\right\}_j
\quad\text{and}\quad
\left\{\frac{d}{d\overline{z_j}}\right\}_j, 
\]
respectively, 
where $z_j = x_j + \I y_j$ are coordinates on $\Ha_n$ at the point $\bi$.
\par

The element $X \in \p$ acts on a smooth function $W$ on $\Mp_n(\R)$ by 
\[
X \cdot W(g) = \left.\frac{d}{dt}\right|_{t=0} W(g (e^{tX},1))
\]
for $g \in \Mp_n(\R)$.
This action is extended to $\p_\C$ linearly.
It is easy to check that $\p_\C^-$ acts on $W_B^0$ by zero for any $B \in \Sym_n^+(\R)$
(cf. \cite[Lemma 7]{AS}).
Similarly, $\p_\C^- \cap \sp_{n-1}(\C)$ acts on $\phi_\xi^0$ by zero.
This fact can be proven by using the Fock model of $\omega_{\psi_\xi}$.

\subsection{Global case}\label{sec.global}
Now we let $F$ be a totally real number field, 
and $\psi$ be a non-trivial unitary character of $\A/F$.
We assume that for each infinite place $v$ of $F$, 
there exists $a_v \in F_v \cong \R$ with $a_v > 0$ 
such that $\psi(x_v) = \exp(2\pi a_v \I x_v)$ for $x_v \in F_v$.
For a place $v$ of $F$, we define a maximal compact subgroup $K_v$ of $\Sp_n(F_v)$ by
\[
K_v = 
\left\{
\begin{aligned}
&\Sp_n(\oo_v) \iif \text{$v$ is non-archimedean}, \\
&K_\infty \iif \text{$v$ is real}.
\end{aligned}
\right.
\]
Here, we denote by $\oo_v$ the ring of integers of $F_v$ when $v$ is non-archimedean.
If $v$ is archimedean, 
we define $(\p_v)_\C^- \subset \mathrm{Lie}(\Sp_n(F_v)) \otimes_\R \C$ as in the previous subsection.
\par

Recall that a function $\varphi \colon \Sp_n(F) \bs \Mp_n(\A) \rightarrow \C$ is cusp form if
\begin{itemize}
\item
$\varphi$ is smooth and of moderate growth; 
\item
$\varphi$ is right $\cl{K}$-finite, 
where $\cl{K} = \prod_{v} \cl{K}_v$; 
\item
$\varphi$ is $\mathfrak{z}$-finite, 
where $\mathfrak{z}$ is the center of the universal enveloping algebra of 
$\mathrm{Lie}(\Sp_n(F \otimes_\Q \R)) \otimes_\R \C$; 
\item
there exists $\delta \in \{0,1\}$ such that $\varphi((g,\zeta)) = \zeta^\delta \varphi((g,1))$
for any $g \in \Sp_n(\A)$; 
\item
For any proper $F$-parabolic subgroup $P$ of $\Sp_n$, 
the constant term along $P$
\[
\int_{N(F) \bs N(\A)} \varphi(ug) du
\]
is zero for any $g \in \Mp_n(\A)$, 
where $N$ is the unipotent radical of $P$.
\end{itemize}
We say that $\varphi$ is genuine if $\delta = 1$.
Let $l = (l_v)_v \in \prod_{v \mid \infty}\Z$.
We say that a cusp form $\varphi$ is holomorphic of weight $l/2$ 
if 
\begin{itemize}
\item
$X_v \cdot \varphi = 0$ for any $X_v \in (\p_v)_\C^-$; 
\item
$\varphi(g\cl{u}_v) = {\det}^{l_v/2}(\cl{u}_v) \varphi(g)$ for $g \in \Mp_n(\A)$ and $\cl{u}_v \in \cl{K}_v$
\end{itemize}
for any infinite place $v$.
We denote the space of holomorphic cusp forms of weight $l/2$ by 
\[
\Sc_{l/2}(\Sp_n(F) \bs \Mp_n(\A)).
\]
The group $\Mp_n(\A_\fin)$ acts on $\Sc_{l/2}(\Sp_n(F) \bs \Mp_n(\A))$ by the right translation.
\par

Let $\tau = \otimes_v' \tau_v$ be an irreducible unitary cuspidal automorphic representation of $\GL_2(\A)$.
Assume that
\begin{itemize}
\item[(A1)]
for any finite place $v$, 
the local factor $\tau_v$ is 
an irreducible principal representation $\mu_v \times \mu_v^{-1}$; 
\item[(A2)]
for any infinite place $v$, 
the local factor $\tau_v$ is a discrete series representation with lowest weight $\pm 2 k_v$
where $k_v > 0$; 
\item[(A3)]
the root numer 
\[
\ep(\tau) = \ep(\half{1}, \tau, \psi) = \prod_{v < \infty}\mu_v(-1) \cdot \prod_{v \mid \infty} (-1)^{k_v}
\]
is equal to $1$.
\end{itemize}
For $v < \infty$, we notice that $|\mu_v(\varpi_v)|_v =1$ 
by the Ramanujan conjecture proven in the case of Hilbert modular forms by Blasius \cite{Bl}, 
where $\varpi_v$ is a uniformizer of $F_v$.
For a more general result on the Ramanujan conjecture, see \cite{C}.
We refer to $\pm 2k = (\pm2k_v)_v \in \prod_{v \mid \infty}\Z$ 
as the weight of $\otimes_{v \mid \infty} \tau_v$.
\par

Put $k+(n/2) = (k_v + n/2)_v \in \prod_{v \mid \infty}\Z$ for each integer $n > 0$.
For $v < \infty$, 
we let $I^{(n)}_{\psi_v}(\tau_v) = \Ind_{\cl{P}_n(F_v)}^{\Mp_n(F_v)}(\mu_v^{(n)})$ 
be the degenerate principal series defined in \S \ref{sec.non-arch}.
Set $I^{(n)}_{\psi}(\tau) = \otimes'_{v < \infty} I^{(n)}_{\psi_v}(\tau_v)$, 
which is an irreducible representation of $\Mp_n(\A_\fin)$.
The following is a part of the main theorem in \cite{IY}.

\begin{thm}[{\cite[Theorems 1.1, 1.2]{IY}}]
Let $\tau = \otimes_v' \tau_v$ be an irreducible unitary cuspidal automorphic representation of $\GL_2(\A)$
satisfying the conditions (A1), (A2) and (A3).
Then the representation $I^{(n)}_{\psi}(\tau)$ 
occurs in $\Sc_{k+(n/2)}(\Sp_n(F) \bs \Mp_n(\A))$ with multiplicity one.
\end{thm}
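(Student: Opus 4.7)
The plan is to prove the theorem by constructing a specific cusp form for existence and then bounding multiplicity from above by induction on $n$. For existence, I would build $\FF \in \Sc_{k+(n/2)}(\Sp_n(F) \bs \Mp_n(\A))$ via its Fourier expansion: at each totally positive $B \in \Sym_n(F)$, the coefficient of $\FF$ is prescribed as the product over finite places of local Siegel series attached to $\mu_v$ (the data encoding $\tau_v$ under (A1)) together with the archimedean Whittaker vector $W_B^0$ from Proposition \ref{FJ-arch}. The central task is to verify modularity under $\Sp_n(F)$, and here the functional equation of $L(s,\tau)$ and hence the root number hypothesis (A3) enter essentially. Cuspidality is automatic from the support on positive $B$, and the $\Mp_n(\A_\fin)$-subrepresentation generated by $\FF$ is identified with $I^{(n)}_\psi(\tau)$ via the local irreducibility of $I^{(n)}_{\psi_v}(\tau_v)$ from \cite{KR1, Sw}.

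For multiplicity at most one, I would induct on $n$ with trivial base $n = 0$, assuming the upper bound for $\Mp_{n-1}$ and for all cuspidal $\tau'$ satisfying (A1), (A2). Given any embedding $\iota \colon I^{(n)}_\psi(\tau) \hookrightarrow \Sc_{k+(n/2)}(\Sp_n(F) \bs \Mp_n(\A))$, I would fix a totally positive $\xi \in F^\times$ and $\phi \in \Sc(\A_\fin^{n-1})$, and consider the Fourier--Jacobi integral against $\phi \otimes \phi_\xi^0$ along $V_{n-1}$. This yields an intertwining map from the space of such embeddings into $\Hom_{\Mp_{n-1}(\A_\fin)}(I^{(n-1)}_\psi(\tau\chi_\xi), \Sc_{k+(n-1)/2}(\Sp_{n-1}(F) \bs \Mp_{n-1}(\A)))$: Proposition \ref{FJ-nonarch} identifies the finite components, and Proposition \ref{FJ-arch} delivers the correct archimedean Whittaker vector $W_{B'}^0$. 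Since $\xi$ is totally positive, $\chi_\xi$ is trivial at all infinite places, so $\tau\chi_\xi$ satisfies (A1), (A2), and the inductive hypothesis gives that the target space is at most one-dimensional. Joint injectivity of the descent maps as $\xi$ and $\phi$ vary is immediate from the Fourier expansion of holomorphic cusp forms: the $\psi_\xi$-Fourier--Jacobi coefficient records precisely those Fourier coefficients indexed by positive $B$ whose $(1,1)$-entry equals $\xi$, and vanishing for all $\xi$ forces the form to be zero. Combining these ingredients gives multiplicity at most one.

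The main obstacle is the existence construction: assembling a half-integral weight cusp form with prescribed Siegel-series Fourier coefficients over a totally real base field, and rigorously establishing its modularity under $\Sp_n(F)$. This is the technical heart of the Ikeda--Yamana paper, and is where hypothesis (A3) enters in a decisive way via the functional equation of $L(s,\tau)$; the multiplicity bound, by contrast, is essentially formal once Propositions \ref{FJ-nonarch} and \ref{FJ-arch} are in hand.
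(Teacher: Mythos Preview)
This theorem is not proven in the present paper; it is quoted from \cite{IY} (Ikeda--Yamana), and the paper gives no argument of its own. So there is no in-paper proof to compare your proposal against. Your sketch of the existence half is essentially an outline of what \cite{IY} does, and you rightly flag that the modularity verification (where (A3) enters via the functional equation) is the hard part.

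That said, your multiplicity-one argument has a genuine gap. You correctly observe that for each totally positive $\xi$ and each $\phi$, the Fourier--Jacobi descent gives a linear map
\[
\Hom_{\Mp_n(\A_\fin)}\bigl(I^{(n)}_\psi(\tau),\,\Sc_{k+(n/2)}\bigr)
\longrightarrow
\Hom_{\Mp_{n-1}(\A_\fin)}\bigl(I^{(n-1)}_\psi(\tau\chi_\xi),\,\Sc_{k+(n-1)/2}\bigr),
\]
and that the target is at most one-dimensional by induction. You also correctly note that the family of these maps, as $(\xi,\phi)$ varies, is jointly injective. But joint injectivity into a \emph{product} of one-dimensional spaces does not bound the source by one dimension: an injective linear map $V \hookrightarrow \prod_\xi \C$ tells you nothing about $\dim V$. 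Concretely, if $\iota_1,\iota_2$ were two linearly independent embeddings, the induction would only give constants $c_\xi$ with $\mathrm{FJ}_\xi \circ \iota_2 = c_\xi \cdot \mathrm{FJ}_\xi \circ \iota_1$, and you have supplied no mechanism forcing $c_\xi$ to be independent of $\xi$. To close this gap one needs an additional rigidity input---for instance, an explicit formula for all Fourier coefficients of any form in such an embedding (so that two embeddings agreeing at a single nonzero coefficient agree everywhere), which is exactly the kind of structure \cite{IY} establishes alongside existence. Without that, the inductive descent alone does not deliver multiplicity one.
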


We denote the unique subrepresentation of $\Sc_{k+(n/2)}(\Sp_n(F) \bs \Mp_n(\A))$ 
which is isomorphic to $I^{(n)}_{\psi}(\tau)$ by $\Ik^{(n)}_{\psi}(\tau)$, 
and call it the Ikeda lift of $\tau$.
\par

Let $\xi \in F^\times$ be a totally positive element.
For $\phi \in \Sc(X(\A))$, 
the theta function $\Theta_{\psi_\xi}^{\phi}(vg')$ is defined by
\[
\Theta_{\psi_\xi}^{\phi}(\bv(x,y,z) g') 
= \sum_{t \in X(F)} \psi_\xi(z + 2t \cdot {}^ty + x \cdot {}^ty) \omega_{\psi_\xi}(g')\phi(t + x)
\]
for $\bv(x,y,z) \in V(\A)$ and $g' \in \Mp_{n-1}(\A)$.
It is a genuine automorphic form on $\cl{J}_{n-1}(\A)$.
One can easily check that
\[
\int_{Z(\A)V(F) \bs V(\A)} \Theta_{\psi_\xi}^{\phi_1}(vg')\overline{\Theta_{\psi_\xi}^{\phi_2}(vg')} dv 
= (\phi_1, \phi_2) = \int_{X(\A)} \phi_1(x) \overline{\phi_2(x)} dx
\]
for $\phi_1, \phi_2 \in \Sc(X(\A))$ and $g' \in \Mp_{n-1}(\A)$.
\par

We denote by $\Sc(X(\A))_\xi$ the subspace of $\Sc(X(\A))$ spanned by $\phi = \otimes_v\phi_v$ 
such that $\phi_v = \phi_{\xi_v}^0$ for each infinite place $v$.
For $\varphi \in \Sc_{l/2}(\Sp_n(F) \bs \Mp_n(\A))$ and $\phi \in \Sc(X(\A))_\xi$, 
the Fourier--Jacobi coefficient associated to $(\varphi, \phi)$ is defined by
\[
\FJ_{\psi_\xi}^\phi(g'; \varphi) 
= \int_{V(F) \bs V(\A)} \varphi(vg') \overline{\Theta_{\psi_\xi}^{\phi}(vg')}dv.
\]
This is a cusp form on $\Mp_{n-1}(\A)$.
(See also \cite[Lemma 2.3]{GRS2} and \cite[Theorem 8]{GRS1}.)
Moreover, by Proposition \ref{FJ-arch}, we conclude that 
\[
\FJ_{\psi_\xi}^\phi(\varphi) \in \Sc_{(l-1)/2}(\Sp_{n-1}(F) \bs \Mp_{n-1}(\A))
\]
for any $\varphi \in \Sc_{l/2}(\Sp_n(F) \bs \Mp_n(\A))$ and $\phi \in \Sc(X(\A))_\xi$.

\begin{prop}\label{FJcoeff1}
For $\FF \in \Ik^{(n)}_\psi(\tau)$ and $\phi \in \Sc(X(\A))_\xi$, we have
$\FJ_{\psi_\xi}^\phi(\FF) \in \Ik^{(n-1)}_\psi(\tau \chi_\xi)$.
\end{prop}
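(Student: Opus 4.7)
The plan is to interpret the Fourier--Jacobi coefficient $\FJ^\phi_{\psi_\xi}(\FF)$ as defining an $\Mp_{n-1}(\A_\fin)$-equivariant map out of the Fourier--Jacobi module of $\Ik^{(n)}_\psi(\tau)$, to identify its source via Proposition \ref{FJ-nonarch}, and then to pin down the target using the Ikeda--Yamana multiplicity one theorem applied to the twist $\tau\chi_\xi$. Concretely, I would first assemble the coefficients into the bilinear map
\[
T \colon \Ik^{(n)}_\psi(\tau) \otimes \Sc(X(\A_\fin)) \longrightarrow \Sc_{k+(n-1)/2}(\Sp_{n-1}(F) \bs \Mp_{n-1}(\A)), \quad (\FF, \phi_\fin) \mapsto \FJ^{\phi_\fin \otimes \phi^0_\infty}_{\psi_\xi}(\FF),
\]
where $\phi^0_\infty = \otimes_{v \mid \infty} \phi^0_{\xi_v}$. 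That the image consists of holomorphic cusp forms of weight $k+(n-1)/2$ has already been recorded just before the proposition: cuspidality of Fourier--Jacobi coefficients of cusp forms is standard, while holomorphy and the weight condition at each archimedean place are exactly the content of Proposition \ref{FJ-arch}.

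Next I would verify that $T$ is $\cl{J}_{n-1}(\A_\fin)$-equivariant, where $\cl{J}_{n-1}(\A_\fin)$ acts on the source via the tensor product of right translation on $\FF$ (through the embedding $\cl{J}_{n-1} \hookrightarrow \Mp_n$) and the conjugate Weil representation $\overline{\omega_{\psi_\xi}}$ on $\phi_\fin$, and on the target by right translation (on which $V_{n-1}(\A_\fin)$ acts trivially). This amounts to a change of variables in the defining integral, using the left-$V_{n-1}(F)$-invariance of $\FF$ together with the transformation law of $\Theta^\phi_{\psi_\xi}$ under $\cl{J}_{n-1}(\A)$ recalled in \S\ref{sec.global}. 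Consequently $T$ factors through the Fourier--Jacobi module $\FJ_{\psi_\xi}(\Ik^{(n)}_\psi(\tau)|_{\cl{J}_{n-1}(\A_\fin)})$ as an $\Mp_{n-1}(\A_\fin)$-module, and applying Proposition \ref{FJ-nonarch} place-by-place and taking the restricted tensor product identifies the latter with the irreducible representation $I^{(n-1)}_\psi(\tau\chi_\xi)$. Hence the image of $T$ is either zero or isomorphic to $I^{(n-1)}_\psi(\tau\chi_\xi)$ as an $\Mp_{n-1}(\A_\fin)$-module.

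To conclude, I would check that $\tau\chi_\xi$ still satisfies (A1)--(A3): (A1) and (A2) are immediate since $\chi_{\xi,v}$ is a quadratic character at each finite place and trivial at each archimedean place (because $\xi$ is totally positive), while (A3) follows from the product formula $\prod_v \chi_{\xi,v}(-1) = 1$ for the Hilbert symbol combined with the hypothesis $\ep(1/2,\tau) = 1$. The Ikeda--Yamana multiplicity one theorem then identifies the $I^{(n-1)}_\psi(\tau\chi_\xi)$-isotypic subspace of $\Sc_{k+(n-1)/2}(\Sp_{n-1}(F) \bs \Mp_{n-1}(\A))$ with $\Ik^{(n-1)}_\psi(\tau\chi_\xi)$, so the image of $T$ is contained in $\Ik^{(n-1)}_\psi(\tau\chi_\xi)$, proving the claim. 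The main obstacle I expect is the bookkeeping in the equivariance check: tracking Rao's $2$-cocycle, the Weil constants appearing in the action of $\cl{M}_{n-1}(\A_\fin)$, and the splitting of $V_{n-1}$ inside the metaplectic cover throughout the adelic setup.
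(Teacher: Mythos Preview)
Your proposal is correct and follows essentially the same route as the paper: both factor the Fourier--Jacobi coefficient map through the $V(\A_\fin)$-coinvariants, invoke Proposition~\ref{FJ-nonarch} to identify the resulting Fourier--Jacobi module with the irreducible $I^{(n-1)}_\psi(\tau\chi_\xi)$, and then appeal to the Ikeda--Yamana multiplicity-one theorem to conclude the image lands in $\Ik^{(n-1)}_\psi(\tau\chi_\xi)$. The paper's presentation differs only cosmetically, first decomposing the cuspidal image as a direct sum of irreducibles and showing each summand is the Ikeda lift, whereas you argue directly that an irreducible source forces the image to be zero or irreducible; your explicit verification that $\tau\chi_\xi$ satisfies (A1)--(A3) is a point the paper leaves implicit.
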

\begin{proof}
We may assume that $\FJ_{\psi_\xi}^\phi(\FF) \not= 0$.
Let $\Pi$ be the representation of $\Mp_{n-1}(\A)$ generated by $\FJ_{\psi_\xi}^\phi(\FF)$
for $\FF \in \Ik^{(n)}_\psi(\tau)$ and $\phi \in \Sc(X(\A))_\xi$.
Since $\Pi$ is cuspidal, it is a direct sum of irreducible representations.
Choose an irreducible direct summand $\pi$ of $\Pi$ and a projection $\Pi \twoheadrightarrow \pi$.
Then the map 
\[
\Ik^{(n)}_\psi(\tau) \otimes \overline{\omega_{\psi_\xi}} \xrightarrow{\FJ_{\psi_\xi}} \Pi \twoheadrightarrow \pi
\]
is a (nonzero) $V(\A_\fin)$-invariant map so that it factors through $\FJ_{\psi_\xi}(\Ik^{(n)}_\psi(\tau))$.
Since $\FJ_{\psi_\xi}(\Ik^{(n)}_\psi(\tau))$ is nonzero, 
we have $\FJ_{\psi_\xi}(\Ik^{(n)}_\psi(\tau)) = \Ik^{(n-1)}_\psi(\tau \chi_\xi)$ 
by Proposition \ref{FJ-nonarch}.
Since it is irreducible, we have $\pi \cong \Ik^{(n-1)}_\psi(\tau \chi_\xi)$.
Hence $\Pi$ is isomorphic to a direct sum of some copies of $\Ik^{(n-1)}_\psi(\tau \chi_\xi)$.
However the Ikeda lift appears in $\Sc_{k+(n-1)/2}(\Sp_{n-1}(F) \bs \Mp_{n-1}(\A))$ with multiplicity one, 
we see that $\Pi$ is irreducible and is equal to $\Ik^{(n-1)}_\psi(\tau \chi_\xi)$.
\end{proof}

For $\varphi \in \Sc_{l/2}(\Sp_n(F) \bs \Mp_n(\A))$, 
we define the $\xi$-th Fourier--Jacobi coefficient $\varphi_{\psi_\xi}$ by 
\[
\varphi_{\psi_\xi}(vg') = \int_{Z(F) \bs Z(\A)}\varphi(zvg') \overline{\psi_{\xi}(z)}dz
\]
for $v \in V(\A)$ and $g' \in \Mp_{n-1}(\A)$.
If $\Pi$ be a subspace of $\Sc_{l/2}(\Sp_n(F) \bs \Mp_n(\A))$, we put
\[
\Pi_{\psi_\xi} = \left\{ \varphi_{\psi_\xi}\ |\ \varphi \in \Pi \right\}.
\]
This is the maximal quotient of $\Pi$ on which $Z(\A_\fin)$ acts by $\psi_\xi$.

\begin{prop}\label{FJcoeff2} 
\begin{enumerate}
\item
For $\FF \in \Ik^{(n)}_\psi(\tau)$, 
there exist $\FF'_1, \dots, \FF'_r \in \Ik^{(n-1)}_\psi(\tau \chi_\xi)$ 
and $\phi_1, \dots, \phi_r \in \Sc(X(\A))_\xi$ such that 
\[
\FF_{\psi_\xi}(vg') = \sum_{i=1}^{r} \FF'_i(g')\Theta_{\psi_\xi}^{\phi_i}(vg')
\]
for $v \in V(\A)$ and $g' \in \Mp_{n-1}(\A)$. 

\item
Suppose that $(\Ik^{(n)}_\psi(\tau))_{\psi_\xi}$ is nonzero.
Then for $\FF' \in \Ik^{(n-1)}_\psi(\tau \chi_\xi)$ and $\phi \in \Sc(X(\A))_\xi$, 
there exists $\FF \in \Ik^{(n)}_\psi(\tau)$ such that 
\[
\FF_{\psi_\xi}(vg') = \FF'(g')\Theta_{\psi_\xi}^{\phi}(vg')
\]
for $v \in V(\A)$ and $g' \in \Mp_{n-1}(\A)$.

\item
If $n \geq 2$, then $(\Ik^{(n)}_\psi(\tau))_{\psi_\xi} \not= 0$ for any totally positive $\xi \in F^\times$.
\end{enumerate}
\end{prop}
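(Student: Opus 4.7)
The overall plan is to treat parts (1) and (2) as two sides of a single Stone--von Neumann plus multiplicity-one argument, and to handle part (3) via a separate non-vanishing computation relying on the explicit Fourier expansion of the Ikeda lift.

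For (1), I would fix $\FF \in \Ik^{(n)}_\psi(\tau)$ and observe that, for each $g' \in \Mp_{n-1}(\A)$, the function $v \mapsto \FF_{\psi_\xi}(vg')$ is a smooth $V(F)$-invariant function on $V(\A)$ on which $Z(\A)$ acts by $\psi_\xi$. The global Stone--von Neumann theorem identifies the smooth $\psi_\xi$-isotypic part of $C^\infty(V(F) \bs V(\A))$ with the global Weil representation realized via the theta map $\phi \mapsto \Theta^{\phi}_{\psi_\xi}(\cdot, 1)$. Using the right $\cl{K}$-finiteness of $\FF$ together with Proposition \ref{FJ-arch} to pin down the archimedean weight, one then writes
\[
\FF_{\psi_\xi}(vg') = \sum_{i=1}^{r} F_i(g')\,\Theta^{\phi_i}_{\psi_\xi}(vg')
\]
as a finite sum with $\phi_i \in \Sc(X(\A))_\xi$ and $F_i \in \Sc_{k+(n-1)/2}(\Sp_{n-1}(F) \bs \Mp_{n-1}(\A))$. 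Pairing this identity against $\overline{\Theta^{\phi_j}_{\psi_\xi}}$ and integrating over $Z(\A)V(F) \bs V(\A)$ recovers each $F_i$ (after inversion of a Gram matrix of theta inner products) as a linear combination of integrals of the form $\FJ^{\phi}_{\psi_\xi}(\FF)$, each of which lies in $\Ik^{(n-1)}_\psi(\tau\chi_\xi)$ by Proposition \ref{FJcoeff1}.

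For (2), I would introduce the multiplication map
\[
m \colon \Ik^{(n-1)}_\psi(\tau\chi_\xi) \otimes \Sc(X(\A))_\xi \longrightarrow \{\text{aut. forms on } \cl{J}_{n-1}(\A)\},
\qquad \FF' \otimes \phi \longmapsto \bigl(vg' \mapsto \FF'(g')\,\Theta^{\phi}_{\psi_\xi}(vg')\bigr),
\]
whose source is an irreducible $\cl{J}_{n-1}(\A_\fin)$-module by Proposition \ref{FJ11}. Part (1) shows that $(\Ik^{(n)}_\psi(\tau))_{\psi_\xi}$ is contained in the image of $m$. If this coinvariant is nonzero, its preimage under $m$ is a nonzero $\cl{J}_{n-1}(\A_\fin)$-submodule of the irreducible source, hence equals the full source; consequently every pure tensor $\FF' \otimes \phi$ is realized as $\FF_{\psi_\xi}$ for some $\FF$.

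The main obstacle is (3). Local non-vanishing of $I^{(n)}_{\psi_v}(\tau_v)_{\psi_{\xi_v}}$ provided by Proposition \ref{FJ-nonarch} is not sufficient to force global non-vanishing; one must produce a global cusp form with a nonzero $\psi_\xi$-coefficient. The plan is to invoke the explicit Fourier expansion $\FF = \sum_{B \in \Sym_n^+(F)} a(B)\,W_B$ of an Ikeda lift from \cite{IY}, in which the Fourier coefficients $a(B)$ decompose as products of explicit local terms. Since $\FF_{\psi_\xi}$ extracts precisely those $B$ with upper-left entry equal to $\xi$, the task reduces to exhibiting a totally positive definite $B \in \Sym_n(F)$ with $B_{11} = \xi$ and $a(B) \neq 0$. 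For $n \geq 2$ the remaining entries of $B$ can be chosen freely to ensure total positivity, and the explicit local formulas from \cite{IY} show that $a(B)$ can be arranged to be nonzero; verifying this last non-vanishing for a concrete $B$ is the technical heart of the argument.
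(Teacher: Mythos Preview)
Your arguments for (1) and (2) are essentially the same as the paper's. The paper quotes \cite[Proposition~1.3]{I0} for the finite expansion in (1) rather than redoing the Stone--von Neumann step, but the content is identical: write $\FF_{\psi_\xi}$ against an orthonormal family $\phi_i$, recognize the coefficients as $\FJ^{\phi_i}_{\psi_\xi}(\FF)$, and apply Proposition~\ref{FJcoeff1}. For (2) the paper phrases it as an inclusion $(\Ik^{(n)}_\psi(\tau))_{\psi_\xi} \subset \Ik^{(n-1)}_\psi(\tau\chi_\xi) \otimes \omega_{\psi_\xi}$ of $\cl{J}_{n-1}(\A_\fin)$-modules which, if nonzero, must be an equality by irreducibility of the target; this is exactly your multiplication-map argument.

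For (3) your plan is correct in outline but you have not identified the actual mechanism behind the ``technical heart.'' The Fourier coefficients $a(B)$ of the Ikeda lift are \emph{not} governed purely by local data that one can freely arrange: by \cite[Lemmas~8.5, 5.4]{IY} there is an $\FF \in \Ik^{(n)}_\psi(\tau)$ whose $B$-th Fourier coefficient is nonzero if and only if the central value $L(1/2,\tau\otimes\chi_{\det B})$ is nonzero. This is a genuinely global obstruction, and no amount of choosing the lower-right block of $B$ will circumvent it unless you control $\det B$. The paper takes $B = \diag(\xi,\xi',1,\dots,1)$ so that $\det B = \xi\xi'$, and then invokes Waldspurger's theorem \cite[Th\'eor\`eme~4]{W} to produce a totally positive $\xi' \in F^\times$ with $L(1/2,\tau\otimes\chi_{\xi\xi'}) \neq 0$. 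This is precisely where the hypothesis $n \geq 2$ is used (one needs at least one free diagonal slot for $\xi'$). Without naming Waldspurger's non-vanishing result, your sketch for (3) has a real gap.
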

\begin{proof}
By \cite[Proposition 1.3]{I0}, 
for $\FF \in \Ik^{(n)}_\psi(\tau)$, 
there exist $\phi_1, \dots, \phi_r \in \Sc(X(\A))_\xi$ with $(\phi_i, \phi_j) = \delta_{i,j}$ such that 
\[
\FF_{\psi_\xi}(vg') = \sum_{i=1}^{r} \FJ_{\psi_\xi}^{\phi_i}(\FF)(g') \cdot \Theta_{\psi_\xi}^{\phi_i}(vg')
\]
for $v \in V(\A)$ and $g' \in \Mp_{n-1}(\A)$.
Since $\FJ_{\psi_\xi}^{\phi_i}(\FF)(g') \in \Ik^{(n-1)}_\psi(\tau \chi_\xi)$ 
by Proposition \ref{FJcoeff1}, we obtain (1).
\par

Hence 
\[
(\Ik^{(n)}_\psi(\tau))_{\psi_\xi} \subset \Ik^{(n-1)}_\psi(\tau \chi_\xi) \otimes \omega_{\psi_\xi}
\]
as $\cl{J}_{n-1}(\A_\fin)$-modules.
If $(\Ik^{(n)}_\psi(\tau))_{\psi_\xi}$ is nonzero, this inclusion must be equal 
since $\Ik^{(n-1)}_\psi(\tau \chi_\xi) \otimes \omega_{\psi_\xi}$ is irreducible.
Hence we obtain (2).
\par

Let $\Sym_n^+(F)$ be the subset of $\Sym_n(F)$ consisting of symmetric matrices 
whose images in $\Sym_n(F_v)$ are positive definite for each infinite place $v$.
By \cite[Lemmas 8.5, 5.4]{IY}, there exists $\FF \in \Ik^{(n)}_{\psi}(\tau)$ such that
for $B \in \Sym_n^+(F)$, 
the $B$-th Fourier coefficient
\[
W_B(g) = \int_{\Sym_n(F) \bs \Sym_n(\A)} \FF(\bn(X)g) \overline{\psi(\Tr(BX))} dX
\]
is not identically zero if and only if $L(1/2, \tau \otimes \chi_{\det(B)}) \not= 0$.
When $B = \mathrm{diag}(\xi, \xi',1, \dots, 1)$ with $\xi' \in F^\times$ totally positive, 
the $B$-th Fourier coefficient $W_B$ is given by
\[
W_B(g) = \int_{Z(\A)\Sym_n(F) \bs \Sym_n(\A)}\FF_{\psi_\xi}(\bn(X)g) \overline{\psi(\Tr(BX))} dX.
\]
Hence the $\xi$-th Fourier--Jacobi coefficient $\FF_{\psi_\xi}$ is not identically zero
if $W_B \not= 0$.
Since there exists a totally positive element $\xi' \in F^\times$ such that
$L(1/2, \tau \otimes \chi_{\xi\xi'}) \not= 0$ by \cite[Th{\'e}or{\`e}m 4]{W}, 
we obtain (3).
\end{proof}

\section{Local Miyawaki liftings}\label{local}
In this section, we define the local Miyawaki lifting, and prove basic properties.

\subsection{Definition}
Let $F$ be a non-archimedean local field of characteristic zero, 
and $\psi$ be a non-trivial additive character of $F$.
For two non-negative integers $n$ and $r$, 
we define an embedding $\iota \colon \Sp_n \times \Sp_r \hookrightarrow \Sp_{n+r}$ by
\[
\iota\left(
\begin{pmatrix}
A_1 & B_1 \\ C_1 & D_1
\end{pmatrix}, 
\begin{pmatrix}
A_2 & B_2 \\ C_2 & D_2
\end{pmatrix}
\right)
=
\left(
\begin{array}{cc|cc}
A_1 & 0 & B_1 & 0 \\
0 & A_2 & 0 & B_2 \\
\hline
C_1 & 0 & D_1 & 0 \\
0 & C_2 & 0 & D_2 
\end{array}
\right), 
\]
and we identify $\Sp_n \times \Sp_r$ with the image.
For $\tau = \mu \times \mu^{-1}$ with $\mu$ being a unitary character of $F^\times$, 
we consider the local Ikeda lift $I^{(n+r)}_{\psi}(\tau) = \Ind_{\cl{P}_{n+r}(F)}^{\Mp_{n+r}(F)}(\mu^{(n+r)})$, 
which is a degenerate principal series of $\Mp_{n+r}(F)$.
For an irreducible representation $\pi$ of $\Mp_{r}(F)$, 
on which the kernel $\{\pm1\}$ of the covering map $\Mp_{r}(F) \twoheadrightarrow \Sp_{r}(F)$
acts by $(\pm1)^{n+r}$, 
the maximal $\pi$-isotypic quotient of $I^{(n+r)}_\psi(\tau)$ is of the form
\[
\MM_{\psi, \tau}^{(n)}(\pi) \boxtimes \pi
\]
for some smooth representation $\MM_{\psi, \tau}^{(n)}(\pi)$ of $\Mp_{n}(F)$, 
on which the kernel $\{\pm1\}$ of the covering map $\Mp_{n}(F) \twoheadrightarrow \Sp_{n}(F)$
acts by $(\pm1)^{n+r}$.
We call $\MM_{\psi, \tau}^{(n)}(\pi)$ the local Miyawaki lift of $\pi$.

\subsection{Preliminary}\label{pre}
In this subsection, we recall some basic terminologies of representations of $\Mp_{r}(F)$.
\par

A parabolic induction
\[
\tau_1|\cdot|^{s_1} \times \dots \times \tau_l|\cdot|^{s_l} \rtimes \pi_0
=
\Ind_{P(F)}^{\Sp_{r}(F)}(\tau_1|\cdot|^{s_1} \otimes \dots \otimes \tau_l|\cdot|^{s_l} \otimes \pi_0), 
\]
is standard if 
\begin{itemize}
\item
$P=MN$ is a standard parabolic subgroup of $\Sp_r$
with $M \cong \GL_{k_1} \times \dots \times \GL_{k_l} \times \Sp_{r_0}$; 
\item
$\tau_i$ (\resp $\pi_0$) is an irreducible tempered representation of $\GL_{k_i}(F)$ (\resp $\Sp_{r_0}(F)$); 
\item
$s_1, \dots, s_l$ are real numbers such that $s_1 > \dots > s_l > 0$.
\end{itemize}
The Langlands classification asserts that 
any irreducible smooth representation $\pi$ of $\Sp_r(F)$ is a unique irreducible quotient of 
a standard module $\tau_1|\cdot|^{s_1} \times \dots \times \tau_l|\cdot|^{s_l} \rtimes \pi_0$, 
which is called the Langlands quotient and is denoted 
by $J(\tau_1|\cdot|^{s_1}, \dots, \tau_l|\cdot|^{s_l}, \pi_0)$.
For any irreducible representation $\pi$, 
the datum $(P, \{\tau_i |\cdot|^{s_i}\}, \pi_0)$ is determined uniquely up to an isomorphism.
\par

When $\pi$ is a genuine irreducible representation of $\Mp_{r}$, 
one should replace the parabolic induction with
\[
\Ind_{\cl{P}(F)}^{\Mp_{r}(F)}
(\tau_{1,\psi}|\cdot|^{s_1} \otimes \dots \otimes \tau_{l, \psi}|\cdot|^{s_l} \otimes \pi_0), 
\]
where 
\[
\tau_{i, \psi}(a, \zeta) = \zeta \frac{\alpha_\psi(1)}{\alpha_\psi(\det a)} \tau_i(a)
\]
is a genuine irreducible representation of the double cover $\cl{\GL}_{k_i}(F)$ of $\GL_{k_i}(F)$
(which is identified with $\GL_{k_i}(F) \times \{\pm1\}$ as sets), 
and $\pi_0$ is a genuine irreducible representation of $\Mp_{r_0}(F)$.
For more precision, see \cite{GS}.
We identify $\tau_{\psi}$ with $\tau$ itself, 
and we use the same notation $\tau_1|\cdot|^{s_1} \times \dots \times \tau_l|\cdot|^{s_l} \rtimes \pi_0$
and $J(\tau_1|\cdot|^{s_1}, \dots, \tau_l|\cdot|^{s_l}, \pi_0)$ as in the non-genuine case.
\par

We say that an irreducible representation $\pi$ of $\Mp_{r}$ is almost tempered 
if $\pi = J(\tau_1|\cdot|^{s_1}, \dots, \tau_l|\cdot|^{s_l}, \pi_0)$ with $0 < s_l < \dots < s_1 < 1/2$.
Then the standard module $\tau_1|\cdot|^{s_1} \times \dots \times \tau_l|\cdot|^{s_l} \rtimes \pi_0$ 
is irreducible by Corollary \ref{at}, so that $\pi$ is equal to this standard module. 
\par

When we consider unramified representations, 
we always assume that the residue characteristic of $F$ is greater than $2$.
Then the covering map $\Mp_r(F) \rightarrow \Sp_r(F)$ splits over 
the maximal compact subgroup $\Sp_r(\oo)$, 
where $\oo$ is the ring of integers of $F$.
Recall that an irreducible smooth representation $\pi$ of $\Mp_r(F)$ is unramified 
if $\pi$ has a nonzero $\Sp_r(\oo)$-fixed vector.
If $\pi$ is unramified, then there exist unramified characters $\chi_1, \dots, \chi_r$ such that
$\pi$ is a unique unramified constituent of the induced representation 
$\Ind_{\cl{B}_r(F)}^{\Mp_r(F)}(\chi_1 \otimes \dots \otimes \chi_r)$.
We call the multiset $\{\chi_1(\varpi)^{\pm1}, \dots, \chi_r(\varpi)^{\pm1}\}$
the Satake parameter of $\pi$, 
where $\varpi$ is a uniformizer of $F$.
\par

Recall that we can associate an irreducible representation $\pi$ of $\Mp_r(F)$ to an $L$-parameter $\phi$, 
which is a self-dual representation of the Weil--Deligne group $\WD_F = W_F \times \SL_2(\C)$ of $F$.
This is symplectic (\resp orthogonal) if $\pi$ is genuine (\resp not genuine).
More precisely, see Appendix \ref{langlands}.
We say that an $L$-parameter $\phi$ is of good parity 
if $\phi$ is a sum of irreducible self-dual representations of the same type as $\phi$.
The unique irreducible algebraic representation of $\SL_2(\C)$ of dimension $d$ 
is denoted by $S_d$.

\subsection{Local main theorem}
Recall that $\tau = \mu \times \mu^{-1}$ with $\mu$ being a unitary character of $F^\times$.
Let $\chi_{-1}$ be the quadratic character associated to the extension $F(\sqrt{-1})/F$.
For a real number $a$, we denote by $[a]$ the maximal integer which is not greater than $a$.
\par

The following is the local main theorem.
\begin{thm}\label{howe}
Let $\mu$ be a unitary character of $F^\times$, and 
$\pi$ be an irreducible representation of $\Mp_{r}(F)$ on which $\{\pm1\}$ acts by $(\pm1)^{n+r}$.
Suppose that $n \geq r$.

\begin{enumerate}
\item
The local Miyawaki lift $\MM_{\psi, \tau}^{(n)}(\pi)$ is nonzero and of finite length. 

\item
If $\pi$ is almost tempered and unitary, then 
\begin{itemize}
\item
$\MM_{\psi, \tau}^{(n)}(\pi)$ is irreducible; 
\item
$\MM_{\psi, \tau}^{(n)}(\pi) \cong (\mu' \circ {\det}_{n-r}) \rtimes \pi$ with $\mu' = \mu\chi_{-1}^{[(n+r)/2]}$; 
\item
$\MM_{\psi, \tau}^{(n)}(\pi)$ is isomorphic to the unique irreducible quotient of the induced representation
\[
\left\{
\begin{aligned}
&\tau'|\cdot|^{\half{n-r-1}} \times \tau'|\cdot|^{\half{n-r-3}} \times \dots \times \tau'|\cdot|^{\half{1}} \rtimes \pi
\iif n+r \equiv 0 \bmod 2, \\
&\tau'|\cdot|^{\half{n-r-1}} \times \tau'|\cdot|^{\half{n-r-3}} \times \dots \times \tau'|\cdot|^{1} 
\times \mu' \rtimes \pi
\iif n+r \equiv 1 \bmod 2, 
\end{aligned}
\right.
\]
where $\tau' = \tau \otimes \chi_{-1}^{[(n+r)/2]} = \mu' \times \mu'^{-1}$.
\end{itemize}

\item
For any irreducible almost tempered unitary representations $\pi_1$ and $\pi_2$, 
we have
\[
\MM_{\psi, \tau}^{(n)}(\pi_1) \cong \MM_{\psi, \tau}^{(n)}(\pi_2)
\implies 
\pi_1 \cong \pi_2. 
\]

\item
Set
\[
\pi' = \MM_{\psi, \tau}^{(r)}\left( \MM_{\psi, \tau}^{(n)}(\pi) \right).
\]
Assume that $\pi$ is almost tempered and unitary, 
and that one of the following conditions holds:
\begin{itemize}
\item
The $L$-parameter $\phi$ does not contain $\mu^{\pm1} S_d$ 
for any $d \geq n-r$ with $d \equiv n-r \bmod2$;

\item
$n=r$ or $n=r+1$.
\end{itemize}
Then all irreducible subquotients of $\pi'$ are isomorphic to $\pi$, 
and the maximal semisimple quotient of $\pi'$ is irreducible.

\item
Suppose that $\mu$ is unramified, 
and set $\alpha = (\mu\chi_{-1}^{[(n+r)/2]})(\varpi)$.
If $\pi$ is an irreducible unramified representation of $\Mp_r(F)$ 
with the Satake parameter $\{\beta_1^{\pm1}, \dots, \beta_r^{\pm1}\}$, 
then $\MM_{\psi, \tau}^{(n)}(\pi)$ has a unique irreducible unramified quotient. 
Its Satake parameter is equal to
\[
\{\beta_1^{\pm1}, \dots, \beta_r^{\pm1}\} \cup 
\{\alpha^{\pm1}q^{\half{n-r-1}}, \alpha^{\pm1}q^{\half{n-r-3}}, \dots, \alpha^{\pm1}q^{-\half{n-r-1}}\}
\]
as multisets.

\end{enumerate}
\end{thm}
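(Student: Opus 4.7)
The plan is to analyze the restriction of the degenerate principal series $I^{(n+r)}_\psi(\tau)$ to $\Mp_n(F) \times \Mp_r(F)$ via the orbit decomposition of the Siegel flag variety $P_{n+r}(F) \backslash \Sp_{n+r}(F)$ under $\Sp_n(F) \times \Sp_r(F)$. Identifying $P_{n+r} \backslash \Sp_{n+r}$ with the set of Lagrangian subspaces of the ambient symplectic space, the orbits are parameterized by an integer $0 \leq k \leq \min(n,r) = r$ measuring the dimension of the projection of a Lagrangian onto the $\Sp_r$-factor. This produces a $\Mp_n(F) \times \Mp_r(F)$-equivariant filtration of $I^{(n+r)}_\psi(\tau)$ whose graded pieces, computed via Mackey theory and the geometric lemma, take the form of parabolic inductions involving $\mu^{(k)}$ on a $\cl{\GL}_k(F)$-factor together with the analogous degenerate principal series on $\Mp_{n-k}(F) \times \Mp_{r-k}(F)$.

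For parts (1) and (2), the open-orbit contribution (that is, $k=r$) identifies via Frobenius reciprocity with $(\mu' \circ {\det}_{n-r}) \rtimes \pi$ as a $\Mp_n(F)$-module, giving the nonvanishing and finite length in (1). The key step for (2) is to show that, under the almost-tempered, unitary hypothesis on $\pi$, this is the only piece of the filtration contributing to the $\pi$-isotypic quotient. The cuspidal support of every non-open graded piece involves characters $\mu'|\cdot|^{s}$ whose exponents $|s|$ fall outside the Langlands exponents of an almost-tempered $\pi$ (which lie in $(-1/2, 1/2)$), so no quotient map into $\pi$ exists. Combined with Corollary \ref{at}, which gives irreducibility of the standard module $(\mu' \circ {\det}_{n-r}) \rtimes \pi$, this yields $\MM^{(n)}_{\psi,\tau}(\pi) = (\mu' \circ {\det}_{n-r}) \rtimes \pi$. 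To match this with the stated Langlands quotient, I would decompose $\mu' \circ {\det}_{n-r}$ as an iterated parabolic induction of the characters $\mu'|\cdot|^{(n-r-1)/2}, \dots, \mu'|\cdot|^{-(n-r-1)/2}$, pair $\mu'|\cdot|^{s}$ with $\mu'|\cdot|^{-s}$ for $s > 0$ to recombine as $\tau'|\cdot|^{s}$, and invoke Langlands classification.

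Part (3) is immediate from (2): the Langlands data determine $\pi$ uniquely from $\MM^{(n)}_{\psi,\tau}(\pi)$. Part (5) is the unramified specialization of (2); the induced representation $(\mu' \circ {\det}_{n-r}) \rtimes \pi$ has a unique unramified subquotient, and its Satake parameters are the union of those of $\pi$ with $\{\alpha^{\pm 1}q^{(n-r-1)/2}, \dots, \alpha^{\pm 1}q^{-(n-r-1)/2}\}$. For part (4), applying $\MM^{(r)}_{\psi,\tau}$ to $(\mu' \circ {\det}_{n-r}) \rtimes \pi$ and redoing the orbit analysis gives $\pi \subseteq \pi'$ via Frobenius reciprocity; the hypotheses on the $L$-parameter $\phi$ of $\pi$ (no $\mu^{\pm 1}S_d$ summand with $d \geq n-r$, $d \equiv n-r \bmod 2$), or the constraints $n \in \{r, r+1\}$, are precisely what prevent the non-open orbits from contributing extra subquotients in $\pi'$, giving the claimed structure.

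The main obstacle is the vanishing argument for the non-open orbits in step (2): one must verify that none of the non-open graded pieces of the filtration admits $\pi$ as a quotient, which requires writing down the cuspidal support of each piece via the geometric lemma and using the almost-tempered, unitary assumption on $\pi$ to rule out each possible match. The delicate combinatorics of the twists by $\mu^{(k)}$ and the central characters $(\pm 1)^{n+r}$ in the metaplectic setting, together with the genuine-versus-non-genuine parity matching on each factor, make this step the most technical, but the Jacquet-module machinery of Appendix \ref{s.jac} should make it tractable.
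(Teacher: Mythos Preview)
Your filtration approach matches the paper's Lemma~\ref{filt} and Proposition~\ref{det}, and the finite-length and surjection-onto-$\MM^{(n)}_{\psi,\tau}(\pi)$ parts are fine. But there are two genuine gaps.

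\textbf{Nonvanishing in (1).} The filtration gives you that the $\pi$-isotypic quotient of $I_0=R_0$ surjects onto the $\pi$-isotypic quotient of $I^{(n+r)}_\psi(\tau)$ (right-exactness of coinvariants, once you kill the higher $R_t$). It does \emph{not} tell you the latter is nonzero: the map $(I_0)_\pi \to (I^{(n+r)}_\psi(\tau))_\pi$ could be zero. Moreover, your ``almost-tempered exponents'' argument only kills the higher $R_t$ for almost tempered $\pi$, whereas (1) is claimed for \emph{all} irreducible $\pi$. The paper handles this by an explicit construction: a doubling zeta integral $\ZZ_{r,n}$ (Proposition~\ref{zeta}) produces a nonzero element of $\Hom_{\Mp_n \times \Mp_r}(I^{(n+r)}_\psi(\tau),\, ((\mu'\circ\det_{n-r})\rtimes\pi)\boxtimes\pi)$ for every irreducible $\pi$. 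You need some such construction; the orbit filtration alone is not enough.

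\textbf{Irreducibility in (2).} This is the more serious gap. You invoke Corollary~\ref{at} for the irreducibility of $(\mu'\circ\det_{n-r})\rtimes\pi$, but that corollary says only that an almost tempered $\pi$ is itself an irreducible standard module. The representation $(\mu'\circ\det_{n-r})\rtimes\pi$ is \emph{not} a standard module: the character $\mu'\circ\det_{n-r}$ has exponent $0$, and if you unfold it into $\mu'|\cdot|^{(n-r-1)/2}\times\cdots\times\mu'|\cdot|^{-(n-r-1)/2}$ the exponents reach $\pm(n-r-1)/2$, well outside the $(0,1/2)$ range where Proposition~\ref{GPR}/Corollary~\ref{at} applies. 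In the paper this irreducibility is the entire content of \S\ref{(2)}: one first proves by induction on $k$ (Proposition~\ref{lk}, via detailed Jacquet-module bookkeeping using Proposition~\ref{jacquet}) that $\mu^{-1}|\det_k|^{l/2}\rtimes\pi_0$ is irreducible for $\pi_0$ of good parity and suitable $(k,l)$, handles a delicate boundary case separately, and then deduces a surjection from the stated standard-type module onto $(\mu'\circ\det_{n-r})\rtimes\pi$; irreducibility finally follows because unitary induction is semisimple and the Langlands quotient is unique. None of this is a consequence of Corollary~\ref{at}, and your sketch of ``recombining $\mu'|\cdot|^s$ with $\mu'|\cdot|^{-s}$ into $\tau'|\cdot|^s$'' presupposes exactly the irreducibility you are trying to prove.
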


As in Remark \ref{rem} (1) below, the first condition of Theorem \ref{howe} (4) holds 
when $n > 2r$ or $\pi$ is discrete series.
\par

The assertion (2) gives the Langlands data for $\MM_{\psi, \tau}^{(n)}(\pi)$ explicitly.
In particular, it deduces (3).
The proof of (5) for the non-genuine case (i.e., the case where $n+r$ is even) is \cite[Proposition 3.1]{I2}.
The genuine case is proven similarly.
\par

We prove Theorem \ref{howe} (1), (2) and (4) in \S \ref{(1)}, \S \ref{(2)} and \S \ref{(4)}, respectively.

\subsection{Miyawaki liftings and degenerate induced representations}
In this subsection, 
we show that $\MM_{\psi, \tau}^{(n)}(\pi)$ is of finite length, 
and that for almost tempered $\pi$, 
there is a surjection
\[
(\mu' \circ {\det}_{n-r}) \rtimes \pi \twoheadrightarrow \MM_{\psi, \tau}^{(n)}(\pi).
\]
\par

We need the following lemma (see \cite{KR2} and \cite[Lemma 2.2]{GT}).
\begin{lem}\label{filt}
Suppose that $n \geq r$. 
Set $\mu' = \mu \chi_{-1}^{[(n+r)/2]}$.
The local Ikeda lift (the degenerate principal series) 
$I^{(n+r)}_\psi(\tau) = \Ind_{\cl{P}_{n+r}(F)}^{\Mp_{n+r}(F)}(\mu^{(n+r)})$
has an $\Mp_{n}(F) \times \Mp_{r}(F)$-equivalent filtration 
\[
0 \subset I_0 \subset I_1 \subset \dots \subset I_{r} = I^{(n+r)}_\psi(\tau)
\]
with successive quotients
\begin{align*}
R_t = I_{t}/I_{t-1}
= \Ind_{\cl{P}_{t+(n-r)}(F) \times \cl{P}_{t}(F)}^{\Mp_{n}(F) \times \Mp_{r}(F)}
\left(
(\mu'|{\det}_{t+(n-r)}|^{\half{t}} \boxtimes \mu'|{\det}_{t}|^{\half{t+(n-r)}})
\otimes C_c^\infty(\Mp_{r-t}(F))
\right).
\end{align*}
Here the induction is normalized, 
${\det}_{t}$ denotes the determinant character of $\GL_{t}(F)$, 
and $\Mp_{r-t}(F) \times \Mp_{r-t}(F)$ acts on $C_c^\infty(\Mp_{r-t}(F))$ by 
\[
((g_1, g_2)\varphi)(x) = \varphi(g_1^{-1} \cdot x \cdot \alpha g_2 \alpha^{-1})
\quad\text{with}\quad
\alpha = \begin{pmatrix}
\1_{r-t} & 0 \\ 0 & -\1_{r-t}
\end{pmatrix}.
\]
In particular, 
\[
R_0 = \Ind_{\cl{P}_{n-r}(F) \times \Mp_r(F)}^{\Mp_{n}(F) \times \Mp_r(F)}
( \mu' \circ {\det}_{n-r} \otimes C_c^\infty(\Mp_r(F))).
\]
\end{lem}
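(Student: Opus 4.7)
The strategy is the classical Mackey/geometric-lemma analysis of the degenerate principal series restricted to a subgroup, as carried out in \cite{KR2, GT}. The plan is to stratify the flag variety $P_{n+r}(F)\bs \Sp_{n+r}(F)$ by orbits of $\Sp_n(F)\times \Sp_r(F)$, and to read off a filtration of $I_\psi^{(n+r)}(\tau)$ with successive quotients supported on individual strata.

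First I would describe the double cosets $P_{n+r}(F)\bs \Sp_{n+r}(F)/(\Sp_n(F)\times \Sp_r(F))$. Since $P_{n+r}$ is the stabilizer of a maximal isotropic subspace $L \subset F^{2(n+r)}$ and $\Sp_n\times \Sp_r$ preserves the orthogonal decomposition $F^{2n}\oplus F^{2r}$, the orbits are parametrized by an integer $t=0,1,\dots,r$ measuring $\dim(L\cap F^{2r})$ (equivalently the rank of a certain map). This is a finite set of orbits linearly ordered by closure, with $t=0$ giving the closed orbit and $t=r$ the open one. Writing $U_t=\bigsqcup_{s\le t}\Omega_s$ for the corresponding closed union of orbits and $I_t\subset I_\psi^{(n+r)}(\tau)$ for the subspace of sections supported in $U_t$, one obtains an increasing $\Mp_n(F)\times\Mp_r(F)$-equivariant filtration $0\subset I_0\subset\cdots\subset I_r=I_\psi^{(n+r)}(\tau)$.

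Next I would compute each successive quotient $R_t=I_t/I_{t-1}$ by choosing a concrete representative $w_t$ of the $t$-th orbit and identifying the stabilizer of $w_t$ in $\Sp_n(F)\times\Sp_r(F)$. A straightforward (if tedious) matrix calculation shows that this stabilizer is the preimage under $P_{t+(n-r)}(F)\times P_t(F)\to M_{t+(n-r)}(F)\times M_t(F)$ of the diagonal-like subgroup where the $\GL_{t+(n-r)}$-part and $\GL_t$-part meet along a common $\GL_t\times \Sp_{r-t}$ block and where a copy of $\Sp_{r-t}(F)\times\Sp_{r-t}(F)$ acts along a ``twisted diagonal'' governed by $\alpha=\diag(\1_{r-t},-\1_{r-t})$. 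Applying Bernstein--Zelevinsky's geometric lemma (in the form of induction from an open orbit, together with compact induction in the unipotent direction) one obtains $R_t$ as the induction written in the statement, where the appearance of $C_c^\infty(\Mp_{r-t}(F))$ is the standard fact that functions on a homogeneous space for a diagonal subgroup give a regular representation under the two factors.

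Finally, the character appearing on the $\cl{P}_{t+(n-r)}(F)\times\cl{P}_t(F)$-piece is obtained by restricting $\mu^{(n+r)}$ along the Levi embedding
\[
\bm(A_1)\times\bm(A_2)\mapsto \bm\bigl(\mathrm{diag}(A_1,A_2)\bigr),
\]
and by tracking the normalization factors (half sums of roots) arising from the Jacobian of the orbit map. This produces the shifts $|\det|^{t/2}$ and $|\det|^{(t+n-r)/2}$. The metaplectic cocycle modifies $\mu$ by the factor $(\alpha_\psi(1)/\alpha_\psi(\det A))^{n+r}$ on each $\GL$-block, whose square equals $\chi_{-1}(\det A)^{n+r}$; working this out yields precisely $\mu'=\mu\chi_{-1}^{[(n+r)/2]}$ on each factor, as claimed. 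The case $t=0$ is the closed orbit and gives $R_0$ directly, with no unipotent $C_c^\infty$-factor beyond the trivial $\Mp_r(F)$-regular representation.

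The main obstacle is the bookkeeping for the metaplectic twist. Parametrizing the double cosets and identifying stabilizers is standard, but verifying that Rao's cocycle restricted to the stabilizer of $w_t$ produces exactly the character $\mu'|\det|^{t/2}\boxtimes\mu'|\det|^{(t+n-r)/2}$, with the correct power of $\chi_{-1}$ depending on the parity of $n+r$, requires a careful computation of Weil constants. I would handle this by reducing to the two cases of $A\in \GL_{t+(n-r)}(F)\times \GL_t(F)$ diagonally embedded into $\GL_{n+r}(F)$, using the multiplicativity $\alpha_\psi(ab)/\alpha_\psi(a)\alpha_\psi(b)=\langle a,b\rangle\alpha_\psi(1)^{-1}$, together with the formula $(\alpha_\psi(1)/\alpha_\psi(a))^2=\chi_{-1}(a)$ recalled in \S\ref{weil}.
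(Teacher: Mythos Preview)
The paper does not prove this lemma; it simply records it with a citation to \cite{KR2} and \cite[Lemma~2.2]{GT}. Your approach via the orbit decomposition of $P_{n+r}(F)\bs\Sp_{n+r}(F)$ under $\Sp_n(F)\times\Sp_r(F)$ and the geometric lemma is precisely the method of those references, so in outline you are doing the right thing.

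There is, however, a genuine error in your orientation of the filtration. You assert that $t=0$ is the closed orbit and $t=r$ the open one, and then define $I_t$ as the space of sections supported on the closed set $U_t=\bigsqcup_{s\le t}\Omega_s$. This is backwards. The orbit with $t=r$ (where the Lagrangian $L$ meets $F^{2r}$ in a full Lagrangian of $F^{2r}$) contains the identity coset, whose stabilizer in $\Sp_n(F)\times\Sp_r(F)$ is $P_n(F)\times P_r(F)$; its dimension is $\binom{n+1}{2}+\binom{r+1}{2}$, strictly less than $\dim P_{n+r}\bs\Sp_{n+r}=\binom{n+r+1}{2}$, so this orbit is closed, not open. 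The open orbit is $t=0$: there the stabilizer contains a diagonal copy of $\Sp_{r}$, exactly as in the big cell of the doubling method, which is why $R_0$ carries the regular representation $C_c^\infty(\Mp_r(F))$. The filtration is built by letting $I_t$ be the subspace of sections supported in the \emph{open} union $\bigsqcup_{s\le t}\Omega_s$ (with $\Omega_0$ open); extension by zero makes each $I_t$ an honest submodule, and restriction to the locally closed stratum $\Omega_t$ identifies $I_t/I_{t-1}$ with the compactly induced module $R_t$. With your orientation, the sections of a locally constant sheaf supported on a lower-dimensional closed stratum are zero, so your $I_0$ would vanish and the filtration collapses. Once you reverse the orientation, the rest of your sketch (stabilizer computation, modulus characters, and the metaplectic bookkeeping producing $\mu'=\mu\chi_{-1}^{[(n+r)/2]}$) is the correct calculation.
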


Using this lemma, we have the following.
\begin{prop}\label{det}
Let $\pi$ and $\pi'$ be smooth representations of $\Mp_r(F)$ and $\Mp_n(F)$, respectively.
Set $\mu' = \mu \chi_{-1}^{[(n+r)/2]}$.
Suppose that $\pi$ is irreducible, almost tempered and that
\[
\Hom_{\Mp_{n}(F) \times \Mp_r(F)}(I_{\psi}^{(n+r)}(\tau), \pi' \boxtimes \pi) \not= 0.
\]
Then we have
\[
\Hom_{\Mp_n(F)} \left((\mu' \circ {\det}_{n-r}) \rtimes \pi, \pi' \right) \not= 0.
\]
Moreover, if there is an $\Mp_{n}(F) \times \Mp_r(F)$-equivalent surjection
\[
I_{\psi}^{(n+r)}(\tau) \twoheadrightarrow \pi' \boxtimes \pi, 
\]
then there is an $\Mp_{n}(F)$-equivalent surjection
\[
(\mu' \circ {\det}_{n-r}) \rtimes \pi \twoheadrightarrow \pi'.
\]
\end{prop}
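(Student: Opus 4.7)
The plan is to invoke the Kudla--Rallis-type filtration of Lemma \ref{filt}:
\[
0 \subset I_0 \subset I_1 \subset \dots \subset I_r = I^{(n+r)}_\psi(\tau),\qquad R_t := I_t/I_{t-1}.
\]
The crucial reduction is to prove that $\Hom_{\Mp_n(F) \times \Mp_r(F)}(R_t, \pi' \boxtimes \pi) = 0$ for every $t \geq 1$. Granted this, applying $\Hom(-,\pi'\boxtimes\pi)$ to the short exact sequences $0 \to I_{t-1} \to I_t \to R_t \to 0$ gives successive injections on $\Hom$, so that a nonzero homomorphism from $I_r = I^{(n+r)}_\psi(\tau)$ restricts to a nonzero homomorphism from $I_0 = R_0$; moreover any surjection onto $\pi' \boxtimes \pi$ restricts to a surjection from $R_0$, since the higher filtration pieces contribute nothing to the image.

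For the vanishing step I would apply Frobenius reciprocity to rewrite the Hom at the Levi level: a nonzero element would force the character $\mu'|{\det}_t|^{\half{t+n-r}}$ of $\cl{\GL}_t$ to embed as a sub into the $\cl{\GL}_t$-isotypic part of the Jacquet module $R_{\cl{P}_t}(\pi)$. The normalized central exponent of this character equals $(t+n-r)/2$, which is $\geq 1/2$ whenever $t \geq 1$ and $n \geq r$. On the other hand, since $\pi$ is almost tempered with Langlands exponents $s_i \in (0, 1/2)$, Casselman's criterion (as recorded in Appendix \ref{s.jac}) forces every normalized central exponent appearing in $R_{\cl{P}_t}(\pi)$ to be strictly less than $1/2$ in absolute value. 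The two bounds are incompatible, producing the desired vanishing. Note that the borderline case $t=1,\,n=r$ (where the required exponent is exactly $1/2$) is handled precisely by the \emph{strict} inequality $s_1 < 1/2$ built into the definition of ``almost tempered''.

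Finally one identifies $\Hom(R_0, \pi'\boxtimes\pi) \cong \Hom_{\Mp_n}((\mu' \circ {\det}_{n-r}) \rtimes \pi, \pi')$. Since $R_0 = \Ind_{\cl{P}_{n-r} \times \Mp_r}^{\Mp_n \times \Mp_r}\bigl((\mu' \circ {\det}_{n-r}) \otimes C_c^\infty(\Mp_r)\bigr)$ with the outer copy of $\Mp_r$ acting on $C_c^\infty(\Mp_r)$ by $\alpha$-conjugated right translation, Frobenius reciprocity on the $\cl{P}_{n-r}$-induction, combined with the Peter--Weyl-type identity $\Hom_{\Mp_r \times \Mp_r}(C_c^\infty(\Mp_r), \sigma \boxtimes \pi) \cong \Hom_{\Mp_r}(\sigma, \pi)$ (where the $\alpha$-twist is absorbed using an MVW-type involution together with the self-duality of irreducible representations of $\Mp_r$), yields the stated isomorphism. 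Surjectivity transfers automatically: because $\pi$ is irreducible, every $\Mp_n \times \Mp_r$-submodule of $\pi' \boxtimes \pi$ has the form $V \boxtimes \pi$ for some $\Mp_n$-submodule $V \subset \pi'$, so the image of $\Phi \in \Hom(R_0, \pi'\boxtimes\pi)$ corresponds precisely to the image of its counterpart under the above isomorphism. The main obstacle is the vanishing step above: cleanly executing the exponent comparison on the metaplectic side (tracking the Weil-constant twists entering the character $\mu^{(t)}$) requires care, and one must invoke the strict version of Casselman's bound specific to almost-tempered $\pi$.
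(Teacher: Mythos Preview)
Your overall strategy matches the paper's: the filtration of Lemma~\ref{filt}, Bernstein's Frobenius reciprocity, and an exponent comparison to kill $R_t$ for $t\ge 1$. The execution of the exponent step contains a genuine error, however. Second adjointness applied to $\Hom(R_t,\pi'\boxtimes\pi)$ produces a map into the Jacquet module with respect to the \emph{opposite} parabolic, i.e.\ an embedding of the character into $R_{\overline{P_t}}(\pi)$, not $R_{\cl{P}_t}(\pi)$; and your assertion that Casselman bounds the exponents of $R_{\cl{P}_t}(\pi)$ by $1/2$ \emph{in absolute value} is false already for tempered $\pi$ (the Steinberg of $\Sp_1$ has $R_{P_1}(\St)=|\cdot|^{1}$). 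Casselman's bound is one-sided: almost temperedness of $\pi$ forces the $\cl{\GL}_t$-exponents of every subquotient of $R_{P_t}(\pi)$ to be $>-1/2$, hence those of $R_{\overline{P_t}}(\pi)$ to be $<1/2$, and it is \emph{this} upper bound that contradicts $(t+n-r)/2\ge 1/2$. The paper reaches the same contradiction by dualizing: it passes to $\Hom_{\cl{\GL}_t}(R_{P_t}(\pi^\vee),\mu'^{-1}|{\det}_t|^{-(t+n-r)/2})$ and invokes the lower bound $>-1/2$ for the almost tempered $\pi^\vee$.

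A minor further imprecision in your $R_0$ step: irreducible genuine representations of $\Mp_r$ are not literally self-dual. What is needed (and what the $\alpha$-twist in the action on $C_c^\infty(\Mp_r)$ in Lemma~\ref{filt} is set up to deliver) is the MVW relation $\pi^{\alpha}\cong\pi^{\vee}$; with this the identification $\Hom(R_0,\pi'\boxtimes\pi)\cong\Hom_{\Mp_n}((\mu'\circ{\det}_{n-r})\rtimes\pi,\pi')$ goes through exactly as in the paper.
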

\begin{proof}
By Lemma \ref{filt}, for some $0 \leq t \leq r$, we must have 
$\Hom_{\Mp_{n}(F) \times \Mp_r(F)}(R_t, \pi' \boxtimes \pi) \not= 0$.
By Bernstein's Frobenius reciprocity, this $\Hom$-space is isomorphic to
the space of $(\cl{M}_{t+(n-r)}(F) \times \cl{M}_{t}(F))$-equivalent maps
\begin{align*}
&(\mu'|{\det}_{t+(n-r)}|^{\half{t}} \boxtimes \mu'|{\det}_{t}|^{\half{t+(n-r)}})
\otimes C_c^\infty(\Mp_{r-t}(F))
\\&\rightarrow
R_{\overline{P_{t+(n-r)}(F)}}( (\mu' \circ {\det}_{n-r}) \rtimes \pi ) \boxtimes R_{\overline{P_{t}(F)}}(\pi'), 
\end{align*}
where $R_{\overline{P_{t}(F)}}(\pi)$ is the normalized Jacquet module of $\pi$
with respect to (the double cover of) the opposite parabolic subgroup $\overline{P_{t}(F)}$ to $P_t(F)$.
First, we assume that $t > 0$.
By taking the contragredient, we have 
\[
\Hom_{\cl{\GL}_t(F)}(R_{P_t(F)}(\pi^\vee), \mu'^{-1}|{\det}_{t}|^{\half{-t-(n-r)}}) \not= 0.
\]
Since $\pi^\vee$ is almost tempered, we must have $-t-(n-r) > -1$. 
This contradicts $n \geq r$ and $t > 0$.
Hence we must have $t = 0$, and 
\[
\Hom_{\cl{M}_{n-r}(F) \times \Mp_r(F)}\left(
\mu' \circ {\det}_{n-r} \otimes C_c^\infty(\Mp_{r}(F)), 
R_{\overline{P_{n-r}(F)}}(\pi') \boxtimes \pi
\right) \not= 0.
\]
Taking care of the action of $\Mp_r(F) \times \Mp_r(F)$ on $C_c^\infty(\Mp_{r}(F))$, 
we see that this $\Hom$-space is isomorphic to
\[
\Hom_{\cl{M}_{n-r}(F)}\left(
\mu' \circ {\det}_{n-r} \otimes \pi, R_{\overline{P_{n-r}(F)}}(\pi')
\right).
\]
Using Bernstein's Frobenius reciprocity, we obtain the first assertion.
\par

Suppose that there is an $\Mp_{n}(F) \times \Mp_r(F)$-equivalent surjection 
$I_r \twoheadrightarrow \pi' \boxtimes \pi$.
Then the image of $I_0$ is of the form $\pi'_0 \boxtimes \pi$ 
for some $\Mp_{n}(F)$-subrepresentation $\pi'_0$ of $\pi'$, 
and it induces a surjection $I_r/I_0 \twoheadrightarrow (\pi'/\pi'_0) \boxtimes \pi$.
The above argument implies $(\pi'/\pi'_0) = 0$ 
so that the restriction gives an $\Mp_{n}(F) \times \Mp_r(F)$-equivalent surjection 
$I_0 \twoheadrightarrow \pi' \boxtimes \pi$.
This induces an $\Mp_{n}(F)$-equivalent surjection
$(\mu' \circ {\det}_{n-r}) \rtimes \pi \twoheadrightarrow \pi'$.
\end{proof}

\begin{rem}\label{length}
Let $\pi$ and $\pi'$ be smooth representations of $\Mp_r(F)$ and $\Mp_n(F)$, respectively.
Suppose that $\pi$ is irreducible, and that there exists a surjection
\[
I_{\psi}^{(n+r)}(\tau) \twoheadrightarrow \pi' \boxtimes \pi.
\]
Then Lemma \ref{filt} gives a filtration 
\[
0 \subset \pi'_0 \subset \pi'_1 \subset \dots \subset \pi'_r = \pi'.
\]
More precisely, the restriction of the surjection to $I_t \subset I_{\psi}^{(n+r)}(\tau)$ 
defines a subrepresentation $\pi'_t$ of $\pi'$ so that the image of $I_t$ is $\pi'_t \boxtimes \pi$.
By a similar argument to Proposition \ref{det}, 
one can show that the successive quotients $\pi'_t/\pi'_{t-1}$ are of finite length.
Therefore $\pi'$ must also be of finite length.
\end{rem}

\subsection{Non-vanishing of local Miyawaki liftings}\label{(1)}
Next, we show Theorem \ref{howe} (1).
By Remark \ref{length}, we see that $\MM_{\psi, \tau}^{(n)}(\pi)$ is of finite length.
We show that $\MM_{\psi, \tau}^{(n)}(\pi) \not= 0$. 
In fact, we will prove that 
\[
\Hom_{\Mp_n(F) \times \Mp_r(F)}\left( 
I_{\psi}^{(n+r)}(\tau), ((\mu' \circ {\det}_{n-r}) \rtimes \pi) \boxtimes \pi 
\right) \not= 0
\]
for any irreducible representation $\pi$ of $\Mp_r(F)$.
\par

Using 
\[
\Sp_{2r}(F) \hookrightarrow \Sp_{n+r}(F),\ 
\begin{pmatrix}
A & B \\ C & D
\end{pmatrix}
\mapsto 
\left(
\begin{array}{cc|cc}
\1_{n-r} &&&\\
&A&&B\\
\hline
&&\1_{n-r}&\\
&C&&D
\end{array}
\right), 
\]
we regard $\Mp_{2r}(F)$ as a subgroup of $\Mp_{n+r}(F)$.
Then for $\Phi \in I_{\psi}^{(n+r)}(\tau)$, we have
\[
\Phi|_{\Mp_{2r}(F)} \in \Ind_{\cl{P}_{2r}(F)}^{\Mp_{2r}(F)}(\mu \chi_{-1}^{[\half{n+r}]} |{\det}_{2r}|^{\half{n-r}}).
\]
By the theory of the doubling method \cite{LR}, \cite{PSR}, 
a doubling zeta integral gives a nonzero element
\[
\ZZ_{r,r} \in 
\Hom_{\Mp_r(F) \times \Mp_r(F)}(
\Ind_{\cl{P}_{2r}(F)}^{\Mp_{2r}(F)}(\mu \chi_{-1}^{[\half{n+r}]} |{\det}_{2r}|^{\half{n-r}}) 
\otimes \left(\pi \boxtimes \pi\right), \C
).
\]
Note that the embedding $\iota \colon \Sp_r(F) \times \Sp_r(F) \hookrightarrow \Sp_{2r}(F)$
is not the usual one in this theory. 
For $\Phi \in I_{\psi}^{(n+r)}(\tau)$, $f \in (\mu^{-1}\chi_{-1}^{[-(n+r)/2]} \circ {\det}_{n-r}) \rtimes \pi$, 
and $v \in \pi$, 
we consider the integral 
\[
\ZZ_{r,n}(\Phi, f, v) = \int_{P_{n-r}(F) \bs \Sp_n(F)}
\ZZ_{r,r}\left( (\iota(g,1), \zeta)\Phi |_{\Mp_{2r}(F)} \otimes f(g,\zeta) \otimes v \right)
dg.
\]
\begin{prop}\label{zeta}
The integral $\ZZ_{r,n}(\Phi, f, v)$ is well-defined and gives a nonzero element in 
\[
\Hom_{\Mp_n(F) \times \Mp_r(F)}
\left( 
I_{\psi}^{(n+r)}(\tau) \otimes 
\left(
((\mu^{-1}\chi_{-1}^{[-(n+r)/2]} \circ {\det}_{n-r}) \rtimes \pi) 
\boxtimes \pi
\right), \C 
\right).
\]
\end{prop}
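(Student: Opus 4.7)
The plan is to verify four properties of $\ZZ_{r,n}(\Phi, f, v)$ in sequence: (i) the integrand descends to a well-defined function on the compact quotient $P_{n-r}(F) \bs \Sp_n(F)$; (ii) it converges absolutely; (iii) the resulting trilinear form is $(\Mp_n(F) \times \Mp_r(F))$-equivariant; (iv) it is not identically zero.

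For (i), I would carry out a direct computation of modular characters. For $(p, \zeta') \in \cl{P}_{n-r}(F)$, write $p = \bm(a)u$ with $a \in \GL_{n-r}(F)$ and $u \in N_{n-r}(F)$; then $\iota(p, 1) \in P_{n+r}(F)$ has Siegel-Levi part $\bm_{n+r}(\mathrm{diag}(a, \1_{2r}))$. Using the definition of $\mu^{(n+r)}$ together with Rao's cocycle relations (\S \ref{weil}), left-multiplication by $(\iota(p,1), \zeta')$ multiplies the $\Phi$-part of the integrand by $\mu'(\det a)\,|\det a|^{(n+r+1)/2}$ up to splitting signs, while $f$ transforms by $(\mu'^{-1} \circ {\det}_{n-r})(a)\,\delta_{P_{n-r}}^{1/2}(p)$ with matching signs. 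A root-by-root count gives $\delta_{P_{n-r}}^{1/2}(\bm(a)) = |\det a|^{(n+r+1)/2}$, so the two $|\det a|$-factors and the $\mu'$-factors cancel exactly, and the integrand descends. Independence of the choice of $\zeta \in \{\pm 1\}$ above $g$ follows because $\Phi$ and $f \otimes v$ both transform by $\zeta^{n+r}$ under the kernel, whose combined power is trivial.

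Claims (ii) and (iii) are then routine. Since $P_{n-r}(F) \bs \Sp_n(F)$ is a compact flag variety and the integrand is smooth, absolute convergence is automatic. Right-translation by $(h_n, \zeta_n) \in \Mp_n(F)$ is absorbed by the substitution $g \mapsto gh_n$. Right-translation by $(h_r, \zeta_r) \in \Mp_r(F)$ on $\Phi$ corresponds, through $\iota$, to the action of the second copy of $\Mp_r(F) \times \Mp_r(F) \subset \Mp_{2r}(F)$ on the restricted section $\iota(g, 1)\Phi|_{\Mp_{2r}(F)}$; by the defining $(\Mp_r(F) \times \Mp_r(F))$-equivariance of $\ZZ_{r,r}$ this is cancelled against the corresponding right-translation on $v \in \pi$.

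The main obstacle is (iv), the non-vanishing. My plan is a bump-function localization at $g = e$, where the integrand reduces to $\ZZ_{r,r}(\Phi|_{\Mp_{2r}(F)} \otimes f((e,1)) \otimes v)$. By the theory of the doubling method \cite{LR, PSR}, $\ZZ_{r,r}$ is a nonzero functional on $\Ind_{\cl{P}_{2r}(F)}^{\Mp_{2r}(F)}(\mu' |{\det}_{2r}|^{(n-r)/2}) \otimes \pi \otimes \pi$, so fix $\Phi_0$, $u_1, u_2 \in \pi$ with $\ZZ_{r,r}(\Phi_0 \otimes u_1 \otimes u_2) \neq 0$. Using the Iwasawa decomposition $\Mp_{n+r}(F) = \cl{P}_{n+r}(F) \cdot \cl{K}$ together with a bump function on a transversal slice to $\cl{K}' := \cl{K} \cap \Mp_{2r}(F) \hookrightarrow \cl{K}$, one extends $\Phi_0$ from a small compact-open neighborhood of $e$ in $\cl{K}'$ to a smooth section $\Phi \in I_\psi^{(n+r)}(\tau)$ whose restriction to $\Mp_{2r}(F)$ agrees with $\Phi_0$ on that neighborhood. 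Set $v = u_2$, and by the analogous construction choose $f$ supported on a small open $U \subset P_{n-r}(F) \bs \Sp_n(F)$ around the identity coset with $f((e,1)) = u_1$. By continuity of $g \mapsto \iota(g,1)\Phi|_{\Mp_{2r}(F)}$, for $U$ sufficiently small the integrand equals $\ZZ_{r,r}(\Phi_0 \otimes u_1 \otimes u_2) + o(1)$ on the support of $f$, so the integral evaluates to $\mathrm{vol}(U)\cdot\ZZ_{r,r}(\Phi_0 \otimes u_1 \otimes u_2) + o(1) \neq 0$.
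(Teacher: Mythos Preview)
Your argument is essentially correct and follows the paper's approach for well-definedness and equivariance. Two remarks:

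For part (i), your phrase ``the two $|\det a|$-factors cancel exactly'' is imprecise: the $|\det a|^{(n+r+1)/2}$ from $\Phi$ and the $\delta_{P_{n-r}}^{1/2}(\bm(a)) = |\det a|^{(n+r+1)/2}$ from $f$ do not cancel but combine to $|\det a|^{n+r+1} = \delta_{P_{n-r}}(\bm(a))$. The integrand does not descend to $P_{n-r}(F)\bs\Sp_n(F)$; rather it transforms by the full modulus character, which is exactly what is required for the quotient integral to make sense. This is the computation the paper carries out explicitly. Your conclusion is right, only the bookkeeping sentence is off.

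For part (iv), the paper's route is shorter than yours. Rather than constructing $\Phi$ by extending from a neighborhood of $e$ in $\cl{K}'$ via a bump function, the paper observes directly that the restriction map
\[
I_\psi^{(n+r)}(\tau) \longrightarrow \Ind_{\cl{P}_{2r}(F)}^{\Mp_{2r}(F)}\bigl(\mu\chi_{-1}^{[(n+r)/2]}|{\det}_{2r}|^{(n-r)/2}\bigr),\qquad \Phi \mapsto \Phi|_{\Mp_{2r}(F)},
\]
is surjective (immediate from the Iwasawa decomposition). Thus $\Phi|_{\Mp_{2r}(F)}$ can be taken to be any prescribed $\Phi_0$, and non-vanishing of $\ZZ_{r,n}$ then follows from non-vanishing of $\ZZ_{r,r}$ together with the freedom to localize $f$ near the identity coset. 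Your localization argument is a valid way to finish, but in the non-archimedean setting the ``continuity $+ o(1)$'' language is unnecessary: by local constancy, for $U$ small enough the integrand is exactly constant on the support of $f$, and the integral equals $\vol(U)\cdot\ZZ_{r,r}(\Phi_0\otimes u_1\otimes u_2)$ on the nose.
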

\begin{proof}
Since 
\[
h \cdot \iota(u,1) \cdot h^{-1} = 
\left(
\begin{array}{cc|cc}
\1_{n-r} & * & * & * \\
0 & \1_{2r} & * & 0 \\
\hline
0 & 0 & \1_{n-r} & 0 \\
0 & 0 & * & \1_{2r}
\end{array}
\right)
\]
for $u \in N_{n-r}(F) \subset \Sp_{n}(F)$ and $h \in \Mp_{2r}(F)$, 
we have $\Phi(h \cdot (\iota(ug,1), \zeta)) = \Phi(h \cdot (\iota(g,1), \zeta))$.
Similarly, we have
\begin{align*}
&\Phi(h \cdot \left( \iota(
\left(
\begin{array}{cc|cc}
a & 0 & 0 & 0 \\
0 & \1_{r} & 0 & 0 \\
\hline
0 & 0 & {}^ta^{-1} & 0 \\
0 & 0 & 0 & \1_{r}
\end{array}
\right)
g,1), \zeta \right))
\otimes
f(
\left(
\begin{array}{cc|cc}
a & 0 & 0 & 0 \\
0 & \1_{r} & 0 & 0 \\
\hline
0 & 0 & {}^ta^{-1} & 0 \\
0 & 0 & 0 & \1_{r}
\end{array}
\right)
g,\zeta)
\\&= 
|\det a|^{\half{n+r+1}}\Phi(h \cdot (\iota(g,1), \zeta)) \otimes f(g,\zeta)
\end{align*}
for $a \in \GL_{n-r}(F)$.
Hence we have
\begin{align*}
&\ZZ_{r,r}\left( (\iota(pg,1), \zeta)\Phi |_{\Mp_{2r}(F)} \otimes f(pg,\zeta) \otimes v \right)
\\&= \delta_{P_{n-r}}(p) \cdot 
\ZZ_{r,r}\left( (\iota(g,1), \zeta)\Phi |_{\Mp_{2r}(F)} \otimes f(g,\zeta) \otimes v \right)
\end{align*}
for $p \in P_{n-r}(F) \subset \Sp_{n}(F)$, where $ \delta_{P_{n-r}}$ is the modulus character of $P_{n-r}(F)$.
This shows that $\ZZ_{r,n}(\Phi, f, v)$ is well-defined.
\par

It is easy to see that
\[
\ZZ_{r,n} \in \Hom_{\Mp_n(F) \times \Mp_r(F)}
\left( 
I_{\psi}^{(n+r)}(\tau) \otimes 
\left(
((\mu^{-1}\chi_{-1}^{[-(n+r)/2]} \circ {\det}_{n-r}) \rtimes \pi) \boxtimes \pi
\right), \C 
\right).
\]
Since 
\[
I_{\psi}^{(n+r)}(\tau) \rightarrow 
\Ind_{\cl{P}_{2r}(F)}^{\Mp_{2r}(F)}(\mu \chi_{-1}^{[\half{n+r}]} |{\det}_{2r}|^{\half{n-r}}) ,\ 
\Phi \mapsto \Phi|_{\Mp_{2r}(F)}
\]
is surjective, 
we see that $\ZZ_{r,n} \not= 0$.
\end{proof}

Applying Proposition \ref{zeta} for $\pi^\vee$, 
we conclude that 
\[
\Hom_{\Mp_n(F) \times \Mp_r(F)}(I_{\psi}^{(n+r)}(\tau), ((\mu' \circ {\det}_{n-r}) \rtimes \pi) \boxtimes \pi) 
\not= 0
\]
since 
$((\mu' \circ {\det}_{n-r}) \rtimes \pi)^\vee 
\cong (\mu^{-1}\chi_{-1}^{[-(n+r)/2]} \circ {\det}_{n-r}) \rtimes \pi^\vee$.

\subsection{Irreducibility of degenerate induced representations}\label{(2)}
To show Theorem \ref{howe} (2), 
we need to prove the irreducibility of the induced representation $(\mu \circ {\det}_{n-r}) \rtimes \pi$, 
where $\pi$ is an irreducible almost tempered representation of $\Mp_r(F)$, 
and $\mu$ is a unitary character of $F^\times$.
When $\pi$ is supercuspidal and $\delta=0$, 
the irreducibility of $(\mu \circ {\det}_{n-r}) \rtimes \pi$ was proven by Tadi{\'c} \cite[Theorem 9.1]{T2}, 
and its Langlands data was given by Jantzen \cite{J}.
We imitate their proofs.
\par

We first show the irreducibility of other induced representations.
Let  $\pi$ be an irreducible representation of $\Mp_r(F)$.
We set $\delta = 1$ if $\pi$ is genuine, and $\delta = 0$ otherwise.
For a smooth representation $\Pi$ of $\Mp_n(F)$, 
we write $\semi(\Pi)$ for the semisimplification of $\Pi$.

\begin{prop}\label{lk}
Let $\mu$ be a unitary character of $F^\times$, 
and $\pi$ be an irreducible representation of $\Mp_r(F)$.
Suppose that the $L$-parameter $\phi$ for $\pi$ is of good parity.
Then for $l \geq k-\delta$ with $l \equiv k-\delta \bmod 2$, 
the induced representation
\[
\mu^{-1}|{\det}_k|^{\half{l}} \rtimes \pi
\]
is irreducible.
\end{prop}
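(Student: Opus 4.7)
The plan is to adapt Tadi{\'c}'s proof of the supercuspidal case \cite[Theorem 9.1]{T2}, by reducing the irreducibility of $I := \mu^{-1}|{\det}_k|^{l/2} \rtimes \pi$ to a multiplicity-one statement on Jacquet modules. The first observation is that the good-parity hypothesis on the $L$-parameter $\phi$ actually forces $\pi$ to be tempered: any irreducible summand $\rho|\cdot|^s \boxtimes S_d$ of $\phi$ is self-dual only when $\rho$ is self-dual and $s=0$, so good parity entails that every constituent of $\phi$ is unitary self-dual, whence $\pi$ is tempered by the Langlands classification. Thus I may assume $\pi$ is tempered throughout.

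For such $\pi$, I would compute the Jacquet module $R_{\overline{P_k}(F)}(I)$ via the geometric lemma in the metaplectic form developed in Appendix \ref{s.jac}. The formula expresses the Jacquet module as a sum over ways to split $\mu^{-1}|{\det}_k|^{l/2}$ into two pieces and combine them with Siegel-type Jacquet modules of $\pi$. The central combinatorial claim is that $\mu^{-1}|{\det}_k|^{l/2} \otimes \pi$ occurs with multiplicity exactly one: because $\pi$ is tempered, its cuspidal support consists of unitary self-dual representations (on the $\GL$-factors), whereas our character $\mu^{-1}|\cdot|^{l/2}$ has real exponent $l/2 \geq (k-\delta)/2 > 0$ and is strictly non-unitary, so no nontrivial regrouping of cuspidal support of $\pi$ with pieces of $\mu^{-1}|{\det}_k|^{l/2}$ can reproduce this character — only the ``trivial'' splitting (in which one takes the full character and the trivial Jacquet module of $\pi$) contributes.

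Finally, irreducibility is obtained by combining the multiplicity-one result with the standard intertwining operator $M \colon \mu^{-1}|{\det}_k|^{l/2} \rtimes \pi \to \mu|{\det}_k|^{-l/2} \rtimes \pi$: applying the same Jacquet-module analysis to the target shows that both representations have a unique irreducible subquotient containing $\mu^{-1}|{\det}_k|^{l/2} \otimes \pi$ in its Jacquet module, and the existence of $M$ identifies the unique irreducible quotient of the source (its Langlands quotient) with the unique irreducible subrepresentation of the target, forcing $I$ to be irreducible. The main obstacle will be executing the Jacquet-module computation rigorously in the metaplectic setting: the Weil-constant twists in Rao's cocycle feed into $\cl{M}_k(F)$-characters through $\mu^{(k)}$, and tracking them through the successive quotients of the geometric-lemma filtration requires care. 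The parity condition $l \equiv k-\delta \bmod 2$ is precisely what ensures that every constituent of the filtration remains consistently on the same (genuine or non-genuine) side of $\Mp$, so that the multiplicity-one argument actually applies.
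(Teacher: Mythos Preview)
Your multiplicity-one claim rests on the assertion that ``because $\pi$ is tempered, its cuspidal support consists of unitary self-dual representations.'' This is false: a tempered representation with good-parity parameter can have cuspidal support with nonzero exponents (already the Steinberg of $\Sp_1$ has $|\cdot|^{\pm1}$ in its support). The correct obstruction is parity, not unitarity: by Corollary~\ref{cor3}, any $\chi|\cdot|^{\alpha}$ occurring in $R_{P_1}(\pi)$ has $2\alpha\equiv\delta\bmod 2$, whereas the exponents in $\mu^{-1}|{\det}_k|^{l/2}$ satisfy $2\alpha\equiv l-k+1\equiv 1-\delta\bmod 2$. This is how the hypothesis $l\equiv k-\delta\bmod 2$ actually enters, and it salvages your multiplicity-one statement (with one exception noted below).

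The more serious gap is your final step. Multiplicity one tells you that $I$ has a unique irreducible subrepresentation $\sigma$ (carrying $\mu^{-1}|{\det}_k|^{l/2}\otimes\pi$ in its $R_{P_k}$), and Langlands gives a unique irreducible quotient $J$. But the intertwining operator $M$ only shows that $J$ embeds in $\mu|{\det}_k|^{-l/2}\rtimes\pi$, hence that $R_{P_k}(J)$ contains $\mu|{\det}_k|^{-l/2}\otimes\pi$ --- not $\mu^{-1}|{\det}_k|^{l/2}\otimes\pi$. You have given no reason why $\sigma=J$, and this is exactly the content of the proof. The paper handles it by induction on $k$: working with $R_{P_1}$ (using the inductive hypothesis to keep the two extreme constituents irreducible and of multiplicity one), it then locates a specific irreducible constituent of $R_{P_k}$, namely $\bigl(\mu|{\det}_{k-1}|^{-(l+1)/2}\times\mu^{-1}|\cdot|^{(l-k+1)/2}\bigr)\boxtimes\pi$, whose further Jacquet module along $P_1$ contributes to \emph{both} extremes, forcing them into a single irreducible subquotient. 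Checking that this $\GL_k$-piece is irreducible uses Zelevinsky's linkedness criterion, and there is an exceptional case ($\delta=1$, $k=2$, $l=1$, $\mu^2=\1$) where it fails and a separate argument is required.
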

\begin{proof}
We prove the proposition by induction on $k$.
When $k=1$ and $l>0$, this is Corollary \ref{cor2}.
When $k=1$ and $l=0$ so that $\delta=1$, 
the irreducibility of $\mu^{-1} \rtimes \pi$ follows from Theorem \ref{LLC} (6).
\par

Suppose that $k>1$
and that $\mu^{-1}|{\det}_k|^{\half{l}} \rtimes \pi$ is reducible.
Let $\sigma_1, \dots, \sigma_t$ be irreducible representations of $\Mp_{k+r}(F)$ such that
$\semi(\mu^{-1}|{\det}_k|^{\half{l}} \rtimes \pi) = \sigma_1 \oplus \dots \oplus \sigma_t$, 
and that $\sigma_1$ is a submodule 
and $\sigma_t$ is a quotient of $\mu^{-1}|{\det}_k|^{\half{l}} \rtimes \pi$:
\[
\sigma_1 \hookrightarrow \mu^{-1}|{\det}_k|^{\half{l}} \rtimes \pi \twoheadrightarrow \sigma_t.
\]
Since the Langlands quotient appears in the standard module with multiplicity one as a subquotients, 
we have $\sigma_i \not\cong \sigma_t$ for $i \not= t$.
\par

By Proposition \ref{jacquet}, we have
\begin{align*}
\semi R_{P_1(F)}(\mu^{-1}|{\det}_k|^{\half{l}} \rtimes \pi)
&= \mu|\cdot|^{-\half{l+k-1}} \boxtimes \left( \mu^{-1}|{\det}_{k-1}|^{\half{l-1}} \rtimes \pi \right)
\\&
\oplus \mu^{-1}|\cdot|^{\half{l-k+1}} \boxtimes \left( \mu^{-1}|{\det}_{k-1}|^\half{l+1} \rtimes \pi \right)
\\&
\oplus \semi\left( 
\bigoplus_{\lam} \chi_\lam |\cdot|^{\alpha_\lam} 
\boxtimes \left( \mu^{-1}|{\det}_k|^{\half{l}} \rtimes \pi_\lam \right) 
\right), 
\end{align*}
where $\semi R_{P_1(F)}(\pi) = \oplus_\lam \chi_\lam |\cdot|^{\alpha_\lam} \boxtimes \pi_\lam$
with $\chi_\lam$ unitary and $\alpha_\lam \in \R$.
By the induction hypothesis, we see that 
$\mu^{-1}|{\det}_{k-1}|^{(l \pm 1)/2} \rtimes \pi$ is irreducible.
By Corollary \ref{cor3}, we have $2\alpha_\lam \in \Z$ and $2\alpha_\lam \equiv l-k \bmod 2$.
In particular, the first two summands appear in $\semi R_{P_1(F)}(\mu^{-1}|{\det}_k|^{\half{l}} \rtimes \pi)$
with multiplicity one.
\par

Since $\sigma_1 \hookrightarrow \mu^{-1}|{\det}_k|^{\half{l}} \rtimes \pi$ 
and $\sigma_t \hookrightarrow \mu|{\det}_k|^{-\half{l}} \rtimes \pi$, 
we have
\begin{align*}
\semi R_{P_1(F)}(\sigma_i) 
\supset \mu^{-1}|\cdot|^{\half{l-k+1}} \boxtimes \left( \mu^{-1}|{\det}_{k-1}|^\half{l+1} \rtimes \pi \right)
& \iff i = 1, \\
\semi R_{P_1(F)}(\sigma_i)
\supset \mu|\cdot|^{-\half{l+k-1}} \boxtimes \left( \mu^{-1}|{\det}_{k-1}|^{\half{l-1}} \rtimes \pi \right)
& \iff i = t.
\end{align*}
\par

On the other hand, by Proposition \ref{jacquet}, we have
\[
\semi R_{P_k(F)}(\mu^{-1}|{\det}_k|^{\half{l}} \rtimes \pi) \supset 
\semi\left( \mu|{\det}_{k-1}|^{-\half{l+1}} \times \mu^{-1}|\cdot|^{\half{l-k+1}} \right) \boxtimes \pi.
\]
Note that $\mu|{\det}_{k-1}|^{-\half{l+1}} = \pair{\mu; -(l+k-1)/2, \dots, (-l+k-3)/2}$
and $\mu^{-1}|\cdot|^{\half{l-k+1}} = \pair{\mu^{-1}; (l-k+1)/2}$
in the notation in Appendix \ref{GL}.
Unless $\delta=1$, $l=k-1$ and $\mu^{-1} = \mu$, 
the representation $\mu|{\det}_{k-1}|^{-\half{l+1}} \times \mu^{-1}|\cdot|^{\half{l-k+1}}$ 
is irreducible by Theorem \ref{zel}.
When it is irreducible, 
there exists $i$ such that 
\[
\semi R_{P_k(F)}(\sigma_i) \supset 
\left( \mu|{\det}_{k-1}|^{-\half{l+1}} \times \mu^{-1}|\cdot|^{\half{l-k+1}} \right) \boxtimes \pi.
\]
Considering the Jacquet module with respect to $P_k \cap P_1$, 
we see that 
\[
\semi R_{P_1(F)}(\sigma_i) 
\supset 
\mu^{-1}|\cdot|^{\half{l-k+1}} \boxtimes \left( \mu^{-1}|{\det}_{k-1}|^\half{l+1} \rtimes \pi \right)
\oplus 
\mu|\cdot|^{-\half{l+k-1}} \boxtimes \left( \mu^{-1}|{\det}_{k-1}|^{\half{l-1}} \rtimes \pi \right).
\]
This implies that $i=1$ and $i=t$.
We obtain a contradiction.
\par

Similarly, when $k \geq 3$, we have
\[
\semi R_{P_k(F)}(\mu^{-1}|{\det}_k|^{\half{l}} \rtimes \pi) \supset 
\left( \mu|{\det}_{k-2}|^{-\half{l+2}} \times \mu^{-1}|{\det}_2|^{\half{l-k+2}} \right) \boxtimes \pi.
\]
The right hand side is irreducible by Theorem \ref{zel}
since $\mu|{\det}_{k-2}|^{-\half{l+2}} = \pair{\mu; -(l+k-1)/2, \dots, (-l+k-5)/2}$
and $\mu^{-1}|{\det}_2|^{\half{l-k+2}} = \pair{\mu^{-1}; (l-k+1)/2, (l-k+3)/2}$.
By the same argument, we obtain a contradiction.
\par

Therefore, the proposition is reduced to the case 
where $\delta=1$, $k=l+1=2$ and $\mu^{-1} = \mu$.
We treat this case in the following lemma.
\end{proof}

\begin{lem}
Let $\pi$ be an irreducible genuine representation of $\Mp_r(F)$ so that $\delta = 1$.
Suppose that $\mu$ is a quadratic character, and that
the $L$-parameter $\phi$ for $\pi$ is of good parity.
Then the induced representation
\[
\mu|{\det}_2|^{\half{1}} \rtimes \pi
\]
is irreducible.
\end{lem}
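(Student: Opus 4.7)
The plan is to proceed as in the proof of Proposition \ref{lk}: assume for contradiction that $\Pi := \mu|{\det}_2|^{\half{1}} \rtimes \pi$ is reducible, choose an irreducible subrepresentation $\sigma_1 \hookrightarrow \Pi$ and an irreducible quotient $\Pi \twoheadrightarrow \sigma_t$ with $\sigma_1 \not\cong \sigma_t$, and derive a contradiction from the structure of the Jacquet modules. The step of Proposition \ref{lk} that jammed was the reducibility of $\mu|\cdot|^{-1} \times \mu$ (which fails here precisely because $\mu = \mu^{-1}$), so my plan is to replace the failing Zelevinsky-product step by direct use of the short exact sequence of $\cl{\GL}_2(F)$-modules
\[
0 \longrightarrow \St_2 \otimes \mu|\cdot|^{\half{1}} \longrightarrow \mu|\cdot|^{1} \times \mu \longrightarrow \mu|{\det}_2|^{\half{1}} \longrightarrow 0,
\]
which yields, after applying $\rtimes \pi$, an exact sequence $0 \to \Sigma' \to \Sigma \to \Pi \to 0$ on $\Mp_{r+2}(F)$.

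The first step is to bound the length of $\Sigma$. Reorganising $\Sigma = \mu|\cdot|^{1} \rtimes (\mu \rtimes \pi)$ and invoking Theorem \ref{LLC} (6), the inner induction $\mu \rtimes \pi$ is semisimple of finite length with each constituent of good parity. Since the exponent $1$ is strictly greater than $\half{1}$, Corollary \ref{cor2} shows that $\mu|\cdot|^{1} \rtimes \sigma'$ is irreducible for each irreducible constituent $\sigma'$ of $\mu \rtimes \pi$, so $\Sigma$ has length equal to the length of $\mu \rtimes \pi$. A parallel analysis bounds the length of $\Sigma'$, forcing $\Pi$ itself to have small length.

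The second step is to compute $R_{P_2}(\Pi)$ explicitly using Proposition \ref{jacquet}. The Jacquet of the $\GL_2$-character $\mu|{\det}_2|^{\half{1}}$ along its Borel is the single anti-dominant term $\mu \otimes \mu|\cdot|^{1}$, so the geometric-lemma sum for $R_{P_2}(\Pi)$ is short. From this short list I would isolate a multiplicity-one constituent that, by Frobenius reciprocity, must occur in $R_{P_2}(\sigma_1)$; running the dual analysis on the contragredient via the MVW involution, the same constituent must also occur in $R_{P_2}(\sigma_t)$. Multiplicity one then forces $\sigma_1 \cong \sigma_t$, contradicting the choice of $\sigma_1, \sigma_t$.

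The main obstacle is precisely the multiplicity bookkeeping in this last step: in the exceptional regime $\mu = \mu^{-1}$, $\delta = 1$, extra coincidences in the geometric-lemma sum threaten to inflate the multiplicity of the key constituent and thereby break the argument. Overcoming this will require combining the good-parity hypothesis on $\phi$ with the $k=1$ case of Proposition \ref{lk} (already established) to rule out the spurious shuffle contributions, after which the contradiction proceeds as in the main proof.
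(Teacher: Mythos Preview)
Your Step~1 contains a genuine error. By Theorem~\ref{LLC}~(6) the induction $\mu \rtimes \pi$ is indeed irreducible, but its $L$-parameter is $\mu \oplus \phi \oplus \mu$, which is \emph{not} of good parity in the genuine case: the one-dimensional summand $\mu$ is orthogonal, not symplectic. Hence Corollary~\ref{cor2} does not apply to $\mu|\cdot|^{1} \rtimes (\mu \rtimes \pi)$. Worse, the conclusion you draw---that $\Sigma = \mu|\cdot|^{1} \times \mu \rtimes \pi$ has length equal to the length of $\mu \rtimes \pi$, i.e.\ length one---is simply false: your own exact sequence $0 \to \Sigma' \to \Sigma \to \Pi \to 0$ with $\Sigma' = \mu|\cdot|^{\half{1}}\St_2 \rtimes \pi \neq 0$ and $\Pi \neq 0$ already shows $\Sigma$ is reducible. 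So the length bound collapses and yields no information about $\Pi$.

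Your Step~2 is too vague to evaluate, and the obstacle you flag is real: the natural ``bridge'' constituent $(\mu|\cdot|^{-1} \times \mu) \boxtimes \pi$ in $\semi R_{P_2}(\Pi)$ is reducible here, and the multiplicity-one term $\mu|{\det}_2|^{\half{1}} \boxtimes \pi$ is only forced into $R_{P_2}(\sigma_1)$, not into $R_{P_2}(\sigma_t)$. The MVW/contragredient symmetry pairs it with $\mu|{\det}_2|^{-\half{1}} \boxtimes \pi$, which has multiplicity two, so the argument does not close.

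The paper avoids all of this by going \emph{up} one more rank rather than staying at $r+2$. It first proves that $(\mu \circ {\det}_3) \rtimes \pi$ is irreducible (via a chain of surjections from the standard module $\tau|\cdot|^1 \times \mu \rtimes \pi$ together with unitarity of $\pi$), then works inside $\mu|\cdot|^{1} \times \mu|{\det}_2|^{\half{1}} \rtimes \pi$ on $\Mp_{r+3}(F)$. Comparing the $\mu|\cdot|^{-1}$-isotypic parts of $R_{P_1}$ against those of the now-known-irreducible $(\mu \circ {\det}_3) \rtimes \pi$ forces any irreducible submodule $\sigma \subset \Pi$ to equal $\mu|\cdot|^{\half{1}}\St_2 \rtimes \pi$, which is then ruled out because its $R_{P_1}$ contains $\mu|\cdot|^{1} \boxtimes (\mu \rtimes \pi)$, a term absent from $R_{P_1}(\Pi)$. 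The passage to rank $r+3$ is the missing idea.
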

\begin{proof}
First, we show that $(\mu \circ {\det}_3) \rtimes \pi$ is irreducible in this case.
By Theorem \ref{zel} and Corollary \ref{cor2}, 
we have
\begin{align*}
\tau|\cdot| \times \mu \rtimes \pi
&\cong 
\mu|\cdot| \times \mu|\cdot| \times \mu \rtimes \pi
\\&\twoheadrightarrow
\mu|\cdot| \times \mu|{\det}_2|^{\half{1}} \rtimes \pi 
\\&\cong 
\mu|{\det}_2|^{\half{1}} \times \mu|\cdot| \rtimes \pi
\\&\cong 
\mu|{\det}_2|^{\half{1}} \times \mu|\cdot|^{-1} \rtimes \pi
\\&\twoheadrightarrow
(\mu \circ {\det}_3) \rtimes \pi.
\end{align*}
Since $\pi$ is unitary, $(\mu \circ {\det}_3) \rtimes \pi$ is semisimple. 
Therefore we deduce that $(\mu \circ {\det}_3) \rtimes \pi \cong J(\tau|\cdot|, \mu \rtimes \pi)$.
\par

Now we start to prove the lemma.
Suppose for the sake of contradiction that $\mu|{\det}_2|^{\half{1}} \rtimes \pi$ is reducible.
Take an irreducible submodule of $\sigma$, 
and an irreducible quotient $\sigma'$ of $\mu|{\det}_2|^{\half{1}} \rtimes \pi$: 
\[
\sigma \hookrightarrow 
\mu|{\det}_2|^{\half{1}} \rtimes \pi
\twoheadrightarrow \sigma'.
\]
Then $\sigma \not= \sigma'$.
Moreover, we have
\[
\mu|\cdot|^{1} \rtimes \sigma \hookrightarrow 
\mu|\cdot|^{1} \times \mu|{\det}_2|^{\half{1}} \rtimes \pi
\twoheadrightarrow \mu|\cdot|^{1} \rtimes \sigma'.
\]
Since 
$\tau|\cdot| \times \mu \rtimes \pi 
\twoheadrightarrow \mu|\cdot|^{1} \times \mu|{\det}_2|^{\half{1}} \rtimes \pi$, 
we have $\mu|\cdot|^{1} \rtimes \sigma' \twoheadrightarrow (\mu \circ \det_3) \rtimes \pi$.
Moreover, since any standard module has its Langlands quotient with multiplicity one as a subquotient, 
we have
\[
(\mu \circ {\det}_3) \rtimes \pi \not\subset \semi(\mu|\cdot|^{1} \rtimes \sigma).
\]
\par

Using proposition \ref{jacquet}, we compute the Jacquet modules
$\semi R_{P_1(F)}(\mu|\cdot|^{1} \times \mu|{\det}_2|^{\half{1}} \rtimes \pi)$
and $\semi R_{P_1(F)}((\mu \circ {\det}_3) \rtimes \pi)$.
Then the sums of all irreducible representations of the form $\mu|\cdot|^{-1} \boxtimes \Sigma$
which appear in $\semi R_{P_1(F)}(\mu|\cdot|^{1} \times \mu|{\det}_2|^{\half{1}} \rtimes \pi)$
and $\semi R_{P_1(F)}((\mu \circ {\det}_3) \rtimes \pi)$
are isomorphic to
\begin{align*}
&\semi \left(\mu|\cdot|^{-1} \boxtimes \mu|{\det}_2|^{\half{1}} \rtimes \pi \right)
\oplus 
\semi \left(\mu|\cdot|^{-1} \boxtimes (\mu|\cdot|^{1} \times \mu) \rtimes \pi \right)
\end{align*}
and
\[
\semi \left(\mu|\cdot|^{-1} \boxtimes \mu|{\det}_2|^{-\half{1}} \rtimes \pi \right)
\oplus 
\semi \left(\mu|\cdot|^{-1} \boxtimes \mu|{\det}_2|^{\half{1}} \rtimes \pi \right), 
\]
respectively.
Hence the difference is 
\[
\mu|\cdot|^{-1} \boxtimes \mu|\cdot|^{\half{1}}\St_2 \rtimes \pi, 
\]
where $\mu|\cdot|^{\half{1}}\St_2 = \pair{\mu; 1, 0}$ is a Steinberg representation.
By Proposition \ref{GPR}, we see that $\mu|\cdot|^{\half{1}}\St_2 \rtimes \pi$ is irreducible.
Since
\[
\semi R_{P_1(F)}(\mu|\cdot|^{1} \rtimes \sigma) \supset \mu|\cdot|^{-1} \boxtimes \sigma, 
\]
we deduce that $\sigma = \mu|\cdot|^{\half{1}}\St_2 \rtimes \pi$.
However, we have
\[
\semi R_{P_1(F)}(\mu|\cdot|^{\half{1}}\St_2 \rtimes \pi)
\supset 
\mu|\cdot|^1 \boxtimes \mu \rtimes \pi
\not\subset
\semi R_{P_1(F)}(\mu|{\det}_2|^{\half{1}} \rtimes \pi).
\]
Therefore, $\mu|\cdot|^{\half{1}}\St_2 \rtimes \pi \not\subset \mu|{\det}_2|^{\half{1}} \rtimes \pi$, 
which is a contradiction.
\end{proof}

Now we show the following.
\begin{thm}
Let $\pi$ be an irreducible almost tempered representation of $\Mp_r(F)$, 
and $\mu$ be a unitary character of $F^\times$.
Set $\tau = \mu \times \mu^{-1}$.
Then there exists a surjection
\[
\left\{
\begin{aligned}
\tau|\cdot|^{k-\half{1}} \times \tau|\cdot|^{k-\half{3}} \times \dots \times \tau|\cdot|^{\half{1}} \rtimes \pi
&\twoheadrightarrow (\mu \circ {\det}_{2k}) \rtimes \pi
\iif \delta = 0, \\
\tau|\cdot|^{k-1} \times \tau|\cdot|^{k-2} \times \dots \times \tau|\cdot|^{1}
\times \mu \rtimes \pi 
&\twoheadrightarrow (\mu \circ {\det}_{2k-1}) \rtimes \pi
\iif \delta = 1.
\end{aligned}
\right.
\]
In particular, if $\pi$ is unitary, then $(\mu \circ {\det}_{2k-\delta}) \rtimes \pi$ is irreducible. 
\end{thm}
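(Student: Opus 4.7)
The plan is to prove the surjection by induction on $k$ for each value of $\delta \in \{0,1\}$, and then derive the irreducibility for unitary $\pi$ as a formal consequence. The base case ($k=0$ for $\delta=0$, or $k=1$ for $\delta=1$, where the statement is tautological) is immediate. For the inductive step with $\delta=0$, the induction hypothesis gives a surjection $\tau|\cdot|^{k-\half{3}} \times \dots \times \tau|\cdot|^{\half{1}} \rtimes \pi \twoheadrightarrow (\mu \circ {\det}_{2(k-1)}) \rtimes \pi$. By exactness of parabolic induction, inducing with $\tau|\cdot|^{k-\half{1}}$ on the left reduces the problem to constructing a surjection
\[
\tau|\cdot|^{k-\half{1}} \times (\mu \circ {\det}_{2(k-1)}) \rtimes \pi \twoheadrightarrow (\mu \circ {\det}_{2k}) \rtimes \pi,
\]
and the $\delta=1$ case reduces analogously.

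For the generic case $\mu^2 \neq 1$, I would expand $\tau|\cdot|^{k-\half{1}} = \mu|\cdot|^{k-\half{1}} \times \mu^{-1}|\cdot|^{k-\half{1}}$, then swap $\mu^{-1}|\cdot|^{k-\half{1}}$ past $\mu \circ {\det}_{2(k-1)}$ using Theorem \ref{zel} (these are segments based on the distinct cuspidal characters $\mu^{-1}$ and $\mu$, hence unlinked), and apply Corollary \ref{cor2} on the metaplectic side to convert $\mu^{-1}|\cdot|^{k-\half{1}} \rtimes \pi$ into $\mu|\cdot|^{-(k-\half{1})} \rtimes \pi$. This produces the representation $\mu|\cdot|^{k-\half{1}} \times (\mu \circ {\det}_{2(k-1)}) \times \mu|\cdot|^{-(k-\half{1})} \rtimes \pi$, whose $\GL_{2k}$-factor is already in Langlands form (exponents $k-\half{1} > 0 > -(k-\half{1})$) and admits $\mu \circ {\det}_{2k}$ as its unique irreducible quotient, corresponding to the union segment $\{\mu|\cdot|^{-\half{2k-1}}, \dots, \mu|\cdot|^{\half{2k-1}}\}$. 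Exactness of parabolic induction from $\GL_{2k} \rtimes \Mp_r$ then finishes the inductive step.

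The hard part will be the quadratic case $\mu^2 = 1$: here $\mu^{-1}|\cdot|^{k-\half{1}} = \mu|\cdot|^{k-\half{1}}$, and the segments $\{\mu|\cdot|^{k-\half{1}}\}$ and $\{\mu|\cdot|^{-\half{2k-3}}, \dots, \mu|\cdot|^{\half{2k-3}}\}$ are genuinely linked, so the direct swap above fails. Following the strategy of the preceding lemma, I would first apply the general linear surjection $\mu|\cdot|^{k-\half{1}} \times (\mu \circ {\det}_{2(k-1)}) \twoheadrightarrow \mu|{\det}_{2k-1}|^{\half{1}}$ (the Langlands quotient of this linked induction, corresponding to the union segment) on the inner pair, then use Theorem \ref{zel} to swap $\mu|\cdot|^{k-\half{1}} \times \mu|{\det}_{2k-1}|^{\half{1}} \cong \mu|{\det}_{2k-1}|^{\half{1}} \times \mu|\cdot|^{k-\half{1}}$ (now unlinked, since the singleton segment $\{\mu|\cdot|^{k-\half{1}}\}$ is contained in that of $\mu|{\det}_{2k-1}|^{\half{1}}$), apply Corollary \ref{cor2} to arrive at $\mu|{\det}_{2k-1}|^{\half{1}} \times \mu|\cdot|^{-(k-\half{1})} \rtimes \pi$, and conclude via the Langlands surjection $\mu|{\det}_{2k-1}|^{\half{1}} \times \mu|\cdot|^{-(k-\half{1})} \twoheadrightarrow \mu \circ {\det}_{2k}$.

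Finally, the irreducibility of $(\mu \circ {\det}_{2k-\delta}) \rtimes \pi$ when $\pi$ is unitary is a formal consequence of the surjection. Since $\mu \circ {\det}_{2k-\delta}$ is a unitary character and $\pi$ is unitary, the representation $(\mu \circ {\det}_{2k-\delta}) \rtimes \pi$ is unitarily induced from a unitary representation, hence itself unitary and therefore semisimple of finite length. By the surjection from the standard module $M$, every irreducible summand of $(\mu \circ {\det}_{2k-\delta}) \rtimes \pi$ is a quotient of $M$, hence isomorphic to the Langlands quotient $L$ of $M$; thus $(\mu \circ {\det}_{2k-\delta}) \rtimes \pi \cong L^{\oplus m}$ for some $m \geq 1$. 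The standard multiplicity-one fact $\dim \Hom(M, L) = 1$ from Langlands classification then forces $m = 1$, so $(\mu \circ {\det}_{2k-\delta}) \rtimes \pi$ is irreducible.
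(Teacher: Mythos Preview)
There is a genuine gap in your inductive step. You invoke Corollary~\ref{cor2} to flip $\mu^{-1}|\cdot|^{k-\half{1}} \rtimes \pi \cong \mu|\cdot|^{-(k-\half{1})} \rtimes \pi$, but that corollary requires the $L$-parameter of $\pi$ to be of good parity, whereas the theorem only assumes $\pi$ almost tempered. This is not a cosmetic issue: for $\delta=0$, $k\geq 2$, and $\pi=\mu^{-1}\St_{2k-2}\rtimes\pi_0$ (tempered, but with symplectic summand $\mu^{-1}S_{2k-2}$ in its parameter, hence not of good parity), the length-one segment $\{\mu^{-1}|\cdot|^{k-\half{1}}\}$ is genuinely linked to the Steinberg segment of $\mu^{-1}\St_{2k-2}$, and $\mu^{-1}|\cdot|^{k-\half{1}}\rtimes\pi$ is reducible. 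So the flip you need simply fails, and without it your $\GL_{2k}$-factor does not surject onto $\mu\circ\det_{2k}$.

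The paper avoids this by working not with the length-one character $\mu^{-1}|\cdot|^{k-\half{1}}$ but with the full Speh block $\mu^{-1}|{\det}_k|^{\half{k-\delta}}$ at once, and by first decomposing $\pi\cong\tau_1\times\cdots\times\tau_l\rtimes\pi_0$ with $\pi_0$ of good parity. The Speh segment $[(1-\delta)/2,\,k-(1+\delta)/2]$ has the endpoint $(1-\delta)/2$; by the ``opposite orientation'' clause in the definition of linked segments (Definition preceding Theorem~\ref{zel}), this forces $\mu^{-1}|{\det}_k|^{\half{k-\delta}}$ to commute with every $\tau_i$, even the problematic bad-parity Steinbergs. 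Then Proposition~\ref{lk} (which does assume good parity, but is applied only to $\pi_0$) gives the flip $\mu^{-1}|{\det}_k|^{\half{k-\delta}}\rtimes\pi_0\cong\mu|{\det}_k|^{-\half{k-\delta}}\rtimes\pi_0$, and one reassembles. Your induction on $k$ can be repaired by inserting this decomposition step, but then it is no longer simpler than the paper's direct argument. Your final paragraph deducing irreducibility from the surjection when $\pi$ is unitary is fine and matches the paper.
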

\begin{proof}
By Theorem \ref{LLC}, 
we have
\[
\pi \cong \tau_1 \times \dots \times \tau_l \rtimes \pi_0, 
\]
where
\begin{itemize}
\item
$\tau_i = |\cdot|^{s_i}\tau_i'$ with
$\tau'_i$ being an irreducible discrete series representation of $\GL_{k_i}(F)$; 
\item
$1/2 > s_1 \geq s_2 \geq \dots \geq s_l \geq 0$; 
\item
when $s_i = 0$, 
the irreducible representation $\phi_i$ of $\WD_F$ corresponding to $\tau_i$
is not orthogonal if $\delta = 0$, and is not symplectic if $\delta = 1$; 
\item
$\pi_0$ is an irreducible representation of $\Mp_{r_0}(F)$ whose $L$-parameter is of good parity.
\end{itemize}
The segments corresponding to $\mu^{-1}|{\det}_k|^{\half{k-\delta}}$ and $\mu|{\det}_k|^{-\half{k-\delta}}$
are $[(1-\delta)/2, k-(1+\delta)/2]$ and $[-k+(1+\delta)/2, -(1-\delta)/2]$, respectively,
which contain $(1-\delta)/2$ or $-(1-\delta)/2$.
Hence by Theorem \ref{zel}, we see that
\begin{align*}
\mu^{-1}|{\det}_k|^{\half{k-\delta}} \times \tau_i &\cong \tau_i \times \mu^{-1}|{\det}_k|^{\half{k-\delta}}, \\
\mu|{\det}_k|^{-\half{k-\delta}} \times \tau_i &\cong \tau_i \times \mu|{\det}_k|^{-\half{k-\delta}}.
\end{align*}
By Proposition \ref{lk}, we have
\[
\mu^{-1}|{\det}_k|^{\half{k-\delta}} \rtimes \pi_0 \cong \mu|{\det}_k|^{-\half{k-\delta}} \rtimes \pi_0.
\]
Therefore, 
\begin{align*}
&\tau|\cdot|^{k-\half{1+\delta}} \times \tau|\cdot|^{k-\half{1+\delta}-1} \times \dots \times \tau|\cdot|^{1+\delta}
\times (\mu \circ {\det}_\delta) \rtimes \pi
\\&\twoheadrightarrow 
\mu|{\det}_{k-\delta}|^{\half{k}} \times \mu^{-1}|{\det}_k|^{\half{k-\delta}} \rtimes \pi
\\&\cong
\mu|{\det}_{k-\delta}|^{\half{k}} \times \tau_1 \times \dots \times \tau_l 
\times \mu^{-1}|{\det}_k|^{\half{k-\delta}} \rtimes \pi_0
\\&\cong
\mu|{\det}_{k-\delta}|^{\half{k}} \times \tau_1 \times \dots \times \tau_l 
\times \mu|{\det}_k|^{-\half{k-\delta}} \rtimes \pi_0
\\&\cong 
\mu|{\det}_{k-\delta}|^{\half{k}} \times \mu|{\det}_k|^{-\half{k-\delta}} \rtimes \pi
\\&\twoheadrightarrow
(\mu \circ {\det}_{2k-\delta}) \rtimes \pi.
\end{align*}
If $\pi$ is unitary, then $(\mu \circ {\det}_{2k-\delta}) \rtimes \pi$ is semisimple.
Therefore, by the uniqueness of the Langlands quotient, 
we conclude that $(\mu \circ {\det}_{2k-\delta}) \rtimes \pi$ is irreducible. 
\end{proof}

This theorem together with Proposition \ref{det} implies Theorem \ref{howe} (2).

\subsection{Going down case}\label{(4)}
Finally, we show Theorem \ref{howe} (4).
\par

Let $\pi$ be an irreducible almost tempered unitary representation of $\Mp_r(F)$, 
and set
\[
\pi' = \MM_{\psi, \tau}^{(r)}\left( \MM_{\psi, \tau}^{(n)}(\pi) \right).
\]
Since there is a surjection $\pi' \twoheadrightarrow \pi$, we have $\pi' \not= 0$.
Moreover, by the definition and Theorem \ref{howe} (2), 
we obtain a surjection 
\[
I^{(n+r)}_\psi(\tau) \twoheadrightarrow \left( (\mu' \circ {\det}_{n-r}) \rtimes \pi \right) \boxtimes \pi'.
\]
\par

By Lemma \ref{filt}, there is $0 \leq t \leq r$ such that 
$\Hom_{\Mp_{n}(F) \times \Mp_{r}(F)}(R_t, ( (\mu' \circ {\det}_{n-r}) \rtimes \pi ) \boxtimes \pi') \not= 0$.
By Bernstein's Frobenius reciprocity, this $\Hom$-space is isomorphic to
the space of $(\cl{M}_{t+(n-r)}(F) \times \cl{M}_{t}(F))$-equivalent maps
\begin{align*}
&(\mu'|{\det}_{t+(n-r)}|^{\half{t}} \boxtimes \mu'|{\det}_{t}|^{\half{t+(n-r)}})
\otimes C_c^\infty(\Mp_{r-t}(F))
\\&\rightarrow
R_{\overline{P_{t+(n-r)}(F)}}( (\mu' \circ {\det}_{n-r}) \rtimes \pi ) \boxtimes R_{\overline{P_{t}(F)}}(\pi'). 
\end{align*}
In particular, we have a nonzero $\cl\GL_{t+(n-r)}(F)$-equivalent map
\[
R_{P_{t+(n-r)}(F)}( (\mu'^{-1} \circ {\det}_{n-r}) \rtimes \pi^\vee ) 
\rightarrow \mu'^{-1}|{\det}_{t+(n-r)}|^{-\half{t}}.
\]
By Proposition \ref{jacquet}, 
$\semi R_{P_{t+(n-r)}(F)}( (\mu'^{-1} \circ {\det}_{n-r}) \rtimes \pi^\vee )$ is the sum of 
\[
\semi\left( \mu'|{\det}_{n-r-a}|^{-\half{a}} \times \mu'^{-1}|{\det}_b|^{-\half{n-r-b}} \times \tau_\lam \right)
\boxtimes
\left( \mu'^{-1}|{\det}_{a-b}|^{-\half{n-r-a-b}} \rtimes \pi_\lam \right), 
\]
where $(a,b)$ runs over the pairs of integers such that $0 \leq b \leq a \leq n-r$, 
and $\tau_\lam \boxtimes \pi_\lam$ runs over all irreducible subquotients of $R_{P_{a-b+t}(F)}(\pi^\vee)$.
Hence we have
\[
\semi\left( \mu'|{\det}_{n-r-a}|^{-\half{a}} \times \mu'^{-1}|{\det}_b|^{-\half{n-r-b}} \times \tau_\lam \right)
\supset \mu'^{-1}|{\det}_{t+(n-r)}|^{-\half{t}}
\]
for some $(a,b)$ and $\lam$.
Note that the segments corresponding to 
$\mu'|{\det}_{n-r-a}|^{-\half{a}}$, $\mu'^{-1}|{\det}_b|^{-\half{n-r-b}}$ 
and $\mu'^{-1}|{\det}_{t+(n-r)}|^{-\half{t}}$
are $[-(n-r-1)/2, (n-r-1)/2-a]$, $[-(n-r-1)/2, -(n-r-1)/2+b-1]$
and $[-(n-r-1)/2-t, (n-r-1)/2]$, respectively.
If $t \geq 1$, then $\semi R_{P_1(F)}(\pi^\vee)$ must contain a nonzero representation on which 
$\cl\GL_1(F)$ acts by $\mu'^{-1}|\cdot|^{-(n-r-1)/2-t}$.
This contradicts that $\pi$ is almost tempered, by Casselman's criterion.
Hence we must have $t=0$. 
\par

Therefore, we obtain a surjection
\[
\Ind_{\cl{P}_{n-r}(F) \times \Mp_r(F)}^{\Mp_n(F) \times \Mp_r(F)}
\left( (\mu' \circ {\det}_{n-r}) \otimes C_c^\infty(\Mp_r(F)) \right)
\twoheadrightarrow 
\left( (\mu' \circ {\det}_{n-r}) \rtimes \pi \right) \boxtimes \pi'.
\]
Now Theorem \ref{howe} (4) follows from the following lemma. 

\begin{lem}\label{msi}
Let $\pi$ be an irreducible almost tempered unitary representation of $\Mp_r(F)$.
Then the maximal $((\mu' \circ {\det}_{n-r}) \rtimes \pi)$-isotypic quotient of
\[
\Ind_{\cl{P}_{n-r}(F) \times \Mp_r(F)}^{\Mp_n(F) \times \Mp_r(F)}
\left( (\mu' \circ {\det}_{n-r}) \otimes C_c^\infty(\Mp_r(F)) \right)
\]
is isomorphic to $( (\mu' \circ {\det}_{n-r}) \rtimes \pi ) \boxtimes \Pi$, 
where $\Pi$ is determined so that
the subrepresentation of $R_{P_{n-r}(F)}( (\mu'^{-1} \circ {\det}_{n-r}) \rtimes \pi )$ 
on which $\cl\GL_{n-r}(F)$ acts by $\mu'^{-1} \circ {\det}_{n-r}$ is isomorphic to 
$(\mu'^{-1} \circ {\det}_{n-r}) \boxtimes \Pi$.
Moreover, if $\pi$ satisfies one of the conditions in Theorem \ref{howe} (4), 
then $\semi(\Pi) = \pi^\alpha$ for some integer $\alpha \geq 1$, and that
\[
\dim\Hom_{\Mp_r(F)}(\Pi, \pi) = 1.
\]
\end{lem}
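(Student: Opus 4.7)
My plan is to reduce the lemma to a Jacquet-module computation via Bernstein's second adjunction, and then control the result using the geometric lemma together with the almost-temperedness of $\pi$. Write $X = (\mu' \circ {\det}_{n-r}) \rtimes \pi$, and denote the induced representation in the statement by $I$. For any smooth representation $\sigma$ of $\Mp_r(F)$, second adjointness applied to the $\Mp_n(F)$-factor gives
\[
\Hom_{\Mp_n(F) \times \Mp_r(F)}(I, X \boxtimes \sigma) \cong \Hom_{\cl{M}_{n-r}(F) \times \Mp_r(F)}\bigl((\mu' \circ {\det}_{n-r}) \otimes C_c^\infty(\Mp_r(F)),\, R_{\overline{P_{n-r}(F)}}(X) \boxtimes \sigma\bigr).
\]
Using the standard bimodule identity $\Hom_{\Mp_r(F)}(C_c^\infty(\Mp_r(F)), \sigma) \cong \sigma$ to absorb the outer $\Mp_r(F)$-action on $C_c^\infty(\Mp_r(F))$ (after accounting for the twist by $\alpha$ built into the $\Mp_r(F) \times \Mp_r(F)$-module structure), the problem reduces to identifying, as a smooth representation $\Pi_0$ of $\Mp_r(F)$, the maximal quotient of $R_{\overline{P_{n-r}(F)}}(X)$ on which $\cl{\GL}_{n-r}(F)$ acts by $\mu' \circ {\det}_{n-r}$.

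Next, I would apply the geometric lemma (Proposition \ref{jacquet}) to $R_{\overline{P_{n-r}(F)}}(X)$, obtaining a filtration whose successive subquotients are indexed by pairs $(a,b)$ with $0 \leq b \leq a \leq n-r$ together with subquotients $\tau_\lambda \boxtimes \pi_\lambda$ of $R_{P_{a-b}(F)}(\pi)$. By Casselman's criterion the exponents of $\tau_\lambda$ are bounded, since $\pi$ is almost tempered, so, exactly as in the analysis carried out just before the lemma statement, only the ``principal'' stratum with $\tau_\lambda$ trivial can contribute the requested character $\mu' \circ {\det}_{n-r}$ on $\cl{\GL}_{n-r}(F)$. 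Together with the contragredient identity $(R_P Y)^\vee \cong R_{\overline{P}}(Y^\vee)$ and the fact that for metaplectic unitary $\pi$ the contragredient $\pi^\vee$ sits in the same local $A$-packet as $\pi$, one matches $\Pi_0$ with the $\Pi$ of the statement, namely the $\Mp_r(F)$-factor of the $\mu'^{-1} \circ {\det}_{n-r}$-isotypic subrepresentation of $R_{P_{n-r}(F)}((\mu'^{-1} \circ {\det}_{n-r}) \rtimes \pi)$. This would establish the first assertion.

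For the ``moreover'' part I would analyze which pairs $\tau_\lambda \boxtimes \pi_\lambda$ arising from $R_{P_{a-b}(F)}(\pi)$ can combine, after the geometric-lemma twists, to produce the character $\mu'^{-1} \circ {\det}_{n-r}$ on $\cl{\GL}_{n-r}(F)$. Under the first hypothesis of Theorem \ref{howe}(4), the absence of summands $\mu^{\pm 1}S_d$ with $d \geq n-r$ and $d \equiv n-r \bmod 2$ in the $L$-parameter of $\pi$ rules out every such combination in which $\pi_\lambda \not\cong \pi$, so $\semi(\Pi) \cong \pi^\alpha$ for some $\alpha \geq 1$. In the boundary cases $n=r$ or $n=r+1$ the $\GL$-part of the relevant Jacquet module has length at most one and the same conclusion is immediate by direct inspection. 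Finally, the multiplicity-one statement $\dim \Hom_{\Mp_r(F)}(\Pi, \pi) = 1$ follows by showing that exactly one stratum of the filtration, namely the canonical ``bottom'' copy of $(\mu'^{-1} \circ {\det}_{n-r}) \boxtimes \pi$ inside $R_{P_{n-r}(F)}((\mu'^{-1} \circ {\det}_{n-r}) \rtimes \pi)$, admits a surjection onto $\pi$, while every other stratum is excluded by the parameter hypotheses. The principal obstacle I anticipate is precisely this bookkeeping: exhibiting all possible exotic strata in the geometric lemma and systematically ruling them out under each hypothesis of Theorem \ref{howe}(4), and correctly tracking the contragredient twist that relates the two Jacquet-module descriptions.
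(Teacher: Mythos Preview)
Your reduction via second adjunction and the geometric lemma matches the paper's approach, and the analysis showing $\semi(\Pi)=\pi^\alpha$ under the hypotheses of Theorem \ref{howe}(4) is essentially the same. Two points, however, need correction.

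First, the passage from $\pi^\vee$ to $\pi$. Saying that ``$\pi^\vee$ sits in the same local $A$-packet as $\pi$'' is both imprecise and insufficient: packets generally have several members, so this would not pin down $\Pi$. The actual mechanism, which the paper makes explicit, is that the outer $\Mp_r(F)$-action on $C_c^\infty(\Mp_r(F))$ is twisted by the conjugation $g_r\mapsto g_r^\alpha$ with $\alpha=\mathrm{diag}(\1_r,-\1_r)$, and the representation $g_r\mapsto \pi^\vee(g_r^\alpha)$ is isomorphic to $\pi$ itself (the MVW involution). So once you track the $\alpha$-twist carefully, the Jacquet module you land on is $R_{P_{n-r}(F)}\bigl((\mu'^{-1}\circ\det_{n-r})\rtimes\pi\bigr)$ with $\pi$, not $\pi^\vee$, and no packet argument is needed.

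Second, and more seriously, your argument for $\dim\Hom_{\Mp_r(F)}(\Pi,\pi)=1$ does not work. You claim that exactly one stratum in the geometric-lemma filtration contributes a copy of $\pi$, but this is false: already in the case $n=r+1$, if $\pi\cong(\mu'^{-1})^{a-1}\rtimes\pi_0$ with $a>1$, the paper shows that the relevant isotypic piece of $\semi R_{P_1(F)}(\mu'^{-1}\rtimes\pi)$ contains $\pi$ with multiplicity $a$ (or $2a$ when $\mu'^2=\1$), so several strata contribute and $\alpha>1$. Knowing the strata therefore gives you $\semi(\Pi)$ but tells you nothing about the cosocle. The paper's argument is different and is the point you are missing: going back through the Frobenius identifications one finds
\[
\Hom_{\Mp_r(F)}(\Pi,\pi)\;\cong\;\Hom_{\cl{M}_{n-r}(F)}\bigl(R_{P_{n-r}(F)}((\mu'^{-1}\circ{\det}_{n-r})\rtimes\pi),\,(\mu'^{-1}\circ{\det}_{n-r})\boxtimes\pi\bigr)\;\cong\;\End_{\Mp_n(F)}\bigl((\mu'^{-1}\circ{\det}_{n-r})\rtimes\pi\bigr),
\]
and the latter is one-dimensional precisely because $(\mu'^{-1}\circ{\det}_{n-r})\rtimes\pi$ is irreducible by Theorem \ref{howe}(2). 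In other words, the multiplicity-one statement is a consequence of the irreducibility result already proven, not of a stratum count.
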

\begin{proof}
The linear dual $\Pi^*$ of $\Pi$, which is not necessarily smooth, is given by
\[
\Pi^* \cong \Hom_{\Mp_n(F)}
\left(
\Ind_{\cl{P}_{n-r}(F) \times \Mp_r(F)}^{\Mp_n(F) \times \Mp_r(F)}
\left( (\mu' \circ {\det}_{n-r}) \otimes C_c^\infty(\Mp_r(F)) \right),
(\mu' \circ {\det}_{n-r}) \rtimes \pi
\right).
\]
As representations of $\Mp_n(F)$, we have
\begin{align*}
\Ind_{\cl{P}_{n-r}(F) \times \Mp_r(F)}^{\Mp_n(F) \times \Mp_r(F)}
\left( (\mu' \circ {\det}_{n-r}) \otimes C_c^\infty(\Mp_r(F)) \right)
\cong 
\ind_{\cl{P}_{n-r}^\circ(F)}^{\Mp_n(F)}(\chi), 
\end{align*}
where
\begin{itemize}
\item
$P_{n-r}^\circ(F) \subset P_{n-r}(F)$ is generated by $\GL_{n-r}(F)$ and $N_{n-r}(F)$, 
so that $P_{n-r}(F)/P_{n-r}^\circ(F) \cong \Sp_r(F)$; 
\item
$\ind_{\cl{P}_{n-r}^\circ(F)}^{\Mp_n(F)}$ is the (unnormalized) compact induction functor; 
\item
the character $\chi \colon \cl{P}_{n-r}^\circ(F) \rightarrow \C^\times$ is given by
\[
\chi(
\left(
\begin{array}{cc|cc}
a & * & * & * \\
0 & \1_r & * & 0 \\
\hline
0 & 0 & {}^t a^{-1} & 0 \\
0 & 0 & * & \1_r
\end{array}
\right), \zeta
)
= \zeta^{n+r} \left( \frac{\alpha_{\psi}(1)}{\alpha_{\psi}(\det a)} \right)^{n+r} 
\mu(\det a) |\det a|^{\half{n+r+1}}.
\]
\end{itemize}
Moreover, $\Mp_r(F)$ acts on $\ind_{\cl{P}_{n-r}^\circ(F)}^{\Mp_n(F)}(\chi)$ by 
\[
(g_r \cdot \varphi)(g_n) = \varphi((g_r^{-1})^{\alpha} g_n)
\]
for $g_r \in \Mp_r(F)$, $g_n \in \Mp_n(F)$ and $\varphi \in \ind_{\cl{P}_{n-r}^\circ(F)}^{\Mp_n(F)}(\chi)$.
Here, we set
\[
g_r^\alpha = 
\begin{pmatrix}
\1_r & 0 \\ 0 & -\1_r
\end{pmatrix}
g_r 
\begin{pmatrix}
\1_r & 0 \\ 0 & -\1_r
\end{pmatrix}^{-1}, 
\]
and we identify $\Mp_r(F)$ with the image of the composition 
$\Mp_{r}(F) \hookrightarrow \cl{M}_{n-r}(F) \hookrightarrow \Mp_{n}(F)$.
By the Frobenius reciprocity, we have
\begin{align*}
\Pi^* 
&\cong 
\Hom_{\Mp_n(F)}((\mu'^{-1} \circ {\det}_{n-r}) \rtimes \pi^\vee, 
\Ind_{\cl{P}_{n-r}^\circ(F)}^{\Mp_n(F)}(\chi^{-1} \delta_{P_{n-r}}))
\\&\cong 
\Hom_{\cl{P}_{n-r}^\circ(F)} ((\mu'^{-1} \circ {\det}_{n-r}) \rtimes \pi^\vee|_{\cl{P}_{n-r}^\circ(F)}, 
\chi^{-1}\delta_{P_{n-r}}) 
\\&\cong 
\Hom_{\cl\GL_{n-r}(F)} (R_{P_{n-r}(F)}
\left( (\mu'^{-1} \circ {\det}_{n-r}) \rtimes \pi^\vee \right)|_{\cl\GL_{n-r}(F)}, 
\mu'^{-1} \circ {\det}_{n-r}).
\end{align*}
Moreover, $\Mp_r(F)$ acts on this $\Hom$-space by 
\[
(g_r \cdot \Phi)(f) = \Phi((g_r^{-1})^\alpha \cdot f)
\]
for $\Phi$ in this $\Hom$-space, 
and for $f \in R_{P_{n-r}(F)}\left( (\mu'^{-1} \circ {\det}_{n-r}) \rtimes \pi^\vee \right)$.
Since the representation $[g_r \mapsto \pi^\vee(g_r^\alpha)]$ is isomorphic to $\pi$, 
by the definition of the Jacquet module, we see that
the action $f \mapsto g_r^\alpha \cdot f$ is isomorphic 
to $R_{P_{n-r}(F)}\left( (\mu'^{-1} \circ {\det}_{n-r}) \rtimes \pi \right)$.
Hence 
\[
\Pi^* \cong 
\Hom_{\cl\GL_{n-r}(F)} (R_{P_{n-r}(F)}\left( (\mu'^{-1} \circ {\det}_{n-r}) \rtimes \pi \right)|_{\cl\GL_{n-r}(F)}, 
\mu'^{-1} \circ {\det}_{n-r}).
\]
as representations of $\Mp_r(F)$.
This means that $\Pi$ is determined so that 
the subrepresentation of $R_{P_{n-r}(F)}( (\mu'^{-1} \circ {\det}_{n-r}) \rtimes \pi )$ 
on which $\cl\GL_{n-r}(F)$ acts by $\mu'^{-1} \circ {\det}_{n-r}$ is isomorphic to 
$(\mu'^{-1} \circ {\det}_{n-r}) \boxtimes \Pi$
since $R_{P_{n-r}(F)}( (\mu'^{-1} \circ {\det}_{n-r}) \rtimes \pi )$ is admissible.
\par

Note that $\semi R_{P_{n-r}(F)}( (\mu'^{-1} \circ {\det}_{n-r}) \rtimes \pi )$ is the sum of 
\[
I_{(a,b), \lam} = 
\semi\left( \mu'|{\det}_{n-r-a}|^{-\half{a}} \times \mu'^{-1}|{\det}_b|^{-\half{n-r-b}} \times \tau_\lam \right)
\boxtimes
\left( \mu'^{-1}|{\det}_{a-b}|^{-\half{n-r-a-b}} \rtimes \pi_\lam \right), 
\]
where $(a,b)$ runs over the pairs of integers such that $0 \leq b \leq a \leq n-r$, 
and $\tau_\lam \boxtimes \pi_\lam$ runs over all irreducible subquotients of $R_{P_{a-b}(F)}(\pi)$.
We claim that when $\pi$ satisfies one of the conditions in Theorem \ref{howe} (4), 
if 
\[
I_{(a,b), \lam}|_{\cl\GL_{n-r}(F)} \supset \mu'^{-1} \circ {\det}_{n-r}, 
\]
then $I_{(a,b), \lam} \cong \mu'^{-1} \circ {\det}_{n-r} \boxtimes \pi$.
\par

Note that the segments corresponding to 
$\mu'|{\det}_{n-r-a}|^{-\half{a}}$, $\mu'^{-1}|{\det}_b|^{-\half{n-r-b}}$ 
and $\mu'^{-1} \circ {\det}_{n-r}$
are $[-(n-r-1)/2, (n-r-1)/2-a]$, $[-(n-r-1)/2, -(n-r-1)/2+b-1]$
and $[-(n-r-1)/2, (n-r-1)/2]$, respectively.
When the $L$-parameter $\phi$ does not contain $\mu^{\pm1} S_d$ 
for any $d \geq n-r$ with $d \equiv n-r \bmod2$, 
by computing the Jacquet module with respect to the Borel subgroup of $\cl{\GL}_{n-r}(F)$, 
we see that if
\[
\semi\left( \mu'|{\det}_{n-r-a}|^{-\half{a}} \times \mu'^{-1}|{\det}_b|^{-\half{n-r-b}} \times \tau_\lam \right)
\supset 
\mu'^{-1} \circ {\det}_{n-r}, 
\]
then $(a,b) = (0,0)$ or $(a,b) = (n-r, n-r)$.
In these cases, 
we have $I_{(a,b), \lam} = \mu'^{\pm1} \circ {\det}_{n-r} \boxtimes \pi$, as desired.
\par

When $n=r$, there is nothing to prove.
Now we assume that $n=r+1$.
We take the maximal integer $a$ such that 
\[
\pi \cong \underbrace{\mu'^{-1} \times \dots \times \mu'^{-1}}_{a-1} \rtimes \pi_0
\]
for some irreducible representation $\pi_0$.
Then by induction on $a$, 
one can show that 
the subrepresentation of $\semi R_{P_1(F)}( \mu'^{-1} \rtimes \pi )$ 
on which $\cl\GL_{1}(F)$ acts by $\mu'^{\pm1}$ is isomorphic to 
\[
\left( (\mu' \boxtimes \pi) \oplus (\mu'^{-1} \boxtimes \pi) \right)^{\oplus a}.
\]
This proves the claim when $n=r+1$.
\par

When $\pi$ satisfies one of the conditions in Theorem \ref{howe} (4), 
by the claim, we see that $\semi(\Pi)$ is of the form $\pi^{\oplus \alpha}$ for some integer $\alpha \geq 1$.
Moreover, we have
\begin{align*}
&\Hom_{\Mp_r(F)}(\Pi, \pi) \cong (\Pi^* \otimes \pi)^{\Mp_r(F)}
\\&\cong \Hom_{\cl\GL_{n-r}(F)} (R_{P_{n-r}(F)}
\left( (\mu'^{-1} \circ {\det}_{n-r}) \rtimes \pi \right)|_{\cl\GL_{n-r}(F)}, 
(\mu'^{-1} \circ {\det}_{n-r}) \boxtimes \pi)^{\Mp_r(F)}
\\&\cong \End_{\Mp_r(F)}((\mu'^{-1} \circ {\det}_{n-r}) \rtimes \pi), 
\end{align*}
which is one dimensional by Theorem \ref{howe} (2).
\end{proof}

\begin{rem}\label{rem}
\begin{enumerate}
\item
If the $L$-parameter $\phi$ contains one of $\mu S_d$ or $\mu^{-1} S_d$ 
for some $d \geq n-r$ with $d \equiv n-r \bmod2$, 
then $\phi \supset \mu S_d \oplus \mu^{-1} S_d$.
By comparing the dimensions, we have
\[
2r+1-\delta \geq 2d \geq 2(n-r).
\]
Hence we must have $n \leq 2r$.
In other words, when $n > 2r$, the first condition in Theorem \ref{howe} (4) always holds.

\item
In general, $\Pi$ in Lemma \ref{msi} might have irreducible subquotients other than $\pi$.
For example, consider the case where $r=2$, $n=4$ and $\pi = \mu'^{-1}\St_2 \rtimes \1_{\Sp_0(F)}$.
Then one can see that
\[
\semi(\Pi) \supset (\mu'^{-1} \circ {\det}_2) \rtimes \1_{\Sp_0(F)} \not\cong \pi.
\]
\end{enumerate}
\end{rem}

\subsection{Seesaw identities}
In the theory of theta liftings, seesaw identities are useful tools.
The following proposition is an analogy for Miyawaki liftings.
Recall that $J_{n-1}(F) = \Sp_{n-1}(F) \ltimes V_{n-1}(F)$ is a Jacobi subgroup of $\Sp_n(F)$.
The center of the Heisenberg group $V_{n-1}(F)$ is denoted by $Z_{n-1}(F)$.
For $\xi \in F^\times$, 
we denote the Weil representations of $\cl{J}_{n-1}(F)$ and $\Mp_r(F)$ with respect to $\psi_\xi$
by $\omega_{\psi_\xi}^{(n-1)}$ and $\omega_{\psi_\xi}^{(r)}$, respectively.

\begin{prop}[Seesaw identity]\label{LSS}
Let $\pi$ and $\pi'$ be irreducible representations of $\Mp_r(F)$ and $\Mp_{n-1}(F)$, 
on which $\{\pm1\}$ acts by $(\pm1)^{r+n}$ and $(\pm1)^{r+n-1}$, respectively.
Then 
\[
\Hom_{\cl{J}_{n-1}(F)}(\MM_{\psi, \tau}^{(n)}(\pi)|_{\cl{J}_{n-1}(F)}, \pi' \otimes \omega_{\psi_\xi}^{(n-1)}) 
\not= 0
\iff
\Hom_{\Mp_r(F)}(\MM_{\psi, \tau\chi_\xi}^{(r)}(\pi') \otimes \omega_{\psi_\xi}^{(r)}, \pi) \not= 0.
\]
We shall write this property as the following seesaw diagram:
\[
\xymatrix{
\Mp_r(F) \times \Mp_r(F) \ar@{-}[d] \ar@{-}[dr]& \Mp_n(F) \ar@{-}[d]\\
\Mp_r(F) \ar@{-}[ur]& \Mp_{n-1}(F) \ltimes V_{n-1}(F).
}
\]
\end{prop}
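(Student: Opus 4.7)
The plan is to establish the seesaw identity as a chain of natural isomorphisms between Hom spaces, passing through the Ikeda lift $I^{(n+r)}_\psi(\tau)$ and its Fourier--Jacobi quotient along $\psi_\xi$. Since each step will be a natural isomorphism, the stated equivalence of non-vanishing follows automatically.

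First, by the universal property of $\MM^{(n)}_{\psi,\tau}(\pi)$ as the $\Mp_n(F)$-module whose external tensor product with $\pi$ is the maximal $\pi$-isotypic quotient of $I^{(n+r)}_\psi(\tau)$, any $(\cl{J}_{n-1}(F) \times \Mp_r(F))$-map out of $I^{(n+r)}_\psi(\tau)$ whose target is $\pi$-isotypic as an $\Mp_r(F)$-module must factor through $\MM^{(n)}_{\psi,\tau}(\pi) \boxtimes \pi$. Combined with Schur's lemma for $\pi$, this yields
\[
\Hom_{\cl{J}_{n-1}(F)}(\MM^{(n)}_{\psi,\tau}(\pi), \pi' \otimes \omega_{\psi_\xi}^{(n-1)}) \cong \Hom_{\cl{J}_{n-1}(F) \times \Mp_r(F)}(I^{(n+r)}_\psi(\tau), (\pi' \otimes \omega_{\psi_\xi}^{(n-1)}) \boxtimes \pi).
\]
The center $Z_{n-1}(F)$ of $V_{n-1}(F)$, viewed via $\iota$ inside $\Sp_{n+r}(F)$, coincides with $Z_{n+r-1}(F) \subset V_{n+r-1}(F)$ (both are the one-parameter subgroup $\{\bv(0,0,z)\}$). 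Since $\omega_{\psi_\xi}^{(n-1)}$ has central character $\psi_\xi$, the above Hom space equals $\Hom_{\cl{J}_{n-1}(F) \times \Mp_r(F)}((I^{(n+r)}_\psi(\tau))_{\psi_\xi}, \, (\pi' \otimes \omega_{\psi_\xi}^{(n-1)}) \boxtimes \pi)$, and Proposition \ref{FJ-nonarch} gives $(I^{(n+r)}_\psi(\tau))_{\psi_\xi} \cong I^{(n+r-1)}_\psi(\tau\chi_\xi) \otimes \omega_{\psi_\xi}^{(n+r-1)}$ as $\cl{J}_{n+r-1}(F)$-modules.

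The main technical step is to restrict this isomorphism to the subgroup $\cl{J}_{n-1}(F) \times \Mp_r(F) \subset \cl{J}_{n+r-1}(F)$ and to check the decomposition
\[
\omega_{\psi_\xi}^{(n+r-1)}\big|_{\cl{J}_{n-1}(F) \times \Mp_r(F)} \cong \omega_{\psi_\xi}^{(n-1)} \boxtimes \omega_{\psi_\xi}^{(r)}.
\]
This I verify in the Schrödinger model $\Sc(F^{n+r-1}) = \Sc(F^{n-1}) \hat\otimes \Sc(F^r)$: the embedded elements $\bv((x,0),(y,0),z) \in V_{n+r-1}$ coming from $V_{n-1}(\Sp_n)$, together with the block-diagonal Levi elements $\bm(\mathrm{diag}(A_1,\1_r))$ and $\bm(\mathrm{diag}(\1_{n-1},A_2))$ and the corresponding $\bn(\cdot)$'s, all act by tensor-product formulas; the Weyl generators require separately computing the partial Fourier transform and checking it matches the product of partial Fourier transforms on each factor. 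This is the step I expect to be the main obstacle, because the metaplectic cocycle must be tracked precisely: the splittings of $\Mp_{n+r-1}$ over the Siegel Levi $\cl{M}_{n+r-1}$, restricted to $\cl{M}_{n-1} \times \cl{M}_r$, involves the Weil constants $\alpha_{\psi_\xi}(\det A_1 \det A_2)$ vs.\ $\alpha_{\psi_\xi}(\det A_1)\alpha_{\psi_\xi}(\det A_2)$, whose ratio is a Hilbert symbol $\langle \det A_1, \det A_2\rangle$ that one must show is absorbed by the metaplectic signs $\zeta_1\zeta_2$ in the product cover.

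With this decomposition in hand, Proposition \ref{FJ11} (the bijection $\pi'' \leftrightarrow \pi'' \otimes \omega_{\psi_\xi}^{(n-1)}$ between $\Mp_{n-1}(F)$-modules and $\cl{J}_{n-1}(F)$-modules with central character $\psi_\xi$) cancels the common $\omega_{\psi_\xi}^{(n-1)}$ factor, reducing the LHS to
\[
\Hom_{\Mp_{n-1}(F) \times \Mp_r(F)}\bigl(I^{(n+r-1)}_\psi(\tau\chi_\xi) \otimes \omega_{\psi_\xi}^{(r)}, \, \pi' \boxtimes \pi\bigr),
\]
where $\Mp_r(F)$ acts diagonally on the source. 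By the defining property of $\MM^{(r)}_{\psi,\tau\chi_\xi}(\pi')$, the maximal $\pi'$-isotypic quotient of $I^{(n+r-1)}_\psi(\tau\chi_\xi)$ as an $(\Mp_{n-1}(F) \times \Mp_r(F))$-module is $\pi' \boxtimes \MM^{(r)}_{\psi,\tau\chi_\xi}(\pi')$; since $\Mp_{n-1}(F)$ acts trivially on $\omega_{\psi_\xi}^{(r)}$, the maximal $\pi'$-isotypic quotient of the tensor product is $\pi' \boxtimes (\MM^{(r)}_{\psi,\tau\chi_\xi}(\pi') \otimes \omega_{\psi_\xi}^{(r)})$. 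Applying Schur's lemma for $\pi'$ one more time gives $\Hom_{\Mp_r(F)}(\MM^{(r)}_{\psi,\tau\chi_\xi}(\pi') \otimes \omega_{\psi_\xi}^{(r)}, \pi)$, which is exactly the RHS.
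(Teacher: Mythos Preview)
Your proof is correct and uses exactly the same ingredients as the paper: the universal property of the Miyawaki lift as a maximal isotypic quotient, Proposition~\ref{FJ-nonarch} identifying $(I^{(n+r)}_\psi(\tau))_{\psi_\xi}$, the splitting $\omega_{\psi_\xi}^{(n+r-1)} \cong \omega_{\psi_\xi}^{(n-1)} \boxtimes \omega_{\psi_\xi}^{(r)}$ (which the paper simply asserts without the cocycle verification you sketch), and Proposition~\ref{FJ11}. The only difference is organizational: the paper argues the two implications separately by chasing nonzero maps through the same diagram, whereas you package everything as a chain of natural isomorphisms of $\Hom$-spaces, which is slightly cleaner and in fact yields a stronger conclusion than the stated equivalence of nonvanishing.
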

\begin{proof}
By proposition \ref{FJ-nonarch}, 
there exists a $\cl{J}_{n+r-1}(F)$-surjection
\[
I_\psi^{(n+r)}(\tau) \twoheadrightarrow I_\psi^{(n+r-1)}(\tau\chi_\xi) \otimes \omega_{\psi_\xi}^{(n+r-1)}.
\]
Note that as $\cl{J}_{n-1}(F) \times \Mp_{r}(F)$-modules, 
we have $\omega_{\psi_\xi}^{(n+r-1)} \cong \omega_{\psi_\xi}^{(n-1)} \boxtimes \omega_{\psi_\xi}^{(r)}$.
Composing a surjective $\Mp_{n-1}(F) \times \Mp_{r}(F)$-map
\[
I_\psi^{(n+r-1)}(\tau\chi_\xi) \twoheadrightarrow \pi' \boxtimes \MM_{\psi, \tau\chi_\xi}^{(r)}(\pi'), 
\]
we obtain a nonzero $\cl{J}_{n-1}(F) \times \Mp_r(F)$-map
\[
I_\psi^{(n+r)}(\tau) \twoheadrightarrow 
(\pi' \otimes \omega_{\psi_\xi}^{(n-1)}) \boxtimes 
(\MM_{\psi, \tau\chi_\xi}^{(r)}(\pi') \otimes \omega_{\psi_\xi}^{(r)}).
\]
If $\Hom_{\Mp_r(F)}(\MM_{\psi, \tau\chi_\xi}^{(r)}(\pi') \otimes \omega_{\psi_\xi}^{(r)}, \pi) \not= 0$, 
then we obtain a nonzero $\cl{J}_{n-1}(F) \times \Mp_r(F)$-map
\[
I_\psi^{(n+r)}(\tau) \twoheadrightarrow 
(\pi' \otimes \omega_{\psi_\xi}^{(n-1)}) \boxtimes \pi.
\]
By the definition of local Miyawaki liftings, it implies a nonzero $\cl{J}_{n-1}(F)$-map
$\MM_{\psi, \tau}^{(n)}(\pi) \rightarrow \pi' \otimes \omega_{\psi_\xi}^{(n-1)}$ so that
\[
\Hom_{\cl{J}_{n-1}(F)}(\MM_{\psi, \tau}^{(n)}(\pi)|_{\cl{J}_{n-1}(F)}, \pi' \otimes \omega_{\psi_\xi}^{(n-1)}) 
\not= 0.
\]
\par

Conversely, if 
$\Hom_{\cl{J}_{n-1}(F)}(\MM_{\psi, \tau}^{(n)}(\pi)|_{\cl{J}_{n-1}(F)}, 
\pi' \otimes \omega_{\psi_\xi}^{(n-1)}) \not= 0$, 
then we have a nonzero $\cl{J}_{n-1}(F) \times \Mp_r(F)$-map
\[
I_\psi^{(n+r)}(\tau) \twoheadrightarrow 
\MM_{\psi, \tau}^{(n)}(\pi) \boxtimes \pi
\rightarrow
(\pi' \otimes \omega_{\psi_\xi}^{(n-1)}) \boxtimes \pi.
\]
It factors through 
\[
\left( I_\psi^{(n+r)}(\tau) \right)_{\psi_\xi},  
\]
that is the maximal quotient of $I_\psi^{(n+r)}(\tau)$ on which $Z_{n+r-1}(F)$ acts by $\psi_\xi$.
By the proof of Proposition \ref{FJ11} together with Proposition \ref{FJ-nonarch}, 
we see that 
\[
\left( I_\psi^{(n+r)}(\tau) \right)_{\psi_\xi} 
\cong I_\psi^{(n+r-1)}(\tau\chi_\xi) \otimes \omega_{\psi_\xi}^{(n+r-1)}
\]
as $\cl{J}_{n+r-1}(F)$-modules.
Hence we have a nonzero $\cl{J}_{n-1}(F) \times \Mp_r(F)$-map
\[
I_\psi^{(n+r-1)}(\tau\chi_\xi) \otimes \omega_{\psi_\xi}^{(n+r-1)}
\rightarrow 
(\pi' \otimes \omega_{\psi_\xi}^{(n-1)}) \boxtimes \pi.
\]
This implies that
\[
\Hom_{\Mp_r(F)}(\MM_{\psi, \tau\chi_\xi}^{(r)}(\pi') \otimes \omega_{\psi_\xi}^{(r)}, \pi) \not= 0.
\]
This completes the proof.
\end{proof}

\section{Global Miyawaki liftings}\label{global}
Now we let $F$ be a totally real number field, 
and $\psi$ be a non-trivial unitary character of $\A/F$.
We assume that for each infinite place $v$ of $F$, 
there exists $a_v \in F_v \cong \R$ with $a_v > 0$ 
such that $\psi(x_v) = \exp(2\pi a_v \I x_v)$ for $x_v \in F_v$.
\par

In this section, we define the global Miyawaki liftings as the pullbacks of Ikeda liftings, 
and establish basic properties.
One may regard the global Miyawaki liftings as an analogue of the global theta liftings, 
which are defined by the pullbacks of theta functions.

\subsection{Definition}
Let $\tau = \otimes_v' \tau_v$ be an irreducible unitary cuspidal automorphic representation of $\GL_2(\A)$
satisfying the conditions (A1), (A2) and (A3) in \S \ref{sec.global}.
We denote the weight of $\otimes_{v \mid \infty}\tau_v$ 
by $\pm2k = (\pm2k_v)_{v}$ with $k_v > 0$.
Then we have the Ikeda lift $\Ik^{(n+r)}_\psi(\tau)$, 
which is an irreducible admissible representation of $\Mp_{n+r}(\A_\fin)$
occurring in the space $\Sc_{k+(n+r)/2}(\Sp_{n+r}(F) \bs \Mp_{n+r}(\A))$
of holomorphic cusp forms on $\Mp_{n+r}(\A)$ of weight $k + (n+r)/2$.
For $\FF \in \Ik^{(n+r)}_\psi(\tau)$ and $\varphi \in \Sc_{k+(n+r)/2}(\Sp_{r}(F) \bs \Mp_{r}(\A))$, 
consider the integral 
\[
\MM^{(n)}((g_n, \zeta_n); \varphi, \FF) 
= \int_{\Sp_{r}(F) \bs \Sp_{r}(\A)} \FF(\iota(g_n, g_r), \zeta_n\zeta_r) \overline{\varphi(g_r, \zeta_r)} dg_r 
\]
for $(g_n, \zeta_n) \in \Mp_{n}(\A)$, 
where $dg_r$ is the Tamagawa measure on $\Sp_{r}(F) \bs \Sp_{r}(\A)$.
Note that this integral does not depend on the choice of $\zeta_r \in \{\pm1\}$, 
and that $\MM^{(n)}(\varphi, \FF)$ is genuine if and only if $n+r$ is odd.

\begin{lem}
We have
\[
\MM^{(n)}(\varphi, \FF) \in \Sc_{k+(n+r)/2}(\Sp_{n}(F) \bs \Mp_{n}(\A)). 
\]
\end{lem}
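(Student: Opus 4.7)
The plan is to verify in turn the defining conditions of $\Sc_{k+(n+r)/2}(\Sp_n(F)\bs\Mp_n(\A))$, most of which are routine inheritances from the properties of $\FF$ and $\varphi$. First, since $\FF$ is a cusp form on $\Mp_{n+r}(\A)$ it is rapidly decreasing modulo $\Sp_{n+r}(F)$, and $\varphi$ is bounded on the finite-volume quotient $\Sp_r(F)\bs\Sp_r(\A)$, so the defining integral converges absolutely and uniformly on compacta in $g_n$. This gives smoothness in $g_n$ (by differentiating under the integral), moderate growth, and left $\Sp_n(F)$-invariance via $\iota(\gamma g_n, g_r) = \iota(\gamma,1)\iota(g_n,g_r)$ together with the canonical splitting of $\Mp_{n+r}$ over $\Sp_{n+r}(F)$ restricting to that over $\Sp_n(F) \times \Sp_r(F)$. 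A short calculation in the $\zeta$-variables gives
\[
\MM^{(n)}((g_n,\zeta_n);\varphi,\FF) = \zeta_n^\delta \MM^{(n)}((g_n,1);\varphi,\FF)
\]
with $\delta \equiv n+r \pmod 2$, the correct parity for weight $k+(n+r)/2$. The block-diagonal embedding $\iota$ sends $\cl{K}_v \times \{1\}$ into $\cl{K}_v$ of $\Mp_{n+r}(F_v)$ and $d\iota$ intertwines the action of $\mathfrak{z}$, so right $\cl{K}$- and $\mathfrak{z}$-finiteness are inherited from $\FF$.

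For the archimedean weight and holomorphy conditions, fix $v\mid\infty$ and $X_v \in (\p_v)_\C^-$ of $\Mp_n(F_v)$; differentiation under the integral yields
\[
X_v \cdot \MM^{(n)}(\varphi,\FF) = \int_{\Sp_r(F)\bs\Sp_r(\A)} (d\iota(X_v,0)\FF)(\iota(g_n,g_r), \zeta_n\zeta_r) \overline{\varphi(g_r,\zeta_r)} dg_r.
\]
A direct computation with the block-diagonal formula for $\iota$ shows that $d\iota$ intertwines the Cartan decompositions and the complex structures $J$ defining $\p_\C^\pm$, so $d\iota(X_v,0) \in (\p_v)_\C^-$ of $\Mp_{n+r}(F_v)$; by the holomorphy of $\FF$ the integrand vanishes. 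The transformation law under $\cl{K}_v$ reduces to the identity ${\det}^{1/2}(\iota(\cl{u}_v,1)) = {\det}^{1/2}(\cl{u}_v)$, which is immediate from the block-diagonal form.

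The substantive point is cuspidality. My plan is to argue via Fourier expansion, in the spirit of the classical Siegel $\Phi$-operator test. Since $\FF$ is a holomorphic cusp form on $\Mp_{n+r}(\A)$, its Fourier expansion along the Siegel unipotent $N_{n+r} \subset \Sp_{n+r}$ is supported on $B \in \Sym_{n+r}^+(F)$. For $X_1 \in \Sym_n(\A)$ one has $\iota(\bn(X_1),1) = \bn\!\left(\begin{pmatrix} X_1 & 0 \\ 0 & 0 \end{pmatrix}\right)$, so for $B \in \Sym_{n+r}(F)$ with upper-left $n\times n$ block $B_{11}$, $\Tr\!\left(B\begin{pmatrix} X_1 & 0 \\ 0 & 0 \end{pmatrix}\right) = \Tr(B_{11} X_1)$ depends only on $B_{11}$. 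Computing the $B_1$-th Fourier coefficient of $\MM^{(n)}(\varphi,\FF)$ along $N_n \subset \Sp_n$ and interchanging the $X_1$-integral with the $g_r$-integral (justified by the decay estimates above) expresses it as an integral against $\overline{\varphi}$ of the sum of Fourier coefficients $W_B^\FF$ of $\FF$ over all $B \in \Sym_{n+r}(F)$ with $B_{11} = B_1$. Since a totally positive definite $B$ has totally positive definite principal submatrix $B_{11}$, this sum vanishes unless $B_1 \in \Sym_n^+(F)$, and hence the Fourier expansion of $\MM^{(n)}(\varphi,\FF)$ along $N_n$ is supported on $\Sym_n^+(F)$. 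The classical Siegel $\Phi$-operator argument then forces the vanishing of constant terms along every proper standard parabolic of $\Sp_n$, completing cuspidality. The main obstacle here is the interchange of Fourier series with the inner integral, which is handled by the absolute convergence inherited from the rapid decay of $\FF$.
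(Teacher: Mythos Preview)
Your proof is correct and follows essentially the same route as the paper: both reduce cuspidality to the fact that the Fourier expansion of $\MM^{(n)}(\varphi,\FF)$ along the Siegel unipotent is supported on $\Sym_n^+(F)$, inherited from that of $\FF$. Where you invoke the ``classical Siegel $\Phi$-operator argument'' for the final step, the paper makes this explicit by observing that for $B\in\Sym_n^+(F)$ the induced character on $Y_{n-1}Z_{n-1}$ is nontrivial and that $Y_{n-1}Z_{n-1}$ is normal in each $N_k$, so all constant terms vanish; you may want to spell this out rather than leave it as a citation.
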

\begin{proof}
The non-trivial part is the cuspidality.
Note that $\MM^{(n)}(\varphi, \FF)$ has a Fourier expansion of the form
\[
\MM^{(n)}((g_n, \zeta_n); \varphi, \FF) = \sum_{B \in \Sym_n^+(F)} W_B(g_n, \zeta_n), 
\]
where 
\[
W_B(g_n, \zeta_n) = 
\int_{\Sym_n(F) \bs \Sym_n(\A)} \MM^{(n)}(\bn(X)(g_n, \zeta_n); \varphi, \FF) \overline{\psi(\Tr(BX))}dX.
\]
Since 
\[
\bv(0,y,z) \mapsto \psi(\Tr
\left(B
\begin{pmatrix}
z & y \\ {}^ty & 0
\end{pmatrix}
\right)
)
\]
is a non-trivial character of $Y(\A)Z(\A)$ for any $B \in \Sym_n^+(F)$, 
we have 
\[
\int_{Y(F)Z(F) \bs Y(\A)Z(\A)} \MM^{(n)}(\bv(0,y,z)(g_n, \zeta_n); \varphi, \FF)dydz = 0.
\]
Since $YZ$ is a normal subgroup of $N_k$, 
which is the unipotent radical of the standard maximal parabolic subgroup $P_k$, 
the constant term of $\MM^{(n)}(\varphi, \FF)$ along $P_k$ must be zero.
Hence $\MM^{(n)}(\varphi, \FF)$ is cuspidal.
\end{proof}

Let $\pi$ be an irreducible admissible representation of $\Mp_{r}(\A_\fin)$
occurring in $\Sc_{k+(n+r)/2}(\Sp_{r}(F) \bs \Mp_{r}(\A))$.
The (global) Miyawaki lift $\MM^{(n)}_{\psi, \tau}(\pi)$ of $\pi$ is defined by 
the representation of $\Mp_{n}(\A_\fin)$ generated by
\[
\left\{
\MM^{(n)}(\varphi, \FF) \in \Sc_{k+(n+r)/2}(\Sp_{n}(F) \bs \Mp_{n}(\A))\ |\ 
\FF \in \Ik^{(n+r)}_\psi(\tau),\ \varphi \in \pi
\right\}.
\]

\subsection{$A$-parameters}\label{Ap}
Arthur's multiplicity formula established by Arthur \cite{Ar} and Gan--Ichino \cite{GI2} 
describes the discrete spectrum of automorphic forms on
$\Sp_n(F) \bs \Mp_n(\A)$ in terms of global $A$-parameters.
In this subsection, we review Arthur's multiplicity formula for holomorphic cusp forms.
For more precision, see also Appendix \ref{app.AMF}.
\par

A discrete global $A$-parameter for $\Sp_n(F)$ (\resp $\Mp_n(F)$) is 
a symbol
\[
\Psi = \tau_1[d_1] \boxplus \dots \boxplus \tau_t[d_t], 
\]
where $\tau_i$ is an irreducible cuspidal unitary automorphic representation of $\GL_{m_i}(\A)$, 
and $d_i$ is a positive integer such that 
they satisfy several conditions, e.g., 
$\sum_{i=1}^{t}m_id_i$ is equal to $2n+1$ (\resp $2n$).
The precise definition is given in Appendix \ref{app.AMF}.
Two $A$-parameters $\Psi = \boxplus_{i=1}^{t}\tau_{i}[d_{i}]$ and $\Psi' = \boxplus_{i=1}^{t'}\tau'_{i}[d'_{i}]$
are said to be equivalent if $t = t'$ and there exists a permutation $\sigma \in \mathfrak{S}_t$ such that
$d'_i = d_{\sigma(i)}$ and $\tau'_i \cong \tau_{\sigma(i)}$ for each $i$.
We denote the set of equivalence classes of discrete global $A$-parameters 
for $\Sp_n(F)$ (\resp $\Mp_n(F)$) by $\Psi_2(\Sp_n(F))$ (\resp $\Psi_2(\Mp_n(F))$).
We call an $A$-parameter $\Psi = \boxplus_{i=1}^{t}\tau_{i}[d_{i}]$ tempered if $d_i = 1$ for any $i$.
In this case, we write $\Psi = \boxplus_{i=1}^{t}\tau_{i}$ for simplicity.
\par

We state Arthur's multiplicity formula (\cite[Theorem 1.5.2]{Ar} and \cite[Theorems 1.1, 1.3]{GI2}) 
for holomorphic cusp forms.
\begin{thm}[Arthur's multiplicity formula]\label{AMFhol}
Let $l = (l_v) \in \prod_{v \mid \infty}\Z$ with $l_v > 0$ 
and $l_v \equiv l_{v'} \bmod 2$ for any $v,v' \mid \infty$.
\begin{enumerate}
\item
For $\Psi \in \Psi_2(\Sp_n(F))$ if $l_v$ is even, and for $\Psi \in \Psi_2(\Mp_n(F))$ if $l_v$ is odd, 
there exists an $\Mp_{n}(\A_\fin)$-stable subspace $\Sc_{l/2, \Psi}$ of $\Sc_{l/2}(\Sp_n(F) \bs \Mp_n(\A))$
(possibly zero) such that
\[
\Sc_{l/2}(\Sp_n(F) \bs \Mp_n(\A)) = \bigoplus_{\Psi}\Sc_{l/2, \Psi}, 
\]
where $\Psi$ runs over $\Psi_2(\Sp_n(F))$ if $l_v$ is even, and over $\Psi_2(\Mp_n(F))$ if $l_v$ is odd.

\item
Suppose that $\pi = \otimes'_{v < \infty} \pi_v$ is an irreducible subrepresentation of $\Sc_{l/2, \Psi}$ 
with $\Psi = \boxplus_{i=1}^{t}\tau_{i}[d_{i}]$.
Then for almost all $v < \infty$, 
the local factors $\pi_v$ of $\pi$ and $\tau_{i,v}$ of $\tau_i$ are unramified for any $i$.
Moreover, if we denote the Satake parameter for $\tau_{i,v}$ by $\{c_{i,v,1}, \dots, c_{i,v,m_i}\}$, 
then the Satake parameter of $\pi_v$ is equal to
\[
\bigcup_{i=1}^{t} \bigcup_{j=1}^{m_i} 
\left\{ c_{i,v,j}q^{-\half{d_i-1}}, c_{i,v,j}q^{-\half{d_i-3}}, \dots c_{i,v,j}q^{\half{d_i-1}} \right\}
\]
as multisets.

\item
If $\Psi$ is a tempered $A$-parameter,
then $\Sc_{l/2, \Psi}$ is multiplicity-free as a representation of $\Mp_{n}(\A_\fin)$.

\end{enumerate}
\end{thm}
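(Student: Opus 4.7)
The plan is to deduce Theorem \ref{AMFhol} directly from Arthur's multiplicity formula for $\Sp_n$ proved in \cite{Ar} (applicable when $l_v$ is even, so that the cusp forms are non-genuine) together with its metaplectic counterpart for $\Mp_n$ established by Gan--Ichino in \cite{GI2} (applicable when $l_v$ is odd, so cusp forms are genuine). Both results decompose the discrete (in particular cuspidal) automorphic spectrum into near-equivalence classes indexed by elliptic $A$-parameters, and provide an explicit formula for the multiplicity of each irreducible constituent. For part (1), I would intersect this decomposition with the subspace $\Sc_{l/2}(\Sp_n(F) \bs \Mp_n(\A))$ and define $\Sc_{l/2, \Psi}$ as the intersection with the cuspidal $\Psi$-component. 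The archimedean conditions defining holomorphic cusp forms of weight $l/2$ (annihilation by $(\p_v)_\C^-$ and the prescribed action of $\cl{K}_v$ by ${\det}^{l_v/2}$) are imposed only on the archimedean factors and therefore commute with both the projection onto the $\Psi$-isotypic component and the action of $\Mp_n(\A_\fin)$; the direct sum decomposition is then immediate.

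For part (2), I would use that Arthur's near-equivalence class attached to $\Psi$ is determined at almost every finite place by its unramified Satake parameters. At such a place $v$, the unramified Langlands parameter of $\pi_v$ is obtained from $\Psi$ by pairing the unramified parameter of each block $\tau_i[d_i]$ with the Arthur $\SL_2$ factor of dimension $d_i$ through the diagonal substitution $\mathrm{diag}(q^{(d_i-1)/2}, q^{(d_i-3)/2}, \dots, q^{-(d_i-1)/2})$. Reading off the eigenvalues of this parameter on a geometric Frobenius element yields precisely the multiset described in the statement.

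Part (3) is the substantive point. By Arthur's multiplicity formula, for a tempered $A$-parameter $\Psi$ and an irreducible $\pi = \pi_{\fin} \otimes \pi_\infty$, the multiplicity of $\pi$ in the cuspidal $\Psi$-component equals
\[
m(\pi) = \frac{1}{|\mathcal{S}_\Psi|} \sum_{s \in \mathcal{S}_\Psi} \ep_\Psi(s) \pair{s, \pi},
\]
where $\mathcal{S}_\Psi$ is the global component group, $\ep_\Psi$ is Arthur's sign character, and $\pair{s, \pi}$ is the product over all places of characters on the local component groups $\mathcal{S}_{\Psi_v}$. The holomorphic condition forces $\pi_\infty \cong \otimes_{v \mid \infty} \DD^{(n)}_{l_v/2}$; since $\Psi_\infty$ is tempered and $\DD^{(n)}_{l_v/2}$ is a discrete series, this representation lies in the local Vogan packet for $\Psi_v$ and defines a fixed character $\pair{\cdot, \pi_\infty}$ of $\mathcal{S}_\Psi$. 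By orthogonality of characters on $\mathcal{S}_\Psi$, the above sum is either zero or $|\mathcal{S}_\Psi|$, so $m(\pi) \in \{0,1\}$; it equals one precisely when $\pair{\cdot, \pi_{\fin}}$ equals $\ep_\Psi \cdot \pair{\cdot, \pi_\infty}^{-1}$. Hence each $\pi_{\fin}$ occurs in $\Sc_{l/2, \Psi}$ with multiplicity at most one, as required. The main obstacle is identifying the character $\pair{\cdot, \pi_\infty}$ attached to the holomorphic discrete series $\DD^{(n)}_{l_v/2}$: this requires invoking the explicit description of archimedean $A$-packets for real symplectic and metaplectic groups (Adams--Johnson for $\Sp_n(\R)$, Adams for $\Mp_n(\R)$) to verify that $\DD^{(n)}_{l_v/2}$ does lie in the packet attached to $\Psi_v$, after which the orthogonality argument delivers multiplicity-freeness immediately.
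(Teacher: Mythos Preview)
Your proposal is correct and matches the paper's approach: the paper simply defines $\Sc_{l/2,\Psi} = \AA_{2,\Psi} \cap \Sc_{l/2}(\Sp_n(F)\bs\Mp_n(\A))$ after stating the full Arthur multiplicity formula (Theorem~\ref{AMF}, quoted from \cite{Ar} and \cite{GI2}) and reads off (1)--(3) as immediate consequences. Your discussion of (3) is more detailed than necessary---the multiplicity-freeness of $\AA_{2,\Psi}$ for tempered $\Psi$ is already part of the cited black box, and since the holomorphic condition pins down the archimedean component as $\otimes_{v\mid\infty}\DD^{(n)}_{l_v/2}$, multiplicity-freeness as an $\Mp_n(\A_\fin)$-module is automatic without rederiving the character-orthogonality argument.
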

\par

If an admissible representation $\pi$ of $\Mp_n(\A_\fin)$ is contained in $\Sc_{l/2, \Psi}$, 
we say that $\pi$ has an $A$-parameter $\Psi$, and $\Psi$ is the $A$-parameter for $\pi$.
\par

\subsection{Basic properties}
We establish basic properties of Miyawaki liftings.
Let $\pi = \otimes'_{v < \infty} \pi_v$ be an irreducible admissible representation of $\Mp_{r}(\A_\fin)$
occurring in $\Sc_{k+(n+r)/2}(\Sp_{r}(F) \bs \Mp_{r}(\A))$.
\par

First we compute the $A$-parameter for Miyawaki liftings.
\begin{prop}\label{Apara}
Suppose that $\MM^{(n)}_{\psi, \tau}(\pi) \not= 0$ with $n \geq r$.
If $\pi$ has an $A$-parameter $\Psi$, then $\MM^{(n)}_{\psi, \tau}(\pi)$ has an $A$-parameter
\[
\Psi \boxplus \tau\chi_{-1}^{[(n+r)/2]}[n-r].
\]
\end{prop}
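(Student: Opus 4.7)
The plan is to invoke Arthur's multiplicity formula (Theorem~\ref{AMFhol}) and to match Satake parameters at almost all finite unramified places. Since $\MM^{(n)}_{\psi,\tau}(\pi)$ is contained in $\Sc_{k+(n+r)/2}(\Sp_n(F) \bs \Mp_n(\A))$, which decomposes according to global $A$-parameters, it suffices to show that every irreducible $\Mp_n(\A_\fin)$-subrepresentation $\sigma = \otimes'_{v<\infty}\sigma_v$ of $\MM^{(n)}_{\psi,\tau}(\pi)$ has the claimed $A$-parameter. By Theorem~\ref{AMFhol}(2), this $A$-parameter is pinned down by the Satake parameters of $\sigma_v$ at almost all finite $v$, so I only need to compute those.

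First I would descend the global construction to each local factor. By inspecting the defining integral, $\MM^{(n)}_{\psi,\tau}(\pi)$ is the image of an $\Mp_n(\A_\fin)$-equivariant map from $\Ik^{(n+r)}_\psi(\tau) \otimes \overline{\pi}$, where the complex conjugate $\overline{\pi}$ is isomorphic to $\pi^\vee$ since $\pi$ is unitary. Restricted to the $v$-th tensor factor, this map factors through the maximal $\pi_v$-isotypic quotient of the local Ikeda representation $I^{(n+r)}_{\psi_v}(\tau_v)$, which by definition equals $\MM^{(n)}_{\psi_v,\tau_v}(\pi_v) \boxtimes \pi_v$. Consequently each local constituent $\sigma_v$ is an irreducible quotient of the local Miyawaki lift $\MM^{(n)}_{\psi_v,\tau_v}(\pi_v)$.

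At almost all finite $v$, the characters $\psi_v,\mu_v$ are unramified, $\pi_v$ is unramified with Satake parameter $\{\beta_{i,v}^{\pm 1}\}_{i=1}^{r}$ (determined by $\Psi$ via Theorem~\ref{AMFhol}(2) applied to $\pi$), and $\sigma_v$ is unramified by the automorphy of $\sigma$. Theorem~\ref{howe}(5) then identifies $\sigma_v$ with the unique irreducible unramified quotient of $\MM^{(n)}_{\psi_v,\tau_v}(\pi_v)$, whose Satake parameter is
\[
\{\beta_{1,v}^{\pm1},\dots,\beta_{r,v}^{\pm1}\} \cup \{\alpha_v^{\pm1}q_v^{j}\ |\ j = -\half{n-r-1},-\half{n-r-3},\dots,\half{n-r-1}\},
\]
where $\alpha_v = (\mu_v \chi_{-1,v}^{[(n+r)/2]})(\varpi_v)$ and $q_v$ is the cardinality of the residue field at $v$.

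Finally I would compare this with the Satake parameter predicted by Theorem~\ref{AMFhol}(2) for the candidate $A$-parameter $\Psi \boxplus \tau\chi_{-1}^{[(n+r)/2]}[n-r]$. The summand $\Psi$ contributes exactly $\{\beta_{i,v}^{\pm1}\}$, while $\tau\chi_{-1}^{[(n+r)/2]} = \mu' \times \mu'^{-1}$ has local Satake parameter $\{\alpha_v,\alpha_v^{-1}\}$, and combining with the Jordan block $[n-r]$ produces precisely the second multiset above. Agreement at almost all $v$ then forces the $A$-parameter of $\sigma$ to be $\Psi \boxplus \tau\chi_{-1}^{[(n+r)/2]}[n-r]$, using the strong-multiplicity-one property inherent in Arthur's classification. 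The step requiring most care, and essentially the only non-formal input, is the identification of $\sigma_v$ with the canonical unramified quotient of $\MM^{(n)}_{\psi_v,\tau_v}(\pi_v)$; this rests on both the uniqueness clause in Theorem~\ref{howe}(5) and the fact that $\sigma$ must be unramified at almost all places.
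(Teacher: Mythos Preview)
Your argument is correct and follows essentially the same approach as the paper's proof, which is simply a one-line citation of the local Satake-parameter computation in Theorem~\ref{howe}(5); you have merely spelled out the intermediate steps (passage to an irreducible summand, local--global compatibility of the Miyawaki construction, and the strong-multiplicity-one input from Arthur's classification) that the paper leaves implicit.
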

\begin{proof}
This follows from the computation of Satake parameters for $\MM^{(n)}_{\psi_v, \tau_v}(\pi_v)$ 
(Theorem \ref{howe} (4)).
\end{proof}

Miyawaki liftings have a duality.
\begin{prop}\label{dual}
Suppose that $\MM^{(n)}_{\psi, \tau}(\pi) \not= 0$.
Then 
\[
\pi \subset \MM^{(r)}_{\psi, \tau} \left( \MM^{(n)}_{\psi, \tau}(\pi) \right).
\]
\end{prop}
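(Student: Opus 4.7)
The strategy is to construct, under the assumption $\MM_{\psi,\tau}^{(n)}(\pi) \neq 0$, a specific element of $\MM_{\psi,\tau}^{(r)}(\MM_{\psi,\tau}^{(n)}(\pi))$ having strictly positive Petersson pairing with some $\varphi_0 \in \pi$. Since the Petersson inner product on the cuspidal spectrum is $\Mp_r(\A_\fin)$-invariant, it separates distinct irreducible isotypic components; such a pairing will then force the $\pi$-isotypic part of $\MM_{\psi,\tau}^{(r)}(\MM_{\psi,\tau}^{(n)}(\pi))$ to be nonzero, yielding the desired inclusion $\pi \subset \MM_{\psi,\tau}^{(r)}(\MM_{\psi,\tau}^{(n)}(\pi))$.

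The main step is to establish the identity
\[
\bigl( \varphi_0,\ \MM^{(r)}(\MM^{(n)}(\varphi_0, \FF),\ \FF) \bigr)_{\mathrm{Pet}}
= \bigl\| \MM^{(n)}(\varphi_0, \FF) \bigr\|^{2}_{\mathrm{Pet}}
\]
for every $\varphi_0 \in \pi$ and $\FF \in \Ik_{\psi}^{(n+r)}(\tau)$, where the left-hand inner product is over $\Sp_r(F)\bs\Sp_r(\A)$ and the right-hand norm is over $\Sp_n(F)\bs\Sp_n(\A)$. I would prove this by substituting the defining integrals of both the outer $\MM^{(r)}$ and the inner $\MM^{(n)}$, applying Fubini to interchange the integrations over $\Sp_r(F)\bs\Sp_r(\A)$ and $\Sp_n(F)\bs\Sp_n(\A)$, and observing that with the same kernel $\FF$ appearing in both slots, the conjugations conspire so that the inner $g_r$-integration exactly reconstitutes $\overline{\MM^{(n)}(g_n;\varphi_0,\FF)}$. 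What remains is the integral of $|\MM^{(n)}(g_n;\varphi_0,\FF)|^{2}$ over $\Sp_n(F)\bs\Sp_n(\A)$, which is by definition $\|\MM^{(n)}(\varphi_0,\FF)\|^{2}_{\mathrm{Pet}}$.

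Granting the identity, the conclusion is immediate: by the hypothesis $\MM_{\psi,\tau}^{(n)}(\pi)\neq 0$ we can choose $\varphi_0$ and $\FF$ with $\MM^{(n)}(\varphi_0,\FF)\neq 0$, so the right-hand side is strictly positive; hence $\tilde{\varphi}:=\MM^{(r)}(\MM^{(n)}(\varphi_0,\FF),\FF)$ lies in $\MM_{\psi,\tau}^{(r)}(\MM_{\psi,\tau}^{(n)}(\pi))$ and pairs nontrivially with $\varphi_0\in\pi$, so the $\pi$-isotypic component of the $\Mp_r(\A_\fin)$-subrepresentation generated by $\tilde{\varphi}$ is nonzero by semisimplicity of the cuspidal spectrum. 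The only analytic point to verify is the Fubini swap, which is routine since $\FF$ and $\varphi_0$ are cusp forms and therefore rapidly decreasing on the respective fundamental domains; the iterated integral converges absolutely. There is no deeper obstacle here — the content of the proposition is essentially the symmetry of the Miyawaki construction with respect to Petersson duality, and requires no input from Arthur's multiplicity formula or from the local classification.
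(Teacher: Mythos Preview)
Your proof is correct and follows essentially the same approach as the paper: both compute the Petersson pairing $\langle \MM^{(r)}(\MM^{(n)}(\varphi,\FF),\FF),\varphi\rangle$ by unfolding the definitions and applying Fubini to obtain $\|\MM^{(n)}(\varphi,\FF)\|^{2}$, then conclude via nondegeneracy of the Petersson form on the cuspidal spectrum. The only cosmetic difference is that the paper writes the pairing in the opposite order (yielding the complex conjugate), which is immaterial since the result is real and positive.
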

\begin{proof}
For $\varphi_1, \varphi_2 \in \Sc_{k+(n+r)/2}(\Sp_{r}(F) \bs \Mp_{r}(\A))$, 
we define the Petersson inner product by
\[
\pair{\varphi_1, \varphi_2} = \int_{\Sp_{r}(F) \bs \Sp_{r}(\A)} 
\varphi_1(g,\zeta) \overline{\varphi_2(g,\zeta)} dg.
\]
If $\MM^{(n)}(\varphi, \FF) \not= 0$ for $\varphi \in \pi$ and $\FF \in \Ik_\psi^{(n+r)}(\tau)$, 
then 
\begin{align*}
\pair{
&\MM^{(r)}\left( \MM^{(n)}(\varphi, \FF), \FF \right), \varphi
}
\\&= \int_{\Sp_{r}(F) \bs \Sp_{r}(\A)} 
\left(\int_{\Sp_{n}(F) \bs \Sp_{n}(\A)} \FF(\iota(g_n, g_r), \zeta_n\zeta_r) 
\overline{\MM^{(n)}((g_n,\zeta_n)\varphi, \FF)} dg_n \right) 
\overline{\varphi(g_r, \zeta_r)} dg_r
\\&= \pair{\MM^{(n)}(\varphi, \FF), \MM^{(n)}(\varphi, \FF)} \not= 0.
\end{align*}
Hence $\pair{, }$ is nonzero on $\MM^{(r)}_{\psi, \tau} ( \MM^{(n)}_{\psi, \tau}(\pi) ) \times \pi$.
This shows that $\pi \subset \MM^{(r)}_{\psi, \tau} ( \MM^{(n)}_{\psi, \tau}(\pi) )$.
\end{proof}

The local irreducibility (Theorem \ref{howe}) implies the global irreducibility.
\begin{thm}\label{irred}
Suppose that $\pi$ has a tempered $A$-parameter, 
and $\MM^{(n)}_{\psi, \tau}(\pi) \not= 0$ with $n \geq r$.
Then:
\begin{enumerate}
\item
$\MM^{(n)}_{\psi, \tau}(\pi)$ is irreducible, and 
$\MM^{(n)}_{\psi, \tau}(\pi) \cong \otimes'_{v < \infty} \MM^{(n)}_{\psi_v, \tau_v}(\pi_v)$.

\item
When $n \leq r+1$ or $n > 2r$, we have
\[
\MM^{(r)}_{\psi, \tau}\left( \MM^{(n)}_{\psi, \tau}(\pi) \right) = \pi.
\]
\end{enumerate}
\end{thm}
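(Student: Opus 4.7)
The plan is to deduce both parts from the local results in Theorem \ref{howe} combined with Arthur's multiplicity formula (Theorem \ref{AMFhol}), exploiting that the tempered global $A$-parameter of $\pi$ forces each local factor $\pi_v$ to be almost tempered and unitary, so that Theorem \ref{howe} (2) and (4) are available at every finite place.

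For part (1), the plan is to construct a surjection
\[
\otimes'_{v < \infty} \MM^{(n)}_{\psi_v, \tau_v}(\pi_v) \twoheadrightarrow \MM^{(n)}_{\psi, \tau}(\pi)
\]
of $\Mp_n(\A_\fin)$-modules, after which irreducibility of the source (by Theorem \ref{howe} (2)) together with the nonvanishing hypothesis forces the map to be an isomorphism. First, a change of variables in the defining integral shows that the bilinear map $(\FF, \varphi) \mapsto \MM^{(n)}(\varphi, \FF)$ is $\Mp_n(\A_\fin)$-equivariant (with $\Mp_n$ acting on $\Ik_\psi^{(n+r)}(\tau)$ through $\Sp_n \hookrightarrow \Sp_{n+r}$) and invariant under the diagonal action of $\Mp_r(\A_\fin)$ on $\Ik_\psi^{(n+r)}(\tau) \otimes \overline{\pi}$. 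At each finite place $v$ the induced map from $I^{(n+r)}_{\psi_v}(\tau_v) \otimes \overline{\pi_v}$ thus factors through the maximal $\pi_v$-isotypic quotient $\MM^{(n)}_{\psi_v, \tau_v}(\pi_v) \boxtimes \pi_v$ and then through the diagonal $\Mp_r(F_v)$-coinvariants of $\pi_v \otimes \overline{\pi_v}$, which is one-dimensional via the canonical unitary pairing. Assembling these local factorizations yields the required surjection.

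For part (2), Proposition \ref{dual} already provides the inclusion $\pi \subset \MM^{(r)}_{\psi, \tau}(\MM^{(n)}_{\psi, \tau}(\pi))$, so only the reverse inclusion needs proof. Since the right-hand side lies in the cuspidal spectrum it is semisimple, and I decompose it as a direct sum of irreducible $\Mp_r(\A_\fin)$-subrepresentations. Running the same local-global factorization as in (1), with the roles of source and target exchanged, any irreducible summand $\sigma = \otimes'_v \sigma_v$ has each $\sigma_v$ realized as an irreducible quotient of $\MM^{(r)}_{\psi_v, \tau_v}(\MM^{(n)}_{\psi_v, \tau_v}(\pi_v))$. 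Under the hypothesis $n \leq r+1$ or $n > 2r$, Theorem \ref{howe} (4) applies (for $n > 2r$ via Remark \ref{rem} (1)), so the maximal semisimple quotient of that local iterated lift is $\pi_v$; hence $\sigma_v \cong \pi_v$ at every finite place, i.e.\ $\sigma \cong \pi$ abstractly. Because $\pi$ carries the tempered $A$-parameter $\Psi$, the subspace $\Sc_{k+(n+r)/2, \Psi}$ is multiplicity-free by Theorem \ref{AMFhol} (3), so the unique copy of $\pi$ guaranteed by Proposition \ref{dual} exhausts the whole iterated Miyawaki lift.

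The most delicate step is the local identification at the heart of (1): one must verify that the diagonal $\Mp_r(F_v)$-coinvariants of $(\MM^{(n)}_{\psi_v, \tau_v}(\pi_v) \boxtimes \pi_v) \otimes \overline{\pi_v}$ really collapse to $\MM^{(n)}_{\psi_v, \tau_v}(\pi_v)$, which rests on Schur's lemma together with admissibility of $\pi_v$ and the existence of an invariant inner product on $\pi_v$. A secondary technical point, implicit in invoking Theorem \ref{howe}, is that a tempered global $A$-parameter produces almost tempered unitary local components, a Ramanujan-type input available in the settings relevant to this paper.
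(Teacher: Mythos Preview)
Your proof is correct and follows essentially the same strategy as the paper for part (1): both construct the surjection from the restricted tensor product of local Miyawaki lifts onto the global one, and conclude by the irreducibility of each local factor from Theorem \ref{howe} (2).

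For part (2) there is a minor but genuine difference in the final step. The paper argues, ``by a similar argument to (1)'', that the global iterated lift is itself irreducible: since each local iterated lift $\MM^{(r)}_{\psi_v,\tau_v}(\MM^{(n)}_{\psi_v,\tau_v}(\pi_v))$ has a \emph{unique} irreducible quotient (Theorem \ref{howe} (4) says its maximal semisimple quotient is irreducible), the restricted tensor product also has a unique irreducible quotient, and the global iterated lift---being a nonzero semisimple quotient---must coincide with it. Combined with $\pi \subset \MM^{(r)}_{\psi,\tau}(\MM^{(n)}_{\psi,\tau}(\pi))$ from Proposition \ref{dual}, equality follows immediately. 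Your route instead allows a priori several irreducible summands, shows each is abstractly isomorphic to $\pi$, and then invokes the multiplicity-freeness of $\Sc_{k+(n+r)/2,\Psi}$ from Arthur's formula (Theorem \ref{AMFhol} (3)) to force a single copy. Both arguments are valid; the paper's is slightly more economical in that it does not need Arthur at this point, while yours makes the role of multiplicity one explicit.
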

\begin{proof}
Since $\pi$ has a tempered $A$-parameter, 
the local factor $\pi_v$ is almost tempered by Lemma \ref{atu}. 
By Theorem \ref{howe} (2), the local Miyawaki lift $\MM^{(n)}_{\psi_v, \tau_v}(\pi_v)$ is irreducible.
Hence the $\pi$-isotypic quotient of $\Ik^{(n+r)}_\psi(\tau)$ is of the form $\pi' \boxtimes \pi$
for some irreducible representation $\pi'$ of $\Mp_{n}(\A_\fin)$.
The $\Mp_r(\A_\fin)$-invariant surjection
\[
\Ik^{(n+r)}_\psi(\tau) \otimes \overline{\pi} \twoheadrightarrow \MM^{(n)}_{\psi, \tau}(\pi), \ 
\FF \otimes \overline{\varphi} \mapsto \MM^{(n)}(\varphi, \FF)
\]
factors through a surjective $\Mp_n(\A_\fin)$-homomorphism
$\pi' \twoheadrightarrow \MM^{(n)}_{\psi, \tau}(\pi)$.
Since $\pi'$ is irreducible, this map must be injective if $\MM^{(n)}_{\psi, \tau}(\pi) \not= 0$.
Hence $\MM^{(n)}_{\psi, \tau}(\pi)$ is irreducible.
Moreover, the above $\Mp_r(\A_\fin)$-invariant surjection 
implies that the local component at $v < \infty$ of $\MM^{(n)}_{\psi, \tau}(\pi)$
is isomorphic to $\MM^{(n)}_{\psi_v, \tau_v}(\pi_v)$.
Hence we obtain (1).
\par

By Proposition \ref{dual}, we know that
\[
\MM^{(r)}_{\psi, \tau}\left( \MM^{(n)}_{\psi, \tau}(\pi) \right) \supset \pi.
\]
By a similar argument to (1), 
when $r \leq n \leq r+1$ or $n > 2r$, 
Theorem \ref{howe} (4) implies that the left hand side is irreducible.
Hence the inclusion is an equality, and we obtain (2).
\end{proof}

\begin{cor}\label{pi}
Suppose that $\pi$ has a tempered $A$-parameter $\Psi$.
\begin{enumerate}
\item
If $n=r$ and $\MM^{(r)}_{\psi, \tau}(\pi) \not= 0$, 
then $\MM^{(r)}_{\psi, \tau}(\pi) = \pi$ as a subrepresentation of $\Sc_{k+r}(\Sp_{r}(F) \bs \Sp_{r}(\A))$.
\item
If $n=r+1$ and $\MM^{(r+1)}_{\psi, \tau}(\pi) \not= 0$, 
then $\MM^{(r+1)}_{\psi, \tau}(\pi)$ is a unique irreducible subrepresentation 
of $\Sc_{k+r+1/2}(\Sp_{r+1}(F) \bs \Mp_{r+1}(\A))$
whose local component at $v < \infty$ is isomorphic to $\mu'_v \rtimes \pi_v$
with $\mu'_v = \mu_v \chi_{-1}^r$.
\end{enumerate}
\end{cor}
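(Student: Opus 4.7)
The plan is to combine Theorem \ref{irred} with the multiplicity-one part of Arthur's multiplicity formula (Theorem \ref{AMFhol} (3)).

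First I would apply Theorem \ref{irred} (1) to conclude that $\MM^{(n)}_{\psi, \tau}(\pi)$ is irreducible and decomposes as the restricted tensor product $\otimes'_{v<\infty} \MM^{(n)}_{\psi_v, \tau_v}(\pi_v)$. Since $\Psi$ is tempered, each local factor $\pi_v$ is almost tempered and unitary (by Lemma \ref{atu}), so Theorem \ref{howe} (2) identifies
\[
\MM^{(n)}_{\psi_v,\tau_v}(\pi_v) \cong (\mu'_v \circ {\det}_{n-r}) \rtimes \pi_v.
\]
For $n = r$ this reduces to $\pi_v$ itself (no $\GL$-factor to induce from), and for $n = r+1$ it equals $\mu'_v \rtimes \pi_v$ with $\mu'_v = \mu_v\chi_{-1}^r$.

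Next I would invoke Proposition \ref{Apara} to identify the global $A$-parameter of $\MM^{(n)}_{\psi,\tau}(\pi)$ as $\Psi \boxplus \tau\chi_{-1}^{[(n+r)/2]}[n-r]$. When $n = r$, the second block has $n-r = 0$, so it is absent and the $A$-parameter is simply $\Psi$; when $n = r+1$, we have $[(n+r)/2] = r$ and $n-r = 1$, so the $A$-parameter is $\Psi \boxplus \tau\chi_{-1}^r$. In either case the parameter remains tempered (every $\GL$-block carries $d_i = 1$), so by Theorem \ref{AMFhol} (3) the corresponding isotypic subspace is multiplicity-free as an $\Mp_n(\A_\fin)$-module.

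Finally I would conclude both statements from this multiplicity-freeness. For (1), $\pi$ sits inside $\Sc_{k+r, \Psi}$ by hypothesis, and $\MM^{(r)}_{\psi,\tau}(\pi)$ is also an irreducible subrepresentation of $\Sc_{k+r,\Psi}$ abstractly isomorphic to $\pi$; multiplicity-freeness forces them to coincide as subspaces, yielding $\MM^{(r)}_{\psi,\tau}(\pi) = \pi$. For (2), any irreducible subrepresentation of $\Sc_{k+r+1/2}(\Sp_{r+1}(F) \bs \Mp_{r+1}(\A))$ whose finite-place components are $\mu'_v \rtimes \pi_v$ must have $A$-parameter $\Psi \boxplus \tau\chi_{-1}^r$ (its Satake parameters at almost all places pin this down via Theorem \ref{AMFhol} (2)), and then multiplicity-freeness of $\Sc_{k+r+1/2, \Psi \boxplus \tau\chi_{-1}^r}$ gives the claimed uniqueness. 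I do not expect a real obstacle here: the argument is essentially bookkeeping of $A$-parameters on top of the already established local and global irreducibility results.
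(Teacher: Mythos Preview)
Your proposal is correct and follows essentially the same approach as the paper: identify the local components via Theorem \ref{howe} (2), determine the (tempered) $A$-parameter via Proposition \ref{Apara}, and then invoke the multiplicity-freeness of Theorem \ref{AMFhol} (3). Your additional remarks (citing Theorem \ref{irred} (1) and Lemma \ref{atu} explicitly, and using Theorem \ref{AMFhol} (2) to pin down the $A$-parameter in part (2)) just make explicit what the paper leaves implicit.
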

\begin{proof}
By Theorem \ref{howe} (3), 
the local Miyawaki lift $\MM^{(n)}_{\psi_v, \tau_v}(\pi_v)$ 
is isomorphic to $\pi_v$ if $n = r$, and to $\mu'_v \rtimes \pi_v$ if $n = r+1$.
By Proposition \ref{Apara}, when $n = r$ (\resp $n = r+1$), 
we see that $\MM^{(n)}_{\psi, \tau}(\pi)$ has a tempered $A$-parameter $\Psi' = \Psi$
(\resp $\Psi' = \Psi \boxplus \tau \chi_{-1}^{r}$).
Since $\Sc_{k+(n+r)/2, \Psi'}$ is multiplicity-free by Theorem \ref{AMFhol} (3), 
we have $\MM^{(r)}_{\psi, \tau}(\pi) = \pi$ if $n=r$, 
and $\MM^{(r+1)}_{\psi, \tau}(\pi)$ satisfies the desired uniqueness property if $n = r+1$.
\end{proof}

Let $n' = n$ or $n' = n-1$.
Fix $l,l' \in \prod_{v \mid \infty} \Z$ such that 
$l_{v} \equiv l_{v'} \not\equiv l'_v \equiv l'_{v'} \bmod 2$ for any $v, v' \mid \infty$.
For $\varphi \in \Sc_{l/2}(\Sp_{n}(F) \bs \Mp_{n}(\A))$, 
$\varphi' \in \Sc_{l'/2}(\Sp_{n'}(F) \bs \Mp_{n'}(\A))$, and $\phi \in \Sc(X_{n'}(\A))$, 
we define a Fourier--Jacobi period $\PP_{n,n', \psi_\xi}(\varphi, \overline{\varphi'}, \phi)$ 
by the integral
\[
\left\{
\begin{aligned}
&\int_{\Sp_{n}(F) \bs \Sp_{n}(\A)} \varphi(g, \zeta) \overline{\varphi'(g,\zeta)} 
\overline{\Theta_{\psi_\xi}^{\phi}(g,\zeta)} dg, \iif n' = n, \\
&\int_{V_{n-1}(F) \bs V_{n-1}(\A)} \int_{\Sp_{n-1}(F) \bs \Sp_{n-1}(\A)} 
\varphi(v(g, \zeta)) \overline{\varphi'(g,\zeta)} 
\overline{\Theta_{\psi_\xi}^{\phi}(v(g,\zeta))} dgdv, \iif n' = n-1.
\end{aligned}
\right.
\]
The local seesaw identity (Proposition \ref{LSS}) is a local analogue of the following result.

\begin{prop}[Seesaw identity]\label{GSS}
Let $\pi$ and $\pi'$ be irreducible representations of $\Mp_{r}(\A_\fin)$ and $\Mp_{n-1}(\A_\fin)$
occurring in $\Sc_{k+(n+r)/2}(\Sp_{r}(F) \bs \Mp_{r}(\A))$ and 
$\Sc_{k+(n-1+r)/2}(\Sp_{n-1}(F) \bs \Mp_{n-1}(\A))$, respectively.
Fix a totally positive element $\xi \in F^\times$. 
\begin{enumerate}
\item
If there exist $\MM^{(n)}(\varphi_1, \FF_1) \in \MM^{(n)}_{\psi, \tau}(\pi)$, 
$\varphi'_1 \in \pi'$, and $\phi_1 \in \Sc(X_{n-1}(\A))_\xi$ such that 
\[
\PP_{n,n-1,\psi_\xi}(\MM^{(n)}(\varphi_1, \FF_1), \overline{\varphi'_1}, \phi_1) \not= 0, 
\]
then 
there exist $\varphi_2 \in \pi$, $\MM^{(r)}(\varphi_2', \FF'_2) \in \MM^{(r)}_{\psi, \tau\chi_\xi}(\pi')$, 
and $\phi_2 \in \Sc(X_{r}(\A))_\xi$ such that
\[
\PP_{r,r,\psi_\xi}(\varphi_2, \overline{\MM^{(r)}(\varphi_2', \FF'_2)}, \phi_2) \not= 0.
\]
Moreover, we can take $\varphi_2 = \varphi_1$ and $\varphi_2' = \varphi_1'$.

\item
Assume that $n+r \geq 2$. 
If there exist $\varphi_2 \in \pi$, $\MM^{(r)}(\varphi_2', \FF'_2) \in \MM^{(r)}_{\psi, \tau\chi_\xi}(\pi')$, 
and $\phi_2 \in \Sc(X_{r}(\A))_\xi$ such that
\[
\PP_{r,r,\psi_\xi}(\varphi_2, \overline{\MM^{(r)}(\varphi_2', \FF'_2)}, \phi_2) \not= 0, 
\]
then 
there exist $\MM^{(n)}(\varphi_1, \FF_1) \in \MM^{(n)}_{\psi, \tau}(\pi)$, 
$\varphi'_1 \in \pi'$, and $\phi_1 \in \Sc(X_{n-1}(\A))_\xi$ such that 
\[
\PP_{n,n-1,\psi_\xi}(\MM^{(n)}(\varphi_1, \FF_1), \overline{\varphi'_1}, \phi_1) \not= 0.
\]
Moreover, we can take $\varphi_1 = \varphi_2$ and $\varphi_1' = \varphi_2'$.

\end{enumerate}
\end{prop}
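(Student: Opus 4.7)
The proof closely parallels the standard seesaw identity argument from the theory of theta liftings, with the Ikeda lift playing the role usually played by the theta function. The key geometric observation is that under the embedding $\iota \colon \Sp_n \times \Sp_r \hookrightarrow \Sp_{n+r}$, the subgroup $V_{n-1}(F) \rtimes (\Sp_{n-1}(F) \times \Sp_r(F))$ maps into the Jacobi subgroup $\cl{J}_{n+r-1}(F) = V_{n+r-1}(F) \rtimes \Sp_{n+r-1}(F)$ of $\Sp_{n+r}(F)$. Indeed, for $v = \bv(x,y,z) \in V_{n-1}$ the element $\iota(v,1)$ lies in $V_{n+r-1}$ (with the extra $r$ coordinates of $x, y$ padded by zero), and for $g \in \Sp_{n-1} \hookrightarrow \Sp_n$ and $g_r \in \Sp_r$ the element $\iota(g, g_r)$ lies in $\Sp_{n+r-1} \hookrightarrow \Sp_{n+r}$ because both its $(1,1)$- and $(n+r+1,n+r+1)$-entries equal $1$. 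Under this identification the centers $Z_{n-1}$ and $Z_{n+r-1}$ coincide, and the Weil representation splits as $\omega_{\psi_\xi}^{(n+r-1)} \cong \omega_{\psi_\xi}^{(n-1)} \boxtimes \omega_{\psi_\xi}^{(r)}$ along the decomposition $X_{n+r-1} = X_{n-1} \oplus X_r$.

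To prove (1), I would substitute the defining integral of $\MM^{(n)}(vg; \varphi_1, \FF_1)$ into $\PP_{n,n-1,\psi_\xi}$ and interchange the integrations so that $\int_{\Sp_r(F) \bs \Sp_r(\A)} \overline{\varphi_1(g_r)}\, dg_r$ is outermost. Using $\overline{\Theta_{\psi_\xi}^{\phi_1}(\bv(x,y,z)g)} = \overline{\psi_\xi(z)} \cdot \overline{\Theta_{\psi_\xi}^{\phi_1}(\bv(x,y,0)g)}$ and integrating first over $Z_{n-1}(F) \bs Z_{n-1}(\A) = Z_{n+r-1}(F) \bs Z_{n+r-1}(\A)$, the integrand $\FF_1$ is replaced by its $\xi$-th Fourier--Jacobi coefficient $\FF_{1,\psi_\xi}$. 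By Proposition \ref{FJcoeff2} (1), we may write
\[
\FF_{1,\psi_\xi}(v' h') = \sum_i \FF'_i(h')\, \Theta_{\psi_\xi}^{\phi'_i}(v' h')
\]
with $\FF'_i \in \Ik^{(n+r-1)}_\psi(\tau \chi_\xi)$ and $\phi'_i \in \Sc(X_{n+r-1}(\A))_\xi$; after refining the sum, we may assume each $\phi'_i = \phi^{(n-1)}_i \otimes \phi^{(r)}_i$ is factorizable along $X_{n+r-1} = X_{n-1} \oplus X_r$. The factorization of the Weil representation then yields
\[
\Theta_{\psi_\xi}^{\phi'_i}(\iota(v,1)\iota(g,g_r)) = \Theta_{\psi_\xi}^{\phi^{(n-1)}_i}(vg) \cdot \Theta_{\psi_\xi}^{\phi^{(r)}_i}(g_r),
\]
where the second factor is the value at the identity of the Heisenberg, i.e., the classical theta function on $\Mp_r(\A)$.

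Now the integration over $\cl{V}_{n-1}(F) \bs \cl{V}_{n-1}(\A)$ (the quotient of $V_{n-1}$ by its center) of $\Theta_{\psi_\xi}^{\phi^{(n-1)}_i}(\bv(x,y,0)g) \overline{\Theta_{\psi_\xi}^{\phi_1}(\bv(x,y,0)g)}$ collapses to the Schwartz inner product $(\phi^{(n-1)}_i, \phi_1)$ by the orthogonality of theta functions, and the remaining integral $\int_{\Sp_{n-1}(F) \bs \Sp_{n-1}(\A)} \FF'_i(\iota(g, g_r)) \overline{\varphi'_1(g)}\, dg$ coincides with the value of $\MM^{(r)}(\varphi'_1, \FF''_i)$ at $g_r$ for a suitable vector $\FF''_i \in \Ik^{(n+r-1)}_\psi(\tau \chi_\xi)$ (the two possible orderings of the factors in $\iota$ are related by right translation by an element of $\Sp_{n+r-1}(F)$, and thus stay within the same irreducible automorphic representation $\Ik^{(n+r-1)}_\psi(\tau\chi_\xi)$). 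Combining and taking complex conjugates yields
\[
\overline{\PP_{n,n-1,\psi_\xi}(\MM^{(n)}(\varphi_1, \FF_1), \overline{\varphi'_1}, \phi_1)} = \sum_i \overline{(\phi^{(n-1)}_i, \phi_1)} \cdot \PP_{r,r,\psi_\xi}(\varphi_1, \overline{\MM^{(r)}(\varphi'_1, \FF''_i)}, \phi^{(r)}_i),
\]
so the non-vanishing of the left side forces the non-vanishing of at least one term on the right, proving (1) with $\varphi_2 = \varphi_1$ and $\varphi'_2 = \varphi'_1$.

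For (2) the calculation runs in reverse: given data $(\varphi_2, \varphi'_2, \FF'_2, \phi_2)$ with $\PP_{r,r,\psi_\xi}(\varphi_2, \overline{\MM^{(r)}(\varphi'_2, \FF'_2)}, \phi_2) \neq 0$, I choose any $\phi^{(n-1)} \in \Sc(X_{n-1}(\A))_\xi$ with $(\phi^{(n-1)}, \phi^{(n-1)}) \neq 0$, set $\phi' = \phi^{(n-1)} \otimes \phi_2 \in \Sc(X_{n+r-1}(\A))_\xi$, and invoke Proposition \ref{FJcoeff2} (2) to produce $\FF_1 \in \Ik^{(n+r)}_\psi(\tau)$ with $(\FF_1)_{\psi_\xi}(v' h') = \FF'_2(h') \Theta_{\psi_\xi}^{\phi'}(v' h')$. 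The hypothesis $n + r \geq 2$ ensures via Proposition \ref{FJcoeff2} (3) that $(\Ik^{(n+r)}_\psi(\tau))_{\psi_\xi} \neq 0$, so that Proposition \ref{FJcoeff2} (2) is applicable. Running the same chain of identities then produces the first-type period as a nonzero scalar multiple of the second. The main technical point to verify carefully is the compatibility of metaplectic cocycles under the various embeddings, in particular the decomposition $\omega_{\psi_\xi}^{(n+r-1)}|_{\cl{J}_{n-1}(\A) \times \Mp_r(\A)} \cong \omega_{\psi_\xi}^{(n-1)} \boxtimes \omega_{\psi_\xi}^{(r)}$; once this is in place, the remaining manipulations reduce to orthogonality of theta kernels and Fubini.
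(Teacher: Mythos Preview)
Your proposal is correct and follows essentially the same route as the paper's proof: unfold the Miyawaki integral inside $\PP_{n,n-1,\psi_\xi}$, integrate over $Z_{n-1}$ to produce $\FF_{\psi_\xi}$, apply Proposition~\ref{FJcoeff2}(1) to decompose it as $\sum_i \FF'_i \cdot \Theta^{\phi'_i}$ with factorizable $\phi'_i = \phi_i^{(n-1)} \otimes \phi_i^{(r)}$, collapse the $V_{n-1}$-integral to the inner product $(\phi_i^{(n-1)}, \phi_1)$, and recognize the remainder as (the conjugate of) $\PP_{r,r,\psi_\xi}(\varphi_1, \overline{\MM^{(r)}(\varphi'_1, \FF'_i)}, \phi_i^{(r)})$; for (2) the same identity is run in reverse using Proposition~\ref{FJcoeff2}(2),(3). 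Your remark about the swap of the two factors in $\iota$ being absorbed by a right translate in $\Ik^{(n+r-1)}_\psi(\tau\chi_\xi)$ is a point the paper leaves implicit, so if anything you are being slightly more careful there.
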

\begin{proof}
Suppose that there exist $\MM^{(n)}(\varphi, \FF) \in \MM^{(n)}_{\psi, \tau}(\pi)$, 
$\varphi' \in \pi'$, and $\phi \in \Sc(X_{n-1}(\A))_\xi$ such that 
\[
\PP_{n,n-1,\psi_\xi}(\MM^{(n)}(\varphi, \FF), \overline{\varphi'}, \phi) \not= 0.
\]
It is equal to 
\begin{align*}
&\int_{V_{n-1}(F) \bs V_{n-1}(\A)} \int_{\Sp_{n-1}(F) \bs \Sp_{n-1}(\A)} 
\\&\quad\times
\left(
\int_{\Sp_{r}(F) \bs \Sp_{r}(\A)} \FF(\iota(v, 1) \cdot \iota(g_{n-1}, g_r), \zeta_n\zeta_r) 
\overline{\varphi(g_r, \zeta_n)} dg_r
\right)
\\&\quad\times
\overline{\varphi'(g_{n-1},\zeta_{n})} 
\overline{\Theta_{\psi_\xi}^{\phi}(v(g_{n-1},\zeta_{n}))} dg_{n-1}dv.
\end{align*}
First, if we compute the integral on $Z_{n-1}(F) \bs Z_{n-1}(\A)$, then $\FF_{\psi_\xi}$ appears.
By Proposition \ref{FJcoeff2}, 
there exist $\FF'_1, \dots, \FF'_r \in \Ik^{(n+r-1)}_\psi(\tau \otimes \chi_\xi)$ 
and $\phi_1, \dots, \phi_r \in \Sc(X_{n+r-1}(\A))_\xi$ such that 
\[
\FF_{\psi_\xi}(\iota(v, 1) \cdot \iota(g_{n-1}, g_r), \zeta_n\zeta_r) 
= \sum_{i=1}^{r} \FF'_i(\iota(g_{n-1}, g_r), \zeta_n\zeta_r)
\Theta_{\psi_\xi}^{\phi_i}(\iota(v, 1) \cdot \iota(g_{n-1}, g_r), \zeta_n\zeta_r).
\]
Note that $X_{n+r-1}(\A) \cong X_{n-1}(\A) \oplus X_{r}(\A)$.
We may assume that $\phi_i = \phi_i^{(n-1)} \otimes \phi_i^{(r)}$ with 
$\phi_i^{(n-1)} \in \Sc(X_{n-1}(\A))$ and $\phi_i^{(r)} \in \Sc(X_{r}(\A))$.
Then we have
\[
\Theta_{\psi_\xi}^{\phi_i}(\iota(v, 1) \cdot \iota(g_{n-1}, g_r), \zeta_n\zeta_r)
= 
\Theta_{\psi_\xi}^{\phi_i^{(n-1)}}(v (g_{n-1}, \zeta_n))
\Theta_{\psi_\xi}^{\phi_i^{(r)}}(g_r, \zeta_r).
\]
Hence there exists $i$ such that
\begin{align*}
&\int_{\Sp_{n-1}(F) \bs \Sp_{n-1}(\A)} 
\int_{\Sp_{r}(F) \bs \Sp_{r}(\A)}
\FF_i'(\iota(g_{n-1}, g_r), \zeta_n\zeta_r) 
\overline{\varphi(g_r, \zeta_r)} \overline{\varphi'(g_{n-1}, \zeta_n)}
\Theta_{\psi_\xi}^{\phi_i^{(r)}}(g_r, \zeta_r)
\\&\times
\left(
\int_{Z_{n-1}(\A)V_{n-1}(F) \bs V_{n-1}(\A)}
\Theta_{\psi_\xi}^{\phi_i^{(n-1)}}(v (g_{n-1}, \zeta_n))
\overline{\Theta_{\psi_\xi}^{\phi}(v(g_{n-1},\zeta_{n}))} dv
\right) dg_r dg_{n-1} \not= 0.
\end{align*}
The integral on $Z_{n-1}(\A)V_{n-1}(F) \bs V_{n-1}(\A)$ is equal to the inner product $(\phi_i^{(n-1)}, \phi)$, 
which does not depend on $(g_{n-1},\zeta_{n})$.
The other integral is equal to the complex conjugate of 
\[
\PP_{r,r,\psi_\xi}(\varphi, \overline{\MM^{(r)}(\varphi', \FF'_i)}, \phi_i^{(r)}).
\]
Hence we obtain (1).
\par

Next suppose that 
there exist $\varphi \in \pi$, $\MM^{(r)}(\varphi', \FF') \in \MM^{(r)}_{\psi, \tau\chi_\xi}(\pi')$, 
and $\phi_2 \in \Sc(X_{r}(\A))_\xi$ such that
\[
\PP_{r,r,\psi_\xi}(\varphi, \overline{\MM^{(r)}(\varphi', \FF')}, \phi) \not= 0.
\]
Choose a nonzero vector $\phi' \in \Sc(X_{n-1}(\A))_\xi$.
Then by the same calculation above, we have
\begin{align*}
&(\phi', \phi') \overline{\PP_{r,r,\psi_\xi}(\varphi, \overline{\MM^{(r)}(\varphi', \FF')}, \phi)}
\\&=
\int_{\Sp_{r}(F) \bs \Sp_{r}(\A)}
\int_{\Sp_{n-1}(F) \bs \Sp_{n-1}(\A)} 
\FF'(\iota(g_{n-1}, g_r), \zeta_n\zeta_r) 
\overline{\varphi'(g_{n-1}, \zeta_n)} \overline{\varphi(g_r, \zeta_r)} 
\Theta_{\psi_\xi}^{\phi}(g_r, \zeta_r)
\\&\times
\left(
\int_{Z_{n-1}(\A)V_{n-1}(F) \bs V_{n-1}(\A)}
\Theta_{\psi_\xi}^{\phi'}(v (g_{n-1}, \zeta_n))
\overline{\Theta_{\psi_\xi}^{\phi'}(v(g_{n-1},\zeta_{n}))} dv
\right) dg_{n-1} dg_r
\\&=
\int_{Z_{n-1}(\A)V_{n-1}(F) \bs V_{n-1}(\A)}
\int_{\Sp_{n-1}(F) \bs \Sp_{n-1}(\A)} 
\\&\times
\left(
\int_{\Sp_{r}(F) \bs \Sp_{r}(\A)}
\FF'(\iota(g_{n-1}, g_r), \zeta_n\zeta_r) 
\Theta_{\psi_\xi}^{\phi' \otimes \phi}(\iota(v, 1) \cdot \iota(g_{n-1}, g_r), \zeta_n\zeta_r)
\overline{\varphi(g_r, \zeta_r)} dg_r
\right) 
\\&\times
\overline{\varphi'(g_{n-1}, \zeta_n)}
\overline{\Theta_{\psi_\xi}^{\phi'}(v(g_{n-1},\zeta_{n}))}
dg_{n-1} dv.
\end{align*}
If $n+r \geq 2$, then by Proposition \ref{FJcoeff2} (2), (3), 
there exists $\FF \in \Ik^{(n+r)}_{\psi}(\tau)$ such that
\[
\FF_{\psi_\xi}(\iota(v, 1) \cdot \iota(g_{n-1}, g_r), \zeta_n\zeta_r) 
= \FF'(\iota(g_{n-1}, g_r), \zeta_n\zeta_r)
\Theta_{\psi_\xi}^{\phi' \otimes \phi}(\iota(v, 1) \cdot \iota(g_{n-1}, g_r), \zeta_n\zeta_r).
\]
Then 
\begin{align*}
0 \not= (\phi', \phi') \overline{\PP_{r,r,\psi_\xi}(\varphi, \overline{\MM^{(r)}(\varphi', \FF')}, \phi)}
= \PP_{n,n-1, \psi_\xi}(\MM^{(n)}(\varphi, \FF), \overline{\varphi'}, \phi').
\end{align*}
Hence we obtain (2).
\end{proof}

As an application of this seesaw identity, 
we have a criterion for the non-vanishing of the Miyawaki liftings for the equal rank case.

\begin{prop}\label{n=r}
Let $\pi$ be an irreducible representation of $\Sp_r(\A_\fin)$
occurring in $\Sc_{k+r}(\Sp_r(F) \bs \Sp_r(\A))$ with $r \geq 1$.
Assume that $\pi$ has a tempered $A$-parameter, 
and that the $\xi$-th Fourier--Jacobi module $\FJ_{\psi_\xi}(\pi)$ is nonzero.
Then the following are equivalent: 
\begin{enumerate}
\item[(a)]
$\MM^{(r)}_{\psi, \tau}(\pi)$ is nonzero. 
\item[(b)]
For any irreducible representation $\pi'$ of $\Mp_{r-1}(\A_\fin)$
occurring in $\Sc_{k+r-1/2}(\Sp_{r-1}(F) \bs \Mp_{r-1}(\A))$, 
if $\PP_{r,r-1, \psi_\xi}$ is not identically zero on $\pi \times \overline{\pi'} \times \Sc(X_{r-1}(\A))_\xi$, 
then $\PP_{r,r, \psi_\xi}$ is not identically zero 
on $\pi \times \overline{\MM^{(r)}_{\psi, \tau\chi_\xi}(\pi')} \times \Sc(X_{r}(\A))_\xi$. 
\item[(c)]
There exists an irreducible representation $\pi'$ of $\Mp_{r-1}(\A_\fin)$
occurring in $\Sc_{k+r-1/2}(\Sp_{r-1}(F) \bs \Mp_{r-1}(\A))$
such that
$\PP_{r,r, \psi_\xi}$ is not identically zero 
on $\pi \times \overline{\MM^{(r)}_{\psi, \tau\chi_\xi}(\pi')} \times \Sc(X_{r}(\A))_\xi$.
\end{enumerate}
\end{prop}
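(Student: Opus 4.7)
The plan is to establish the cycle of implications (a) $\Rightarrow$ (b) $\Rightarrow$ (c) $\Rightarrow$ (a). The two main ingredients are the seesaw identity (Proposition \ref{GSS}) and Corollary \ref{pi}(1); the latter, under the tempered $A$-parameter hypothesis, identifies $\MM^{(r)}_{\psi,\tau}(\pi)$ with $\pi$ itself as subspaces of $\Sc_{k+r}(\Sp_r(F) \bs \Sp_r(\A))$ whenever it is nonzero.

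For (a) $\Rightarrow$ (b), I would first invoke Corollary \ref{pi}(1) to write any $\varphi \in \pi$ as $\MM^{(r)}(\varphi_1, \FF_1)$ for some $\varphi_1 \in \pi$ and $\FF_1 \in \Ik_\psi^{(2r)}(\tau)$. Given $\pi'$ such that $\PP_{r,r-1,\psi_\xi}(\varphi, \overline{\varphi'}, \phi) \neq 0$ for some $\varphi' \in \pi'$ and $\phi \in \Sc(X_{r-1}(\A))_\xi$, this rewriting shows the hypothesis of Proposition \ref{GSS}(1) (with $n = r$) is satisfied, and its conclusion yields the desired nonvanishing of $\PP_{r,r,\psi_\xi}$ on $\pi \times \overline{\MM^{(r)}_{\psi,\tau\chi_\xi}(\pi')} \times \Sc(X_r(\A))_\xi$. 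For (c) $\Rightarrow$ (a), since $r \geq 1$ yields $n + r = 2r \geq 2$, Proposition \ref{GSS}(2) applies directly, producing $\MM^{(r)}(\varphi_1, \FF_1) \in \MM^{(r)}_{\psi,\tau}(\pi)$ with $\PP_{r,r-1,\psi_\xi}(\MM^{(r)}(\varphi_1, \FF_1), \overline{\varphi'_1}, \phi_1) \neq 0$; in particular $\MM^{(r)}(\varphi_1, \FF_1) \neq 0$, so $\MM^{(r)}_{\psi,\tau}(\pi) \neq 0$.

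The heart of the argument lies in (b) $\Rightarrow$ (c), where one must exhibit at least one irreducible $\pi'$ witnessing the nonvanishing of $\PP_{r,r-1,\psi_\xi}$. Here I would use the assumption $\FJ_{\psi_\xi}(\pi) \neq 0$ to select $\varphi \in \pi$ and $\phi \in \Sc(X_{r-1}(\A))_\xi$ such that $\FJ_{\psi_\xi}^\phi(\varphi) \neq 0$ as an element of $\Sc_{k+r-1/2}(\Sp_{r-1}(F) \bs \Mp_{r-1}(\A))$. Since this cuspidal function has positive Petersson norm, it has nonzero projection onto some irreducible constituent $\pi'$ of the cuspidal spectrum, and a suitable $\varphi' \in \pi'$ then satisfies $\PP_{r,r-1,\psi_\xi}(\varphi, \overline{\varphi'}, \phi) = \pair{\FJ_{\psi_\xi}^\phi(\varphi), \varphi'} \neq 0$. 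Applying (b) to this $\pi'$ delivers (c). The main conceptual obstacle is precisely this extraction of an irreducible $\pi'$ from the abstract nonvanishing hypothesis, which relies on the discreteness of the cuspidal spectrum; the remaining implications are essentially direct applications of the seesaw identity combined with Corollary \ref{pi}(1).
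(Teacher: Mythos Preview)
Your proposal is correct and follows essentially the same approach as the paper's proof: the cycle (a) $\Rightarrow$ (b) $\Rightarrow$ (c) $\Rightarrow$ (a), using Corollary \ref{pi}(1) together with Proposition \ref{GSS}(1) for the first implication, the nonvanishing hypothesis $\FJ_{\psi_\xi}(\pi)\neq 0$ and the discrete decomposition of the cuspidal spectrum for the second, and Proposition \ref{GSS}(2) for the third. One minor imprecision: Corollary \ref{pi}(1) only gives that $\pi$ is \emph{spanned} by elements of the form $\MM^{(r)}(\varphi_1,\FF_1)$, not that each $\varphi$ equals a single such; but since the period is linear this is harmless, and the paper's proof glosses over the same point.
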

\begin{proof}
First, we show that (a) implies (b). 
Suppose that $\MM^{(r)}_{\psi, \tau}(\pi)$ is nonzero.
Since $\pi$ has a tempered $A$-parameter, 
by Corollary \ref{pi} (1), we have $\MM^{(r)}_{\psi, \tau}(\pi) = \pi$.
Then by Proposition \ref{GSS} (1), we obtain the condition (b).
\par

Next, we show that (b) implies (c). 
Since $\FJ_{\psi_\xi}(\pi) \not= 0$, the map
\begin{align*}
\pi \otimes \overline{\omega_{\psi_\xi}} &\rightarrow \Sc_{k+r-1/2}(\Sp_{r-1}(F) \bs \Mp_{r-1}(\A)), \\
\varphi \otimes \overline{\phi} &\mapsto 
\int_{V_{r-1}(F) \bs V_{r-1}(\A)}\varphi(vg') \cdot \overline{\Theta_{\psi_\xi}^{\phi}(vg')} dv 
\end{align*}
is nonzero.
If we take an irreducible subrepresentation $\pi'$ in the image of this map, 
then $\PP_{r,r-1, \psi_\xi}$ is not identically zero on $\pi \times \overline{\pi'} \times \Sc(X_{r-1}(\A))_\xi$.
Applying (b) to $\pi'$, we see that $\pi'$ satisfies the condition of (c).
\par

Finally, we show that (c) implies (a).
If $\PP_{r,r, \psi_\xi}$ is not identically zero 
on $\pi \times \overline{\MM^{(r)}_{\psi, \tau\chi_\xi}(\pi')} \times \Sc(X_{r}(\A))_\xi$, 
then $\PP_{r,r-1, \psi_\xi}$ is not identically zero 
on $\MM^{(r)}_{\psi, \tau}(\pi) \times \overline{\pi'} \times \Sc(X_{r-1}(\A))_\xi$
by Proposition \ref{GSS} (2).
In particular, $\MM^{(r)}_{\psi, \tau}(\pi) \not= 0$.
This completes the proof.
\end{proof}

\section{A relation between Miyawaki liftings and the Gan--Gross--Prasad conjecture}\label{relation}
In this section, we formulate a conjecture on the non-vanishing of global Miyawaki liftings.
In addition, we relate this conjecture with 
the Gan--Gross--Prasad conjecture for the symplectic-metaplectic case.
\par

\subsection{Conjecture}
Fix $\tau$ and $\psi$ as in the previous section, 
and a totally positive element $\xi \in F^\times$. 
We denote the weight of $\otimes_{v \mid \infty}\tau_v$ 
by $\pm2k = (\pm2k_v)_{v}$ with $k_v > 0$.
Let $\pi$ be an irreducible representation of $\Mp_r(\A_\fin)$
occurring in $\Sc_{k+(n+r)/2}(\Sp_r(F) \bs \Mp_r(\A))$, 
and $\MM^{(n)}_{\psi, \tau}(\pi)$ be the Miyawaki lift of $\pi$.
The Rankin--Selberg $L$-function attached to $\pi \times \tau\chi_{-1}^r$
is denoted by $L(s, \pi \times \tau\chi_{-1}^r)$.
\par

\begin{conj}[$(M)_{r,n}$]\label{M0}
Suppose that $n \geq r$.
\begin{enumerate}
\item
When $n=r$, 
the Miyawaki lift $\MM^{(r)}_{\psi, \tau}(\pi)$ is nonzero 
if and only if $L(1/2, \pi \times \tau\chi_{-1}^r) \not= 0$.
\item
When $n > r$, 
the Miyawaki lift $\MM^{(n)}_{\psi, \tau}(\pi)$ is always nonzero.
\end{enumerate}
\end{conj}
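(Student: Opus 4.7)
The plan is to use the seesaw identities (Proposition \ref{GSS}) to translate non-vanishing of a Miyawaki lift into non-vanishing of Fourier--Jacobi periods, and then invoke the global Gan--Gross--Prasad conjecture to convert those periods into central L-values. The three statements are handled separately: the cases $n=r$ and $n=r+1$ by a direct seesaw-plus-GGP argument, the case $n=r=1$ unconditionally (because the required input is available at that rank), and the case $n \geq r+2$ by an inductive reduction to the $(n-1)$-case.

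For the equal-rank case $n = r$: Proposition \ref{n=r} equates $\MM^{(r)}_{\psi,\tau}(\pi) \neq 0$ with the existence of an irreducible $\pi'$ in $\Sc_{k+r-1/2}(\Sp_{r-1}(F) \bs \Mp_{r-1}(\A))$ such that the Fourier--Jacobi period $\PP_{r,r,\psi_\xi}(\pi, \overline{\MM^{(r)}_{\psi,\tau\chi_\xi}(\pi')}, \phi)$ is nonzero for some $\phi$; the auxiliary assumption $\FJ_{\psi_\xi}(\pi) \neq 0$ needed in that proposition can be arranged by varying the totally positive element $\xi$. I would then invoke the global GGP conjecture (Conjecture \ref{GGP-S}) for $\Mp_r \times \Mp_r$ to replace the period non-vanishing by the combined conditions that all local GGP Hom spaces are nonzero and that the central L-value $L(1/2, \pi \times \MM^{(r)}_{\psi,\tau\chi_\xi}(\pi'))$ is nonzero. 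By Proposition \ref{Apara} the lift $\MM^{(r)}_{\psi,\tau\chi_\xi}(\pi')$ has A-parameter $\Psi' \boxplus \tau\chi_\xi\chi_{-1}^{r-1}$, which factorises the L-value as
\[
L(1/2, \pi \times \Psi') \cdot L(1/2, \pi \times \tau\chi_\xi\chi_{-1}^{r-1}).
\]
The final step uses Hypothesis \ref{hypo} to produce a tempered $\pi'$ whose local components match $\pi$ for the local GGP test and whose companion L-factor $L(1/2, \pi \times \Psi')$ is nonzero, together with a quadratic twist non-vanishing result in the spirit of \cite{W} to identify the remaining factor with $L(1/2, \pi \times \tau\chi_{-1}^r)$.

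For $n = r+1$ a parallel argument applies, relating the period $\PP_{r+1, r, \psi_\xi}$ governing $\MM^{(r+1)}_{\psi,\tau}(\pi)$ (via the seesaw of Proposition \ref{GSS}) to a period of type $\PP_{r, r, \psi_\xi}$ involving $\pi$ and an auxiliary $\pi'$ on $\Mp_r(\A)$. Here the target A-parameter from Proposition \ref{Apara} is the tempered $\Psi \boxplus \tau\chi_{-1}^r$, and the GGP plus Hypothesis \ref{hypo} input produces non-vanishing without an L-value constraint, in agreement with part (2) of Conjecture \ref{nonvanishing}. The unconditional case $n = r = 1$ falls out because Hypothesis \ref{hypo} is trivially met (the auxiliary $\pi'$ lives on $\Mp_0$) and GGP at this rank is the classical Waldspurger formula. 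For $n \geq r+2$, the direct seesaw cannot match $\pi$ and the lift on the small side, so I would argue by induction: combining Proposition \ref{GSS} with Proposition \ref{Apara}, one shows that non-vanishing of an appropriate lower-rank Miyawaki lift $\MM^{(n-1)}_{\psi,\tau'}(\pi')$ propagates through the seesaw to force non-vanishing of $\MM^{(n)}_{\psi, \tau}(\pi)$, giving the implication required in part (3).

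The main obstacle is the simultaneous arrangement of the auxiliary $\pi'$ so that (i) it is an automorphic cusp form of the correct weight with a tempered A-parameter, (ii) its local components match those of $\pi$ in the sense required by the local GGP conjecture (so that all local Hom spaces survive), and (iii) its companion factor $L(1/2, \pi \times \Psi')$ is nonzero. Controlling these three conditions simultaneously is precisely the content of Hypothesis \ref{hypo}, so the proof in the tempered case is unavoidably conditional on it. A secondary obstacle is that the seesaw naturally forces the character twist $\chi_\xi$ rather than the $\chi_{-1}^r$ appearing in Conjecture \ref{nonvanishing}; bridging this requires combining a quadratic twist non-vanishing theorem with a careful choice of totally positive $\xi$ consistent with the local root-number condition (A3) and with the surviving local Hom spaces.
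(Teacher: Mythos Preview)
The statement is a conjecture that the paper does not prove in full; what the paper establishes are the conditional implications of Theorems~\ref{rrr} and~\ref{n>r} and the unconditional Corollary~\ref{11}. Your outline tracks that strategy, but it misses the inductive structure that makes the equal-rank case work. The paper does not attack $(M)_{r,r}$ directly; it proves $(M)_{r-1,r}\Rightarrow (M)_{r,r}\Rightarrow (M)_{r,r+1}$. The reason this induction is essential shows up in the converse direction of $(M)_{r,r}$: to pass from $L(1/2,\pi\times\tau\chi_{-1}^r)\neq 0$ back to a nonzero period via Conjecture~\ref{GGP-S}~(2), you first need the auxiliary lift $\MM^{(r)}_{\psi,\tau\chi_\xi}(\pi')$ to be nonzero (otherwise there is nothing to pair against), and that is exactly $(M)_{r-1,r}$ applied to $\pi'$. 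Similarly, for $n=r+1$ the paper does not start from $\MM^{(r+1)}_{\psi,\tau}(\pi)$; it first manufactures, via Lemma~\ref{lemA} and Arthur's multiplicity formula, a cuspidal $\Pi$ on $\Mp_{r+1}$ with the predicted local components, applies Hypothesis~\ref{hypo} to $\Pi$ to obtain $\pi'$ on $\Sp_r$, and then invokes the already-established $(M)_{r,r}$ to identify $\MM^{(r)}_{\psi,\tau\chi_\xi}(\pi')$ with $\pi'$ itself (Corollary~\ref{pi}) before running the seesaw. Your sketch does not isolate either inductive input.

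Your ``secondary obstacle'' about the character twist is a misreading rather than a genuine difficulty. The $L$-function in Conjecture~\ref{GGP-S} already carries the twist $\chi_{(-1)^{l-1}\xi}$; combined with the $A$-parameter $\Psi'\boxplus\tau\chi_{(-1)^{r-1}\xi}$ of $\MM^{(r)}_{\psi,\tau\chi_\xi}(\pi')$ (Proposition~\ref{Apara}), the $\xi$'s cancel because $\chi_{\xi^2}=\1$, and the relevant factor is exactly $L(1/2,\pi\times\tau\chi_{(-1)^r})$ with no appeal to a Waldspurger-type twist result. Likewise, your obstacle (iii) is not a separate condition to be arranged: Hypothesis~\ref{hypo} hands you a $\pi'$ with $\PP_{r,r-1,\psi_\xi}\not\equiv 0$, and Conjecture~\ref{GGP-S}~(1) then delivers the local Hom conditions and the non-vanishing of the companion factor $L(1/2,\pi\times\pi'\times\chi_{-\xi})$ in one stroke. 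For $n\geq r+2$ your description is on the right track but omits the key intermediate use of the duality $\pi'\subset\MM^{(r)}_{\psi,\tau\chi_\xi}(\Pi')$ (Proposition~\ref{dual}), which is what converts the inductive hypothesis $(M)_{r,n-1}$ for the auxiliary $\pi'$ into the input required by the seesaw (Proposition~\ref{GSS}~(2)).
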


Note that when $r=0$, Conjecture \ref{M0} is trivial since 
$\MM_{\psi, \tau}^{(n)}(\eta^n) = \Ik^{(n)}_{\psi}(\tau)$, 
where $\eta$ is the unique non-trivial character of $\Mp_0(\A_\fin) \cong \Z/2\Z$. 
When $n \geq r$ and $n \equiv r \bmod 2$, 
it is an extension of a part of Ikeda's conjecture \cite[Conjecture 5.1]{I2}.
The simplest case of this conjecture 
(the case where $F=\Q$, $n=r=1$, and both $\tau\chi_{-1}$ and $\pi$ are unramified everywhere) 
is proven by Ichino \cite{Ic} and Xue \cite{X2}.
When $n \geq r$ and $n \not\equiv r \bmod 2$, 
some examples are given by Hayashida \cite[Lemma 9.1]{Ha}.
\par

In the rest of this section, we explain a relation between Conjecture \ref{M0} 
and the Gan--Gross--Prasad conjecture.

\subsection{Gan--Gross--Prasad conjecture}
The Gan--Gross--Prasad conjecture (GGP) \cite{GGP} 
relates the non-vanishing of the Fourier--Jacobi periods
with the non-vanishing of the central values of the Rankin--Selberg $L$-functions.
In this subsection, we review the GGP conjecture for holomorphic cusp forms. 
For more precision, see also Appendix \ref{sec.GGP}.
\par

Suppose that $n' = n$ or $n' = n-1$.
Let $\pi$ and $\pi'$ be irreducible representations of $\Mp_n(\A_\fin)$ and $\Mp_{n'}(\A_\fin)$
occurring in $\Sc_{l/2}(\Sp_n(F) \bs \Mp_n(\A))$ and $\Sc_{(l-1)/2}(\Sp_{n'}(F) \bs \Mp_{n'}(\A))$, 
respectively.
Assume that $\pi$ and $\pi'$ have tempered $A$-parameters
$\Psi = \boxplus_{i=1}^{t}\tau_i$ and $\Psi' = \boxplus_{j=1}^{t'}\tau'_{j}$, respectively.
We define the Rankin--Selberg $L$-function $L(s, \pi \times \pi' \times \chi_{(-1)^{l-1}\xi})$ by
\[
L(s, \pi \times \pi' \times \chi_{(-1)^{l-1}\xi})
= \prod_{i=1}^{t} \prod_{j=1}^{t'} L(s, \tau_i \times \tau'_j \times\chi_{(-1)^{l-1}\xi}).
\]
Note that $\tau_j'$ is self-dual and unitary, so that $\overline{\tau_j'} \cong \tau'_j$ for any $j$.
\par

We state the Gan--Gross--Prasad conjecture \cite[Conjecture 24.1]{GGP} for holomorphic cusp forms.

\begin{conj}[Gan--Gross--Prasad conjecture]\label{GGP-S}
Let $\pi = \otimes'_{v < \infty} \pi_v$ and $\pi' = \otimes'_{v < \infty} \pi'_v$ be as above.
\begin{enumerate}
\item
If the Fourier--Jacobi period $\PP_{n,n', \psi_\xi}$ is not identically zero 
on $\pi \times \overline{\pi'} \times \Sc(X_{n'}(\A))_\xi$, 
then 
the central value $L(1/2, \pi \times \pi' \times \chi_{(-1)^{l-1}\xi})$ is nonzero and 
the local $\Hom$-space 
\[
\Hom_{\Sp_{n'}(F_v)}(\pi_v \otimes \overline{\pi'_v} \otimes \overline{\omega_{\psi_\xi}}, \C)
\]
is nonzero for any $v < \infty$.

\item
When $n' = n$, the converse of (1) holds.
\end{enumerate}
\end{conj}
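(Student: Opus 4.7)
The plan is to follow Xue \cite{X} and reduce Conjecture \ref{GGP-S} to the Bessel form of the Gan--Gross--Prasad conjecture for special orthogonal groups via the global theta correspondence. Fix $\psi_\xi$ and choose quadratic $F$-spaces $V \supset V'$ with $\dim V - \dim V' = 1$, of suitable dimension and discriminant, so that $(\Mp_n, \mathrm{O}(V))$ and $(\Sp_{n-1}, \mathrm{O}(V'))$ form Howe dual pairs compatible with $\psi_\xi$; in the equal-rank case $n' = n$ one uses two copies of $\mathrm{O}(V)$ diagonally embedded in $\mathrm{O}(V \oplus V)$.

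First I would theta-lift $\pi$ and $\pi'$ globally along the Witt tower to first occurrence, producing cuspidal $\sigma$ on $\mathrm{O}(V)(\A)$ and $\sigma'$ on $\mathrm{O}(V')(\A)$. Using the tempered hypothesis and the Gan--Ichino multiplicity formula one identifies the $A$-parameters of $\sigma, \sigma'$ with twists by $\chi_\xi$ of $\Psi, \Psi'$, modulo an additional summand encoding the discriminant. Second, the seesaw
\[
\xymatrix{
\mathrm{O}(V) \times \mathrm{O}(V') \ar@{-}[d] \ar@{-}[dr] & \Mp_n \times \Mp_{n-1} \ar@{-}[d] \\
\mathrm{O}(V) & \cl{J}_{n-1}(F)
}
\]
turns the Fourier--Jacobi period $\PP_{n, n-1, \psi_\xi}(\varphi, \overline{\varphi'}, \phi)$ into a Bessel period of $\sigma \otimes \sigma'$ on $\mathrm{O}(V) \times \mathrm{O}(V')$ paired against matching test vectors. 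The Rallis inner product formula converts the $L$-ratio $L(1/2, \pi \times \pi' \times \chi_{(-1)^{l-1}\xi}) / L(1, \Psi, \Ad) L(1, \Psi', \Ad)$ into its orthogonal counterpart, up to explicit Euler factors of $L(s, \Psi \otimes \chi_\xi)$ at $s=1$ that one verifies are nonzero.

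Third, I would invoke the orthogonal GGP. Direction (1) --- nonzero Fourier--Jacobi period forces nonzero central $L$-value and local $\Hom$-nonvanishing --- follows from the analogous direction for Bessel periods on $\SO \times \SO$, known in substantial generality by Ginzburg--Jiang--Rallis and Jiang--Zhang through Rankin--Selberg integrals; pulling back through the theta seesaw yields Conjecture \ref{GGP-S}(1), with the local $\Hom$-nonvanishing being a formal consequence of the global period nonvanishing. Direction (2), the refined converse in the equal-rank case, transports under theta to the refined Gan--Gross--Prasad identity for $\SO \times \SO$; the Jacquet--Rallis relative trace formula adapted to the Bessel setting by W.~Zhang, Beuzart-Plessis, Chaudouard--Zydor, and Liu supplies this converse for tempered $A$-parameters on the orthogonal side.

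The main obstacle is threefold. First, controlling the first occurrence of $\theta_{\psi_\xi}(\pi)$ on the correct Witt tower without shrinking the $A$-parameter typically requires $L(1/2, \pi \otimes \chi_\xi) \neq 0$, which is not part of the hypotheses; since $\xi$ is fixed, one is forced to work with several Witt towers in parallel and match periods across them via the conservation relation. Second, the Rallis-type identity between Fourier--Jacobi and Bessel periods requires a delicate regularization when $\pi, \pi'$ are not globally generic, and representations arising from Miyawaki liftings are generically of this non-generic type, so the fine analysis of \cite{X} must be extended. Third, and most fundamentally, the orthogonal GGP to which we reduce is itself only conditionally known beyond the tempered case, so this strategy delivers a reduction rather than an unconditional proof; extensions to the non-tempered packets relevant for Miyawaki theory would require Arthur's endoscopic classification together with stabilization of the relevant Jacquet--Rallis trace formulas.
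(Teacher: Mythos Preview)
The statement you are addressing is labeled a \emph{Conjecture} in the paper and is not proven there. The paper does not attempt a proof; it uses Conjecture~\ref{GGP-S} as a standing hypothesis in Theorem~\ref{rrr}, and in Appendix~\ref{sec.GGP} merely explains how this holomorphic reformulation is deduced from the standard Gan--Gross--Prasad conjecture (Conjecture~\ref{GGP}) and its refined version (Conjecture~\ref{refined}), together with the elementary archimedean computation in Lemma~\ref{arch-period}. In particular, the paper's ``proof'' consists of two observations: Conjecture~\ref{GGP} implies part~(1), and Conjecture~\ref{refined} plus Lemma~\ref{arch-period} implies part~(2). The only unconditional input the paper records is Yamana's result \cite{Y} establishing direction~(1) when $n'=n$.

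Your proposal goes well beyond this: you outline a genuine reduction of the symplectic--metaplectic GGP to the orthogonal Bessel GGP via theta lifting and seesaw, following Xue~\cite{X}. That is a correct description of how one would \emph{attack} Conjecture~\ref{refined} (and hence Conjecture~\ref{GGP-S}), and you rightly flag the conditional nature of the endpoint. But this is not a comparison point with the paper, since the paper simply assumes the conjecture rather than proving or reducing it. If the intent was to supply what the paper provides, the appropriate content is the short deduction in Appendix~\ref{sec.GGP}: check that passing from $\pi'$ to $\overline{\pi'}$ twists the $A$-parameter by $\chi_{-1}$ in the genuine case (so the $L$-function acquires the character $\chi_{(-1)^{l-1}\xi}$), and verify via Lemma~\ref{arch-period} that the archimedean local periods do not obstruct the converse direction when $n'=n$.
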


This is not the usual formulation of the GGP conjecture.
For the usual statements and the relation with Conjecture \ref{GGP-S}, 
see Appendix \ref{sec.GGP} below.
When $n' = n$, Conjecture \ref{GGP-S} (1) was proven by Yamana \cite{Y}.

\subsection{Relation between Conjectures \ref{M0} and \ref{GGP-S}}
In this subsection, we show Conjecture \ref{M0} using the GGP conjecture
and the following hypothesis.

\begin{hypo}\label{hypo}
Let $\pi$ be an irreducible representation of $\Mp_r(\A_\fin)$
occurring in $\Sc_{l/2}(\Sp_r(F) \bs \Mp_r(\A))$ with tempered $A$-parameter.
Suppose that $\FJ_{\psi_\xi}(\pi) \not= 0$.
Then there exists an irreducible representation $\pi'$ of $\Mp_{r-1}(\A_\fin)$
occurring in $\Sc_{(l-1)/2}(\Sp_{r-1}(F) \bs \Mp_{r-1}(\A))$ with tempered $A$-parameter such that
$\PP_{r,r-1, \psi_\xi}$ is not identically zero on $\pi \times \overline{\pi'} \times \Sc(X_{r-1}(\A))_\xi$.
\end{hypo}

Hypothesis \ref{hypo} may be regarded as a global analogue of \cite[Lemma C.6]{AG}.
The main difficulty is the tempered-ness of the $A$-parameter for $\pi'$.
Since all representations occurring in $\Sc_{(l-1)/2}(\Sp_1(F) \bs \Mp_1(\A))$ 
have tempered $A$-parameters by Example \ref{n=1}, 
Hypothesis \ref{hypo} is true when $r \leq 2$.
\par

Using the GGP conjecture (Conjecture \ref{GGP-S}) and Hypothesis \ref{hypo},
we will show Conjecture \ref{M0}.
First, we consider the (almost) equal rank case, i.e., the case where $n=r$ or $n=r+1$.

\begin{thm}\label{rrr}
Assume the GGP conjecture (Conjecture \ref{GGP-S}) and Hypothesis \ref{hypo}.
Then for irreducible representations with tempered $A$-parameters, we have
\[
(M)_{r-1,r} \implies (M)_{r,r} \implies (M)_{r, r+1}.
\]
In particular, $(M)_{r,r}$ and $(M)_{r,r+1}$ for irreducible representations with tempered $A$-parameters 
are true for any $r \geq 0$.
\end{thm}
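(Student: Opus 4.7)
The plan is to establish the two implications in turn and then chain them by induction starting from the base case $(M)_{0,1}$, which is the non-vanishing of the Ikeda lift $\Ik^{(1)}_{\psi}(\tau)$ (the $r=0$ cases of $(M)_{r,r}$ and $(M)_{r,r+1}$ being immediate from the definitions and the existing Ikeda theory). Both implications follow the same pattern: convert the question of non-vanishing of a Miyawaki lift into non-vanishing of a Fourier--Jacobi period via either Proposition \ref{n=r} (in the equal-rank case) or the seesaw identity Proposition \ref{GSS} (in the $n=r+1$ case), translate this via Conjecture \ref{GGP-S} into non-vanishing of central $L$-values, and factorize the latter through the $A$-parameter of the Miyawaki lift as computed in Proposition \ref{Apara}. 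Hypothesis \ref{hypo} supplies the tempered auxiliary representation on $\Mp_{r-1}$ that initiates each argument, and the local seesaw Proposition \ref{LSS} bijects local Hom conditions between the two sides of the seesaw diagram.

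For $(M)_{r-1,r}\Rightarrow(M)_{r,r}$, fix a tempered $\pi$ on $\Mp_r$ in $\Sc_{k+r}$ with $r\ge 1$. Pick a totally positive $\xi\in F^\times$ with $\FJ_{\psi_\xi}(\pi)\neq 0$ (possible by the Fourier--Jacobi expansion of a cusp form), and let $\pi'$ be the tempered representation on $\Mp_{r-1}$ supplied by Hypothesis \ref{hypo}, so that $\PP_{r,r-1,\psi_\xi}(\pi,\overline{\pi'},\phi)\neq 0$ for some $\phi$. Conjecture \ref{GGP-S}(1) gives $L(\tfrac12,\pi\times\pi'\times\chi_{-\xi})\neq 0$ and non-vanishing of all local Hom spaces, while $(M)_{r-1,r}$ ensures $\MM^{(r)}_{\psi,\tau\chi_\xi}(\pi')\neq 0$; this lift is irreducible and tempered with $A$-parameter $\Psi'\boxplus\tau\chi_\xi\chi_{-1}^{r-1}$ by Theorem \ref{irred}. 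In the forward direction, Proposition \ref{n=r}(a)$\Rightarrow$(b) promotes the $\PP_{r,r-1}$ non-vanishing to $\PP_{r,r,\psi_\xi}(\pi,\overline{\MM^{(r)}_{\psi,\tau\chi_\xi}(\pi')},-)\neq 0$, and Yamana's theorem (the equal-rank case of Conjecture \ref{GGP-S}(1)) yields
\[
0\neq L(\tfrac12,\pi\times\MM^{(r)}_{\psi,\tau\chi_\xi}(\pi')\times\chi_{-\xi})=L(\tfrac12,\pi\times\pi'\times\chi_{-\xi})\cdot L(\tfrac12,\pi\times\tau\chi_{-1}^r),
\]
forcing $L(\tfrac12,\pi\times\tau\chi_{-1}^r)\neq 0$. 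Conversely, the same factorization produces non-vanishing of the composite $L$-value from $L(\tfrac12,\pi\times\tau\chi_{-1}^r)\neq 0$; the local Hom conditions required by Conjecture \ref{GGP-S}(2) follow from the local seesaw Proposition \ref{LSS} combined with $\MM^{(r)}_{\psi_v,\tau_v}(\pi_v)\cong\pi_v$ (Theorem \ref{howe}(2)), which bijects them with the local Hom conditions for $\PP_{r,r-1}$ already secured. Conjecture \ref{GGP-S}(2) then delivers $\PP_{r,r,\psi_\xi}(\pi,\overline{\MM^{(r)}_{\psi,\tau\chi_\xi}(\pi')},-)\neq 0$, and Proposition \ref{n=r}(c)$\Rightarrow$(a) concludes $\MM^{(r)}_{\psi,\tau}(\pi)\neq 0$.

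For $(M)_{r,r}\Rightarrow(M)_{r,r+1}$, the analogous machinery using Proposition \ref{GSS}(2) in place of Proposition \ref{n=r} reduces the non-vanishing of $\MM^{(r+1)}_{\psi,\tau}(\pi)$ to exhibiting a tempered $\pi'$ on $\Mp_r$ in $\Sc_{k+r}$ with $\PP_{r,r,\psi_\xi}(\pi,\overline{\MM^{(r)}_{\psi,\tau\chi_\xi}(\pi')},-)\neq 0$. By the assumed $(M)_{r,r}$ and Corollary \ref{pi}(1), $\MM^{(r)}_{\psi,\tau\chi_\xi}(\pi')\cong\pi'$ whenever $L(\tfrac12,\pi'\times\tau\chi_\xi\chi_{-1}^r)\neq 0$; Conjecture \ref{GGP-S}(2) (equal rank) then reduces the task to producing a tempered $\pi'$ with simultaneous non-vanishing of $L(\tfrac12,\pi'\times\tau\chi_\xi\chi_{-1}^r)$ and $L(\tfrac12,\pi\times\pi'\times\chi_\xi)$ and of all local Hom spaces. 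The candidate $\pi'$ is built as the Miyawaki lift of the $\Mp_{r-1}$-representation $\pi''$ furnished by Hypothesis \ref{hypo} applied to $\pi$, with a companion cuspidal of appropriately shifted archimedean weight to reconcile the weight indices; both desired $L$-values then factorize through $\pi'$'s tempered $A$-parameter, one factor being non-zero by Conjecture \ref{GGP-S}(1) on the $\PP_{r,r-1}$ datum and the other by a standard non-vanishing of twisted central $L$-values \cite{W}. The main obstacle is precisely this construction of the companion cuspidal and the verification that no factor vanishes accidentally; once it is in place, Proposition \ref{GSS}(2) lifts the equal-rank non-vanishing to $\PP_{r+1,r,\psi_\xi}(\MM^{(r+1)}(\varphi,\FF),\overline{\pi'},-)\neq 0$ and hence to $\MM^{(r+1)}_{\psi,\tau}(\pi)\neq 0$. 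With both implications in hand, induction along the chain $(M)_{0,1}\Rightarrow(M)_{1,1}\Rightarrow(M)_{1,2}\Rightarrow(M)_{2,2}\Rightarrow\cdots$ delivers $(M)_{r,r}$ and $(M)_{r,r+1}$ for every $r\ge 0$.
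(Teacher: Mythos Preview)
Your argument for $(M)_{r-1,r}\Rightarrow(M)_{r,r}$ is essentially the paper's proof; the only difference is that where you invoke the local seesaw (Proposition~\ref{LSS}) to transport the local Hom conditions, the paper appeals directly to the explicit local GGP recipe (Theorem~\ref{localGGP}). Either route works.

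The second implication, however, has a genuine gap. You propose to manufacture the auxiliary $\pi'$ on $\Sp_r$ (in $\Sc_{k+r}$) as a Miyawaki lift $\MM^{(r)}_{\psi,\tau'}(\pi'')$ of the $\Sp_{r-1}$-representation $\pi''$ supplied by Hypothesis~\ref{hypo} applied to $\pi$. But $\pi''$ lives in $\Sc_{k+r}(\Sp_{r-1}(F)\backslash\Sp_{r-1}(\A))$, and a Miyawaki lift from $\Mp_{r-1}$ to $\Mp_r$ operates inside the weight space $\Sc_{k'+(2r-1)/2}$ determined by the companion cuspidal $\tau'$. Matching $k'+(2r-1)/2=k+r$ forces $k'_v=k_v+\tfrac12$, which is impossible: the $k_v$ are integers (since $\tau$ has trivial central character, so $2k_v$ is even), and no $\tau'$ satisfying (A1)--(A3) with half-integral $k'_v$ exists. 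Even granting such a $\tau'$, your reduction would still require $L(\tfrac12,\pi\times\tau'\chi_\bullet)\neq 0$ for a \emph{specific} twist, which Waldspurger's \cite{W} existence result does not provide.

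The paper circumvents this entirely by a different mechanism: it first uses Lemma~\ref{lemA} (an application of Arthur's multiplicity formula) to produce, abstractly, an irreducible $\Pi\subset\Sc_{k+r+1/2}(\Sp_{r+1}(F)\backslash\Mp_{r+1}(\A))$ whose local components are $\MM^{(r+1)}_{\psi_v,\tau_v}(\pi_v)$, without yet knowing that $\Pi$ equals the global Miyawaki lift. Hypothesis~\ref{hypo} is then applied to $\Pi$ (not to $\pi$) to obtain $\pi'$ on $\Sp_r$ directly, with $\PP_{r+1,r,\psi_\xi}$ nonzero on $\Pi\times\overline{\pi'}$. Now Conjecture~\ref{GGP-S}(1) on this codimension-one period gives the factorization $L(\tfrac12,\pi\times\pi'\times\chi_\xi)\,L(\tfrac12,\pi'\times\tau\chi_{(-1)^r\xi})\neq 0$ for free, and the rest of your argument (using $(M)_{r,r}$, Corollary~\ref{pi}(1), GGP(2), and the seesaw) goes through. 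The missing ingredient in your proposal is precisely Lemma~\ref{lemA}: you need an \emph{a priori} construction of the target representation on $\Mp_{r+1}$ before you can produce a suitable $\pi'$ on $\Sp_r$.
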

\begin{proof}
First, we show $(M)_{r-1,r} \implies (M)_{r,r}$.
Let $\pi$ be an irreducible representation of $\Mp_r(\A_\fin)$
occurring in $\Sc_{k+r}(\Sp_r(F) \bs \Sp_r(\A))$ with tempered $A$-parameter $\Psi$.
We choose a totally positive element $\xi \in F^\times$ such that $\FJ_{\psi_\xi}(\pi) \not= 0$.
By Hypothesis \ref{hypo}, 
we can find an irreducible representation $\pi'$ of $\Mp_{r-1}(\A_\fin)$
occurring in $\Sc_{k+r-1/2}(\Sp_{r-1}(F) \bs \Mp_{r-1}(\A))$ with tempered $A$-parameter $\Psi'$ such that
$\PP_{r,r-1, \psi_\xi}$ is not identically zero on $\pi \times \overline{\pi'} \times \Sc(X_{r-1}(\A))_\xi$.
\par

Suppose that $\MM_{\psi, \tau}^{(r)}(\pi) \not= 0$. 
Then by Proposition \ref{n=r} (b), we see that
$\PP_{r,r, \psi_\xi}$ is not identically zero 
on $\pi \times \overline{\MM_{\psi, \tau\chi_\xi}^{(r)}(\pi')} \times \Sc(X_{r}(\A))_\xi$.
Since $\pi'$ has a tempered $A$-parameter $\Psi'$, 
by Theorem \ref{irred} and Proposition \ref{Apara}, 
the Miyawaki lifting $\MM_{\psi, \tau\chi_\xi}^{(r)}(\pi')$ is irreducible 
and has a tempered $A$-parameter $\Psi' \boxplus \tau\chi_{(-1)^{r-1}\xi}$.
By the GGP conjecture (Conjecture \ref{GGP-S} (1)), 
we conclude that 
\[
L(1/2, \pi \times \pi' \times \chi_{-\xi}) L(1/2, \pi \times (\tau\chi_{(-1)^{r-1}\xi}) \times \chi_{-\xi}) \not= 0, 
\] 
so that $L(1/2, \pi \times \tau\chi_{(-1)^r}) \not= 0$.
\par

Conversely, suppose that $L(1/2, \pi \times \tau\chi_{(-1)^r}) \not= 0$.
By $(M)_{r-1,r}$, we have $\MM_{\psi, \tau\chi_\xi}^{(r)}(\pi') \not= 0$.
Hence by Theorem \ref{irred} and Proposition \ref{Apara}, 
it is irreducible and has a tempered $A$-parameter $\Psi' \boxplus \tau\chi_{(-1)^{r-1}\xi}$.
By the GGP conjecture (Conjecture \ref{GGP-S} (1), (2) and Theorem \ref{localGGP}), 
we see that $\PP_{r,r, \psi_\xi}$ is not identically zero 
on $\pi \times \overline{\MM_{\psi, \tau\chi_\xi}^{(r)}(\pi')} \times \Sc(X_{r}(\A))_\xi$.
Hence by Proposition \ref{n=r}, we conclude that $\MM_{\psi, \tau}^{(r)}(\pi) \not= 0$. 
This completes the proof of $(M)_{r-1,r} \implies (M)_{r,r}$.
\par

Next, we show $(M)_{r,r} \implies (M)_{r,r+1}$.
By Lemma \ref{lemA}, 
there exists an irreducible representation $\Pi = \otimes'_{v < \infty}\Pi_v$ of $\Mp_{r+1}(\A_{\fin})$
occurring in $\Sc_{k+r+1/2}(\Sp_{r+1}(F) \bs \Mp_{r+1}(\A))$ 
such that $\Pi_v = \MM^{(r+1)}_{\psi_v, \tau_v}(\pi_v)$ for each $v < \infty$, 
and the $A$-parameter for $\Pi$ is equal to $\Psi \boxtimes \tau\chi_{-1}^r$.
We choose a totally positive element $\xi \in F^\times$ such that $\FJ_{\psi_\xi}(\Pi) \not= 0$.
Using Hypothesis \ref{hypo}, 
we take an irreducible representation $\pi'$ of $\Sp_{r}(\A_\fin)$
occurring in $\Sc_{k+r}(\Sp_{r}(F) \bs \Mp_{r}(\A))$ with tempered $A$-parameter $\Psi'$ such that
$\PP_{r+1,r, \psi_\xi}$ is not identically zero on $\Pi \times \overline{\pi'} \times \Sc(X_{r}(\A))_\xi$.
By the GGP conjecture (Conjecture \ref{GGP-S} (1)), we see that 
\[
L(1/2, \Pi \times \pi' \times \chi_\xi) 
= L(1/2, \pi \times \pi' \times \chi_\xi) L(1/2, \pi' \times \tau\chi_{(-1)^r\xi})
\]
is nonzero.
Since $L(1/2, \pi' \times \tau\chi_{(-1)^r\xi}) \not= 0$, 
by $(M)_{r,r}$, 
we see that $\MM^{(r)}_{\psi, \tau\chi_\xi}(\pi')$ is nonzero. 
By Corollary \ref{pi} (1), it is equal to $\pi'$ itself.
Since $L(1/2, \pi \times \pi' \times \chi_\xi) \not= 0$, 
by the GGP conjecture (Conjecture \ref{GGP-S} (2) and Theorem \ref{localGGP}), we see that 
$\PP_{r,r, \psi_\xi}$ is not identically zero on 
\[
\pi \times \overline{\pi'} \times \Sc(X_{r}(\A))_\xi
= 
\pi \times \overline{\MM^{(r)}_{\psi, \tau\chi_\xi}(\pi')} \times \Sc(X_{r}(\A))_\xi.
\]
Then by the seesaw identity (Proposition \ref{GSS} (2)), 
we see that 
$\PP_{r+1,r, \psi_\xi}$ is not identically zero 
on $\MM^{(r+1)}_{\psi, \tau}(\pi) \times \overline{\pi'} \times \Sc(X_r(\A))_\xi$.
In particular we have $\MM^{(r+1)}_{\psi, \tau}(\pi) \not= 0$. 
This completes the proof of $(M)_{r,r} \implies (M)_{r,r+1}$.
\end{proof}

\begin{cor}\label{11}
The conjecture $(M)_{1,1}$ is true.
\end{cor}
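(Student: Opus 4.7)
The plan is to apply Theorem \ref{rrr} in the single instance $r = 1$ and to check that, in this particular case, each hypothesis invoked in the proof (the Gan--Gross--Prasad conjecture and Hypothesis \ref{hypo}) is actually known unconditionally. The statement to be derived is the implication $(M)_{0,1} \Longrightarrow (M)_{1,1}$; the second implication in Theorem \ref{rrr} is not needed here.

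First I would verify the trivially available inputs. The base case $(M)_{0,1}$ holds: when $r = 0$ the only representation is the nontrivial character $\eta$ of $\Mp_0(\A_\fin) \cong \{\pm 1\}$, and by definition $\MM^{(1)}_{\psi,\tau}(\eta)$ coincides with the Ikeda lift $\Ik^{(1)}_\psi(\tau)$, which is nonzero by \cite{IY}. Hypothesis \ref{hypo} at $r = 1$ holds by the remark immediately following its statement, since any irreducible cuspidal representation of $\Mp_1(\A)$ occurring in the relevant holomorphic cusp form space automatically has tempered $A$-parameter (Example \ref{n=1}), so the only substantive demand of the hypothesis is vacuous.

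Next I would show that the uses of Conjecture \ref{GGP-S} in the proof of $(M)_{r-1,r} \Longrightarrow (M)_{r,r}$, specialized to $r=1$, reduce to known theorems. The forward direction (non-vanishing of the Fourier--Jacobi period implies non-vanishing of the central $L$-value) for $n' = n = 1$ is precisely Yamana's result \cite{Y}, which is cited as unconditional right after Conjecture \ref{GGP-S}. The converse direction, together with the local input Theorem \ref{localGGP}, concerns the $\Mp_1 \times \Mp_1$ Fourier--Jacobi period and, via the Shimura correspondence and the theta correspondence between $\Mp_1$ and $\mathrm{PGL}_2$, is a reformulation of Waldspurger's classical theorem relating toric periods to central values of Rankin--Selberg $L$-functions; its local multiplicity-one counterpart on $\widetilde{J}_0(F) = \Mp_0(F) \ltimes V_0(F)$ is elementary.

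With these three unconditional inputs available, the argument for $(M)_{r-1,r} \Longrightarrow (M)_{r,r}$ given in the proof of Theorem \ref{rrr} goes through verbatim at $r = 1$ and yields Conjecture \ref{M0} for $n = r = 1$ without any further assumption. The only slightly delicate point, which I would write out carefully, is the translation of the $\Mp_1 \times \Mp_1$ Fourier--Jacobi period appearing here into the classical Waldspurger setting, so as to identify the Rankin--Selberg $L$-function $L(s, \pi \times \tau\chi_{-1})$ in Conjecture \ref{M0} with the product of central values produced by GGP in the argument; this is a bookkeeping step rather than a genuine obstacle.
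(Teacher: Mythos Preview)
Your proposal is correct and follows essentially the same approach as the paper: invoke Theorem \ref{rrr} for the implication $(M)_{0,1}\Rightarrow(M)_{1,1}$, and verify that for $r=1$ the three conditional inputs---$(M)_{0,1}$, Hypothesis \ref{hypo}, and Conjecture \ref{GGP-S} at $n'=n=1$---are unconditionally available. The paper's own proof does exactly this, citing \cite{GG}, \cite{Q}, and \cite{X} for the equal-rank GGP where you invoke Yamana and Waldspurger; both routes point to the same known facts.
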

\begin{proof}
Note that any irreducible representation occurring in $\Sc_{k+1}(\Sp_1(F) \bs \Sp_1(\A))$
has a tempered $A$-parameter (Remark \ref{n=1}).
The conjecture $(M)_{0,1}$ and Hypothesis \ref{hypo} for $r=1$ are trivial.
The GGP conjecture for $n = n' = 1$ is known
(see e.g., \cite[Proof of Theorem 7.1]{GG}, \cite[Proposition 4.1, Theorem 4.5]{Q} and \cite{X}).
Hence we have $(M)_{1,1}$.
\end{proof}

Next, we consider the going-up case, i.e., the case where $n > r+1$.
\begin{thm}\label{n>r}
For $n \geq r+2$, we have
\[
(M)_{r,n-1} \implies (M)_{r,n}.
\]
\end{thm}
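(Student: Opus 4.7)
The plan is to combine the seesaw identity (Proposition \ref{GSS} (2)) with the hypothesis $(M)_{r,n-1}$ applied to a twist of $\tau$. Let $\pi$ be an irreducible representation of $\Mp_r(\A_\fin)$ occurring in $\Sc_{k+(n+r)/2}(\Sp_r(F) \bs \Mp_r(\A))$; we aim to show $\MM^{(n)}_{\psi,\tau}(\pi) \neq 0$. First I would select a totally positive $\xi \in F^\times$ and an irreducible cuspidal representation $\sigma$ of $\Mp_r(\A_\fin)$ occurring in $\Sc_{k+(n-1+r)/2}(\Sp_r(F) \bs \Mp_r(\A))$ such that the Fourier--Jacobi period $\PP_{r,r,\psi_\xi}(\cdot,\bar\sigma,\cdot)$ is not identically zero on $\pi \times \Sc(X_r(\A))_\xi$. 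Since $\pi$ is generated by holomorphic lowest-weight cusp forms whose Fourier coefficients are supported on totally positive definite matrices, there exists $B \in \Sym_r^+(F)$ with a nontrivial Fourier coefficient $W_B$ for some $\varphi \in \pi$; taking $\xi = B_{1,1}$ forces the $\psi_\xi$-Fourier--Jacobi coefficient of $\pi$ to be nonzero. The representation $\sigma$ is then obtained from the observation that, for $\varphi''$ in a cuspidal representation on $\Mp_r$ of weight $k+(n-1+r)/2$ and $\phi \in \Sc(X_r(\A))_\xi$, the product $\varphi'' \cdot \Theta^\phi_{\psi_\xi}$ is a holomorphic automorphic form of weight $k+(n+r)/2$, whose cuspidal projection can be arranged to have nontrivial Petersson pairing with some $\varphi \in \pi$ by matching Fourier expansions.

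Next, I would apply $(M)_{r,n-1}$ to $\sigma$ and $\tau\chi_\xi$. The latter still satisfies (A1)--(A3) with the same archimedean weights as $\tau$ since $\chi_\xi$ is a quadratic character and $\xi$ is totally positive (so the product formula for the Hilbert symbol preserves the root number condition (A3)). As $n-1 > r$ (because $n \geq r+2$), the Miyawaki lift $\pi' := \MM^{(n-1)}_{\psi,\tau\chi_\xi}(\sigma)$ is nonzero. Proposition \ref{dual} yields $\sigma \subset \MM^{(r)}_{\psi,\tau\chi_\xi}(\pi')$, so any $\varphi'' \in \sigma$ can be written as a finite linear combination of vectors $\MM^{(r)}(\varphi',\FF')$ with $\varphi' \in \pi'$ and $\FF' \in \Ik^{(n+r-1)}_\psi(\tau\chi_\xi)$. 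Consequently, the non-vanishing obtained in the first step supplies $\varphi_2 \in \pi$, $\varphi'_2 \in \pi'$, $\FF'_2 \in \Ik^{(n+r-1)}_\psi(\tau\chi_\xi)$, and $\phi_2 \in \Sc(X_r(\A))_\xi$ with
\[
\PP_{r,r,\psi_\xi}\bigl(\varphi_2,\overline{\MM^{(r)}(\varphi'_2,\FF'_2)},\phi_2\bigr) \neq 0.
\]
Since $n+r \geq 2r+2 \geq 2$, Proposition \ref{GSS} (2) then produces $\MM^{(n)}(\varphi_1,\FF_1) \in \MM^{(n)}_{\psi,\tau}(\pi)$, $\varphi'_1 \in \pi'$, and $\phi_1 \in \Sc(X_{n-1}(\A))_\xi$ such that
\[
\PP_{n,n-1,\psi_\xi}\bigl(\MM^{(n)}(\varphi_1,\FF_1),\overline{\varphi'_1},\phi_1\bigr) \neq 0,
\]
so in particular $\MM^{(n)}_{\psi,\tau}(\pi) \neq 0$, establishing $(M)_{r,n}$.

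The hard part will be the first step: producing $\sigma$ and $\xi$ realising a nonzero Fourier--Jacobi period with $\pi$. Equivalently, one must show that the cuspidal projection into $\Sc_{k+(n+r)/2}(\Sp_r(F) \bs \Mp_r(\A))$ of $\varphi'' \cdot \Theta^\phi_{\psi_\xi}$, as $\varphi''$ ranges over cusp forms of weight $k+(n-1+r)/2$ on $\Mp_r$ and $\phi$ over $\Sc(X_r(\A))_\xi$, meets $\pi$ nontrivially. This is a theta-multiplication non-vanishing statement on $\Mp_r$, which one expects to be derivable from the existence of a nonzero positive-definite Fourier coefficient of $\pi$ together with a careful separation of cuspidal and Eisenstein contributions.
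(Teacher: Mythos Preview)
Your overall strategy is correct and matches the paper's: produce an auxiliary irreducible cuspidal $\sigma$ (the paper calls it $\pi'$) in $\Sc_{k+(n-1+r)/2}(\Sp_r(F)\bs\Mp_r(\A))$ with $\PP_{r,r,\psi_\xi}$ nonzero on $\pi\times\overline{\sigma}\times\Sc(X_r(\A))_\xi$, apply $(M)_{r,n-1}$ to get $\Pi' = \MM^{(n-1)}_{\psi,\tau\chi_\xi}(\sigma)\neq 0$, use Proposition~\ref{dual} to embed $\sigma\subset\MM^{(r)}_{\psi,\tau\chi_\xi}(\Pi')$, and finish with the seesaw identity (Proposition~\ref{GSS}~(2)).

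The difference is in how the ``hard step'' is handled, and here the paper's route is much cleaner. You try to go \emph{up} in weight: start from an unknown $\varphi''$ of weight $k+(n-1+r)/2$, form $\varphi''\cdot\Theta^\phi_{\psi_\xi}$, and arrange its cuspidal projection to hit $\pi$. This forces you to worry about Eisenstein contributions and a surjectivity-type statement you never justify. The paper instead goes \emph{down}: for $\varphi\in\pi$ and $\phi\in\Sc(X_r(\A))_\xi$, the product $\varphi\cdot\overline{\Theta^\phi_{\psi_\xi}}$ has weight $k+(n+r-1)/2$, and since $\DD^{(r)}_{k_v+(n+r-1)/2}$ is discrete series, Wallach's theorem guarantees this product already lies in $\Sc_{k+(n+r-1)/2}(\Sp_r(F)\bs\Mp_r(\A))$---no cuspidal projection needed. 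One then simply takes $\sigma$ to be any irreducible constituent of the span of such products; the Petersson pairing $\langle\varphi\cdot\overline{\Theta^\phi},\varphi''\rangle$ is exactly $\PP_{r,r,\psi_\xi}(\varphi,\overline{\varphi''},\phi)$, so non-vanishing is automatic. Your careful choice of $\xi$ via Fourier coefficients is also unnecessary: any totally positive $\xi$ works, since the product of two nonzero automorphic forms is nonzero.

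In short, your ``hard part'' evaporates once you reverse the direction of the theta multiplication and invoke Wallach's cuspidality result. Everything else in your argument is fine and coincides with the paper.
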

\begin{proof}
Let $\pi$ be an irreducible representation of $\Mp_r(\A_\fin)$
occurring in $\Sc_{k+(n+r)/2}(\Sp_r(F) \bs \Mp_r(\A))$. 
Since $\DD^{(r)}_{k_v+(n+r-1)/2}$ is discrete series, 
by a result of Wallach \cite{Wa}, we have
\[
\varphi(g, \zeta) \overline{\Theta^{\phi}_{\psi_\xi}(g,\zeta)} \in \Sc_{k+(n+r-1)/2}(\Sp_{r}(F) \bs \Mp_{r}(\A))
\]
for any $\varphi \in \pi$ and $\phi \in \Sc(X_r(\A))_\xi$.
Take an irreducible representation $\pi'$ of $\Mp_{r}(\A_\fin)$ which appears in the space spanned by
\[
\left\{ \varphi(g, \zeta) \overline{\Theta^{\phi}_{\psi_\xi}(g,\zeta)}
\ |\ \varphi \in \pi,\ \phi \in \Sc(X_r(\A))_\xi \right\}.
\]
Then $\PP_{r,r, \psi_\xi}$ is not identically zero on $\pi \times \overline{\pi'} \times \Sc(X_{r}(\A))_\xi$.
Applying $(M)_{r,n-1}$ to $\pi'$, 
we have $\Pi' = \MM_{\psi, \tau\chi_\xi}^{(n-1)}(\pi') \not= 0$.
By Proposition \ref{dual}, we have $\pi' \subset \MM_{\psi, \tau\chi_\xi}^{(r)}(\Pi')$.
Hence $\PP_{r,r, \psi_\xi}$ is not identically zero 
on $\pi \times \overline{\MM_{\psi, \tau\chi_\xi}^{(r)}(\Pi')} \times \Sc(X_{r}(\A))_\xi$.
By the seesaw identity (Proposition \ref{GSS} (2)), 
we see that $\PP_{n,n-1, \psi_\xi}$ is not identically zero 
on $\MM_{\psi, \tau}^{(n)}(\pi) \times \overline{\Pi'} \times \Sc(X_{n-1}(\A))_\xi$.
In particular, we have $\MM_{\psi, \tau}^{(n)}(\pi) \not= 0$. 
\end{proof}

\appendix
\section{Jacquet modules of representations of metaplectic groups}\label{s.jac}
Let $F$ be a non-archimedean local field of characteristic zero.
In this appendix, 
we recall computations of Jacquet modules of 
induced representations of $\GL_k(F)$ or $\Mp_n(F)$.

\subsection{Induced representations of general linear groups}\label{GL}
Let $P(F) = M(F)N(F)$ be a parabolic subgroup of $\GL_k(F)$ 
containing the Borel subgroup consisting of upper triangular matrices.
Then the Levi part $M(F)$ is of the form $\GL_{k_1}(F) \times \dots \times \GL_{k_l}(F)$
with $k_1 + \dots + k_l = k$.
For representations $\tau_1, \dots, \tau_l$ of $\GL_{k_1}(F), \dots, \GL_{k_l}(F)$, respectively, 
we denote the normalized induced representation by
\[
\tau_1 \times \dots \times \tau_l = \Ind_{P(F)}^{\GL_k(F)}(\tau_1 \otimes \dots \otimes \tau_l).
\]
\par

A segment is a symbol $[x,y]$, 
where $x,y \in \R$ with $x-y \in \Z$.
We identify $[x,y]$ as the set 
$\{x, x-1, \dots, y\}$ if $x \geq y$, and $\{x, x+1, \dots, y\}$ if $x \leq y$.
Let $\rho$ be an irreducible unitary supercuspidal representation of $\GL_d(F)$.
Then the normalized induced representation 
\[
\rho|\cdot|^{x} \times \dots \times \rho|\cdot|^y
\]
has a unique irreducible subrepresentation, 
which is denoted by 
\[
\pair{\rho; x, \dots, y}.
\]
If $x \geq y$, this is called a Steinberg representation, 
which is a discrete series representation of $\GL_{d(|x-y|+1)}(F)$. 
If $x < y$, this is called a Speh representation.
For example, if $\rho = \mu$ be a unitary character (i.e., $d=1$) and $x<y$, 
then $\pair{\mu; x, \dots, y} = \mu|{\det}_{y-x+1}|^{(x+y)/2}$ is a character of $\GL_{y-x+1}(F)$, 
where we denote by ${\det}_k$ the determinant character of $\GL_k(F)$.
\par

\begin{defi}
Let $[x,y]$ and $[x',y']$ be two segments.
\begin{enumerate}
\item
When $(x-y)(x-y') \geq 0$, 
we say that $[x,y]$ and $[x',y']$ are linked if 
$[x,y] \not\subset [x',y']$, $[x',y'] \not\subset [x,y]$ as sets, 
and $[x,y] \cup [x,y']$ is also a segment.
\item
When $(x-y)(x'-y') < 0$, 
we say that $[x,y]$ and $[x',y']$ are linked if 
$[y,x]$ and $[x',y']$ are linked, 
and $x,y \not\in [x',y']$ and $x',y' \not\in [x,y]$. 
\end{enumerate}
\end{defi}

The linked-ness gives an irreducibility criterion for induced representations.
\begin{thm}[Zelevinsky {\cite[Theorems 4.2, 9.7]{Z}}, M{\oe}glin--Waldspurger \cite{MW}]\label{zel}
Let $[x,y]$ and $[x',y']$ be segments, 
and let $\rho$ and $\rho'$ be irreducible unitary supercuspidal representations 
of $\GL_d(F)$ and $\GL_{d'}(F)$, respectively.
Then the induced representation 
\[
\pair{\rho; x, \dots, y} \times \pair{\rho'; x', \dots, y'}
\]
is irreducible unless $[x, y]$ are $[x',y']$ are linked, and $\rho \cong \rho'$.
\end{thm}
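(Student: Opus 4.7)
The plan is to prove Theorem \ref{zel} in three stages: first reducing to the case $\rho \cong \rho'$, then handling the Steinberg case via the geometric lemma of Bernstein--Zelevinsky, and finally deducing the Speh and mixed cases from the Aubert--Zelevinsky involution.

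First, if $\rho \not\cong \rho'$, then the supercuspidal supports of the two factors lie on disjoint Bernstein components of $\GL_k(F)$, where $k = d(|x-y|+1) + d'(|x'-y'|+1)$. Any irreducible subquotient of the induced representation has cuspidal support equal to the disjoint union of those of the factors; a standard application of Bernstein's second adjointness together with the uniqueness of Langlands data then forces the induced representation to be irreducible. Hence we may assume $\rho \cong \rho'$ and $d = d'$.

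Next, in the Steinberg case $x \geq y$ and $x' \geq y'$, I would compute the semisimplified Jacquet module along the minimal parabolic of $\GL_k(F)$ via the geometric lemma. The output is a sum over shuffles of the ordered multisets $(\rho|\cdot|^x, \dots, \rho|\cdot|^y)$ and $(\rho|\cdot|^{x'}, \dots, \rho|\cdot|^{y'})$ that preserve the internal orders. When $[x,y]$ and $[x',y']$ are unlinked, this Jacquet module admits a unique Langlands-maximal term, and Frobenius reciprocity forces the ambient induced representation to be irreducible. When they are linked, one obtains two distinct Langlands-maximal terms, and a standard intertwiner argument exhibits a proper subrepresentation, giving the two composition factors explicitly as products of Steinberg representations attached to the merged and intersected segments.

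Finally, I would treat the Speh case $x < y$, $x' < y'$ and the mixed case using the Aubert--Zelevinsky involution $\iota$ on the Grothendieck group of finite-length smooth representations of $\GL_k(F)$. This involution preserves irreducibility up to sign, commutes with parabolic induction, and satisfies $\iota(\pair{\rho; x, \dots, y}) = \pair{\rho; y, \dots, x}$. Part (2) of the definition of linked-ness is engineered so that $[x,y]$ and $[x',y']$ are linked if and only if the corresponding reversed configurations are linked, so $\iota$ converts the Speh case directly into the Steinberg case without disturbing the hypothesis. The main obstacle is the mixed case, in which exactly one segment is reversed under $\iota$ while the other is not; here the linked-ness condition must be propagated through the involution in accordance with part (2) of the definition. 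This is the technical input of Moeglin--Waldspurger \cite{MW} and relies on a delicate analysis of shifted Bernstein--Zelevinsky derivatives, which I expect to be the hardest step of the argument.
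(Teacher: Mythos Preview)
The paper does not give its own proof of Theorem \ref{zel}; the statement is quoted from Zelevinsky \cite[Theorems 4.2, 9.7]{Z} for the Steinberg $\times$ Steinberg and Speh $\times$ Speh cases and from M{\oe}glin--Waldspurger \cite{MW} for the mixed case, and is used throughout as a black box. There is therefore no proof in the paper to compare against.

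Your outline is a reasonable reconstruction of how the cited sources proceed, with two small corrections. First, in the Steinberg case Zelevinsky does not argue via a ``unique Langlands-maximal term'' in the Jacquet module; rather, he shows directly that the standard intertwining operator between $\pair{\rho;x,\dots,y} \times \pair{\rho';x',\dots,y'}$ and the representation with the factors transposed is an isomorphism exactly when the segments are unlinked, and combines this with the fact that each of these induced representations has a unique irreducible submodule. Second, the Aubert--Zelevinsky involution reverses \emph{both} factors, so applying it to a Steinberg $\times$ Speh product simply swaps which factor is which; you remain in the mixed case rather than escaping it. You correctly flag this as the step requiring \cite{MW}, though their argument is not really via shifted derivatives---it proceeds through their analysis of when parabolically induced representations from Speh blocks are irreducible, developed in the course of classifying the residual spectrum of $\GL_n$.
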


For a partition $(k_1, k_2)$ of $k$, 
we denote by $R_{(k_1,k_2)}$ the normalized Jacquet functor of representations of $\GL_k(F)$
with respect to the standard maximal parabolic subgroup $P(F) = M(F)N(F)$ 
with $M(F) \cong \GL_{k_1}(F) \times \GL_{k_2}(F)$.
The Jacquet module of $\pair{\rho; x, \dots, y}$ is computed by Zelevinsky.

\begin{prop}[{\cite[Propositions 3.4, 9.5]{Z}}]
Let $\rho$ be an irreducible unitary supercuspidal representation of $\GL_d(F)$.
Suppose that $x \not= y$ and set $k = d(|x-y|+1)$.
Then $R_{(k_1, k_2)}(\pair{\rho; x, \dots, y}) = 0$ unless $k_1 \equiv 0 \bmod d$.
If $k_1 = dm$ with $1 \leq m \leq |x-y|$, we have
\[
R_{(k_1, k_2)}(\pair{\rho; x, \dots, y}) 
= \pair{\rho; x, \dots, x-\epsilon(m-1)} \otimes \pair{\rho; x-\epsilon m, \dots, y}, 
\]
where $\epsilon \in \{\pm1\}$ is defined so that $\epsilon(x-y) > 0$.
\end{prop}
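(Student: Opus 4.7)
The plan is to argue by induction on the segment length $\ell := |x-y|+1$, with the base case $\ell=2$ handled directly from the geometrical lemma and the definition of $\pair{\rho;x,y}$. The vanishing assertion $R_{(k_1,k_2)}(\pair{\rho;x,\dots,y})=0$ when $d \nmid k_1$ is immediate from supercuspidal support: any irreducible subquotient of the Jacquet module has supercuspidal support consisting of a sub-multiset of $\{\rho|\cdot|^x,\rho|\cdot|^{x-\epsilon},\dots,\rho|\cdot|^y\}$, so each tensor factor has degree divisible by $d$. For $\ell=2$, the standard module $\rho|\cdot|^x\times\rho|\cdot|^y$ has length two with socle $\pair{\rho;x,y}$, and its Jacquet module along $P_{(d,d)}$ is $\rho|\cdot|^x\otimes\rho|\cdot|^y \oplus \rho|\cdot|^y\otimes\rho|\cdot|^x$ by the geometrical lemma; Frobenius reciprocity singles out the first summand as lying in $R_{(d,d)}(\pair{\rho;x,y})$.

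For the inductive step, I would use the embedding $\pair{\rho;x,\dots,y} \hookrightarrow \rho|\cdot|^x \times \pair{\rho;x-\epsilon,\dots,y}$, which holds because the right-hand side has a unique irreducible subrepresentation (again by Theorem \ref{zel}, the two segments are linked and share no data forcing reducibility of the socle). Applying the exact functor $R_{(dm,k-dm)}$ and unfolding via the geometrical lemma, the Jacquet module of the outer induction decomposes in the Grothendieck group as
\[
\bigl(\rho|\cdot|^x \times R_{(d(m-1),k-dm)}\pair{\rho;x-\epsilon,\dots,y}\bigr) \;\oplus\; \bigl(R_{(dm,k-dm-d)}\pair{\rho;x-\epsilon,\dots,y} \times \rho|\cdot|^x\bigr),
\]
where the cross on each side indicates the outer parabolic induction on the first or second tensor slot. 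By the inductive hypothesis, each inner Jacquet module is a single tensor product of segment representations; for the first piece this produces $\rho|\cdot|^x \times \pair{\rho;x-\epsilon,\dots,x-\epsilon(m-1)} \otimes \pair{\rho;x-\epsilon m,\dots,y}$, and the outer induction collapses to $\pair{\rho;x,\dots,x-\epsilon(m-1)} \otimes \pair{\rho;x-\epsilon m,\dots,y}$ by Theorem \ref{zel} together with uniqueness of the socle, yielding the target formula.

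The main obstacle is to show that the second piece contributes nothing to $R_{(dm,k-dm)}(\pair{\rho;x,\dots,y})$, so that the answer is irreducible and equal to the one summand just identified. The plan is a Frobenius-reciprocity/socle argument: any irreducible constituent $\sigma_1 \otimes \sigma_2$ of $R_{(dm,k-dm)}(\pair{\rho;x,\dots,y})$ gives an embedding $\pair{\rho;x,\dots,y} \hookrightarrow \sigma_1 \times \sigma_2$, and using the linkedness criterion of Theorem \ref{zel} together with the fact that $\pair{\rho;x,\dots,y}$ is the \emph{unique} irreducible subrepresentation of $\rho|\cdot|^x \times \cdots \times \rho|\cdot|^y$, one forces $\sigma_1$ and $\sigma_2$ to correspond to the initial and terminal sub-segments respectively. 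Any other candidate for $(\sigma_1,\sigma_2)$ arising from the second piece has a leading supercuspidal exponent incompatible with the socle embedding, producing a contradiction. Verifying this compatibility case-by-case (separately for $\epsilon=+1$ and $\epsilon=-1$, i.e.\ the Steinberg and Speh cases) is the technical heart of the argument.
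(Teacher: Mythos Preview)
The paper does not give its own proof of this proposition; it is simply cited from Zelevinsky \cite[Propositions 3.4, 9.5]{Z}. So there is no proof in the paper to compare your attempt against.

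Your inductive strategy is natural, and the vanishing for $d \nmid k_1$ and the base case $\ell = 2$ are fine. The gap is in the inductive step. You write that for the first piece, ``the outer induction collapses to $\pair{\rho;x,\dots,x-\epsilon(m-1)} \otimes \pair{\rho;x-\epsilon m,\dots,y}$ by Theorem~\ref{zel} together with uniqueness of the socle''. This is not correct: for $m \geq 2$ the segments $[x,x]$ and $[x-\epsilon, x-\epsilon(m-1)]$ are linked, so by Theorem~\ref{zel} the product $\rho|\cdot|^x \times \pair{\rho;x-\epsilon,\dots,x-\epsilon(m-1)}$ is reducible of length two. Hence the first piece already contributes two irreducible constituents to the Grothendieck group, not one, and similarly the second piece (for $m \leq \ell-2$) contributes two. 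You have computed the Jacquet module of the full induced representation $\rho|\cdot|^x \times \pair{\rho;x-\epsilon,\dots,y}$, but you need to isolate the Jacquet module of its socle $\pair{\rho;x,\dots,y}$; the complement is the Jacquet module of the unique irreducible \emph{quotient}, which is not a segment representation, so the inductive hypothesis does not apply to it.

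Your final paragraph tries to repair this via Frobenius reciprocity, but as stated it only constrains the irreducible \emph{quotients} of $R_{(dm,k-dm)}(\pair{\rho;x,\dots,y})$, not its full composition series, so it does not by itself establish irreducibility. One way to close the gap is to run the same computation with the companion embedding $\pair{\rho;x,\dots,y} \hookrightarrow \pair{\rho;x,\dots,y+\epsilon} \times \rho|\cdot|^y$ and check that the only constituent common to both lists is the desired one; equivalently, use second adjointness to constrain irreducible submodules as well as quotients. Alternatively, one follows Zelevinsky's original route via the comultiplication on the Grothendieck ring and his characterization of segment representations, which reads off the Jacquet module directly and avoids the bookkeeping of the induction-on-length argument.
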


\subsection{Representations of double covers of general linear groups}
Recall that for $a \in F^\times$, 
the Weil constant $\alpha_\psi(a)$
is an eighth root of unity, and satisfies that
\[
\frac{\alpha_{\psi}(a)\alpha_{\psi}(b)}{\alpha_{\psi}(1)\alpha_{\psi}(ab)} = \pair{a,b}
\]
for $a, b \in F$, 
where the right hand side is the Hilbert symbol.
In particular, 
\[
\left(\frac{\alpha_\psi(1)}{\alpha_\psi(a)}\right)^2 = \chi_{-1}(a)
\]
for $a \in F^\times$, 
where $\chi_{-1}$ is the quadratic character associated to $F(\I)/F$.
\par

A double cover of $\GL_k(F)$ is given by
\[
\cl\GL_k(F) = \GL_k(F) \times \{\pm1\}
\]
with group law
\[
(g_1,\epsilon_1) \cdot (g_2, \epsilon_2) = (g_1g_2, \epsilon_1\epsilon_2 \cdot \pair{\det g_1, \det g_2}).
\]
Let $\Irr(\GL_k(F))$ (\resp $\Irr(\cl\GL_k(F))$) 
be the set of equivalence classes of irreducible representations of $\GL_k(F)$
(\resp the set of equivalence classes of irreducible genuine representations of $\cl\GL_k(F)$).
For an irreducible representation $\tau$ of $\GL_k(F)$ and $(g,\epsilon) \in \cl\GL_k(F)$, 
we set
\[
\tau_\psi(g,\epsilon) = \epsilon \frac{\alpha_\psi(1)}{\alpha_\psi(\det g)} \tau(g).
\]
Then $\tau_\psi$ is irreducible and genuine, and
the map $\tau \mapsto \tau_\psi$ gives a bijection $\Irr(\GL_k(F)) \rightarrow \Irr(\cl\GL_k(F))$.
For a more precise representation theory for $\cl\GL_k(F)$, 
see \cite[\S 4.1]{HM}.
\par

\subsection{Induced representations of symplectic and metaplectic groups}
We denote by $R_{P_t(F)}$ the normalized Jacquet functor of 
representations of $\Mp_r(F)$ with respect to the maximal parabolic subgroup $\cl{P}_t(F)$.
For a smooth representation $\Pi$ of $\Mp_r(F)$, 
we write $\semi(\Pi)$ for the semisimplification of $\Pi$.

\begin{prop}\label{jacquet}
Let $\pi$ be a representation of $\Mp_r(F)$, $\mu$ be a unitary character of $F^\times$, 
and $\alpha \in \C$.
Then $\semi R_{P_t(F)}(\mu|{\det}_k|^{\alpha} \rtimes \pi)$ is isomorphic to the direct sum of 
\[
\semi\left(
\mu^{-1}|{\det}_{k-a}|^{-\alpha - \half{a}} \times \mu|{\det}_{b}|^{\alpha - \half{k-b}} \times \tau_\lam
\right)
\boxtimes 
\left(
\mu|{\det}_{a-b}|^{\alpha - \half{k-a-b}} \rtimes \pi_\lam
\right), 
\]
where $(a,b)$ runs over the pairs of integers such that $0 \leq b \leq a \leq k$ and $a-b \geq k-t$, 
and $\tau_\lam \boxtimes \pi_\lam$ runs over all irreducible subquotients of $R_{P_{a-b-k+t}(F)}(\pi)$
(with multiplicity).
\end{prop}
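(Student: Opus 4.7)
The plan is to apply the Bernstein--Zelevinsky geometric lemma, in the form adapted to $\Mp_n$ by Hanzer--Mui\'c (which is the natural cover-analogue of Tadi\'c's $\mu^*$-formula for $\Sp_n$). The induced representation $\mu|{\det}_k|^\alpha \rtimes \pi$ is parabolically induced from $\cl{P}_k(F)$, and $R_{P_t(F)}$ is a Jacquet functor, so the geometric lemma provides a filtration on the semisimplification whose subquotients are indexed by the double cosets $P_k(F) \bs \Sp_{k+r}(F) / P_t(F)$.

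First I would parametrize these double cosets. By a standard enumeration (orbits of pairs of isotropic flags in the symplectic space), the orbits are indexed by pairs $(a,b)$ with $0 \leq b \leq a \leq k$ and $a - b \geq k - t$ together with $a-b-k+t \leq r$, where $b$ records the dimension of the intersection of the two isotropic subspaces and $a-b$ records a further invariant tied to the perp. For each such $(a,b)$, the relevant intersection of Levis is isomorphic to $\cl\GL_{k-a} \times \cl\GL_{b} \times \cl\GL_{a-b} \times \Mp_{r-(a-b-k+t)}$, and the associated Weyl element $w$ reflects the first $\GL_{k-a}$-block to the ``dual'' side of $\Sp_{k+r}$ while transferring the third $\GL_{a-b}$-block onto the symplectic factor.

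Second I would compute the contribution of each stratum. On the $\GL$-side, the character $\mu|{\det}_k|^\alpha$ restricted along the partition $(k-a,b,a-b)$ is simply the tensor product of characters $\mu|{\det}_{k-a}|^\alpha \boxtimes \mu|{\det}_b|^\alpha \boxtimes \mu|{\det}_{a-b}|^\alpha$. On the $\Mp$-side, one takes the Jacquet module $R_{P_{a-b-k+t}(F)}(\pi)$ and selects a subquotient $\tau_\lam \boxtimes \pi_\lam$. Applying $w$ sends $\mu|{\det}_{k-a}|^\alpha$ to its contragredient $\mu^{-1}|{\det}_{k-a}|^{-\alpha}$. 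After renormalizing the induction from $\cl{M}_k$ and the Jacquet reduction to $\cl{M}_t$ by the appropriate modulus half-powers, the three pieces acquire precisely the shifts $|{\det}|^{-a/2}$, $|{\det}|^{-(k-b)/2}$, and $|{\det}|^{-(k-a-b)/2}$ shown in the statement; re-inducing from the intersection of Levis to $\cl{M}_t$ on the $\GL$-side and to $\Mp_{k+r-t}$ on the symplectic side yields the advertised formula.

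The main obstacle is not structural but bookkeeping: one must track modulus exponents carefully through the Weyl twist, and one must verify that the genuine characters $\mu^{(\cdot)}$ carrying Weil-constant factors $\alpha_\psi(1)/\alpha_\psi(\det)$ combine consistently under the decompositions. This is governed by the cocycle identity $\alpha_\psi(a)\alpha_\psi(b)/\alpha_\psi(1)\alpha_\psi(ab) = \pair{a,b}$, with the resulting Hilbert symbols cancelling against Rao's $2$-cocycle. Since the formula is a direct application of the geometric lemma with these standard normalizations, no essentially new input is required beyond this verification.
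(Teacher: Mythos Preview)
Your proposal is correct and takes essentially the same approach as the paper: the paper's proof consists entirely of the sentence ``This follows from Tadi\'c's formula \cite{T1}, \cite[Proposition 4.5]{HM},'' and what you have written is precisely an outline of how that formula is derived and applied in this case. Your double-coset parametrization, Weyl twist, and modulus bookkeeping are the content of Tadi\'c's $\mu^*$-structure in its Hanzer--Mui\'c metaplectic version, so there is no genuine difference in route.
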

\begin{proof}
This follows from Tadi{\'c}'s formula \cite{T1}, \cite[Proposition 4,5]{HM}.
\end{proof}

\section{Local Langlands correspondence and Arthur's multiplicity formula}\label{langlands}
In this appendix, we summarize the local Langlands correspondence and Arthur's multiplicity formula.

\subsection{Local Langlands correspondence}
We recall the local Langlands correspondence (LLC)
for $\Sp_n(F)$ and $\Mp_n(F)$.
Let $F$ be a local field of characteristic zero. 
Fix a non-trivial unitary character $\psi$ of $F$.
We denote by $W_F$ and $\WD_F$ the Weil group and the Weil--Deligne group of $F$, respectively,
i.e., 
\[
\WD_F = \left\{
\begin{aligned}
&W_F \times \SL_2(\C) 	\iif \text{$F$ is non-archimedean}, \\
&W_F				\iif \text{$F$ is archimedean}.
\end{aligned}
\right.
\]
A representation of $\WD_F$ is a homomorphism $\phi \colon \WD_F \rightarrow \GL_N(\C)$
such that 
\begin{itemize}
\item
$\phi(\mathrm{Frob})$ is semi-simple if $F$ is non-archimedean; 
\item
$\phi|W_F$ is smooth if $F$ is non-archimedean, and $\phi$ is continuous if $F$ is archimedean; 
\item
$\phi|\SL_2(\C)$ is algebraic.
\end{itemize}
Here, $\mathrm{Frob} \in W_F$ is a (geometric) Frobenius element if $F$ is non-archimedean.
When $F$ is a non-archimedean local field of residue characteristic $p > 2$, 
we call a representation $\phi$ of $\WD_F$ unramified 
if $\phi$ is trivial on $I_F \times \SL_2(\C)$, 
where $I_F$ is the inertia subgroup of $W_F$.
\par

Set
\begin{align*}
\Phi(\Sp_n(F)) &= \{\phi \colon \WD_F \rightarrow \SO_{2n+1}(\C)\}/\cong, \\
\Phi(\Mp_n(F)) &= \{\phi \colon \WD_F \rightarrow \Sp_{n}(\C)\}/\cong.
\end{align*}
For $G_n = \Sp_n$ or $G_n = \Mp_n$, 
we call an element in $\Phi(G_n(F))$ an $L$-parameter for $G(F)$.
When $G_n = \Sp_n$ (\resp $G_n = \Mp_n$), 
any $\phi \in \Phi(G_n(F))$ can be decomposed into a direct sum
\[
\phi = m_1\phi_1 \oplus \dots \oplus m_t \phi_t \oplus \phi' \oplus (\phi')^\vee, 
\]
where 
$\phi_1, \dots, \phi_t$ are distinct irreducible orthogonal (\resp symplectic) representations of $\WD_F$, 
$m_i$ is the multiplicity of $\phi_i$ in $\phi$, 
and $\phi'$ is a sum of irreducible representations of $\WD_F$ which are not orthogonal (\resp symplectic).
We define the component group $A_\phi$ of $\phi$ by 
\[
A_\phi = \bigoplus_{i=1}^{t}(\Z/2\Z)a_i.
\]
Namely, $A_\phi$ is a free $\Z/2\Z$-module of rank $t$, 
and $\{a_1, \dots, a_t\}$ is a basis of $A_\phi$ with $a_i$ associated to $\phi_i$.
For $a = a_{i_1} + \dots + a_{i_k} \in A_\phi$ with $1 \leq i_1 < \dots < i_k \leq t$, 
put
\[
\phi^a = \phi_{i_1} \oplus \dots \oplus \phi_{i_k}.
\]
We call $z_\phi = \sum_{i=1}^{t}m_i a_i \in A_\phi$ the central element of $A_\phi$.
We denote the Pontryagin dual of $A_\phi$ 
by $\widehat{A_\phi} = \{\eta \colon A_\phi \rightarrow \{\pm1\}\}$.
\par

For $\phi \in \Phi(\Sp_n(F))$ (\resp $\phi \in \Phi(\Mp_n(F))$),
we say that:
\begin{itemize}
\item
$\phi$ is of good parity 
if $\phi$ is a direct sum of irreducible orthogonal (\resp symplectic) representations; 
\item
$\phi$ is tempered if $\phi(W_F)$ is bounded; 
\item
$\phi$ is almost tempered 
if each irreducible constituent $\phi_i$ of $\phi$ is of the form 
$\phi_i = \phi'_i|\cdot|^{s_i}$ such that $\phi'_i(W_F)$ is bounded and $-1/2 < s_i < 1/2$.
\end{itemize}
\par

The set of equivalence classes of irreducible representations of $\Mp_n(F)$ 
which are genuine (\resp not genuine) is denoted by $\Irr(\Mp_n(F))$ (\resp $\Irr(\Sp_n(F))$).
The LLC classifies $\Irr(G_n(F))$ by $\Phi(G_n(F))$ for $G_n = \Mp_n$ or $G_n = \Sp_n$.

\begin{thm}[\cite{L, AB, Ar, GS}]\label{LLC}
Let $G_n = \Mp_n$ or $G_n = \Sp_n$.
\begin{enumerate}
\item
There is a canonical surjective map
\begin{align*}
\Irr(G_n(F)) &\twoheadrightarrow \Phi(G_n(F)).
\end{align*}
For $\phi \in \Phi(G_n(F))$, we denote the inverse image of $\phi$ by $\Pi_\phi$, 
and call it the $L$-packet of $\phi$.

\item
There exists an injective map 
\[
\Pi_\phi \hookrightarrow \widehat{A_\phi}. 
\]
This is surjective if $G_n = \Mp_n$.
When $G_n = \Sp_n$, 
the image of this map is equal to 
\[
\{\eta \in \widehat{A_\phi}\ |\ \eta(z_\phi) = 1\}.
\]
When $\pi \in \Pi_\phi$ corresponds to $\eta \in \widehat{A_\phi}$, 
we call the pair $(\phi, \eta)$ the $L$-parameter for $\pi$.

\item
When $F$ is a non-archimedean local field of residue characteristic $p > 2$, 
an irreducible representation $\pi$ is unramified if and only if 
its $L$-parameter $(\phi, \eta)$ satisfies that $\phi$ is unramified and $\eta = \1$.

\item
$\pi \in \Pi_\phi$ is (almost) tempered if and only if $\phi$ is (almost) tempered. 

\item
If $\pi \in \Pi_\phi$ is discrete series, then $\phi \in \Phi(G_n(F))$ is of good parity.

\item
If $\phi$ is tempered, then one can decompose
\[
\phi = \phi_\tau \oplus \phi_0 \oplus \phi_\tau^\vee
\]
where
\begin{itemize}
\item
$\phi_0 \in \Phi(G_{n_0}(F))$ is of good parity; 
\item
$\phi_\tau$ is a sum of irreducible representations 
which are not orthogonal when $G_n = \Sp_n(F)$, 
and are not symplectic when $G_n = \Mp_n(F)$.
\end{itemize}
Let $\tau$ be the irreducible (tempered) representation of $\GL_k(F)$ corresponding to $\phi_\tau$.
Then for $\pi_0 \in \Pi_{\phi_0}$, the induced representation $\tau \rtimes \pi_0$ is irreducible, 
and the $L$-packet $\Pi_\phi$ is given by
\[
\Pi_\phi = \{ \tau \rtimes \pi_0\ |\ \pi_0 \in \Pi_{\phi_0}\}.
\]
If the $L$-parameter for $\pi_0$ is $(\phi_0, \eta_0)$, 
then the one for $\tau \rtimes \pi_0$ is $(\phi, \eta_0)$, 
where we regard $\eta_0$ as a character of $A_{\phi}$ 
via the canonical identification $A_{\phi} = A_{\phi_0}$.

\item
If $\pi = J(\tau_1|\cdot|^{s_1}, \dots, \tau_t|\cdot|^{s_t}, \pi_0)$, 
then the $L$-parameter $(\phi, \eta)$ for $\pi$ is given by
\[
\phi = 
\phi_1|\cdot|^{s_1} \oplus \dots \oplus \phi_t|\cdot|^{s_t}
\oplus \phi_0 \oplus 
\phi_t^\vee|\cdot|^{-s_t} \oplus \dots \oplus \phi_1^\vee|\cdot|^{-s_1}
\]
and $\eta = \eta_0$, 
where $(\phi_0, \eta_0)$ is the $L$-parameter for $\pi$, 
and $\phi_i$ is the (tempered) representation of $\WD_F$ corresponding to $\tau_i$ for $i = 1, \dots, t$. 

\end{enumerate}
\end{thm}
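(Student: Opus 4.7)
The statement in question is essentially a compilation of the local Langlands correspondence for $\Sp_n(F)$ and $\Mp_n(F)$ together with its standard functorial properties; rather than a single self-contained result, it packages together several deep theorems. The plan is to build the map $\Irr(G_n(F)) \to \Phi(G_n(F))$ for each group separately, then deduce the parametrization and properties (2)--(7) by unwinding the construction.

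For the symplectic side $G_n = \Sp_n$, the strategy is to invoke Arthur's endoscopic classification: the theory of twisted endoscopy and the stable twisted trace formula on $\GL_{2n+1}$ furnishes the surjection in (1) along with the packet structure in (2), where the constraint $\eta(z_\phi) = 1$ reflects the fact that only $\eta$'s compatible with the central character of the relevant representation of $\GL_{2n+1}$ arise. For the metaplectic side $G_n = \Mp_n$, I would follow Gan--Savin, transferring the question to the split odd special orthogonal group $\SO_{2n+1}(F)$ via the local theta correspondence with a fixed additive character $\psi$, and then transporting Arthur's classification across the theta correspondence. (The dependence on $\psi$ here is the reason the map in (2) is a bijection for $\Mp_n$ without a central-character constraint.) At archimedean places, one invokes Langlands' original work for $\Sp_n(\R)$ and the Adams--Barbasch parametrization for $\Mp_n(\R)$; compatibility with the non-archimedean construction is part of the globalization that Arthur carries out.

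Once the bijections $\Pi_\phi \leftrightarrow$ (subset of) $\widehat{A_\phi}$ are in place, properties (3)--(5) are essentially structural. For (3), by Satake theory an unramified representation corresponds to an unramified semisimple conjugacy class in the dual group, which on the $L$-parameter side means $\phi$ is trivial on $I_F \times \SL_2(\C)$; the triviality of $\eta$ falls out of the requirement that the chosen member of the $L$-packet be spherical, a fact that can be checked directly from the construction via endoscopic transfer (or, on the metaplectic side, via theta-correspondence of unramified data). For (4) one uses Casselman's temperedness criterion together with the fact that the image of the Weil group under $\phi$ controls growth of matrix coefficients; almost-temperedness is the statement that the exponents lie in $(-1/2, 1/2)$, which follows similarly. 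For (5), discrete-series-ness translates to the Langlands parameter being multiplicity-free and having no $\GL$-type summands, i.e., being of good parity, which one reads off the classification.

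Property (6) is proved by first noting that the induction $\tau \rtimes \pi_0$ is irreducible when $\phi_\tau$ has no summand in common with $\phi_0$ --- this is a standard consequence of the analysis of intertwining operators for tempered representations (Harish-Chandra), and the good parity decomposition guarantees this non-overlap since $\phi_\tau$ has no self-dual summands of the correct type. The identification of packets then follows from the construction of tempered $L$-packets via parabolic induction from discrete series of Levi subgroups. Property (7) is then essentially the definition of Langlands classification: one writes $\pi$ as the Langlands quotient of a standard module, reads off the $L$-parameter of the inducing tempered representation via (6), and then twists by $|\cdot|^{s_i}$, which shifts the Weil-group part of each irreducible summand $\phi_i$ by $|\cdot|^{s_i}$. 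The main obstacle in giving a complete proof would be (1) and (2) themselves, which rest on the full weight of Arthur's endoscopic classification and, for $\Mp_n$, on the delicate analysis of the theta correspondence with $\SO_{2n+1}$ carried out in \cite{GS}; the remaining properties (3)--(7) are comparatively formal consequences.
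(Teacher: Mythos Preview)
Your proposal is correct and aligns with the paper's treatment: the paper does not give a proof of this theorem at all, but simply states it with citations to \cite{L, AB, Ar, GS}, since it is a compilation of the local Langlands correspondence for $\Sp_n$ and $\Mp_n$ drawn from those sources. Your outline accurately identifies the provenance of each ingredient (Arthur's endoscopic classification for $\Sp_n$, Gan--Savin's theta-lift approach for $\Mp_n$, and the archimedean work of Langlands and Adams--Barbasch) and correctly observes that (3)--(7) are comparatively formal once (1) and (2) are in place.
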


\subsection{Irreducibility criterion for standard modules}
In this subsection, we assume that $F$ is non-archimedean.
For each representation $\phi$ of $\WD_F$, 
one can consider the $L$-function $L(s, \phi)$ attached to $\phi$.
We recall a criterion for the irreducibility of standard modules 
in terms of analytic properties of $L$-functions.

\begin{prop}\label{GPR}
Let $\pi_0$ (\resp $\tau_i$) be an irreducible tempered representation 
of $\Mp_{r_0}(F)$ (\resp $\GL_{k_i}(F)$) with the $L$-parameter $(\phi_0, \eta_0)$ 
(\resp with associated representation $\phi_i$ of $\WD_F$), 
and $s_1, \dots, s_t$ be real numbers such that $s_1 > \dots > s_t > 0$.
Assume the following:
\begin{itemize}
\item
If $\pi_0$ is not genuine, then
\begin{align*}
&\left( \prod_{i=1}^{t} 
L(s-2s_i, \phi_i^\vee, \wedge^2) L(s-s_i, \phi_0 \otimes \phi_i^\vee) 
\right)
\\&\times
\left( \prod_{1 \leq i < j \leq t} 
L(s-s_i+s_j, \phi_i^\vee \otimes \phi_j) 
L(s-s_i-s_j, \phi_i^\vee \otimes \phi_j^\vee)
\right)
\end{align*}
is regular at $s=1$.
\item
If $\pi_0$ is genuine, then
\begin{align*}
&\left( \prod_{i=1}^{t} 
L(s-2s_i, \phi_i^\vee, \Sym^2) L(s-s_i, \phi_0 \otimes \phi_i^\vee) 
\right)
\\&\times
\left( \prod_{1 \leq i < j \leq t} 
L(s-s_i+s_j, \phi_i^\vee \otimes \phi_j) 
L(s-s_i-s_j, \phi_i^\vee \otimes \phi_j^\vee)
\right)
\end{align*}
is regular at $s=1$.
\end{itemize}
Then the standard module $\tau_1|\cdot|^{s_1} \times \dots \times \tau_t|\cdot|^{s_t} \rtimes \pi_0$
is irreducible.
\end{prop}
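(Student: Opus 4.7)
The plan is to use Shahidi's theory of normalized intertwining operators together with the standard module conjecture. Write $I = \tau_1|\cdot|^{s_1} \times \dots \times \tau_t|\cdot|^{s_t} \rtimes \pi_0$, and let $I' = \tau_t^\vee|\cdot|^{-s_t} \times \dots \times \tau_1^\vee|\cdot|^{-s_1} \rtimes \pi_0$. The first step is to reduce the irreducibility of $I$ to the bijectivity of the long intertwining operator $M(w_0) \colon I \to I'$, whose image is the Langlands quotient $J(\tau_1|\cdot|^{s_1}, \dots, \tau_t|\cdot|^{s_t}, \pi_0)$. Since the Langlands quotient is the unique irreducible quotient of $I$ and occurs in $I$ with multiplicity one, irreducibility of $I$ is equivalent to $I = J$, which in turn is equivalent to $M(w_0)$ being an isomorphism.

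Next, I would factor $M(w_0)$ as a composition of rank-one intertwining operators along a reduced expression of $w_0$. The roots sent to negative roots by $w_0$ come in three flavors, corresponding to the three families of $L$-factors in the statement. The $\GL$-type cross-terms between $\tau_i$ and $\tau_j$ for $i<j$ contribute normalizing factors built from $L(s-s_i+s_j, \phi_i^\vee \otimes \phi_j)$ and $L(s-s_i-s_j, \phi_i^\vee \otimes \phi_j^\vee)$. The mixed roots coupling $\tau_i$ with the small Levi contribute $L(s-s_i, \phi_0 \otimes \phi_i^\vee)$. Finally, the long (Siegel-type) roots for each $\tau_i$ contribute $L(s-2s_i, \phi_i^\vee, \wedge^2)$ in the non-genuine case and $L(s-2s_i, \phi_i^\vee, \Sym^2)$ in the genuine case; the dichotomy is forced by the dual group structure of the relevant Levi, since the adjoint representation on the Lie algebra of the unipotent radical decomposes with the appropriate symmetric or exterior square factor according to whether the ambient dual group is $\SO_{2n+1}$ or $\Sp_{2n}$.

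By Shahidi's theorem in the symplectic case, and its extension to the metaplectic case via the theory of covering groups, each rank-one normalized intertwining operator built from tempered inducing data is holomorphic and nonzero on the closure of the positive Weyl chamber. Therefore $M(w_0)$ is an isomorphism precisely when the product of the normalizing $L$-factors has no pole at the evaluation point $s=1$ (the value of the normalization parameter corresponding to the fixed shifts $s_i$). Translating this via the standard rank-one reducibility criteria yields exactly the regularity condition on the product of $L$-functions stated in the proposition, and hence the irreducibility of the standard module $I$.

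The main obstacle is ensuring that the relevant normalization and factorization results hold uniformly in the metaplectic setting: for the genuine case one must carefully invoke the covering version of Shahidi's local coefficient formalism, since the absence of a natural generic representation on $\Mp_n(F)$ complicates the direct application of the symplectic argument. Once these ingredients are in place, the argument is a direct combinatorial reduction from the long intertwining operator to its rank-one constituents, and the stated irreducibility criterion follows from tracking the poles of the normalizing factors.
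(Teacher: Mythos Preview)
Your approach is different from the paper's, and in a way that matters. The paper does not argue from scratch via intertwining operators; it simply observes that the stated $L$-function condition is exactly the regularity of the adjoint $L$-function $L(s,\phi,\mathrm{Ad})$ at $s=1$, where $\phi$ is the $L$-parameter of the Langlands quotient, and then invokes known results: for the non-genuine case, Heiermann's theorem \cite{He} combined with the Gross--Prasad--Rallis conjecture (proved in \cite[Appendix~B]{GI1}); for the genuine case, \cite[Theorem~3.13]{At1}. What you have sketched is, in spirit, the strategy that underlies those cited results, but packaging it as a self-contained argument understates the amount of work involved, particularly in the metaplectic case where Shahidi's formalism is not directly available and must be rebuilt (this is precisely the content of \cite{At1}).

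There is also a technical soft spot in your sketch. Saying that each rank-one normalized operator is ``holomorphic and nonzero'' at the relevant point is not by itself enough to conclude that the composite $N(w_0)$ is an \emph{isomorphism}; nonzero intertwining maps between reducible induced modules need not be bijective. What one actually uses is the multiplicativity/cocycle property of the normalized operators, $N(w_0)N(w_0^{-1})=\mathrm{id}$ (up to an explicit scalar), which forces bijectivity; equivalently, in the tempered regime one appeals to unitarity. Once that is in place, bijectivity of the unnormalized $M(w_0)$ reduces to the nonvanishing/finiteness of the normalizing ratio of $L$-values, and you must also check that the denominator $L$-factors (at $s+1$) do not vanish --- a point you did not mention, though it is automatic here since the inducing data are tempered. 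If you want to keep your direct argument, you should make these two ingredients explicit; otherwise, the cleaner route is exactly the one the paper takes: recognize the condition as $L(1,\phi,\mathrm{Ad})<\infty$ and cite \cite{He}, \cite[Appendix~B]{GI1}, and \cite[Theorem~3.13]{At1}.
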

\begin{proof}
Note that our assumptions are equivalent that 
the adjoint $L$-function $L(s, \phi, \mathrm{Ad})$ is regular at $s=1$, 
where $\phi$ is the $L$-parameter for $J(\tau_1|\cdot|^{s_1}, \dots, \tau_t|\cdot|^{s_t}, \pi_0)$.
The non-genuine case is a result of Heiermann \cite{He} 
together with a conjecture of Gross--Prasad--Rallis (proven in \cite[Appendix B]{GI1}).
The genuine case is \cite[Theorem 3.13]{At1}.
\end{proof}

This proposition has several corollaries.

\begin{cor}\label{at}
If $\pi$ is irreducible and almost tempered, 
then $\pi$ is isomorphic to an irreducible standard module.
\end{cor}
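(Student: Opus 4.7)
The plan is to directly apply the irreducibility criterion of Proposition \ref{GPR}. Writing $\pi$ via the Langlands classification as $\pi = J(\tau_1|\cdot|^{s_1}, \dots, \tau_t|\cdot|^{s_t}, \pi_0)$ with $\tau_i$ irreducible tempered representations of $\GL_{k_i}(F)$, $\pi_0$ an irreducible tempered representation of $\Mp_{r_0}(F)$, and $s_1 > \dots > s_t > 0$, the almost temperedness of $\pi$ means by definition that $0 < s_t < \dots < s_1 < 1/2$. The goal is then to verify that the corresponding standard module $\tau_1|\cdot|^{s_1} \times \dots \times \tau_t|\cdot|^{s_t} \rtimes \pi_0$ is irreducible, since this forces $\pi$ to coincide with it.

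Let $\phi_i$ be the (tempered) representation of $\WD_F$ associated to $\tau_i$ and let $(\phi_0, \eta_0)$ be the $L$-parameter of $\pi_0$, so $\phi_0$ is also tempered. The strategy is to inspect each of the $L$-functions appearing in the hypothesis of Proposition \ref{GPR}. The key general fact is that for any tempered representation $\sigma$ of $\WD_F$, the local $L$-function $L(s,\sigma)$ is holomorphic in the half-plane $\Re(s) > 0$; this follows because an irreducible tempered constituent has the form $\sigma_0 \boxtimes S_d$ with $\sigma_0$ tempered on $W_F$, contributing poles only at $\Re(s) \le -(d-1)/2 \le 0$. Since $\wedge^2 \phi_i^\vee$, $\Sym^2 \phi_i^\vee$, $\phi_0 \otimes \phi_i^\vee$, $\phi_i^\vee \otimes \phi_j$, and $\phi_i^\vee \otimes \phi_j^\vee$ are all tempered, each factor in the product is holomorphic wherever its argument has positive real part.

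At $s = 1$, the relevant arguments are $1 - 2s_i$, $1 - s_i$, $1 - s_i + s_j$ (for $i < j$), and $1 - s_i - s_j$ (for $i < j$). Using $0 < s_j < s_i < 1/2$, one computes
\[
1 - 2s_i \in (0,1), \qquad 1 - s_i \in (1/2, 1), \qquad 1 - s_i + s_j \in (1/2, 1), \qquad 1 - s_i - s_j \in (0, 1).
\]
In particular all four quantities lie in $(0,1) \subset \{\Re(s) > 0\}$, so every $L$-factor in the product appearing in Proposition \ref{GPR} is regular at $s = 1$. Hence the product is regular at $s=1$, in both the genuine and non-genuine cases, and Proposition \ref{GPR} implies that the standard module $\tau_1|\cdot|^{s_1} \times \dots \times \tau_t|\cdot|^{s_t} \rtimes \pi_0$ is irreducible. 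By the uniqueness of the Langlands quotient it must therefore equal $\pi$, which proves the corollary.

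There is essentially no obstacle; the only subtlety to watch is the case $t = 0$ (where $\pi = \pi_0$ is already tempered and the claim is trivial) and making sure the strict inequalities $s_i < 1/2$ and $s_i + s_j < 1$ really place every argument in the open half-plane of holomorphy. Once this bookkeeping is correctly carried out, the statement reduces purely to the quoted irreducibility criterion.
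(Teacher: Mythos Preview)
Your proof is correct and follows exactly the approach the paper intends: the paper's own proof simply states ``This immediately follows from Proposition \ref{GPR},'' and you have spelled out precisely the verification that all arguments $1-2s_i$, $1-s_i$, $1 - s_i \pm s_j$ lie in the region $\Re(s) > 0$ where the relevant tempered $L$-functions are holomorphic.
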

\begin{proof}
This immediately follows from Proposition \ref{GPR}.
\end{proof}

\begin{cor}\label{cor2}
Let $\mu$ be a unitary character of $F^\times$.
Set $\delta = 0$ if $G_n = \Sp_n(F)$, and $\delta = 1$ if $G_n = \Mp_n(F)$.
Suppose that $\phi \in \Phi(G_n(F))$ is of good parity.
Then for any $\pi \in \Pi_\phi$ and for any positive integer $a$ with $a \not\equiv \delta \bmod 2$, 
the standard module
\[
\mu|\cdot|^{\half{a}} \rtimes \pi
\]
is irreducible.
\end{cor}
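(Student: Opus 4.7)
The plan is to apply Proposition \ref{GPR} with $t=1$, $\tau_1=\mu$, and $s_1=a/2$, so that the associated representation of $\WD_F$ is the one-dimensional character $\phi_1=\mu$ (and $\phi_1^\vee=\mu^{-1}$) and the standard module under consideration is exactly $\mu|\cdot|^{\half{a}}\rtimes\pi$. Writing the $L$-parameter for $\pi$ as $(\phi,\eta)$ with $\phi$ of good parity (so in Proposition \ref{GPR}'s notation $\phi_0=\phi$), irreducibility reduces to verifying regularity at $s=1$ of
\[
L(s-a,\mu^{-1},\wedge^2)\,L(s-\half{a},\phi\otimes\mu^{-1})
\quad\text{(resp.\ }L(s-a,\mu^{-1},\Sym^2)\,L(s-\half{a},\phi\otimes\mu^{-1})\text{)}
\]
when $\delta=0$ (resp.\ $\delta=1$); the double product in Proposition \ref{GPR} is empty.

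For the symmetric-power factor, observe first that $\wedge^2(\mu^{-1})=0$ since $\mu^{-1}$ is one-dimensional, so $L(s-a,\mu^{-1},\wedge^2)=1$ disposes of the non-genuine case. In the genuine case, $\Sym^2(\mu^{-1})=\mu^{-2}$, and $L(s-a,\mu^{-2})$ has a pole at $s=1$ only if simultaneously $a=1$ and $\mu^2=\1$; since $a$ is even when $\delta=1$, this does not occur.

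For the tensor-product factor, decompose $\phi=\bigoplus_i\phi_i$ into irreducible summands and write each $\phi_i=\rho_{0,i}\boxtimes S_{d_i}$ with $\rho_{0,i}$ an irreducible bounded representation of $W_F$. Then
\[
L(s-\half{a},\phi_i\otimes\mu^{-1})=L\bigl(s-\half{a}+\half{d_i-1},\rho_{0,i}\otimes\mu^{-1}\bigr),
\]
which is regular at $s=1$ unless $\rho_{0,i}\otimes\mu^{-1}=\1$ (forcing $\rho_{0,i}=\mu$, in particular one-dimensional) and $1-\half{a}+\half{d_i-1}=0$, i.e.\ $d_i=a-1$. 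In that case $\phi_i=\mu\boxtimes S_{a-1}$. Since $\phi$ is of good parity, $\phi_i$ must be orthogonal if $\delta=0$ and symplectic if $\delta=1$; for the character $\mu$ this forces $a-1$ to be odd (i.e.\ $a$ even) in the orthogonal case and $a-1$ to be even (i.e.\ $a$ odd) in the symplectic case. Both contradict the hypothesis $a\not\equiv\delta\bmod 2$, so no pole occurs.

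Combining the two regularity checks, the hypothesis of Proposition \ref{GPR} is satisfied and $\mu|\cdot|^{\half{a}}\rtimes\pi$ is irreducible. The only real subtlety is the last step, where the parity of $a$ must be matched against the self-duality type dictated by good parity; once the $L$-function factors are pinned down as above, the argument is a direct parity check.
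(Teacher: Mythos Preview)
Your proof is correct and follows exactly the paper's approach: apply Proposition~\ref{GPR} with $t=1$, $\tau_1=\mu$, $s_1=a/2$, and verify the two $L$-factors are regular at $s=1$. The paper's proof simply asserts that $L(s-a/2,\phi\otimes\mu^{-1})$ is regular ``since $\phi$ is of good parity and $a\not\equiv\delta\bmod 2$'' without further explanation, whereas you spell out the parity check on $d_i$ explicitly; otherwise the arguments are identical.
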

\begin{proof}
Since $\phi$ is of good parity and $a \not\equiv \delta \bmod 2$, 
we see that $L(s-a/2, \phi \otimes \mu^{-1})$ is regular at $s=1$.
If $\delta = 0$, then $L(s-a, \mu^{-1}, \wedge^2) = 1$, which is entire.
If $\delta = 1$, then $L(s-a, \mu^{-1}, \Sym^2) = L(s-a, \mu^{-2})$, 
which is regular at $s=1$ when $a \not= 1$.
By Proposition \ref{GPR}, we obtain the corollary.
\end{proof}

\begin{cor}\label{cor3}
Let $\pi \in \Pi_\phi$ with $\phi$ of good parity, 
If $\semi R_{P_1(F)}(\pi)$ contains $\chi|\cdot|^{\alpha} \boxtimes \pi_0$ 
for some unitary character $\chi$ and $\alpha \in \R$, 
then $2\alpha \in \Z$ and $2\alpha \equiv \delta \bmod 2$.
\end{cor}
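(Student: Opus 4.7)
I plan to translate the hypothesis on the Jacquet module into a constraint on the Jordan blocks of $\phi$, through the cuspidal support of $\pi$. First, because $\phi$ is of good parity, every irreducible summand $\rho_i \otimes S_{d_i}$ of $\phi$ is self-dual of the type matching $G_n$, which forces each $\rho_i$ to be a self-dual---hence unitary---irreducible representation of $W_F$. In particular $\phi$ is tempered, and by Theorem \ref{LLC} (4) so is $\pi$. Combining Theorem \ref{LLC} (6) with the known structure of tempered $L$-packets for symplectic and metaplectic groups (so that every element of $\Pi_\phi$ is realized as a parabolic induction from a discrete series), one sees that the $\GL$-part of the cuspidal support of $\pi$ is the multiset
\[
\bigcup_i \bigl\{\rho_i|\cdot|^{(d_i-1)/2},\ \rho_i|\cdot|^{(d_i-3)/2},\ \dots,\ \rho_i|\cdot|^{-(d_i-1)/2}\bigr\}.
\]

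Now suppose $\chi|\cdot|^\alpha \boxtimes \pi_0$ is a composition factor of $R_{P_1(F)}(\pi)$. By the transitivity of the Jacquet functor and the definition of cuspidal support, the character $\chi|\cdot|^\alpha$ of $\cl{\GL}_1(F)$ must lie in the multiset above. Since it is one-dimensional, the corresponding $\rho_i$ is also one-dimensional, and we obtain $\chi = \rho_i$ and $\alpha = (d_i-1)/2 - j$ for some $0 \leq j \leq d_i - 1$. Being a one-dimensional self-dual representation of $W_F$, the character $\rho_i = \chi$ is orthogonal; consequently $\rho_i \otimes S_{d_i}$ is orthogonal when $d_i$ is odd and symplectic when $d_i$ is even. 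The good parity hypothesis therefore forces $d_i$ odd when $\delta = 0$ and $d_i$ even when $\delta = 1$, and we conclude
\[
2\alpha = d_i - 1 - 2j \equiv d_i - 1 \equiv \delta \pmod 2,
\]
so in particular $2\alpha \in \Z$, which is the claim.

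The main point requiring care is the explicit description of the $\GL$-part of the cuspidal support of a tempered representation in terms of the Jordan blocks of its $L$-parameter. For discrete (i.e.\ multiplicity-free good parity) $\phi$ this is classical from the discrete series theory of classical groups, and the general good parity case reduces to it through the parabolic induction structure of tempered $L$-packets; the genuine case for $\Mp_r(F)$ proceeds in parallel via the metaplectic construction cited in Theorem \ref{LLC}.
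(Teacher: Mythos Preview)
Your argument is correct in substance, but it follows a different route from the paper's proof. You work directly with the cuspidal support: since $\phi$ is of good parity each irreducible summand is self-dual and hence (being irreducible) unitary, so $\phi$ is tempered and $\pi$ is tempered; then any $\GL_1$-constituent $\chi|\cdot|^\alpha$ of the cuspidal support of $\pi$ must match one of the exponents $\tfrac{d_i-1}{2}-j$ coming from a one-dimensional block $\rho_i\otimes S_{d_i}$ of $\phi$, and the parity of $d_i$ is pinned down by the good-parity hypothesis. One small imprecision: the $\GL$-part of the cuspidal support is \emph{contained in}, not equal to, the multiset you wrote, since different members of the packet can have part of that multiset absorbed into the supercuspidal factor on the smaller group; but containment is all your argument uses.

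The paper instead handles the discrete series case first, not via cuspidal support but via the irreducibility criterion of Proposition~\ref{GPR}: if $\chi|\cdot|^\alpha\boxtimes\pi_0$ occurs in $R_{P_1(F)}(\pi)$ with $\pi$ discrete, then (after adjusting $\pi_0$) one has an embedding $\pi\hookrightarrow\chi|\cdot|^\alpha\rtimes\pi_0$ into a reducible standard module, and Proposition~\ref{GPR} forces a pole of $L(s-\alpha,\phi_0\otimes\chi^{-1})$ or of the relevant $\wedge^2$/$\Sym^2$ factor at $s=1$, which yields $2\alpha\in\Z$ with $2\alpha\equiv\delta\bmod 2$. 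The general good-parity case is then reduced to this by embedding $\pi$ into an induction $\tau_1\times\cdots\times\tau_r\rtimes\pi_0$ from discrete series with each $\phi_i$ of the correct type, and applying Tadi\'c's formula.

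What your approach buys is conceptual clarity: once one accepts that the $\GL$-exponents in the cuspidal support of $\pi\in\Pi_\phi$ are governed by the Jordan blocks of $\phi$, the parity statement is immediate. The cost is that this compatibility, for discrete series of symplectic and metaplectic groups, is itself a substantial input (essentially the M\oe glin--Tadi\'c classification or its metaplectic analogue), which the paper neither states nor cites. The paper's route is more self-contained within the results it has already established, trading the structural black box for an explicit $L$-function reducibility calculation.
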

\begin{proof}
First, we assume that $\pi$ is discrete series.
Replacing $\pi_0$ if necessary, 
we may assume that 
\[
\pi \hookrightarrow \chi|\cdot|^{\alpha} \rtimes \pi_0.
\]
By Casselman's criterion, we see that
\begin{itemize}
\item
$\alpha > 0$; 
\item
$\pi_0$ is discrete series; 
\item
the standard module $\chi|\cdot|^{\alpha} \rtimes \pi_0$ is reducible.
\end{itemize}
In particular, if $\pi_0 \in \Pi_{\phi_0}$, 
then $L(s-\alpha, \phi_0 \otimes \chi^{-1})$ has a pole at $s=1$, or 
$L(s-2\alpha, \chi^{-1}, \wedge^2)$ (\resp $L(s-2\alpha, \chi^{-1}, \Sym^2)$) has a pole at $s=1$ 
when $\delta = 0$ (\resp $\delta = 1$).
Since $\phi_0$ is of good parity, 
this condition implies that $2\alpha \in \Z$ and $2\alpha \equiv \delta \bmod 2$.
\par

In general, if $\pi \in \Pi_\phi$ with $\phi$ of good parity, 
then 
\[
\pi \hookrightarrow \tau_1 \times \dots \times \tau_r \rtimes \pi_0, 
\]
where 
\begin{itemize}
\item
$\tau_i$ is an irreducible discrete series representation of $\GL_{k_i}(F)$
to which the irreducible representation $\phi_i$ of $\WD_F$ corresponding 
is orthogonal if $\delta = 0$, and symplectic if $\delta = 1$; 
\item
$\pi_0$ is an irreducible discrete series representation of $\Mp_{n_0}(F)$.
\end{itemize}
Then the assertion follows from 
Tadi{\'c}'s formula \cite{T1} \cite[Proposition 4,5]{HM}.
\end{proof}

\subsection{Lowest weight modules}
In this subsection, we assume that $F = \R$ and $\psi(x) = \exp(2\pi a \I x)$ for some $a > 0$.
Recall that we denote the irreducible lowest weight representation of $\Mp_n(\R)$
with lowest $\cl{K}_\infty$-type ${\det}^{l/2}$ by $\DD^{(n)}_{l/2}$.
We determine the $L$-parameters for $\DD^{(n)}_{l/2}$
when it is discrete series, which is equivalent that $l > 2n$.
Note that the infinitesimal character of $\DD^{(n)}_{l/2}$ is equal to
\[
\left(\half{l}-1, \half{l}-2, \dots, \half{l}-n\right).
\]
\par

Recall that the Weil group $W_\R$ of $\R$ is of the form
\[
W_\R = \C^\times \cup \C^\times j
\]
with the group low
\[
j^2 = -1, 
\quad
jzj^{-1} = \overline{z}
\]
for $z \in \C^\times$.
Let $\sgn$ be the sign character of $W_\R$ 
defined by $\sgn(j) = -1$ and $\sgn(z) = 1$ for $z \in \C^\times$.
For each positive integer $k$, we define a $2$-dimensional irreducible representation 
$\rho_k \colon W_\R \rightarrow \GL_2(\C)$ by 
\[
\rho_k(j) = 
\begin{pmatrix}
0 & (-1)^k \\ 1 & 0
\end{pmatrix},
\quad
\rho_k(re^{\I \theta}) = 
\begin{pmatrix}
e^{k \I \theta} & 0 \\ 0 & e^{-k \I \theta}
\end{pmatrix}
\]
for $r > 0$ and $\theta \in \R/2\pi\Z$.
Note that (a conjugate of) the image of $\rho_k$ is contained in $\mathrm{O}(2,\C)$ if $k$ is even, 
and in $\SL_2(\C)$ if $k$ is odd.
\par

For an integer $l > 2n$, we put
\[
\phi_l^{(n)} = 
\left\{
\begin{aligned}
&\rho_{l-2} \oplus \rho_{l-4} \oplus \dots \oplus \rho_{l-2n} \oplus \sgn^n	\iif l \equiv 0 \bmod 2, \\
&\rho_{l-2} \oplus \rho_{l-4} \oplus \dots \oplus \rho_{l-2n}				\iif l \equiv 1 \bmod 2.
\end{aligned}
\right.
\]
Then $\phi_l^{(n)} \in \Phi(\Sp_n(\R))$ if $l$ is even, and $\phi_l^{(n)} \in \Phi(\Mp_n(\R))$ if $l$ is odd.
When $\phi = \phi_l^{(n)}$, the component group $A_{\phi}$ is given by
\[
A_{\phi} = 
\Big((\Z/2\Z) e_{l-2} \oplus (\Z/2\Z) e_{l-4} \oplus \dots \oplus (\Z/2\Z) e_{l-2n} \Big) + (\Z/2\Z)z_{\phi},  
\]
where $e_k \in A_{\phi}$ is the element associated to $\rho_k \subset \phi$.

\begin{prop}\label{Dl}
Suppose that $l > 2n$.
Then the $L$-parameter for $\DD^{(n)}_{l/2}$ is equal to $(\phi_l^{(n)}, \eta_l^{(n)})$, 
where $\eta_l^{(n)}$ is determined 
by $\eta_l^{(n)}(e_{l-2i}) = (-1)^{i-1}$ for $i = 1, \dots, n$.
\end{prop}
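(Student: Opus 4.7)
The plan is to verify in three stages that the $L$-parameter of $\DD^{(n)}_{l/2}$ is $(\phi_l^{(n)}, \eta_l^{(n)})$: first, that $\phi_l^{(n)}$ lies in the correct parameter space with the correct infinitesimal character; second, that it is a discrete parameter so that its $L$-packet consists of discrete series and must contain $\DD^{(n)}_{l/2}$; third, that the character on the component group attached to $\DD^{(n)}_{l/2}$ is exactly $\eta_l^{(n)}$.

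The first two stages are routine. Each $\rho_k$ is a $2$-dimensional irreducible self-dual representation of $W_\R$, orthogonal when $k$ is even and symplectic when $k$ is odd, with infinitesimal character $\{\pm k/2\}$, while $\sgn^n$ is a $1$-dimensional orthogonal character with infinitesimal character $\{0\}$. Summing, for even $l$ the parameter $\phi_l^{(n)}$ is a $(2n+1)$-dimensional orthogonal representation of $\WD_\R$, hence an element of $\Phi(\Sp_n(\R))$, while for odd $l$ it is a $2n$-dimensional symplectic representation, hence an element of $\Phi(\Mp_n(\R))$. Its infinitesimal character equals $(l/2 - 1, l/2-2, \ldots, l/2-n)$ modulo the Weyl group, matching that of $\DD^{(n)}_{l/2}$. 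The hypothesis $l > 2n$ forces the integers $l-2i$ to be distinct and positive for $i = 1, \ldots, n$, so the irreducible summands $\rho_{l-2i}$ of $\phi_l^{(n)}$ are pairwise non-isomorphic with multiplicity one. Consequently $\phi_l^{(n)}$ is a discrete parameter, and by Theorem \ref{LLC} (5) the packet $\Pi_{\phi_l^{(n)}}$ consists of discrete series, which must contain $\DD^{(n)}_{l/2}$ by the infinitesimal character match.

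The main work lies in the third stage: pinning down $\eta_l^{(n)}$. An induction on $n$ reduces the general case to the base case $n=1$, which is the classical identification of the $L$-parameter of the holomorphic discrete series of weight $l/2$ on $\SL_2(\R)$ (for $l$ even) or $\Mp_1(\R)$ (for $l$ odd), namely $(\rho_{l-2}, \eta)$ with $\eta(e_{l-2}) = 1 = (-1)^0$, matching $\eta_l^{(1)}$. For the inductive step I would use the compatibility of LLC with parabolic induction (Theorem \ref{LLC} (6), (7)): realize $\DD^{(n)}_{l/2}$ either as the unique holomorphic irreducible subquotient of the cohomologically induced module from $\DD^{(n-1)}_{l/2} \boxtimes \DD^{(1)}_{l/2}$ via the Levi of a non-Siegel maximal parabolic, or as the lowest-weight submodule of a degenerate principal series from an explicit character of the Siegel Levi $\cl{\GL}_n(\R)$, and then read off the Langlands data using Theorem \ref{LLC} (7).

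The hardest part is the sign bookkeeping: verifying the alternating pattern $(-1)^{i-1}$ rather than, say, the constant sign $+1$ is sensitive to the Shelstad/Adams--Johnson normalization of the bijection $\Pi_\phi \leftrightarrow \widehat{A_\phi}$ used in Theorem \ref{LLC}, and must be tracked through the Harish-Chandra parameter $\lambda = (l/2 - 1, \ldots, l/2 - n)$ together with the chosen ordering of the generators $e_{l-2i}$ of $A_\phi$. A useful global cross-check that anchors the signs is Arthur's multiplicity formula (Theorem \ref{AMFhol}) applied to the Ikeda lift, whose archimedean component is precisely $\DD^{(n)}_{k+n/2}$ and whose $A$-parameter $\tau[n]$ determines the archimedean $(\phi, \eta)$ uniquely, forcing the alternating pattern.
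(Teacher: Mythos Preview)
Your first two stages are correct and coincide with the paper's argument: one checks that $\phi_l^{(n)}$ has the right type and infinitesimal character, so that $\DD^{(n)}_{l/2}$ lies in $\Pi_{\phi_l^{(n)}}$.

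The third stage, however, has a genuine gap. Your proposed inductive step does not go through with the tools you cite. First, there is no parabolic subgroup of $\Sp_n$ whose Levi is $\Sp_{n-1}\times\Sp_1$; the Levi of $P_k$ is $\GL_k\times\Sp_{n-k}$, so ``cohomological induction from $\DD^{(n-1)}_{l/2}\boxtimes\DD^{(1)}_{l/2}$ via the Levi of a non-Siegel maximal parabolic'' has no meaning as stated. Second, Theorem~\ref{LLC}~(6) concerns representations of the form $\tau\rtimes\pi_0$ with $\tau$ tempered on a $\GL$-factor, and Theorem~\ref{LLC}~(7) concerns Langlands quotients of proper standard modules; a discrete series representation such as $\DD^{(n)}_{l/2}$ is its own standard module, so neither statement produces any new information about $\eta$. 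Realising $\DD^{(n)}_{l/2}$ as a \emph{submodule} of a degenerate principal series likewise does not let you read off $\eta$ from~(7), since it is not the Langlands quotient there. Finally, the global cross-check via the Ikeda lift would involve the non-tempered $A$-parameter $\tau[n]$, for which the paper only states the coarse form of Arthur's multiplicity formula (Theorem~\ref{AMF}~(2)), not the sign condition~(3) needed to pin down the archimedean $\eta$.

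The paper instead determines $\eta_l^{(n)}$ by a direct local method: Schmid's character identity, which expresses characters of discrete series in a packet in terms of one another and thereby fixes the bijection $\Pi_\phi\hookrightarrow\widehat{A_\phi}$ in Shelstad's normalisation. The paper refers to \cite[Appendix~A]{At2} for the parallel computation and omits the details. If you want an inductive approach, the correct device is the coherent continuation or translation functor relating $\DD^{(n)}_{l/2}$ to $\DD^{(n)}_{(l+2)/2}$, or the Adams--Johnson description of cohomologically induced packets from $\theta$-stable (not real) parabolics, rather than the real parabolic induction encoded in Theorem~\ref{LLC}~(6),~(7).
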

\begin{proof}
Set $\phi = \phi_l^{(n)}$.
Then the $L$-packet $\Pi_{\phi}$ is the set of discrete series representations with infinitesimal character 
\[
\left(\half{l}-1, \half{l}-2, \dots, \half{l}-n\right).
\]
Hence $\DD^{(n)}_{l/2} \in \Pi_{\phi}$.
The description of $\eta_l^{(n)}$ can be proven by a similar way to \cite[Appendix A]{At2}
using Schmid's character identity.
We omit the detail.
\end{proof}

\subsection{Arthur's multiplicity formula}\label{app.AMF}
In this subsection, 
we let $F$ be a totally real number field, and $\psi$ be a non-trivial unitary character of $\A/ F$.
A discrete global $A$-parameter for $\Sp_n(F)$ (\resp $\Mp_n(F)$) is a symbol
\[
\Psi = \tau_1[d_1] \boxplus \dots \boxplus \tau_t[d_t], 
\]
where
\begin{itemize}
\item
$\tau_i$ is an irreducible cuspidal unitary automorphic representation of $\GL_{m_i}(\A)$; 
\item
$d_i$ is a positive integer such that $\sum_{i=1}^{t}m_id_i$ 
is equal to $2n+1$ (\resp $2n$); 
\item
if $d_i$ is odd (\resp even), then $L(s, \tau_i, \Sym^2)$ has a pole at $s=1$; 
\item
if $d_i$ is even (\resp odd), then $L(s, \tau_i, \wedge^2)$ has a pole at $s=1$; 
\item
the central character $\omega_i$ of $\tau_i$ satisfies that $\omega_1^{d_1} \cdots \omega_l^{d_l} = \1$; 
\item
if $i \not= j$ and $\tau_i \cong \tau_j$, then $d_i \not= d_j$.
\end{itemize}
Two $A$-parameters $\Psi = \boxplus_{i=1}^{t}\tau_{i}[d_{i}]$ and $\Psi' = \boxplus_{i=1}^{t'}\tau'_{i}[d'_{i}]$
are said to be equivalent if $t = t'$ and there exists a permutation $\sigma \in \mathfrak{S}_t$ such that
$d'_i = d_{\sigma(i)}$ and $\tau'_i \cong \tau_{\sigma(i)}$ for each $i$.
We denote the set of equivalence classes of discrete global $A$-parameters 
for $\Sp_n(F)$ (\resp $\Mp_n(F)$) by $\Psi_2(\Sp_n(F))$ (\resp $\Psi_2(\Mp_n(F))$).
We call an $A$-parameter $\Psi = \boxplus_{i=1}^{t}\tau_{i}[d_{i}]$ tempered if $d_i = 1$ for any $i$.
In this case, we write $\Psi = \boxplus_{i=1}^{t}\tau_{i}$ for simplicity.
\par

Let $\Psi = \boxplus_{i=1}^{t}\tau_{i}$ be a tempered $A$-parameter for $\Sp_n(F)$ (\resp $\Mp_n(F)$).
For each place $v$ of $F$, we denote the representation of $\WD_{F_v}$ corresponding to $\tau_{i,v}$
by $\phi_{i,v}$, 
and put $\Psi_v = \oplus_{i=1}^{t} \phi_{i,v}$.
By \cite[Theorem 1.4.2]{Ar}, we have $\Psi_v \in \Phi(\Sp_n(F_v))$ (\resp $\Psi_v \in \Phi(\Mp_n(F_v))$).
We define a global $A$-packet $\Pi_{\Psi}$ by 
\[
\Pi_{\Psi} = \{\pi = \otimes'_v \pi_v\ |\ \pi_v \in \Pi_{\Psi_v},\ \text{$\pi_v$ is unramified for almost all $v$}\}.
\]
\par

For a tempered $A$-parameter $\Psi = \boxplus_{i=1}^{t}\tau_{i}$, 
the global component group $A_{\Psi}$ of $\Psi$ is defined by 
\[
A_\Psi = \bigoplus_{i=1}^{t}(\Z/2\Z)\alpha_i.
\]
Namely, $A_\Psi$ is a free $\Z/2\Z$-module of rank $t$, 
and $\{\alpha_1, \dots, \alpha_t\}$ is a basis of $A_\Psi$ with $\alpha_i$ associated to $\tau_i$.
There exists a localization map $A_{\Psi} \ni \alpha \mapsto \alpha_v \in A_{\Psi_v}$, so that
we have a diagonal map
\[
\Delta \colon A_{\Psi} \rightarrow \prod_{v}A_{\Psi_v}.
\]
\par

Let $\AA_2(\Mp_n(\A))$ (\resp $\AA_2(\Sp_n(\A))$) be the set of square-integrable automorphic forms
on $\Mp_n(\A)$ which are genuine (\resp not genuine).
This is an $\Mp_n(\A_\fin) \times \prod_{v \mid \infty}(\sp_n(\C), \cl{K}_\infty)$-module.
Arthur's multiplicity formula classifies $\AA_2(\Mp_n(\A))$ (\resp $\AA_2(\Sp_n(\A))$) 
in terms of $\Psi_2(\Mp_n(F))$ (\resp $\Psi_2(\Sp_n(F))$).
\begin{thm}[Arthur's multiplicity formula ({\cite[Theorem 1.5.2]{Ar}, \cite[Theorems 1.1, 1.3]{GI2})}]\label{AMF}
Let $G= \Sp_n$ or $G = \Mp_n$.
\begin{enumerate}
\item
For each $\Psi \in \Psi_2(G(F))$, there exists an $\Mp_n(\A_\fin) \times (\sp_n(\C), \cl{K}_\infty)$-submodule 
$\AA_{2, \Psi}$ of $\AA_2(G(\A))$ such that
\[
\AA_2(G(\A)) = \bigoplus_{\Psi \in \Psi_2(G(F))} \AA_{2, \Psi}.
\]

\item
Suppose that $\pi = \otimes_{v}' \pi_v$ is an irreducible subrepresentation of $\AA_{2, \Psi}$ 
with $\Psi = \boxplus_{i=1}^{t}\tau_{i}[d_{i}]$.
Then for almost all $v < \infty$, 
the local factors $\pi_v$ of $\pi$ and $\tau_{i,v}$ of $\tau_i$ are unramified for any $i$.
Moreover, if we denote the Satake parameter for $\tau_{i,v}$ by $\{c_{i,v,1}, \dots, c_{i,v,m_i}\}$, 
then the Satake parameter of $\pi_v$ is equal to
\[
\bigcup_{i=1}^{t} \bigcup_{j=1}^{m_i} 
\left\{ c_{i,v,j}q^{-\half{d_i-1}}, c_{i,v,j}q^{-\half{d_i-3}}, \dots, c_{i,v,j}q^{\half{d_i-1}} \right\}
\]
as multisets.

\item
When $\Psi = \boxplus_{i=1}^{t}\tau_{i}$ is a tempered $A$-parameter, 
$\AA_{2, \Psi}$ is the multiplicity-free direct sum of representations $\pi = \otimes'_v \pi_v \in \Pi_{\Psi}$ 
such that the character $\eta_v$ of $A_{\Psi_v}$ associated to $\pi_v$ satisfies the equation
\[
\left(\prod_{v} \eta_v\right) \circ \Delta(\alpha_i) = 
\left\{
\begin{aligned}
&\1			\iif G = \Sp_n, \\
&\ep(\tau_i)	\iif G = \Mp_n.
\end{aligned}
\right.
\]
Here, $\ep(\tau_i) = \ep(1/2, \tau_i, \psi) \in \{\pm1\}$ is the global root number 
attached to the symplectic representation $\tau_i$.

\end{enumerate}
\end{thm}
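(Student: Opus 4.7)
The plan is to follow the strategy of Arthur \cite{Ar} when $G = \Sp_n$ and of Gan--Ichino \cite{GI2} when $G = \Mp_n$; indeed, in the present paper the theorem is invoked as a black box from those references, so what I describe here is only a roadmap. For $G = \Sp_n$, the starting point is the stable trace formula on $\Sp_n(\A)$ and its comparison with the twisted trace formula for $\GL_{2n+1}(\A)$ relative to the standard outer involution. Endoscopic transfer together with the description of twisted cuspidal data on $\GL_{2n+1}$ in terms of self-dual isobaric sums packages every discrete automorphic representation of $\Sp_n(\A)$ into a formal $A$-parameter $\Psi = \boxplus_i \tau_i[d_i]$ of the prescribed shape; the constraints on the pairs $(\tau_i,d_i)$ come from the sign conditions on orthogonal versus symplectic representations of the $L$-group. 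Part (2) then falls out of the unramified functorial transfer combined with the Satake isomorphism. Part (3) for tempered $\Psi$ is the spectral identity extracted from the stable trace formula, in which the local characters $\eta_v$ on $A_{\Psi_v}$ are pinned down by the local endoscopic character relations, and the global constraint $\bigl(\prod_v \eta_v\bigr) \circ \Delta = \mathbf{1}$ is produced by the stabilization.

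For $G = \Mp_n$, I would pass through the theta correspondence for the dual pairs $(\Mp_n, \SO_{2r+1})$ with $r$ varying, following Gan--Ichino. Since $\Mp_n$ and the split $\SO_{2n+1}$ share the Langlands dual group $\Sp_n(\C)$, the idea is to define the $A$-parameter of a genuine discrete automorphic form $\pi$ on $\Mp_n(\A)$ through the first nonzero global theta lift of $\pi$ to an odd orthogonal group and to inherit the parameter from Arthur's classification on that side. Compatibility of local theta with the local Langlands correspondence of Theorem \ref{LLC} gives (2), and the multiplicity formula (3) is obtained by comparing the Rallis inner product formula on the two sides of the theta correspondence; the appearance of the sign $\ep(\tau_i)$ in the metaplectic case, as opposed to the trivial sign in the symplectic case, is caused by the $\ep$-factor that enters the Rallis inner product formula at each irreducible tempered constituent of $\Psi$.

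The hard part will be the local compatibility: matching the characters of the component groups produced by the theta correspondence with those attached to local $L$-packets via Theorem \ref{LLC}, at every place of $F$, including the archimedean ones. This requires the full local theta dichotomy and a precise calculation of how theta interacts with the component groups, and at real places it forces one to reconcile the description of $\DD^{(n)}_{l/2}$ given in Proposition \ref{Dl} with the Howe correspondence for pairs $(\Mp_n(\R), \mathrm{O}(p,q))$. A secondary obstacle is that a direct trace formula for $\Mp_n$ is not available in Arthur's form, so every automorphic statement on the metaplectic side has to be bootstrapped from the orthogonal side through global theta together with the nonvanishing of appropriate Rallis periods; controlling this nonvanishing is exactly where the parity conditions on $d_i$ (symplectic versus orthogonal type of $\tau_i S_{d_i}$) and the global root number $\ep(\tau_i)$ enter the sign condition in (3).
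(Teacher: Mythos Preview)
Your reading is correct: the paper does not prove this theorem at all but cites it as a black box from \cite{Ar} for $G=\Sp_n$ and from \cite{GI2} for $G=\Mp_n$, and you explicitly acknowledge this. Your roadmap of those two sources (stable/twisted trace formula comparison for $\Sp_n$; transport via global theta to odd orthogonal groups for $\Mp_n$, with the sign $\ep(\tau_i)$ arising from the epsilon dichotomy and Rallis inner product) is an accurate high-level summary of the cited proofs, so there is nothing to compare against the paper itself.
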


If a representation $\pi$ of $\Mp_n(\A)$ is contained in $\AA_{2, \Psi}$, 
we say that $\pi$ has an $A$-parameter $\Psi$, and $\Psi$ is the $A$-parameter for $\pi$.
We note that the following.

\begin{lem}\label{atu}
If $\Psi$ is a tempered $A$-parameter, and 
$\pi = \otimes_{v}' \pi_v$ is an irreducible subrepresentation of $\AA_{2, \Psi}$, 
then the local factors $\pi_v$ are almost tempered and unitary for all $v$.
\end{lem}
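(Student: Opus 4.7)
\textbf{Proof plan for Lemma \ref{atu}.} The plan is to split the conclusion into two independent statements about each place $v$: unitarity of $\pi_v$, which will follow from the global square-integrability of $\pi$, and almost-temperedness of $\pi_v$, which will be reduced via Arthur's formula to a known bound toward Ramanujan for each cuspidal constituent of $\Psi$.

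First I will establish unitarity. Since $\pi$ is an irreducible subrepresentation of $\AA_{2,\Psi}\subset \AA_2(\Mp_n(\A))$, the Petersson inner product restricts to a nonzero invariant Hermitian form on $\pi$, so $\pi$ is globally unitary. By the standard factorization of an invariant Hermitian form on a restricted tensor product (Flath), each local factor $\pi_v$ inherits an invariant inner product, so $\pi_v$ is unitary at every place $v$.

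Next I will handle almost-temperedness. By Arthur's multiplicity formula (Theorem \ref{AMF}), $\pi_v$ lies in the local packet $\Pi_{\Psi_v}$. Since $\Psi=\boxplus_{i=1}^{t}\tau_i$ is tempered in Arthur's sense (all $d_i=1$), the local parameter is simply the direct sum $\Psi_v=\bigoplus_{i=1}^{t}\phi_{i,v}$, where $\phi_{i,v}$ is the local $L$-parameter attached to $\tau_{i,v}$. In view of Theorem \ref{LLC} (4), it is enough to verify that $\Psi_v$ is almost tempered as an $L$-parameter, i.e.\ that every irreducible constituent of each $\phi_{i,v}$ is of the form $\phi'|\cdot|^{s}$ with $\phi'(W_{F_v})$ bounded and $|s|<1/2$.

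The key input for this last step is the bound toward the generalized Ramanujan conjecture for $\GL_m$ due to Luo--Rudnick--Sarnak (with archimedean counterparts by M{\"u}ller--Speh and others): for any unitary cuspidal automorphic representation of $\GL_m(\A)$, every local component is almost tempered with exponents bounded strictly by $1/2-1/(m^2+1)<1/2$. Since each $\tau_i$ appearing in $\Psi$ is by definition unitary cuspidal on some $\GL_{m_i}(\A)$, this applies uniformly to every $\phi_{i,v}$, and hence $\Psi_v$ is almost tempered. Combined with the unitarity of $\pi_v$ established above, this yields the lemma. The only substantive ingredient is the LRS-type bound; once it is cited, the rest is bookkeeping with Arthur's formula and the local Langlands correspondence recorded in Appendix \ref{langlands}, so I do not anticipate a genuine obstacle.
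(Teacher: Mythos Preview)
Your proposal is correct and follows essentially the same approach as the paper: unitarity from global square-integrability, and almost-temperedness from known bounds toward Ramanujan for the cuspidal $\GL_m$ constituents of $\Psi$. The paper's proof is terser and cites Jacquet--Shalika \cite{JS} and Rudnick--Sarnak \cite{RS} rather than Luo--Rudnick--Sarnak, but the logic is identical; you have simply made explicit the passage through Theorem \ref{AMF}(3) and Theorem \ref{LLC}(4) that the paper leaves implicit.
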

\begin{proof}
Since any representation $\pi$ contained in $\AA_{2, \Psi}$ is unitary, 
its local factors $\pi_v$ are unitary for all $v$.
By the toward Ramanujan conjecture (see \cite[(2.5) Corollary]{JS} and \cite[Appendix]{RS}), 
we see that $\pi_v$ are almost tempered for all $v$.
\end{proof}

Let $l = (l_v)_v \in \prod_{v \mid \infty}\Z$ with $l_v > 0$ 
and $l_v \equiv l_{v'} \bmod 2$ for any $v,v' \mid \infty$.
For $\Psi \in \Psi_2(\Sp_n(F))$ if all $l_v$ are even, and for $\Psi \in \Psi_2(\Mp_n(F))$ if all $l_v$ are odd, 
we set 
\[
\Sc_{l/2, \Psi} = \AA_{2, \Psi} \cap \Sc_{l/2}(\Sp_n(F) \bs \Mp_n(\A)).
\]
Hence
\[
\Sc_{l/2}(\Sp_n(F) \bs \Mp_n(\A)) \cong \bigoplus_{\Psi \in \Psi_2(G(F))} \Sc_{l/2, \Psi}.
\]
By Proposition \ref{Dl}, 
if $\Psi$ is tempered and $\Sc_{l/2, \Psi}$ is nonzero, 
then $\Psi_v = \phi_{l_v}^{(n)}$ for each $v \mid \infty$.
The subspace $\Sc_{l/2, \Psi}$ is often zero.

\begin{ex}\label{n=1}
Suppose that $n=1$.
There is a unique non-tempered $A$-parameter $\1[3]$ for $\Sp_1(F)$, 
where $\1$ is the trivial representation of $\GL_1(\A)$.
The associated subspace $\AA_{2, \1[3]}$ is the space of constant functions, 
i.e., $\AA_{2,\1[3]}$ is the trivial $\Sp_1(\A_\fin) \times (\sp_1(\C), K_\infty)$-module.
On the other hand, 
the non-tempered $A$-parameters for $\Mp_1(F)$ are of the form $\chi_\xi[2]$.
The associated space $\AA_{2, \chi_\xi[2]}$ is isomorphic to 
the even Weil representation $\omega_{\psi_\xi}^+$
as $\Mp_1(\A_\fin) \times (\sp_1(\C), \cl{K}_\infty)$-modules.
Therefore, for $\Psi = \1[3]$ or $\Psi = \chi_\xi[2]$, we have $\Sc_{l/2, \Psi} = 0$
since the trivial representation and the even Weil representations are not cuspidal.
In other words, if $n=1$ and $\Sc_{l/2, \Psi} \not= 0$, then $\Psi$ is tempered.
\end{ex}
\par

To show Theorem \ref{rrr}, we need the following lemma.
\begin{lem}\label{lemA}
Let $\tau = \otimes_v' \tau_v$ be an irreducible unitary cuspidal automorphic representation of $\GL_2(\A)$
satisfying the conditions (A1), (A2) and (A3) in \S \ref{sec.global}.
We denote the weight of $\otimes_{v \mid \infty}\tau_v$ by $\pm2k = (\pm2k_v)_{v}$ with $k_v > 0$.
Let $\Psi = \boxplus_{i=1}^{t}\tau_{i}$ be a tempered $A$-parameter for $\Mp_r(F)$. 
Assume that for each infinite place $v \mid \infty$, 
the local factor $\Psi_v$ is equal to $\phi_{l_v}^{(r)}$ with $l_v = 2k_v+2r+1$.
Then $\Psi' = \Psi \boxplus \tau\chi_{-1}^{r}$ is a tempered $A$-parameter for $\Mp_{r+1}(F)$.
Moreover, for any irreducible subrepresentation $\pi = \otimes'_{v < \infty}\pi_v$ of $\Sc_{l_v/2, \Psi}$, 
there exists an irreducible subrepresentation $\pi' = \otimes'_{v < \infty}\pi'_v$ of $\Sc_{l_v/2, \Psi'}$ 
such that for each finite place $v < \infty$, 
the $L$-parameters $(\Psi_v, \eta_v)$ and $(\Psi'_v, \eta'_v)$ for $\pi$ and $\pi'$, respectively, 
are related by $\eta'_v = \eta_v$ under the canonical identification $A_{\Psi'_v} = A_{\Psi_v}$.
\end{lem}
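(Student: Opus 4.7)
The strategy is to first verify that $\Psi'$ is a discrete tempered $A$-parameter for $\Mp_{r+1}(F)$, then to construct $\pi' = \otimes'_v \pi'_v$ locally at each place, and finally to verify Arthur's multiplicity formula for $\pi'$ in $\Sc_{l/2, \Psi'}$. The root-number identity at the new summand will turn out to be the main point.

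The new constituent $\tau\chi_{-1}^r$ is cuspidal and unitary. It has trivial central character (since $\chi_{-1}^{2r}$ is trivial), hence is self-dual and $L(s, \tau\chi_{-1}^r, \wedge^2) = \zeta_F(s)$ has a pole at $s=1$, confirming the symplectic condition; the dimension count gives $2r + 2 = 2(r+1)$, as required. The nontrivial distinctness condition $\tau\chi_{-1}^r \not\cong \tau_i$ follows from the archimedean data. Indeed, since $\rho_k \cong \Ind_{W_\C}^{W_\R}$ of a character of $\C^\times$ on which $\sgn$ is trivial, we have $\rho_k \otimes \sgn \cong \rho_k$, so $(\tau\chi_{-1}^r)_v \cong \rho_{2k_v - 1}$ at $v \mid \infty$. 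But $\Psi_v = \phi_{l_v}^{(r)}$ decomposes as $\rho_{l_v - 2} \oplus \cdots \oplus \rho_{l_v - 2r}$ with indices in $\{2k_v + 1, 2k_v + 3, \dots, 2k_v + 2r - 1\}$, all strictly greater than $2k_v - 1$. Hence $(\tau\chi_{-1}^r)_v$ is not a summand of $\Psi_v$, which gives distinctness and at the same time $\Psi'_v = \phi_{l_v}^{(r+1)}$ for every $v \mid \infty$.

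Next I build $\pi'$. At each $v \mid \infty$, take $\pi'_v = \DD^{(r+1)}_{l_v/2}$; by Proposition \ref{Dl} this lies in $\Pi_{\Psi'_v}$ with character $\eta_{l_v}^{(r+1)}$, and the formula $\eta_l^{(n)}(e_{l - 2i}) = (-1)^{i-1}$ makes $\eta_{l_v}^{(r+1)}$ agree with $\eta_{l_v}^{(r)}$ on the common generators $e_{l_v - 2}, \dots, e_{l_v - 2r}$. At each finite $v$, the local parameter $(\tau\chi_{-1}^r)_v = \mu_v\chi_{-1,v}^r \oplus \mu_v^{-1}\chi_{-1,v}^r$ is a sum of one-dimensional characters, which cannot be symplectic; therefore the passage from $\Psi_v$ to $\Psi'_v$ contributes no new generator to the local component group, giving a canonical identification $A_{\Psi'_v} = A_{\Psi_v}$. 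Setting $\eta'_v := \eta_v$ uniquely determines $\pi'_v \in \Pi_{\Psi'_v}$ by Theorem \ref{LLC} (2), and $\pi'_v$ is unramified wherever $\pi_v$ is, so the restricted tensor product $\pi'$ is well defined.

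It remains to check Arthur's multiplicity formula (Theorem \ref{AMF}) so that $\pi' \in \AA_{2, \Psi'}$, which combined with the archimedean choice places $\pi'$ in $\Sc_{l/2, \Psi'}$. For each old generator $\alpha_j \in A_{\Psi'}$ ($j \leq t$), the product $\prod_v \eta'_v(\alpha_{j,v})$ coincides with that for $\pi$ and equals $\ep(\tau_j)$ by AMF for $\pi$. For the new generator $\alpha_{t+1}$ corresponding to $\tau\chi_{-1}^r$, the local component $\alpha_{t+1, v}$ vanishes in $A_{\Psi'_v}$ at every finite $v$ by the non-symplectic argument above, while at each $v \mid \infty$ we have $\eta'_v(\alpha_{t+1, v}) = \eta_{l_v}^{(r+1)}(e_{l_v - 2(r+1)}) = (-1)^r$. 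Thus the required identity reduces to $(-1)^{rd} = \ep(\tau\chi_{-1}^r)$ where $d = [F:\Q]$. This root-number identity is the main obstacle; I verify it by deriving the analogue of (A3), namely $\ep(\tau\chi_{-1}^r) = \prod_{v < \infty}(\mu_v\chi_{-1,v}^r)(-1) \cdot \prod_{v \mid \infty}(-1)^{k_v}$ (using $\tau_v\chi_{-1,v}^r \cong \tau_v$ at infinite $v$), factoring out $\ep(\tau) = 1$ to isolate $\prod_{v < \infty}\chi_{-1,v}(-1)^r$, and applying Hilbert reciprocity $\prod_v \chi_{-1,v}(-1) = 1$ together with $\chi_{-1,v}(-1) = -1$ at each real place to conclude $\prod_{v < \infty}\chi_{-1,v}(-1)^r = (-1)^{rd}$, as desired.
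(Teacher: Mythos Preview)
Your proof is correct and follows essentially the same approach as the paper: verify that $\Psi'$ is a valid tempered $A$-parameter (symplectic type via the trivial central character, distinctness via the archimedean components $\rho_{2k_v-1} \notin \phi_{l_v}^{(r)}$), define $\pi'$ place-by-place using $\eta'_v = \eta_v$ at finite $v$ and $\DD^{(r+1)}_{l_v/2}$ at infinite $v$, and then check Arthur's multiplicity formula where the only nontrivial point is the root-number identity $\prod_{v\mid\infty}(-1)^r = \ep(\tau\chi_{-1}^r)$, deduced from (A3) together with the product formula for $\chi_{-1}(-1)$. Your write-up is slightly more explicit about the Hilbert-reciprocity step and the vanishing of $\alpha_{t+1,v}$ in $A_{\Psi'_v}$ at finite places, but the argument is the same.
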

\begin{proof}
For $v \mid \infty$, 
the local factor $\Psi_v = \phi_{l_v}^{(r)}$ does not contain $\rho_{2k_v-1}$, 
which is the local factor of $\tau\chi_{-1}^{r}$ at $v$.
Since the central character of $\tau\chi_{-1}^{r}$ is trivial, 
the exterior square $L$-function $L(s, \tau\chi_{-1}^r, \wedge^2)$ has a pole at $s=1$
by \cite[Corollary 7.5]{KR}.
Hence $\Psi' = \Psi \boxplus \tau\chi_{-1}^r$ is a tempered $A$-parameter for $\Mp_{r+1}(F)$.
\par

Let $\pi = \otimes'_{v < \infty}\pi_v$ be an irreducible subrepresentation of $\Sc_{l/2, \Psi}$, 
and define $\pi' = \otimes'_{v < \infty}\pi'_v$ so that 
the $L$-parameter for $\pi'_v$ is equal to $(\Psi'_v, \eta_v)$ for each $v < \infty$, 
where $(\Psi_v, \eta_v)$ is the $L$-parameter for $\pi_v$.
To show that $\pi'$ is a subrepresentation of $\Sc_{l/2, \Psi'}$, 
we use Arthur's multiplicity formula (Theorem \ref{AMF} (3)).
Note that the $L$-parameters $(\phi_{l_v/2}^{(r)}, \eta_{l_v})$ and $(\phi_{l_v/2}^{(r+1)}, \eta'_{l_v})$ 
for $\DD^{(r)}_{l_v/2}$ and $\DD^{(r+1)}_{l_v/2}$ are related by
$\eta'_{l_v}| A_{\phi_{l_v/2}^{(r)}} = \eta_{l_v}$ and $\eta_{l_v}'(a_v) = (-1)^r$, 
where $a_v \in A_{\phi_{l_v/2}^{(r+1)}}$ is the element associated to $\rho_{2k_v -1}$.
Hence for the element $\alpha_i \in A_{\Psi} \subset A_{\Psi'}$ associated to $\tau_i$, 
we have
\[
\left(\prod_{v<\infty} \eta_v'(\alpha_{i,v}) \right) \left(\prod_{v \mid \infty} \eta'_{l_v}(\alpha_{i,v})\right)
=
\left(\prod_{v<\infty} \eta_v(\alpha_{i,v}) \right) \left(\prod_{v \mid \infty} \eta_{l_v}(\alpha_{i,v})\right)
= \ep(\tau_i), 
\]
and for the element $\alpha \in A_{\Psi'}$ associated to $\tau\chi_{-1}^r$, 
we have
\begin{align*}
&\left(\prod_{v<\infty} \eta_v'(\alpha_{v}) \right) \left(\prod_{v \mid \infty} \eta'_{l_v}(\alpha_{v})\right)
= \left(\prod_{v \mid \infty} (-1)^r\right)
= \left(\prod_{v \mid \infty} \chi_{-1,v}^r(-1)\right)
\\&=
\left(\prod_{v<\infty} (\mu_v\chi_{-1,v}^r)(-1) \right) 
\left(\prod_{v \mid \infty} (-1)^{k_v}\right)
= \ep(\tau\chi_{-1}^r).
\end{align*}
Here, we use the condition $(A3)$ on $\tau$.
By Arthur's multiplicity formula (Theorem \ref{AMF} (3)), 
we see that $\pi'$ occurs in $\Sc_{l/2, \Psi'}$.
\end{proof}

\section{Gan--Gross--Prasad conjecture}\label{sec.GGP}
The Gan--Gross--Prasad conjecture (GGP) predicts 
a relation between the non-vanishing of
the Fourier--Jacobi periods with the non-vanishing of the central values 
of the Rankin--Selberg $L$-functions.
In this appendix, 
we review the statements of the GGP conjecture, its refined version and its local version.
As an application of local Miyawaki liftings, 
we prove a new case of the local GGP conjecture in \S \ref{s.new}.

\subsection{Global GGP conjecture}
Let $F$ be a totally real number field, and $\psi$ be a non-trivial unitary character of $\A/ F$.
Recall that for $\xi \in F^\times$, 
the Weil representation $\omega_{\psi_\xi}$ of $\cl{J}_{n}(\A) = \Mp_{n}(\A) \ltimes V_{n}(\A)$
is realized on $\Sc(X_{n}(\A))$.
For $\phi \in \Sc(X_{n}(\A))$, we define the theta function by
\[
\Theta^{\phi}_{\psi_\xi}(vg') = \sum_{x \in X_{n}(\A)} \omega_{\psi_\xi}(vg') \phi(x)
\]
for $v \in V_n(\A)$ and $g' \in \Mp_n(\A)$.
This is a genuine automorphic form on $\cl{J}_{n}(\A)$. 
\par

Suppose that $n' = n$ or $n' = n-1$.
Let $\varphi$ and $\varphi'$ be cusp forms on $\Mp_n(\A)$ and $\Mp_{n'}(\A)$, 
respectively.
Assume that exactly one of $\varphi$ or $\varphi'$ is genuine.
Then for $\phi \in \Sc(X_{n'}(\A))$, 
we define the Fourier--Jacobi period $\PP_{n,n',\psi_\xi}(\varphi, \varphi', \phi)$ by 
\begin{align*}
\PP_{n,n,\psi_\xi}(\varphi, \varphi', \phi)
&=
\int_{\Sp_{n}(F) \bs \Sp_{n}(\A)} \varphi(g, \zeta) \varphi'(g,\zeta) 
\overline{\Theta_{\psi_\xi}^{\phi}(g,\zeta)} dg, \\
\PP_{n,n-1,\psi_\xi}(\varphi, \varphi', \phi) 
&=
\int_{V_{n-1}(F) \bs V_{n-1}(\A)} \int_{\Sp_{n-1}(F) \bs \Sp_{n-1}(\A)} \varphi(v(g, \zeta)) \varphi'(g,\zeta) 
\overline{\Theta_{\psi_\xi}^{\phi}(v(g,\zeta))} dgdv.
\end{align*}

The GGP conjecture is stated as follows:
\begin{conj}[Gan--Gross--Prasad conjecture {\cite[Conjecture 24.1]{GGP}}]\label{GGP}
Suppose that $n' = n$ or $n' = n-1$.
Let $\pi = \otimes'_{v} \pi_v$ and $\pi' = \otimes'_{v} \pi'_v$ be 
irreducible cuspidal automorphic representations of $\Mp_n(\A)$ and $\Mp_{n'}(\A)$, respectively.
Assume that exactly one of $\pi$ or $\pi'$ is genuine, 
and that the $A$-parameters for $\pi$ and $\pi'$ are tempered, 
Then the following are equivalent:
\begin{enumerate}
\item
The Fourier--Jacobi period $\PP_{n,n', \psi_\xi}$ is not identically zero 
on $\pi \times \pi' \times \Sc(X_{n'}(\A))$; 
\item
the central value $L(1/2, \pi \times \pi' \times \chi_{\xi})$ is nonzero and 
the local $\Hom$-space 
\[
\Hom_{\Sp_{n'}(F_v)}(\pi_v \otimes \pi'_v \otimes \overline{\omega_{\psi_{\xi,v}}}, \C)
\]
is nonzero for any place $v$ of $F$.
\end{enumerate}
\end{conj}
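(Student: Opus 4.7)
Since Conjecture \ref{GGP} in its full generality is a major open problem, my plan is to describe the most feasible approach and to identify precisely where it breaks down, rather than attempting a complete proof.

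For the direction $(1)\Rightarrow(2)$, I would aim to establish a refined Ichino--Ikeda identity expressing $|\PP_{n,n',\psi_\xi}(\varphi,\varphi',\phi)|^2$ (suitably normalized) as $L(1/2,\pi\times\pi'\times\chi_\xi)$ divided by a product of adjoint $L$-values and multiplied by local factors $I_v(\varphi_v,\varphi_v',\phi_v)$ whose non-vanishing is equivalent to non-vanishing of the local $\Hom$-spaces in (2). For $n'=n$ this identity is due to Yamana via a doubling construction; for the Bessel case $n'=n-1$, I would imitate his argument after inserting an auxiliary theta function on $V_{n-1}$, using the local Fourier--Jacobi identity of Proposition \ref{FJ-nonarch} to unfold the Jacobi factor into a doubling integral of one larger rank. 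Once such an identity is in place, direction $(1)\Rightarrow(2)$ is automatic: non-vanishing of the global period forces non-vanishing of the $L$-value and of each $I_v$, and the local statement that $I_v\neq 0$ implies non-vanishing of the corresponding $\Hom$-space is a matter of normalizing the doubling zeta integrals correctly.

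For the direction $(2)\Rightarrow(1)$, the plan is inductive and uses the Miyawaki machinery of this paper. Given $\pi,\pi'$ with non-vanishing central $L$-value and non-vanishing local $\Hom$-spaces, I would realize $\pi$ or $\pi'$ as a Miyawaki lift whenever possible, using Theorem \ref{rrr} and Theorem \ref{n>r}, and then apply the seesaw identity of Proposition \ref{GSS} to transfer the non-vanishing problem to an analogous period on a pair of smaller rank, where the $L$-value and local conditions continue to hold by the inductive hypothesis. The base cases are the equal rank case handled by Yamana and the low-rank cases of \S \ref{relation}. The central technical point is that the $A$-parameter of $\MM^{(n)}_{\psi,\tau}(\pi)$ computed in Proposition \ref{Apara} causes the Rankin--Selberg $L$-function to factor cleanly, so that the hypothesis of (2) descends through each inductive step.

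The principal obstacle is the Bessel refined identity: no doubling integral representation for Fourier--Jacobi periods with $n'=n-1$ is currently available for metaplectic groups, and the seesaw reduction above introduces a theta integral that one must control explicitly. A secondary obstacle is the local analysis at archimedean places: one must verify that the normalized local integrals $I_v$ for lowest-weight vectors do not vanish, which is feasible via the explicit formulas of Proposition \ref{FJ-arch} but has not been carried out in complete generality. A realistic short-term goal is therefore to prove Conjecture \ref{GGP} for representations lying in the image of Miyawaki lifts, which is accessible by the inductive plan above combined with Yamana's known equal-rank identity; the general case would then follow once a Bessel-type Ichino--Ikeda identity is established by whatever means.
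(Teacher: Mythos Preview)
The statement you are addressing is a \emph{conjecture}, not a theorem, and the paper makes no attempt to prove it. Immediately after stating Conjecture~\ref{GGP}, the paper simply records the known partial result (Yamana's proof of $(1)\Rightarrow(2)$ when $n'=n$) and explains how Conjecture~\ref{GGP} relates to the weaker Conjecture~\ref{GGP-S}. There is no proof in the paper to compare your proposal against. You correctly recognize at the outset that the full conjecture is open and that you are sketching an approach rather than giving a proof, which is the appropriate response.

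That said, one part of your outline is genuinely circular in the context of this paper. For the direction $(2)\Rightarrow(1)$ you propose to realize $\pi$ or $\pi'$ as a Miyawaki lift ``using Theorem~\ref{rrr} and Theorem~\ref{n>r}'' and then descend via the seesaw. But Theorem~\ref{rrr} \emph{assumes} the GGP conjecture (Conjecture~\ref{GGP-S}) as a hypothesis; the paper's logic runs in the opposite direction, deducing non-vanishing of Miyawaki lifts from GGP, not GGP from Miyawaki lifts. Moreover, even granting the non-vanishing results, a generic tempered cuspidal $\pi$ need not lie in the image of any Miyawaki lift (its $A$-parameter need not contain a summand of the form $\tau\chi_{-1}^{[(n+r)/2]}[n-r]$), so the inductive reduction you sketch would only reach a very restricted class of representations. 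Your description of the $(1)\Rightarrow(2)$ direction via a refined Ichino--Ikeda formula and of the remaining obstacles (the Bessel-case identity, archimedean local integrals) is accurate and matches what the paper itself says is known and unknown.
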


For the case where $n' = n$, 
Yamana \cite[Theorem 1.1]{Y} proved that (1) implies (2).
See also \cite[Remark 6.6]{Y}.
Conjecture \ref{GGP} implies Conjecture \ref{GGP-S} (1)
since the $A$-parameter for $\overline{\pi'}$ is given by
\[
\left\{
\begin{aligned}
&\Psi' = \boxplus_{j=1}^{t'} \tau'_j \iif \text{$\pi'$ is not genuine}, \\
&\Psi' \otimes \chi_{-1} = \boxplus_{j=1}^{t'} (\tau'_j \otimes \chi_{-1})
\iif \text{$\pi'$ is genuine},
\end{aligned}
\right.
\]
where $\Psi' = \boxplus_{j=1}^{t'} \tau'_j$ is the $A$-parameter for $\pi'$.
\par 

To obtain Conjecture \ref{GGP-S} (2), we need a refined version of Conjecture \ref{GGP}.
To state this, we define a local period integral.
For each place $v$, we fix a Haar measure on $\Sp_{n'}(F_v)$ such that 
$\vol(\Sp_{n'}(\oo_v); dg'_v) = 1$ for almost all $v$, 
and that $dg' = \prod_v dg'_v$ is the Tamagawa measure on $\Sp_{n'}(\A)$.
Fix an $\Sp_{n}(F_v)$-invariant inner product $\pair{,}_{\pi_v}$ of $\pi_v \times \pi_v$
such that
\[
\int_{\Sp_{n}(F) \bs \Sp_{n}(\A)} \varphi_1(g) \overline{\varphi_2(g)} dg
= \prod_{v}\pair{\varphi_{1,v}, \varphi_{2,v}}
\]
for $\varphi_1 = \otimes_v \varphi_{1,v}, \varphi_2 = \otimes_v \varphi_{2,v} \in \pi$. 
Similarly, fix an $\Sp_{n'}(F_v)$-invariant inner product $\pair{,}_{\pi'_v}$ of $\pi'_v \times \pi'_v$.
Recall that $\omega_{\psi_{\xi,v}}$ has a $\cl{J}_{n'}(F_v)$-invariant pairing 
\[
(\phi_{1,v}, \phi_{2,v})_v = \int_{X_{n'}(F_v)} \phi_{1,v}(x_v) \overline{\phi_{2,v}(x_v)}dx_v, 
\]
which satisfies that $(\phi_1, \phi_2) = \prod_{v}(\phi_{1,v}, \phi_{2,v})_v$
for $\phi_1 = \otimes_v \phi_{1,v}, \phi_2 = \otimes_v \phi_{2,v} \in \Sc(X_{n'}(\A))$.
For $\varphi_v \in \pi_v$, $\varphi'_v \in \pi'_v$ and $\phi_v \in \Sc(X_{n'}(F_v))$, 
we define a local period integral by
\begin{align*}
&\alpha_v(\varphi_v, \varphi'_v, \phi_v) 
\\&= 
\left\{
\begin{aligned}
&\int_{\Sp_{n}(F_v)} \pair{\pi_v(g)\varphi_v, \varphi_v}_{\pi_v} \pair{\pi'_v(g)\varphi'_v, \varphi'_v}_{\pi'_v} 
\overline{(\omega_{\psi_{\xi,v}}(g)\phi_v, \phi_v)} dg
\iif n'=n, \\
&\int_{V_{n-1}(F_v)}\int_{\Sp_{n-1}(F_v)} 
\pair{\pi_v(ug)\varphi_v, \varphi_v}_{\pi_v} \pair{\pi'_v(g)\varphi'_v, \varphi'_v}_{\pi'_v} 
\overline{(\omega_{\psi_{\xi,v}}(ug)\phi_v, \phi_v)} dgdu
\iif n'=n-1.
\end{aligned}
\right.
\end{align*}
This integral is absolutely convergent by \cite[Proposition 2.2.1]{X}.

The refined version of the GGP conjecture is stated as follows:
\begin{conj}[{Xue \cite[Conjecture 2.3.1]{X}}]\label{refined}
Let $\pi = \otimes'_{v} \pi_v$ and $\pi' = \otimes'_{v} \pi'_v$ be 
irreducible cuspidal automorphic representations of $\Mp_n(\A)$ and $\Mp_{n'}(\A)$
with tempered $A$-parameters $\Psi$ and $\Psi'$, respectively.
Assume that exactly one of $\pi$ or $\pi'$ is genuine.
Fix a sufficiently large finite set $S$ of places of $F$ containing all ``bad'' places.

\begin{enumerate}
\item
For each place $v$, 
the local $\Hom$-space 
$\Hom_{\Sp_{n'}(F_v)}(\pi_v \otimes \pi_v' \otimes \overline{\omega_{\psi_{\xi,v}}}, \C)$
is nonzero 
if and only if 
$\alpha_v(\varphi_v, \varphi'_v, \phi_v) \not= 0$ 
for some $\varphi_v \in \pi_v$, $\varphi'_v \in \pi'_v$ and $\phi_v \in \omega_{\psi_\xi}$.

\item
Let $\varphi = \otimes_v \varphi_v \in \pi$, $\varphi' = \otimes_v \varphi'_v$, 
and $\phi = \otimes_v \phi_v \in \Sc(X_{n'}(\A))$ be factorizable elements.
Then there exists a constant $\Delta^S$, which is an explicit product of partial zeta values, 
such that
\begin{align*}
&|\PP_{n,n', \psi_\xi}(\varphi, \varphi', \phi)|^2
\\&
= \frac{2\Delta^S}{|A_{\Psi}| |A_{\Psi'}|}
\left.\frac{L^S(s, \pi \times \pi' \times \chi_\xi)}
{L^S(s+\half{1}, \pi, \Ad)L^S(s+\half{1}, \pi, \Ad)}\right|_{s= \half{1}}
\times
\prod_{v \in S} \alpha_v(\varphi_v, \varphi'_v, \phi_v).
\end{align*}
Here, $L^S(s, \pi \times \pi' \times \chi_\xi)$ is the partial Rankin--Selberg $L$-function, 
and $L^S(s, \pi, \Ad)$ and $L^S(s, \pi', \Ad)$ are the partial adjoint $L$-functions of $\pi$ and $\pi'$, 
respectively.
\end{enumerate}
\end{conj}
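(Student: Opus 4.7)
The plan is to attack the two parts of Conjecture \ref{refined} separately, using rather different techniques. For the local statement (1), I would view $\alpha_v$ as an element of the local Hom-space and show the resulting map is non-degenerate. Concretely, one unfolds $\alpha_v(\varphi_v, \varphi'_v, \phi_v)$ as a matrix coefficient integral against the Weil representation, and one checks (following the recipe of Waldspurger, Sakellaridis--Venkatesh and Liu--Sun) that the linear functional
\[
\varphi_v \otimes \varphi'_v \otimes \overline{\phi_v} \longmapsto \alpha_v(\varphi_v, \varphi'_v, \phi_v)
\]
lies in the space $\Hom_{\Sp_{n'}(F_v)}(\pi_v \otimes \pi'_v \otimes \overline{\omega_{\psi_{\xi,v}}}, \C)$. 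The ``easy'' direction---non-vanishing of $\alpha_v$ forces the Hom-space to be non-zero---is then immediate. For the ``hard'' direction, I would exploit the almost-tempered hypothesis (Lemma \ref{atu}) to ensure convergence of the matrix coefficient integral, and then combine the uniqueness of the Hom-space (the local GGP multiplicity-one theorem) with a density argument: the space of test vectors is large enough that if $\alpha_v$ vanished identically, the Hom-functional itself would vanish.

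For the global identity (2), my plan is to adapt the theta-correspondence-based strategy that Xue used in \cite{X} to reduce the symplectic-metaplectic case to the orthogonal case. The seesaw diagram of Proposition \ref{LSS} (and its global analogue Proposition \ref{GSS}) gives a mechanism to express the Fourier--Jacobi period $\PP_{n,n',\psi_\xi}$ in terms of periods on orthogonal and metaplectic--Jacobi pairs, for which refined Ichino--Ikeda type formulas are more accessible. Concretely, I would first unfold $|\PP_{n,n',\psi_\xi}(\varphi,\varphi',\phi)|^2$ using the Rankin--Selberg method, convert the integral into a spectral sum over the diagonal, and apply the Ichino--Ikeda-type factorization to relate the global period to partial $L$-values and the product of local matrix coefficient integrals $\alpha_v$. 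The explicit constant $\Delta^S$ would emerge naturally from the choices of Haar and Tamagawa measures, via the Gross motive of $\Sp_n$ (which is a product of $\zeta$-values $\zeta^S(2), \zeta^S(4), \ldots, \zeta^S(2n)$), and the factor $1/(|A_\Psi||A_{\Psi'}|)$ reflects Arthur's multiplicity formula (Theorem \ref{AMF}).

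The main obstacle will be the global identity (2) in its full generality; this is genuinely open outside of special cases (small rank, or where one of $\pi, \pi'$ is an Eisenstein or theta lift). To make progress in the present setting, I would first restrict to the situation covered by the local Miyawaki theory developed in this paper: namely, $\pi' = \MM^{(r)}_{\psi,\tau\chi_\xi}(\pi_0)$ for some $\pi_0$ with tempered $A$-parameter. Under this restriction, the seesaw identity (Proposition \ref{GSS}) converts the Fourier--Jacobi period into a product of a ``smaller'' Fourier--Jacobi period and a doubling zeta integral for $\tau$, each of which admits an explicit Rankin--Selberg integral representation. This reduction ought to yield both a proof of Conjecture \ref{M0} under the refined GGP conjecture in lower rank (compare Theorem \ref{rrr}) and, conversely, new instances of Conjecture \ref{refined} itself via Proposition \ref{GSS} together with results of Ichino and Yamana on the doubling method.

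A secondary obstacle is the treatment of archimedean places, where the Ikeda lift forces a very specific holomorphic weight and one must verify that the archimedean local integral $\alpha_{v}$ for $v \mid \infty$ computes correctly against the distinguished vectors $W_B^0$ and $\phi_\xi^0$. I would handle this by direct calculation using Proposition \ref{FJ-arch} and the explicit Fock-model formulas for the Weil representation, pinning down the archimedean constant so that it fits into the partial $L$-value factorization. Once the archimedean matching is settled and the reduction via seesaw is in place, the remainder of the argument is a careful bookkeeping of constants, which I would defer to a separate paper.
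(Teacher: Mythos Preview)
The statement you are attempting to prove is labeled in the paper as a \emph{Conjecture} (specifically, Xue's Conjecture 2.3.1 from \cite{X}), not a theorem. The paper does not contain a proof of it; rather, it records the conjecture and then observes that Conjecture~\ref{refined} together with Lemma~\ref{arch-period} would imply Conjecture~\ref{GGP-S}~(2). There is therefore nothing in the paper to compare your proposal against.

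Your write-up is best read as a research outline toward an open problem rather than a proof. You yourself concede this when you say that part~(2) ``is genuinely open outside of special cases'' and that you would ``defer to a separate paper'' the bookkeeping of constants. Several of the steps you sketch are not established: the seesaw identities of Propositions~\ref{LSS} and~\ref{GSS} relate non-vanishing of periods, not exact period values, so they do not by themselves yield the precise Ichino--Ikeda formula you need; and the reduction to the orthogonal case via theta correspondence (Xue's method in \cite{X}) already requires the refined GGP conjecture for special orthogonal groups, which is itself not fully proven. In short, there is no gap to identify relative to the paper's proof, because the paper offers none; but you should not present this as a proof proposal when what you have is a plausible but incomplete strategy for a conjecture.
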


Conjecture \ref{refined} together with the following lemma implies Conjecture \ref{GGP-S} (2). 

\begin{lem}\label{arch-period}
Assume that $n'=n$.
Let $v$ be a real place of $F$, and $\xi$ be a positive real number (in $F_v$).
Then for nonzero lowest weight vectors
$\varphi_{l/2} \in \DD_{l/2}^{(n)}$, $\varphi_{(l-1)/2} \in \DD_{(l-1)/2}^{(n)}$, 
and $\phi_{\xi}^0 \in \omega_{\psi_\xi}$, 
we have $\alpha_v(\varphi_{l/2}, \overline{\varphi_{(l-1)/2}}, \phi_{\xi}^0) \not= 0$.
\end{lem}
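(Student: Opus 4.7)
The plan is to exploit the weight identity $l/2 = (l-1)/2 + 1/2$ by realizing $\DD_{l/2}^{(n)}$ as an $\Mp_n(\R)$-subrepresentation of the tensor product $\DD_{(l-1)/2}^{(n)} \otimes \omega_{\psi_\xi}$, with $\varphi_{(l-1)/2} \otimes \phi_\xi^0$ corresponding up to nonzero scalar to the lowest weight vector $\varphi_{l/2}$. Once such an embedding is in hand, the triple product in the definition of $\alpha_v$ collapses to the absolute square of a single discrete-series matrix coefficient of $\DD_{l/2}^{(n)}$, and non-vanishing will follow from Schur's orthogonality.

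For the embedding, first observe that $\varphi_{(l-1)/2} \otimes \phi_\xi^0$ is a $\cl{K}_\infty$-eigenvector of character ${\det}^{l/2}$ and is annihilated by $\p_\C^-$, since $\p_\C^-$ acts by zero on each tensor factor (cf.\ the comments following Proposition \ref{FJ-arch} and the analogous property of $\phi_\xi^0$ in the Fock realization of $\omega_{\psi_\xi}$). Since a lowest weight vector in a unitary $\Mp_n(\R)$-representation generates an irreducible submodule of that lowest weight, this yields an $\Mp_n(\R)$-embedding $\iota \colon \DD_{l/2}^{(n)} \hookrightarrow \DD_{(l-1)/2}^{(n)} \otimes \omega_{\psi_\xi}$ with $\iota(\varphi_{l/2}) = c \cdot (\varphi_{(l-1)/2} \otimes \phi_\xi^0)$ for some $c \neq 0$. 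By Schur's lemma the pullback of the ambient unitary inner product to $\DD_{l/2}^{(n)}$ is a positive scalar multiple of the original, whence
\[
\pair{\pi_{(l-1)/2}(g)\varphi_{(l-1)/2}, \varphi_{(l-1)/2}} \cdot (\omega_{\psi_\xi}(g)\phi_\xi^0, \phi_\xi^0) = C \cdot \pair{\pi_{l/2}(g)\varphi_{l/2}, \varphi_{l/2}}
\]
for a real constant $C > 0$.

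Substituting this identity into the definition of $\alpha_v$ and noting that both genuine factors appear under the complex conjugate yields
\[
\alpha_v(\varphi_{l/2}, \overline{\varphi_{(l-1)/2}}, \phi_\xi^0) = C \int_{\Sp_n(\R)} \bigl| \pair{\pi_{l/2}(g)\varphi_{l/2}, \varphi_{l/2}} \bigr|^2 \, dg,
\]
which is a positive multiple of $\|\varphi_{l/2}\|^4 / d(\DD_{l/2}^{(n)})$ by the Schur orthogonality relations for the discrete series $\DD_{l/2}^{(n)}$ (here the condition $l > 2n$ is implicit, so that $\DD_{l/2}^{(n)}$ is discrete series and the integrand is in $L^1$). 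The main delicate step is confirming that $\varphi_{(l-1)/2} \otimes \phi_\xi^0$ indeed generates an irreducible submodule isomorphic to $\DD_{l/2}^{(n)}$; this can be verified either via the general principle that a lowest weight vector in a unitary representation generates an irreducible lowest-weight submodule, or via a direct Pieri-rule dimension count of the $\det^{l/2}$-weight space using the standard $\cl{K}_\infty$-type decompositions of $\DD_{(l-1)/2}^{(n)}$ (by $\Sym^\bullet \p_\C^+$) and of $\omega_{\psi_\xi}$ (by the Fock model), both of which show this weight space is one-dimensional and spanned by $\varphi_{(l-1)/2} \otimes \phi_\xi^0$.
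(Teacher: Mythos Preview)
Your proof is correct and follows essentially the same approach as the paper's: both realize $\DD_{l/2}^{(n)}$ inside $\DD_{(l-1)/2}^{(n)} \otimes \omega_{\psi_\xi}$ with lowest weight vector $\varphi_{(l-1)/2}\otimes\phi_\xi^0$, reduce the triple-product integrand to the absolute square of a single matrix coefficient of $\DD_{l/2}^{(n)}$, and conclude via square-integrability. The only cosmetic difference is that the paper pins down the embedding by noting the ${\det}^{l/2}$-isotypic subspace of the tensor product is one-dimensional (your alternative route at the end), whereas your primary argument invokes the general principle that a $\p_\C^-$-annihilated vector in a unitary representation generates an irreducible lowest weight submodule.
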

\begin{proof}
Set $\pi_v =  \DD_{l/2}^{(n)}$ and $\pi'_v =  \DD_{(l-1)/2}^{(n)}$.
Note that the subspace of $\DD_{(l-1)/2}^{(n)} \otimes \omega_{\psi_\xi}$ 
on which $\cl{K}_\infty$ acts by ${\det}^{l/2}$ is one dimension, and 
is spanned by $\varphi_{(l-1)/2} \otimes \phi_{\xi}^0$. 
Since $\Hom_{\Mp_n(\R)}(\DD_{l/2}^{(n)}, \DD_{(l-1)/2}^{(n)} \otimes \omega_{\psi_\xi}) \not=0$, 
we may regard $\varphi_{(l-1)/2} \otimes \phi_{\xi}^0$ as a lowest weight vector of 
$\DD_{l/2}^{(n)} \subset \DD_{(l-1)/2}^{(n)} \otimes \omega_{\psi_\xi}$.
Hence we can regard 
\[
\pair{\pi'_v(g)\varphi_{(l-1)/2}, \varphi_{(l-1)/2}}_{\pi_v'} (\omega_{\psi_{\xi}}(g)\phi_\xi^0, \phi_\xi^0) 
\]
as a (nonzero) matrix coefficient of $\DD_{l/2}^{(n)}$ 
on which the left-right translation of $\cl{K}_\infty \times \cl{K}_\infty$ 
is equal to ${\det}^{l/2} \boxtimes {\det}^{l/2}$.
Such a matrix coefficient is a scalar multiple of $\pair{\pi_v(g)\varphi_{l/2}, \varphi_{l/2}}_{\pi_v}$, 
so that $\alpha_v(\varphi_{l/2}, \overline{\varphi_{(l-1)/2}}, \phi_{\xi}^0) \not= 0$. 
\end{proof}

\subsection{Local GGP conjecture}
In this subsection, 
we let $F$ be a non-archimedean local field of characteristic zero.
Fix a non-trivial additive character $\psi$ of $F$ and an element $\xi \in F^\times$.
Let $\pi_1$ and $\pi_2$ be irreducible representations of $\Mp_r(F)$ and $\Mp_{r-1}(F)$, 
respectively. 
Assume that exactly one of them is genuine.
We set
\[
d_{r,r-1, \xi}(\pi_1, \pi_2) 
= \dim\Hom_{J_{r-1}(F)}(\pi_1 \otimes \pi_2 \otimes \overline{\omega_{\psi_\xi}^{(r-1)}}, \C).
\]
Similarly, let $\pi_1'$ and $\pi_2'$ be irreducible representations of $\Mp_n(F)$.
Assume that exactly one of them is genuine.
We set
\[
d_{n,n, \xi}(\pi_1', \pi_2') 
= \dim\Hom_{\Sp_{n}(F)}(\pi_1' \otimes \pi_2' \otimes \overline{\omega_{\psi_\xi}^{(n)}}, \C).
\]
The multiplicity one theorem 
proven by Sun \cite{S} and Gan--Gross--Prasad \cite[Corollary 16.2]{GGP}
asserts that 
\[
d_{r,r-1, \xi}(\pi_1, \pi_2) \leq 1, \quad 
d_{n,n, \xi}(\pi'_1, \pi'_2) \leq 1
\]
for any $\pi_1, \pi_2, \pi_1', \pi_2'$.
\par

For a symplectic representation $\phi$ of $\WD_F$, 
we denote the root number attached to $\phi$ by $\ep(\phi) = \ep(1/2, \phi, \psi)$.
This value does not depend on $\psi$, and is in $\{\pm1\}$.
The local Gan--Gross--Prasad conjecture (GGP) for the symplectic-metaplectic case 
(proven by the author \cite{At1})
gives complete descriptions of $d_{r,r-1, \xi}(\pi_1, \pi_2)$ and $d_{n,n, \xi}(\pi_1', \pi_2')$
in terms of internal structures of $L$-packets
when all $\pi_1, \pi_2, \pi_1', \pi_2'$ are almost tempered.

\begin{thm}[{\cite[Theorem 1.3, Corollary 1.4]{At1}, \cite[Proposition 18.1]{GGP}}]\label{localGGP}
Assume that all $\pi_1, \pi_2, \pi_1', \pi_2'$ are almost tempered.
Let $(\phi_1, \eta_1)$, $(\phi_2, \eta_2)$, $(\phi_1', \eta_1')$, $(\phi_2', \eta_2')$ 
be the $L$-parameters for $\pi_1, \pi_2, \pi_1', \pi_2'$, respectively.
Then $d_{r,r-1,\xi}(\pi_1, \pi_2) \not= 0$ if and only if 
\begin{align*}
\left\{
\begin{aligned}
&\eta_1(a) = 
\ep(\phi_1^a \otimes \phi_2 \otimes \chi_\xi) \ep(\phi_1 \otimes \phi_2 \otimes \chi_\xi)^{\det(a)}
\det(\phi_1^a)((-1)^{\half{1}\dim(\phi_2)}\xi),\\
&\eta_2(b) =
\ep(\phi_1 \otimes \phi_2^b \otimes \chi_\xi) \ep(\phi_2^b) \chi_\xi(-1)^{\half{1}\dim(\phi_2^b)}
\end{aligned}
\right.
\end{align*}
for $a \in A_{\phi_1}$ and $b \in A_{\phi_2}$, 
and 
$d_{n,n,\xi}(\pi'_1, \pi'_2) \not= 0$ if and only if 
\begin{align*}
\left\{
\begin{aligned}
&\eta'_1(a) = 
\ep({\phi'}_1^a \otimes \phi'_2 \otimes \chi_\xi) \ep(\phi'_1 \otimes \phi'_2 \otimes \chi_\xi)^{\det(a)}
\det({\phi'}_1^a)((-1)^{\half{1}\dim(\phi'_2)}\xi),\\
&\eta'_2(b) =
\ep(\phi'_1 \otimes {\phi'}_2^b \otimes \chi_\xi) \ep({\phi'}_2^b) \chi_\xi(-1)^{\half{1}\dim({\phi'}_2^b)}
\end{aligned}
\right.
\end{align*}
for $a \in A_{\phi'_1}$ and $b \in A_{\phi'_2}$.
\end{thm}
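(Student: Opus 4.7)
The plan is to first reduce the almost tempered case to the tempered good-parity case, then apply the theta correspondence to transfer the problem onto special orthogonal groups, where the conjecture is already known by work of Waldspurger and M{\oe}glin--Waldspurger.

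For the first reduction, I would invoke an analogue of \cite[Proposition 18.1]{GGP}: an almost tempered $\pi$ is, by Corollary \ref{at}, an irreducible standard module $\pi = J(\tau_1|\cdot|^{s_1}, \dots, \tau_l|\cdot|^{s_l}, \pi_0)$, and the multiplicity $d_{r,r-1,\xi}(\pi_1, \pi_2)$ (resp.\ $d_{n,n,\xi}(\pi'_1, \pi'_2)$) is computed from the tempered supports $\pi_{0,1}$ and $\pi_{0,2}$ via Bernstein's second adjointness applied to the geometric filtration of the restriction to $\cl{J}_{r-1}(F)$ (resp.\ $\Sp_n(F)$). The strict inequalities $0 < s_l < \dots < s_1 < 1/2$, combined with Casselman's criterion and the Jacquet module formula in Proposition \ref{jacquet}, ensure that the boundary contributions from the filtration vanish, leaving only the interior term involving the tempered data. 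On the $L$-parameter side one writes $\phi_1 = (\oplus_i \phi_{\tau_i}|\cdot|^{s_i}) \oplus \phi_{0,1} \oplus (\oplus_i \phi_{\tau_i}^\vee|\cdot|^{-s_i})$, so $A_{\phi_1}$ is canonically identified with $A_{\phi_{0,1}}$ and the root-number identities in the theorem restrict consistently (the added pieces contribute trivially, as they are not self-dual of the symplectic type).

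For the tempered good-parity case, I would exploit the theta correspondence for the dual pair $(\Mp_r, \SO(V))$ with $V$ an odd-dimensional quadratic space of suitable dimension and discriminant determined by $\xi$. Under Howe duality, an irreducible tempered $\pi$ of $\Mp_r$ is matched with an irreducible tempered $\theta_{\psi,V}(\pi)$ on (some pure inner form of) $\SO(V)$. Applying Kudla's seesaw identity to the diagram
\[
\xymatrix{
\SO(V_1) \times \SO(V_2) \ar@{-}[d] \ar@{-}[dr]& \Mp_r \ar@{-}[d]\\
\SO(V_1 \oplus V_2) \ar@{-}[ur]& \Mp_{r-1} \ltimes V_{r-1}
}
\]
one transforms the Fourier--Jacobi model on $\Mp_r \times \Mp_{r-1}$ into a Bessel model for $\SO(V_1 \oplus V_2) \times \SO(V_1)$, and analogously for the equal-rank Fourier--Jacobi case. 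Then one invokes the tempered local GGP for special orthogonal groups (Waldspurger in the co-rank-one case, M{\oe}glin--Waldspurger for higher co-rank) to obtain the corresponding root-number criterion on the orthogonal side. Finally, Prasad's dictionary (as refined by Atobe--Gan) for the behavior of epsilon factors and component-group characters under the theta correspondence transports the orthogonal criterion back to the metaplectic side, yielding precisely the formulas stated in Theorem \ref{localGGP}.

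The hard part will be the meticulous matching of epsilon factors across the theta correspondence. One must verify that the explicit formulas involving $\ep(\phi_1^a \otimes \phi_2 \otimes \chi_\xi) \ep(\phi_1 \otimes \phi_2 \otimes \chi_\xi)^{\det(a)} \det(\phi_1^a)((-1)^{\dim(\phi_2)/2}\xi)$ on the metaplectic side correspond exactly, under the lift, to the analogous expressions in the orthogonal GGP. A further subtlety is that the orthogonal side distributes a single $L$-parameter into a Vogan packet across pure inner forms, while the metaplectic side uses a single $L$-packet; resolving this requires the conservation relation for the theta correspondence to select the correct inner form for each character of the component group. Finally, one must ensure that the equivalence goes through even when $\xi$ is chosen so that the relevant quadratic space $V$ has anisotropic part of nontrivial dimension, which forces certain epsilon factor identities to involve the extra character $\det(\phi_1^a)((-1)^{\dim(\phi_2)/2}\xi)$---precisely the twist appearing in the statement.
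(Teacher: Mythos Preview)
The paper does not supply a proof of Theorem~\ref{localGGP}; it is quoted directly from the literature, namely \cite[Theorem~1.3, Corollary~1.4]{At1} for the tempered case and \cite[Proposition~18.1]{GGP} for the reduction from almost tempered to tempered. There is nothing in the paper to compare your argument against beyond those citations.

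That said, your outline is a fair high-level summary of the strategy actually carried out in \cite{At1}: the reduction from almost tempered to tempered via irreducibility of the standard module (your first paragraph) corresponds to the mechanism behind \cite[Proposition~18.1]{GGP}, and the theta-lift transfer to special orthogonal groups followed by invoking Waldspurger and M{\oe}glin--Waldspurger (your second paragraph) is precisely the route taken in \cite{At1}. The caveats you flag about matching epsilon factors and tracking pure inner forms through the Vogan packet are exactly the substantive computations in that paper. So your sketch is on target, but since the present paper treats the result as a black box, a detailed proof here would be out of place; a one-line citation is what the paper itself does.
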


\subsection{Application of local Miyawaki liftings}\label{s.new}
As in Proposition \ref{LSS}, 
local Miyawaki liftings satisfy a seesaw identity.
Using this, one can describe $d_{n,n, \xi}(\pi'_1, \pi'_2)$ for a new case.
Recall that the unique irreducible algebraic representation of $\SL_2(\C)$ of dimension $d$ 
is denoted by $S_d$.

\begin{thm}\label{GGP-new}
Let $\pi_1$ and $\pi_2$ be irreducible almost tempered unitary representations 
of $\Mp_r(F)$ and $\Mp_{r-1}(F)$, 
on which $\{\pm1\}$ acts by $(\pm 1)^{n+r}$ and $(\pm 1)^{n+r-1}$, respectively.
Fix a unitary character $\mu$ of $F^\times$.
\begin{enumerate}
\item
Assume one of the following conditions:
\begin{itemize}
\item
The $L$-parameter $\phi_1$ for $\pi_1$ does not contain $\mu^{\pm1} S_d$ 
for any $d \geq n-r$ with $d \equiv n-r \bmod2$;

\item
$n=r$ or $n=r+1$.
\end{itemize}
Then we have 
\begin{align*}
d_{n,n,\xi}((\mu\chi_{-1}^{n+r-1} \circ {\det}_{n-r}) \rtimes \pi_1, 
(\mu\chi_{\xi} \circ {\det}_{n-r+1}) \rtimes \pi_2)
= d_{r,r-1, \xi}(\pi_1, \pi_2).
\end{align*}
In particular, $d_{n,n,\xi}((\mu\chi_{-1}^{n+r-1} \circ {\det}_{n-r}) \rtimes \pi_1, 
(\mu\chi_{\xi} \circ {\det}_{n-r+1}) \rtimes \pi_2)$
can be described in terms of internal structures of the $L$-packets for $\pi_1$ and $\pi_2$.

\item
Set $\mu' = \mu\chi_{-1}^{[(n+r-1)/2]}$ and $\tau' = \mu' \times \mu'^{-1}$.
If an irreducible representation $\pi_1'$ of $\Mp_n(F)$ satisfies 
that $d_{n,n, \xi}(\pi'_1, (\mu\chi_\xi \circ {\det}_{n-r+1}) \rtimes \pi_2)  \not= 0$, 
then $\MM_{\psi, \tau'}^{(r)}(\pi_1') \not= 0$.
\end{enumerate}
\end{thm}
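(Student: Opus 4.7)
The plan is to derive both parts of Theorem \ref{GGP-new} from a single application of the local seesaw identity (Proposition \ref{LSS}), set up in a reversed configuration, together with Theorem \ref{howe} (4) for the ``going-down'' Miyawaki lift in part (1). The first observation is that $\sigma_1$ and $\sigma_2$ in the theorem statement are themselves local Miyawaki lifts: setting $\mu' = \mu\chi_{-1}^{[(n+r-1)/2]}$ and $\tau' = \mu' \times \mu'^{-1}$, Theorem \ref{howe} (2) yields $\sigma_1 = \MM_{\psi,\tau'}^{(n)}(\pi_1)$ and $\sigma_2 = \MM_{\psi,\tau'\chi_\xi}^{(n)}(\pi_2)$, using the parity congruence $[(n+r-1)/2] + [(n+r)/2] \equiv n+r-1 \pmod 2$ to match the inducing characters.

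Next, I apply Proposition \ref{LSS} with the roles of its ``$n$'' and ``$r$'' swapped to our $r$ and $n$, respectively, and with $\pi = \pi_1'$, $\pi' = \pi_2$, and $\tau = \tau'$. The seesaw then produces the equivalence
\[
\Hom_{\cl{J}_{r-1}(F)}\bigl(\MM_{\psi,\tau'}^{(r)}(\pi_1')|_{\cl{J}_{r-1}(F)},\, \pi_2 \otimes \omega_{\psi_\xi}^{(r-1)}\bigr) \neq 0 \iff \Hom_{\Mp_n(F)}\bigl(\sigma_2 \otimes \omega_{\psi_\xi}^{(n)},\, \pi_1'\bigr) \neq 0.
\]
Passing to contragredients and invoking the MVW involution (which coincides with the contragredient on almost tempered unitary representations of metaplectic groups), together with Corollary \ref{cor2} to relate $\sigma_2$ to $\sigma_2^\vee$ up to a twist of the inducing character, one translates the right-hand side into the nonvanishing of $d_{n,n,\xi}(\pi_1',\sigma_2)$ and the left-hand side into the nonvanishing of $d_{r,r-1,\xi}(\MM_{\psi,\tau'}^{(r)}(\pi_1'),\pi_2)$. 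Part (2) follows immediately: if $d_{n,n,\xi}(\pi_1',\sigma_2) \neq 0$, then the $\cl{J}_{r-1}$-Hom space must be nonzero, and this forces $\MM_{\psi,\tau'}^{(r)}(\pi_1') \neq 0$.

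For part (1), I specialize the previous equivalence to $\pi_1' = \sigma_1$. Then $\MM_{\psi,\tau'}^{(r)}(\sigma_1) = \MM_{\psi,\tau'}^{(r)}\!\bigl(\MM_{\psi,\tau'}^{(n)}(\pi_1)\bigr)$, and under either hypothesis of part (1), Theorem \ref{howe} (4) ensures that every irreducible $\Mp_r$-subquotient of this module is isomorphic to $\pi_1$. Combined with the irreducibility of $\pi_2 \otimes \omega_{\psi_\xi}^{(r-1)}$ as a $\cl{J}_{r-1}$-representation (Proposition \ref{FJ11}) and a short exact sequence argument on a Jordan--H\"older filtration, this yields $d_{r,r-1,\xi}\bigl(\MM^{(r)}(\sigma_1),\pi_2\bigr) \neq 0 \iff d_{r,r-1,\xi}(\pi_1,\pi_2) \neq 0$. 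Threading through the seesaw produces $d_{n,n,\xi}(\sigma_1,\sigma_2) \neq 0 \iff d_{r,r-1,\xi}(\pi_1,\pi_2) \neq 0$, and since both dimensions are bounded by one via Sun's multiplicity one theorem, the equivalence of nonvanishing upgrades to the claimed equality of dimensions. The principal technical hurdle is the reconciliation in the second step between the seesaw's Hom spaces, which feature $\omega_{\psi_\xi}^{(n)}$ and $\omega_{\psi_\xi}^{(r-1)}$ without conjugation, and the pairings $d_{n,n,\xi}$ and $d_{r,r-1,\xi}$, which involve their complex conjugates $\omega_{\psi_{-\xi}}^{(n)}$ and $\omega_{\psi_{-\xi}}^{(r-1)}$; this translation demands a careful application of the MVW involution and bookkeeping of the genuineness parities of all metaplectic representations involved.
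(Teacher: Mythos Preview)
Your strategy is the same as the paper's: recognize the two big representations as local Miyawaki lifts $\sigma_1=\MM_{\psi,\tau'}^{(n)}(\pi_1)$ and $\sigma_2=\MM_{\psi,\tau'\chi_\xi}^{(n)}(\pi_2)$, apply the seesaw identity (Proposition~\ref{LSS}) with the roles of $n$ and $r$ reversed, and use Theorem~\ref{howe}~(4) to collapse the going-down lift $\MM_{\psi,\tau'}^{(r)}\bigl(\MM_{\psi,\tau'}^{(n)}(\pi_1)\bigr)$ to $\pi_1$; your Jordan--H\"older argument via Proposition~\ref{FJ11} is exactly what is implicit in the paper's one-line passage from that double lift to $\pi_1$.

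The only real difference is where the contragredient is absorbed. The paper feeds $\pi_2^\vee$ (not $\pi_2$) into the seesaw: from unitarity one has
\[
d_{n,n,\xi}(\sigma_1,\sigma_2)=\dim\Hom_{\Mp_n(F)}\bigl(\sigma_2^\vee\otimes\omega_{\psi_\xi}^{(n)},\sigma_1\bigr),
\]
and since $\sigma_2^\vee\cong(\mu^{-1}\chi_\xi\circ\det_{n-r+1})\rtimes\pi_2^\vee=\MM^{(n)}_{\psi,\tau'\chi_\xi}(\pi_2^\vee)$ (the swap $\mu\leftrightarrow\mu^{-1}$ leaves $\tau'\chi_\xi$ unchanged), Proposition~\ref{LSS} applied with $\pi'=\pi_2^\vee$ lands directly on
\[
\Hom_{\cl{J}_{r-1}(F)}\bigl(\MM^{(r)}_{\psi,\tau'}(\sigma_1),\,\pi_2^\vee\otimes\omega_{\psi_\xi}^{(r-1)}\bigr),
\]
and the final identification with $d_{r,r-1,\xi}(\pi_1,\pi_2)$ is again just unitarity. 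This dissolves the ``technical hurdle'' you flag: no MVW involution is needed at all, and your appeal to Corollary~\ref{cor2} is misplaced (that corollary is an irreducibility criterion, not a duality statement). If you simply run the seesaw with $\pi_2^\vee$ from the outset, your proof becomes identical to the paper's.
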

\begin{proof}
Note that $\mu\chi_{-1}^{n+r-1} = \mu' \chi_{-1}^{[(n+r)/2]}$.
By Theorem \ref{howe} (2), (4) and the seesaw identity (Proposition \ref{LSS}), we have
\begin{align*}
&d_{n,n,\xi}((\mu\chi_{-1}^{n+r-1} \circ {\det}_{n-r}) \rtimes \pi_1, 
(\mu\chi_{\xi} \circ {\det}_{n-r+1}) \rtimes \pi_2) \not= 0
\\& \iff
d_{n,n,\xi}(\MM_{\psi, \tau'}^{(n)}(\pi_1), \MM_{\psi, \tau'\chi_\xi}^{(n)}(\pi_2)) \not= 0
\\& \iff
\Hom_{\Mp_n(F)}\left(
\MM^{(n)}_{\psi, \tau'\chi_\xi}(\pi_2^\vee) \otimes \omega_{\psi_\xi}^{(n)}, 
\MM^{(n)}_{\psi, \tau'}(\pi_1)
\right) \not= 0
\\& \iff 
\Hom_{\cl{J}_{r-1}}\left(
\MM^{(r)}_{\psi, \tau'}\left( \MM^{(n)}_{\psi, \tau'}(\pi_1) \right), 
\pi_2^\vee \otimes \omega_{\psi_\xi}^{(r-1)}
\right) \not= 0
\\& \iff
\Hom_{\cl{J}_{r-1}}\left(
\pi_1, \pi_2^\vee \otimes \omega_{\psi_\xi}^{(r-1)}
\right) \not= 0
\\& \iff
d_{r,r-1,\xi}(\pi_1, \pi_2) \not= 0.
\end{align*}
Hence we obtain (1).
\par

When $d_{n,n,\xi}(\pi'_1, (\mu\chi_\xi \circ {\det}_{n-r+1}) \rtimes \pi_2)  \not= 0$, 
the seesaw identity implies that
\[
\Hom_{\cl{J}_{r-1}}\left(
\MM^{(r)}_{\psi, \tau'}(\pi'_1), 
\pi_2^\vee \otimes \omega_{\psi}^{(r-1)}
\right) \not= 0.
\]
In particular, $\MM^{(r)}_{\psi, \tau'}(\pi'_1) \not= 0$.
Hence we have (2).
\end{proof}


\end{document}